\definecolor{yuting}{RGB}{255,69,0}
\definecolor{yue}{RGB}{199,21,133}
\renewcommand{\a}{\alpha}
\newcommand{\p}{\phi(\alpha)}
\renewcommand{\t}{\bs{\theta}}
\newcommand{\ts}{\taustart}
\newcommand{\normal}{\mathcal{N}}
\newcommand{\lzero}[1]{\|#1\|_0}
\newcommand{\tint}{\widehat{\bs{\theta}}^{\mathsf{Int}}}
\newcommand{\epsilonstar}{\epsilon^\star}
\theoremstyle{plain}
\newtheorem{theo}{Theorem}[section]
\newtheorem{lem}{Lemma}[section]
\newtheorem{prop}{Proposition}[section]
\newtheorem{cor}{Corollary}[section]
\theoremstyle{definition} 
\newtheorem{nota}{Notation}[section]
\newtheorem{de}{Definition}[section]
\newtheorem{exa}{Example}[section]
\newtheorem{as}{Assumption}[section]
\newtheorem{alg}{Algorithm}[section]
\newcommand{\btheo}{\begin{theo}}
\newcommand{\bde}{\begin{de}}
\newcommand{\ble}{\begin{lem}}
\newcommand{\bpr}{\begin{prop}}
\newcommand{\bno}{\begin{nota}}
\newcommand{\bex}{\begin{exa}}
\newcommand{\bcor}{\begin{cor}}
\newcommand{\spro}{\begin{proof}}
\newcommand{\bas}{\begin{as}}
\newcommand{\balg}{\begin{alg}}
\newcommand{\etheo}{\end{theo}}
\newcommand{\ede}{\end{de}}
\newcommand{\ele}{\end{lem}}
\newcommand{\epr}{\end{prop}}
\newcommand{\eno}{\end{nota}}
\newcommand{\eex}{\end{exa}}
\newcommand{\ecor}{\end{cor}}
\newcommand{\fpro}{\end{proof}}
\newcommand{\eas}{\end{as}}
\newcommand{\ealg}{\end{alg}}
\theoremstyle{plain}
\newtheorem{theos}{Theorem}
\newtheorem{props}{Proposition}
\newtheorem{lems}{Lemma}
\newtheorem{cors}{Corollary}
\theoremstyle{definition}
\newtheorem{exas}{Example}
\newtheorem{algs}{Algorithm}
\newtheorem{asss}{Assumption}
\newtheorem{defns}{Definition}
\newcommand{\btheos}{\begin{theos}}
\newcommand{\etheos}{\end{theos}}
\newcommand{\bprops}{\begin{props}}
\newcommand{\eprops}{\end{props}}
\newcommand{\bdes}{\begin{defns}}
\newcommand{\edes}{\end{defns}}
\newcommand{\blems}{\begin{lems}}
\newcommand{\elems}{\end{lems}}
\newcommand{\bcors}{\begin{cors}}
\newcommand{\ecors}{\end{cors}}
\newcommand{\bexs}{\begin{exas}}
\newcommand{\eexs}{\end{exas}}
\newcommand{\balgs}{\begin{algs}}
\newcommand{\ealgs}{\end{algs}}
\newcommand{\bass}{\begin{asss}}
\newcommand{\eass}{\end{asss}}
\newcommand{\ltwo}[1]{\|#1\|_2}
\newcommand{\Risk}{\mathsf{Risk}}
\newcommand{\taustar}{{\tau^\star}}
\newcommand{\taustart}{{\tau^\star_t}}
\newcommand{\alphastar}{{\alpha^\star}}
\newcommand{\alphastart}{{\alpha^\star_t}}
\newcommand{\zetastar}{\zeta^\star}
\newcommand{\nustar}{{\nu^\star}}
\renewcommand{\b}{\sqrt{\delta}\nu}
\newcommand{\bstar}{{\sqrt{\delta}\nu^\star}}
\newcommand{\real}{\ensuremath{\mathbb{R}}}
\newcommand{\thetastar}{\ensuremath{\bm{\theta}^\star}}
\newcommand{\xnew}{\bm{x}_{\text{new}}}
\newcommand{\ynew}{y_{\text{new}}}
\newcommand{\znew}{z_{\text{new}}}
\newcommand{\thetaint}{\widehat{\bs{\theta}}^{\mathsf{Int}}}
\newcommand{\thetainti}{\widehat{\theta}^{\mathsf{Int}}_i}
\newcommand{\residualT}{l_t}
\newcommand{\inprod}[2]{\ensuremath{\langle #1 , \, #2 \rangle}}
\newcommand{\NORMAL}{\mathcal{N}}
\newcommand{\Ind}{\ensuremath{\textbf{I}}}
\newcommand{\goto}{\rightarrow}
\newcommand{\bs}{\boldsymbol}
\newcommand{\defn}{\coloneqq}
\newcommand{\argmin}{\arg\!\min}
\newcommand{\Exs}{\ensuremath{\mathbb{E}}}
\newcommand{\coef}{\bm{\theta}^\star}
\newcommand{\coefi}{\theta^\star_i}
\newcommand{\noise}{\bm{z}}
\newcommand{\sign}{\mathsf{sign}}
\long\def\comment#1{}
\newcommand{\HACKPROOF}{\begin{proof}}
\newcommand{\HACKENDPROOF}{\end{proof}}
\newlength{\widebarargwidth}
\newlength{\widebarargheight}
\newlength{\widebarargdepth}
\long\def\@makecaption#1#2{
        \vskip 0.8ex
        \setbox\@tempboxa\hbox{\small {\bf #1:} #2}
        \parindent 1.5em  %% How can we use the global value of this???
        \dimen0=\hsize
        \advance\dimen0 by -3em
        \ifdim \wd\@tempboxa >\dimen0
                \hbox to \hsize{
                        \parindent 0em
                        \hfil 
                        \parbox{\dimen0}{\def\baselinestretch{0.96}\small
                                {\bf #1.} #2
                                %%\unhbox\@tempboxa
                                } 
                        \hfil}
        \else \hbox to \hsize{\hfil \box\@tempboxa \hfil}
        \fi
        }
\theoremstyle{remark}
\newtheorem{remark}{\textbf{Remark}}
\begin{document}

\title{Minimum $\ell_{1}$-norm interpolators: \\ Precise asymptotics and multiple descent}

\author{Yue Li \thanks{Department of Statistics and Data Science, Carnegie Mellon University, Pittsburgh, PA 15213, USA.} 
	\and 
	Yuting Wei \thanks{Department of Statistics and Data Science, The Wharton School, University of Pennsylvania, Philadelphia, PA 19104, USA.}\\
}

\maketitle

\begin{abstract}
An evolving line of machine learning works observe empirical evidence that suggests interpolating estimators --- the ones that achieve zero training error --- may not necessarily be harmful. This paper pursues theoretical understanding for an important type of interpolators: the minimum $\ell_{1}$-norm interpolator, which is motivated by the observation that several learning algorithms favor low $\ell_1$-norm solutions in the over-parameterized regime. Concretely, we consider the noisy sparse regression model under Gaussian design, focusing on linear sparsity and high-dimensional asymptotics (so that both the number of features and the sparsity level scale proportionally with the sample size). 

We observe, and provide rigorous theoretical justification for, a curious \emph{multi-descent} phenomenon; that is, the generalization risk of the minimum $\ell_1$-norm interpolator undergoes multiple (and possibly more than two) phases of descent and ascent as one increases the model capacity. This phenomenon stems from the special structure of the minimum $\ell_1$-norm interpolator as well as the delicate interplay between the over-parameterized ratio and the sparsity,  thus unveiling a fundamental distinction in geometry from the minimum $\ell_2$-norm interpolator. Our finding is built upon an exact characterization of the risk behavior, which is governed by a system of two non-linear equations with two unknowns.
\end{abstract}

\medskip

\noindent\textbf{Keywords:} 
minimum norm interpolators, multiple descent, Lasso, sparse linear regression, 
exact asymptotics, approximate message passing

{
\setcounter{tocdepth}{2}
  \tableofcontents
}

\section{Introduction}

At the core of statistical learning lies the problem of understanding the generalization performance (e.g., out-of-sample errors) of the learning algorithms in use. 
Conventional wisdom in statistics held that including too many covariates when training statistical models can hurt generalization (despite improving training accuracy), due to the undesired over-fit. 
This leads to the classical conclusion that: proper regularization --- through either adding certain penalty functions to the loss function or algorithmic self-regularization --- seems to be critical in achieving the desired accuracy (e.g., \cite{friedman2001elements,wei2019early}). 
However, an evolving line of works in machine learning observes empirical evidence that suggests, to the surprise of many statisticians,  over-parameterization is not necessarily harmful. 
Indeed, many machine learning models (such as random forests or deep neural networks) are trained until the training error vanishes to zero --- meaning that they are able to perfectly interpolate the data --- while still generalizing well  (e.g., \cite{zhang2021understanding,wyner2017explaining,belkin2019reconciling}).
As a key observation to explain this phenomenon, 
many models when trained by gradient type methods (e.g., gradient descent, stochastic gradient descent, AdaBoost) converge to certain minimum norm interpolators, which implicitly favor models with smaller model complexity.

These empirical mysteries inspire a recent flurry of activity towards understanding the generalization properties of various interpolators. 
A dominant fraction of recent efforts, however, concentrated on studying certain minimum $\ell_{2}$-norm interpolators, primarily in the context of linear and/or kernel regression (see, e.g., \cite{liang2020just,mei2019generalization,hastie2019surprises,belkin2020two,bartlett2020benign,bartlett2021deep} and the references therein). 
This was in part due to the existence of closed-form expressions for minimum $\ell_2$-norm interpolators, 
which are particularly handy when determining the statistical risk. 
In contrast, the theoretical underpinnings for minimum $\ell_{1}$-norm interpolators, 
despite growing interest (e.g., \cite{ju2020overfitting,liang2020precise,chinot2021robustness}), remain highly inadequate and considerably more challenging to establish. 
Given that multiple learning algorithms are known to favor low $\ell_1$-norm solutions in the over-parameterized regime (such as \cite{rosset2004boosting,gunasekar2018implicit}), 
understanding the statistical properties of the minimum $\ell_{1}$-norm interpolation plays a pivotal role in unveiling the trade-offs between over-parameterization and generalization, which we seek to explore in this paper.

\subsection{Motivation: a multi-descent phenomenon}

An intriguing empirical phenomenon called ``\emph{double descent}'' 
has recently emerged in the study of over-parameterized learning models   
 \citep{neyshabur2014search,nakkiran2019deep,belkin2018understand,belkin2019reconciling,belkin2021fit}. 
Consider, for example, a risk curve that depicts how the generalization error varies as more parameters are added to the model. 
 Following the classical bias-variance trade-off U-shape curve before entering the interpolation (or over-parameterized) regime \citep{friedman2001elements}, the generalization error of various models descends again as one further increases the number of parameters beyond the interpolation limit. In addition, this double-descent phenomenon is also closely related to a curious observation --- the non-monotonicy of risk as the model capacity grows --- that has attracted much recent attention \citep{viering2019open}.

Aimed at distilling insights that help explain this phenomenon, 
a recent body of works studied the behavior of the minimum $\ell_2$-norm interpolator in the presence of a linear model, 
which solidified the double-descent phenomenon for this interpolator (see, e.g., \cite{mei2019generalization,hastie2019surprises,bartlett2020benign,belkin2020two} and the references therein). 
Moving beyond minimum $\ell_2$-norm interpolators, 
empirical observations have been discussed regarding the minimum $\ell_{1}$-norm interpolator as well; for instance, 
similar double descent was numerically observed in \cite{muthukumar2020harmless}, with heuristic justification provided in \cite{mitra2019understanding} based on statistical physics intuitions. 
Our own numerical experiments uncover even more intriguing risk behavior of the minimum $\ell_{1}$-norm interpolator. 
As illustrated in Figure~\ref{fig:sparse-s} and Figure~\ref{fig:sparse}, we observe ``\emph{multiple descent}'' in certain parameter regimes; that is,    
as the model complexity continues to grow, the out-of-sample risk of the minimum $\ell_1$-norm interpolator undergoes multiple phases of increase and decrease, and ultimately becomes non-increasing even as the over-parameterized ratio tends to infinity. 
There is, however, lack of theoretical support that elucidates this empirical observation. It remains unclear how to interpret the striking distinction in the risk behavior between the minimum $\ell_1$-norm and the minimum $\ell_2$-norm interpolators. 

\begin{figure}[t]
	\centering
	\includegraphics[width=0.7\linewidth]{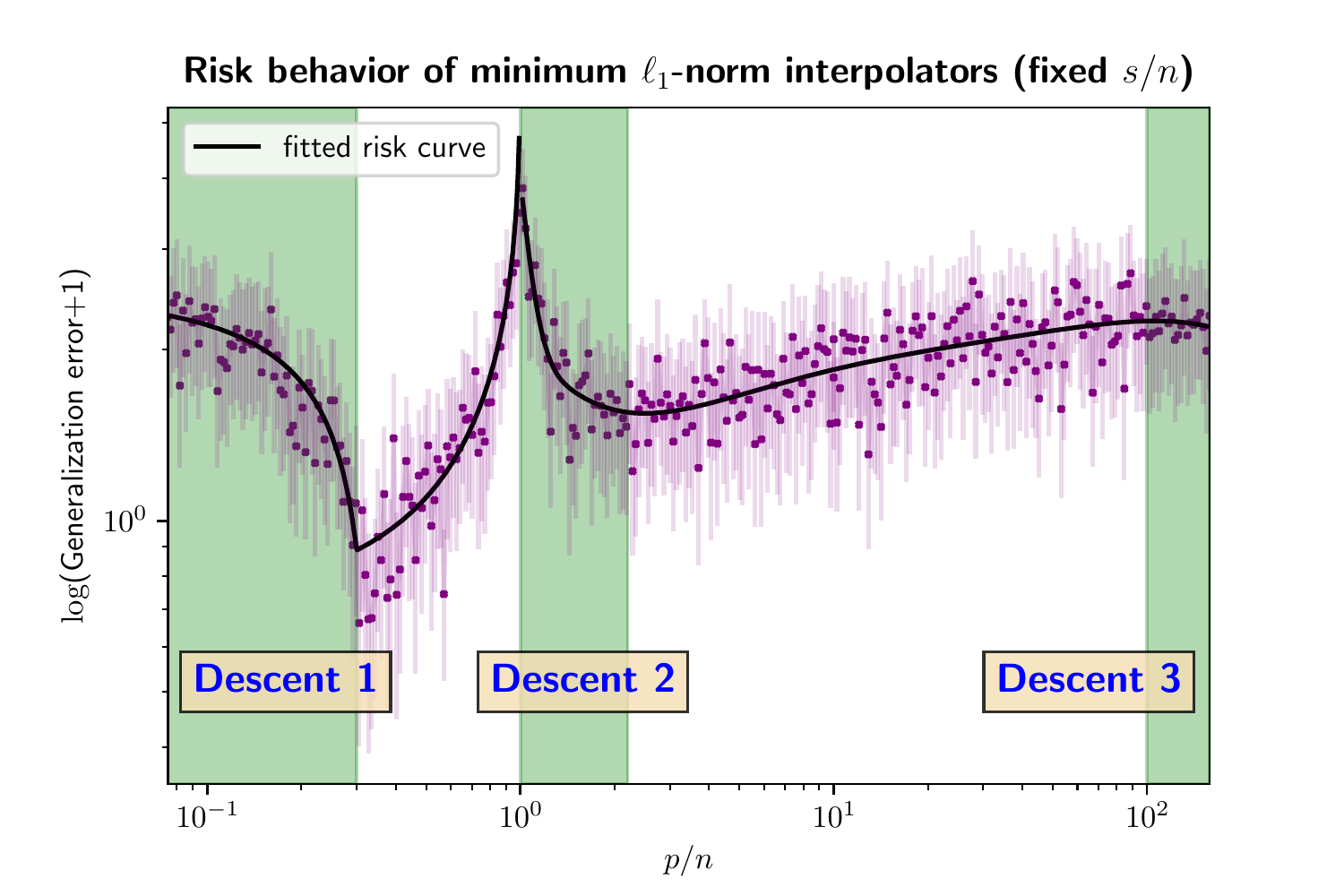}
	\caption{Triple descent in sparse linear regression (see model~\eqref{eq:model}), 
	when the ratio of the sparsity $s$ of the true signal and the sample size $n$ stays fixed.
	More specifically, we fix 
	$s/n = 0.3$ and $s/n \cdot M^2 = 10$ (where $M$ is the magnitude of non-zero entries). When $s \leq p$, the true signal $\thetastar$ is set as an $s$-sparse vector. When $p < s$, we still set the true signal $\thetastar$ as an $s$-dimensional vector, while assuming we only have access to a subset of $p$ features. 
	We set the sample size as $n=100$, and choose $500$ values of $p/n$ such that the $\log (p/n)$'s are uniformly spaced over $[-2, 2.2]$. 
    In each run and for each $p/n$ ratio, we generate a random instance and compute the minimum $\ell_1$-norm interpolator and its risk.
    We report the average risk and error bar over these 30 independent runs for each $p/n$ ratio. 
	The solid line represents the fitted risk curve: when $p/n < 1$, we use the theoretical risk of least-square estimators in the current setting;  when $p/n > 1$, we employ cubic spline smoothing to fit an empirical risk curve. }
	\label{fig:sparse-s}
\end{figure}

\begin{figure}[t]
	\centering
	\includegraphics[width=0.7\linewidth]{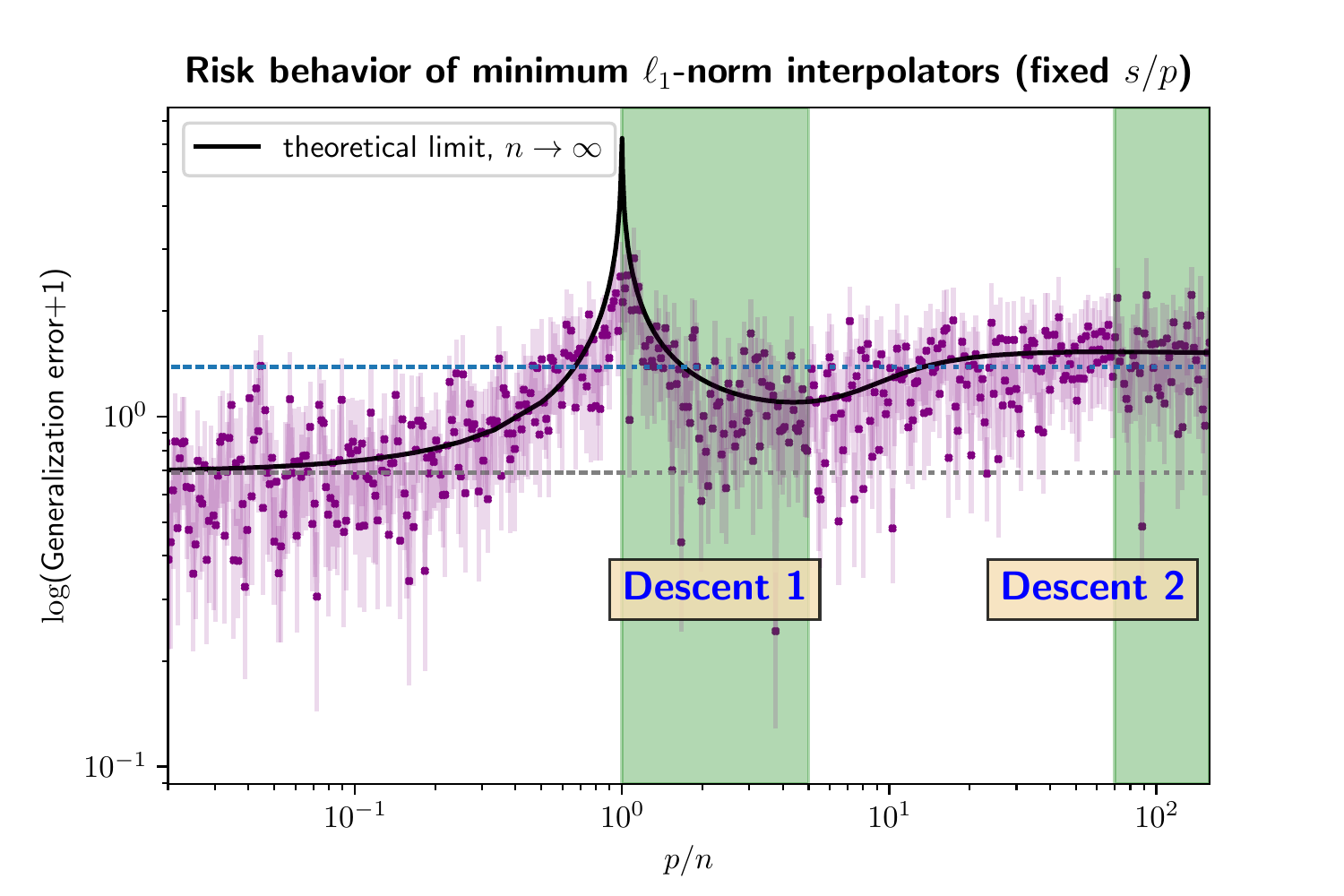}
	\caption{
	Multiple-descent phenomenon observed in numerical experiments. 
 	We generate data from a linear model~\eqref{eq:model} with i.i.d.~Gaussian design, where parameters are set as $\sigma = 1$, SNR$\defn\epsilon M^2= 2$, and sparsity level $\epsilon = 0.01$. The sample size is fixed at $n=100$, and choice of $p$'s and the calculation of error bars are the same as Figure~\ref{fig:sparse-s}.
	 The theoretical curve, predicted by results in the present paper, is shown in solid line, and the $p/n\goto 0$ and $p/n\goto\infty$ limits are shown in dotted lines. 
	 When $p \geq n$, two descending phases are observed here, where  
	 the first descending regime happens at the interpolation point where $n=p$, 
	 which is common for various types of models including the minimum $\ell_2$-norm interpolation. 
	 The second descent appears when $p/n$ is large enough, presenting a unique behavior for $\ell_1$-norm minimization problem.
	}
	\label{fig:sparse}
\end{figure}

\subsection{Main results and insights}

In this paper, we concentrate on linear models, 
and investigate the generalization error (in terms of the out-of-sample squared error) of 
the minimum $\ell_{1}$-norm interpolator --- or equivalently, 
the Lasso estimator with regularization parameter approaching zero. 
We pursue a comprehensive understanding of such estimators in the \emph{proportional growth} and \emph{over-parameterized} regime, where the number of parameters $p$ scales linearly with, but larger than, the number of samples $n$. 
Recognize that Figure~\ref{fig:sparse-s} and Figure~\ref{fig:sparse} (whose difference only lies in how the sparsity levels are chosen) exhibit very similar behavior in the over-parameterized regime. 
To streamline presentation and avoid repetition, we shall restrict our attention to the geometric properties of the risk curve in the setting of Figure~\ref{fig:sparse}.   
In what follows, we formulate the problem precisely, followed by a summary of our main results.

\paragraph{Models.} 

Setting the stage, imagine that we have gathered $n$ i.i.d.~noisy training data drawn from a linear model 
\begin{align}
	y_i = \langle \bm{x}_i, \thetastar \rangle + z_i, \qquad 1\leq i\leq n,
	\label{eq:model}
\end{align}
where $\thetastar\in \mathbb{R}^p$ is a vector composed of $p$ unknowns, 
$\bm{x}_i\in \mathbb{R}^p$ stands for a (random) design vector known {\em a priori}, 
and the $z_i$'s denote i.i.d.~Gaussian noise. 
In addition, we consider the linear sparsity regime, where (i) the unknown signal $\thetastar$ is $(\epsilon \cdot p)$-sparse for some fixed constant $\epsilon>0$, 
and (ii) all the $\epsilon \cdot p$ non-zero entries have magnitudes proportional to some given quantity $M$ (to be made precise momentarily in Section~\ref{sec:model-assumptions}).

Under the well-specified linear model, the generalization error (or out-of-sample risk) of any estimator $\widehat{\bm{\theta}}$ is defined as the expected prediction risk over a new sample data $(\xnew, \ynew)$:\footnote{The expectation is not taken w.r.t.~$\thetastar$ here, which is however not important as the risk will converge almost surely to the expected risk in all cases considered in this paper.} 
\begin{align}
\label{eqn:risk-form}
	\Risk(\widehat{\bs{\theta}}) 
	\defn \Exs\big[(\xnew^\top \widehat{\bs{\theta}} - \ynew)^2 \big] 
	% = \Exs\big[\big(\xnew^\top(\widehat{\bs{\theta}} - \thetastar)\big)^2\big] + \sigma^{2},
\end{align}
where the new sample data follows the same distributions as the training data and is independent of the estimator $\widehat{\bs{\theta}}$.   
Our focal point is the high-dimensional asymptotics (or the large system limit), 
that is, we study the case when $n,p\rightarrow$ with their ratio $n/p$ held fixed.  
For notational convenience, we shall often abbreviate the limiting risk as follows as long as the limit exists almost surely:
\begin{align}
	\label{eqn:risk-form-delta}
	\Risk\big( \widehat{\bs{\theta}}; \delta \big) \coloneqq \lim_{\substack{n/p 
	= \delta \\ n,\, p\to\infty}} \Risk\big( \widehat{\bs{\theta}} \big) .
\end{align}

\paragraph{Main findings: the risk curve of the minimum $\ell_{1}$-norm interpolator.}

When it comes to the over-parametrized regime where $p > n$, the system of equations $y_i = \langle \bm{x}_i, \bm \theta\rangle$, $1\leq i\leq n$ is under-determined, thus implying the existence of multiple regression parameters $\bm \theta$ 
that interpolate the training data perfectly. 
Among all possible interpolators, the focal point of this paper is the {\em minimum $\ell_{1}$-norm intepolator}, 
which enjoys the smallest $\ell_1$-norm as defined below
\begin{align}
	\label{eqn:sparse-intepolation}
	\widehat{\bs{\theta}}^{\mathsf{Int}} \defn \argmin_{\bs{\theta}\in\mathbb{R}^p}\|\bs{\theta}\|_1 \qquad 
	\text{subject to} \quad y_i = \inprod{\bs{x}_i}{\bs{\theta}},~~ 1\leq i\leq n.
\end{align}
In an attempt to understand its generalization behavior, we seek to pin down the exact asymptotics of the above risk metric.  
Encouragingly, the large system limit of $\Risk(\widehat{\bs{\theta}}^{\mathsf{Int}})$ can be accurately pinpointed  
 by solving a system of two nonlinear equations with two unknowns (to be formalized in Theorem~\ref{thm:interpolation-risk}).  
In turn,  such risk characterizations provide a rigorous footing for the multi-descent behavior numerically observed in Figure~\ref{fig:sparse}, as asserted by the following theorem. 

\begin{theos}[Shape of the risk curve]
\label{thm:interpolation}
	Suppose that $0<\delta  <1$, and fix $n=\delta p$. 
	Assume i.i.d.~Gaussian design, i.i.d.~Gaussian noise, and linear sparsity 
	(to be made precise in Section~\ref{sec:model-assumptions}).  
	Then the generalization error (cf.~\eqref{eqn:risk-form-delta}) of the minimum $\ell_1$-norm interpolator \eqref{eqn:sparse-intepolation} satisfies the following properties: 
	\begin{enumerate}

		\item[(a)] There exist two constants $1<\eta_1< \eta_2 <\infty$ such that $\Risk(\widehat{\bs{\theta}}^{\mathsf{Int}}; \delta)$ decreases with $p/n$ within the range $p/n \in (1, \eta_1) \cup (\eta_2,\infty)$. 

		\item[(b)] $\Risk(\widehat{\bs{\theta}}^{\mathsf{Int}};\delta)$ approaches the risk of the zero estimator (i.e., $\Risk(\bm{0})$) 
			as $ p/n$ tends to infinity.

		\item[(c)] For any fixed signal-to-noise ratio (to be defined precisely in \eqref{eq:defn-SNR}), there exists a constant $\epsilon^*>0$ such that 
  if the sparsity ratio $\epsilon$ obeys $\epsilon < \epsilon^*$, then one can find a region within the range $p/n \in (\eta_1,\eta_2)$ such that $\Risk(\widehat{\bs{\theta}}^{\mathsf{Int}}; \delta)$ increases with $p/n$. 

  \item [(d)] In addition, for every given $\delta$, there exists a threshold
	  $\tilde{\epsilon}(\delta)$ such that 
  $\Risk(\widehat{\bs{\theta}}^{\mathsf{Int}};\delta)$ decreases with $p/n$ at this particular point $\delta$ as long as the sparsity ratio $\epsilon$ satisfies
   $\epsilon \leq \tilde{\epsilon}(\delta).$

	\end{enumerate}
\end{theos}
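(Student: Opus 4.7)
The plan is to build directly on the exact asymptotic characterization provided by Theorem~\ref{thm:interpolation-risk}, which expresses $\Risk(\widehat{\bs{\theta}}^{\mathsf{Int}}; \delta)$ through a pair $(\alphastar, \taustar)$ that solves a system of two nonlinear equations. Write $R(\delta)$ for the resulting limiting risk; it is convenient to regard $R$ interchangeably as a function of the over-parameterization ratio $\alpha = 1/\delta \in (1,\infty)$. Part (b) is handled first and most directly: take $\delta \to 0$ inside the two defining equations, track how $\taustar$ diverges (forcing the associated soft-thresholding map to send nearly every coordinate of $\widehat{\bs{\theta}}^{\mathsf{Int}}$ to zero), and substitute the resulting asymptotic regime into the risk formula to obtain $R(\delta) \to \Exs[\ynew^2] = \Risk(\bm{0})$.

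For part (a), the key tool is implicit differentiation of the characterizing system. Computing $\tfrac{d R}{d\alpha}$ requires inverting a $2\times 2$ Jacobian built from Gaussian expectations of soft-thresholded random variables, but once that is in hand the sign analysis at the two endpoints is the main objective. Near $\alpha \to 1^+$ the risk is expected to blow up (the classical interpolation spike), so by continuity $\tfrac{dR}{d\alpha} < 0$ on some initial interval $(1,\eta_1)$. Near $\alpha \to \infty$, a one-order refinement of the expansion from part~(b) should show that $R$ approaches $\Risk(\bm{0})$ monotonically from above over some tail $(\eta_2,\infty)$; the constants $\eta_1, \eta_2$ are then defined as the right endpoints of the maximal monotone intervals containing $1$ and $\infty$ respectively.

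For parts (c) and (d), I would examine the $\epsilon \to 0$ limit of the same system with the signal-to-noise product (essentially $\epsilon M^2$) held constant, so that the effective signal strength does not wash out. Formally sending $\epsilon \to 0$ collapses the fixed-point equations to a reduced system whose solution, and hence whose induced risk curve $R_0(\alpha)$, can be analyzed more transparently. For (c), I expect $R_0$ to be strictly \emph{increasing} on some sub-interval of $(\eta_1,\eta_2)$; continuity of the solutions in $\epsilon$ then lifts the ascent to all sufficiently small $\epsilon > 0$, producing $\epsilonstar$. For (d), at any fixed $\delta$ one evaluates $\tfrac{\partial R}{\partial \alpha}$ in the $\epsilon \to 0$ limit at that particular point; whenever this derivative is negative there, continuity in $\epsilon$ yields the pointwise threshold $\tilde{\epsilon}(\delta)$. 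The main obstacle throughout is that neither $\alphastar$ nor $\taustar$ is available in closed form, so every sign statement reduces to a nontrivial inequality between Gaussian integrals involving soft-thresholding; a secondary difficulty is establishing enough regularity of the $\epsilon = 0$ limit that the continuity arguments in parts (c) and (d) actually deliver strictly positive thresholds $\epsilonstar$ and $\tilde\epsilon(\delta)$.
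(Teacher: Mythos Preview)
Your overall strategy matches the paper's: derive the risk from Theorem~\ref{thm:interpolation-risk}, study the sign of the derivative via implicit differentiation of the two-equation system, and for (c)--(d) take the $\epsilon\to 0$ limit with SNR $=\epsilon M^2$ held fixed. Two points deserve correction or comment.

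First, a factual error in part~(b): as $\delta\to 0^+$, it is not $\taustar$ that diverges but the threshold parameter $\alphastar$. In fact $\taustar\to\tau_0=\sqrt{\sigma^2+\epsilon M^2}$, which \emph{is} $\sqrt{\Risk(\bm 0)}$ directly. The paper handles this by the change of variables $\nu=M/\tau$, which keeps everything bounded on both ends (and is also essential near $\delta\to 1^-$, where $\taustar$ genuinely blows up). All of the derivative computations are then done in the $(\nu,\alpha)$ coordinates rather than $(\tau,\alpha)$; you will want to adopt the same reparametrization before attempting the implicit differentiation, since otherwise the Jacobian degenerates at $\delta=1$.

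Second, for part~(c) the paper uses a slightly different (and arguably cleaner) argument than your direct derivative plan. Rather than locating an interval where the limiting curve $R_0$ is strictly increasing, it shows that for small $\epsilon$ there exists a point $\delta_0$ at which $\taustar^2(\delta_0)$ lies \emph{strictly below} $\Risk(\bm 0)$; since the risk tends to $\Risk(\bm 0)$ as $p/n\to\infty$ by part~(b), continuity forces an ascent somewhere in between. Your approach of showing $R_0'>0$ on a sub-interval would also work in principle, but the paper's value-based argument sidesteps the need to control the sign of the derivative of the $\epsilon\to 0$ limit.
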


\paragraph{Geometric implications and insights.}  

Theorem~\ref{thm:interpolation} reveals certain geometric properties of the risk curve of the minimum $\ell_1$-norm estimator in the over-parameterized regime  (i.e., $p>n$). 
Let us take a moment to discuss the implications regarding how the risk changes in the over-parameterized ratio $p/n$. 
\begin{itemize}

	\item Theorem~\ref{thm:interpolation}(a) identifies two {\em non-overlapping} regions within the over-parameterized regime that exhibit risk descent. Consequently, the total number of descent depends largely on the risk behavior in between these two regions.

	\item Theorem~\ref{thm:interpolation}(c) indicates that the risk in between the above-mentioned two regions exhibits contrastingly different behavior depending on the sparsity ratio. 

	\begin{itemize}	

	\item 
When the sparsity ratio is relatively small, the generalization error exhibits an intriguing ``\emph{decreasing -- increasing -- decreasing again}'' pattern in the over-parameterized regime. 
This taken together with what happens in the under-parameterized regime unveils a \emph{``triple-descent''} behavior, 
which matches the numerical findings in Figure~\ref{fig:lasso}. 
Interestingly, the minimum $\ell_{2}$-norm interpolator for such a model often enjoys a double-descent behavior rather than triple descent, thus uncovering a fundamental difference between the minimum $\ell_1$-norm and the minimum $\ell_2$-norm interpolation. 

	\item

	In stark contrast, 
	if the sparsity ratio is relatively large (in the sense that $\epsilon > \epsilon^\star$), the generalization error might actually decrease monotonically with $p/n$ in the entire over-parameterized regime. 
If this were true, then taking it together with the classical conclusion in the under-parameterized regime 
would justify the double-descent behavior that has also been empirically observed in \cite{muthukumar2020harmless,mitra2019understanding}.

\end{itemize}

\item In view of Theorem~\ref{thm:interpolation}(b), the minimum $\ell_1$-norm interpolator is essentially no better than a trivial estimator (i.e., the zero estimator) when the over-parameterized ratio $p/n$ is overly large.

\item Finally, Theorem~\ref{thm:interpolation}(d) reveals that at any over-parameterized ratio, the generalization risk can be decreasing with $p/n$ as long as the sparsity ratio is small enough. 

\end{itemize}

\begin{figure}
	\centering
	\includegraphics[width=0.7\linewidth]{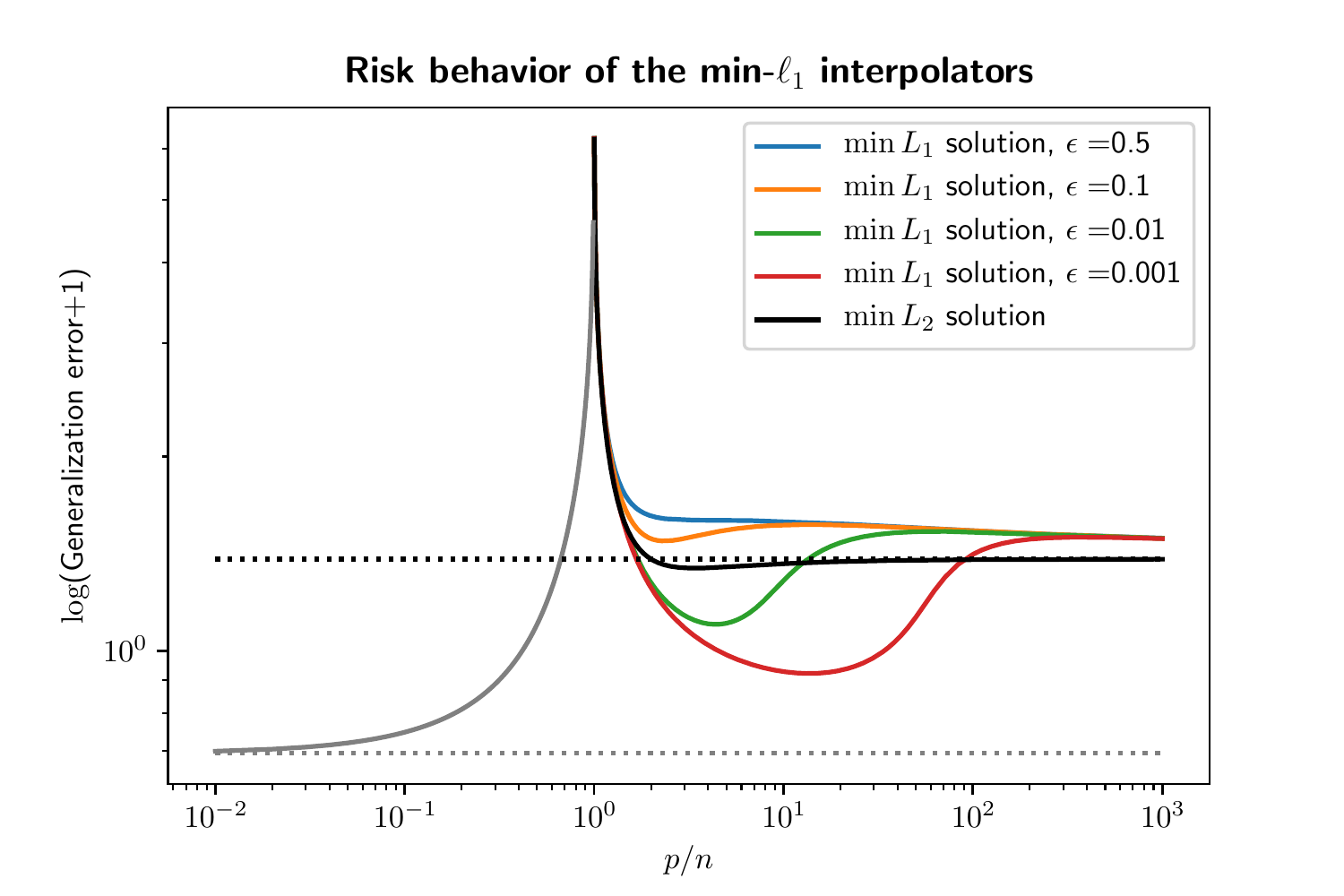}
	\caption{
	Theoretical risk curves for the minimum $\ell_1$-norm interpolation (obtained by solving the system of equations~\eqref{eqn:fix-eqn}).  	Here, we set SNR $ = 2$, and consider different values of the sparsity ratio $\epsilon$. 
	When $p/n < 1$,  the risk curves share similar behavior as the ordinary least square estimator.
	When $p/n>1$ and when $\epsilon$ drops below a certain level, the risk curves present one more descent phase. 
	In contrast, the minimum $\ell_2$-norm interpolation curve exhibits just one descent phase in the $p>n$ regime in all cases, as plotted in black. 
	}
	\label{fig:lasso}
\end{figure}

\subsection{A glimpse of our technical approach and novelty} 

Demonstrating the multi-descent phenomenon requires understanding the asymptotic risk of the interpolator of interest, 
which can be achieved by analyzing an iterative algorithm called  \emph{Approximate Message Passing} (AMP), 
originally proposed by \cite{donoho2009message} in the context of compressed sensing. 
Most relevant to our paper is the series of papers by \cite{bayati2011dynamics,bayati2011lasso} 
that determined  the asymptotic Lasso risk with a fixed and \emph{strictly} positive regularization. 
In order to analyze the minimum $\ell_1$-norm interpolator, 
the present work extends the AMP machinery to accommodate Lasso with the regularization parameter approaching zero, 
which can be accomplished by running a sequence of AMP that changes the algorithm parameters in an epoch-based manner.  
Noteworthily, 
previous analyses relied on the observation that having positive regularization encourages sparse solutions and, in turn, induces certainty restricted strong convexity around the solution. 
This, however, fails to capture our AMP dynamics due to the absence of positive regularization. 
To remedy this issue, we develop a new type of structural properties that allows one to analyze AMP iterates with changing parameters.  
As it turns out, the minimum $\ell_{1}$-norm solution coincides with the fixed-point of the new AMP updates, whose risk behavior can be characterized by a new system of two nonlinear equations with two unknowns. 
Obtaining the exact characterization of the minimum $\ell_{1}$-norm is beyond what prior AMP theory has to offer.

With the risk characterization in place, everything boils down to analyzing the above-mentioned nonlinear systems of equations --- in particular, how its solutions vary with the aspect ratio $\delta$. 
This, however, is challenging to cope with, as there is no closed-form expression of the solution points. 
While the prior work \cite{miolane2018distribution} studied the existence and uniqueness of the state evolution solutions,  
it is unclear how the solution varies with $\delta$, particularly in the absence of strictly positive regularization.  
All this is addressed in the present paper via careful analysis of the first- and second-order properties of the system of equations, 
which constitutes much of our analysis. We expect our analysis idea to be useful to analyze other estimators such as more general M-estimators \citep{donoho2016high} or the SLOPE estimator \citep{su2016slope}.

%%%%%%%%%%%%%%%%%%%%%%%%%%%%%%%%%%%%%%%%%%%%%%%%%%%%%%%%%%%%%%%%%%%%%%%%%%%%%%%%%%%%%%%%%%%%%%%%%%%%%%%%%%%%

\subsection{Other related works}

\paragraph{Multiple descent.}
While the emergence of the multi-descent phenomenon in our setting is caused by the special structure of minimum $\ell_{1}$-norm interpolators as well as the interplay between the over-parameterized ratio and the sparsity level, 
this phenomenon has also been observed in other settings for $\ell_{2}$-norm minimization --- albeit of different nature compared to ours. 
It is noteworthy that the presence of multiple descent can be caused by various other structures of the design matrix. 
As concrete examples, this might arise in non-isotropic linear regression where the covariance of the design matrix possesses two eigenspaces of different variance \citep{nakkiran2020optimal}; another possibility that leads to this phenomenon is to tweak the change points of the risk curve of the minimum $\ell_{2}$-norm solution by carefully adding new columns (features) to the design matrix (with either standard Gaussian or Gaussian mixtures distributions) \citep{chen2020multiple}. 
Additionally, this phenomenon might also stem from the regression kernel in use. For instance, \citet{adlam2020neural} derived the high-dimensional asymptotics for the risk curve when using neural tangent kernels in a two-layer neural network; 
\cite{liang2020multiple} studied the convergence properties of the minimum kernel-Hilbert norm interpolators under various scaling of $p = n^{\alpha}$, $\alpha \in (0,1)$, and suggested possible change points (from ascent to descent) at $\alpha = \frac{1}{l + 1/2}$ for every integer $l.$
In addition, \cite{d2020triple} empirically observed the multi-descent phenomenon under the random Fourier feature model.

\paragraph{Minimum $\ell_1$-norm solutions.} 

In the over-parametrized regime ($p>n$), 
the minimum $\ell_1$-norm interpolator considered herein is closely related to the problem of Basis Pursuit (BP) in the compressed sensing literature (e.g., \cite{chen2001atomic,wojtaszczyk2010stability,candes2006near,donoho2006compressed,donoho2005stable}).   
In particular, the algorithm \eqref{eqn:sparse-intepolation} has been well-established paradigm for finding a sparse solution to a noiseless linear system. 
When it comes to the proportional growth and linear sparsity regime in the noiseless case,    
\cite{donoho2009counting,amelunxen2014living} characterized the exact phase transition boundary regarding the sample size in achieving perfect recovery. Moving to the noisy scenario, \cite{ju2020overfitting} considered the same estimator and exhibited a double-descent phenomenon when $p$ is exponentially larger than $n$.
\cite{chinot2021robustness} studied the setting with $p$ exceeding the order of $n \log^{1-\beta}(n)$ for some constant $\beta\in (0,1)$, which did not focus on determining exact pre-constants and the double- or multi-descent phenomenon.  
Another recent work \cite{liang2020precise}
studied a drastically different problem --- binary classification, and pinned down exact asymptotics of the minimum $\ell_1$-norm solution when the data are separable, which has intimate connection to AdaBoost.

\paragraph{Exact high-dimensional asymptotics.} 

The exact asymptotic framework adopted in this work is closely related to the risk characterization of the Lasso estimator (for positive $\lambda$) that has been obtained in prior literature. 
In the proportional growth regime (so that $p$ and $n$ are comparable),  
the Lasso risk under i.i.d.~Gaussian designs has been determined by \cite{bayati2011lasso,stojnic2013framework,oymak2013squared}. 
In particular, the AMP machinery is a powerful tool for determining exact asymptotics in this regime, and we postpone further discussions to Section~\ref{sec:main-proof-AMP}. 
The distributional characterization of the Lasso has been recently established by \cite{miolane2018distribution} under the i.i.d.~Gaussian designs, and by \cite{celentano2020lasso,bellec2019second} under general correlated Gaussian designs, where the first two works were built upon the convex Gaussian min-max theorem.  
Going beyond the $\ell_1$-penalty, the estimation risk of the robust regression estimators was pioneered by \citet{karoui2013asymptotic,el2018impact,donoho2016high} and extensively studied by, e.g., \citet{dobriban2018high,thrampoulidis2018precise,hastie2019surprises,patil2021uniform}.

\paragraph{Approximate message passing.} 
  
Inspired by statistical physics and information theory literature, AMP was first proposed as an efficient scheme to solve compressed sensing problems \citep{donoho2009message}.  
\cite{bayati2011dynamics,javanmard2013state} then rigorously proved that the dynamics of AMP can be accurately tracked by a simple small-dimensional recursive formula called the \emph{state evolution}. 
This state-evolution characterization made AMP amenable as a analysis device to describe the statistical behaviors for various problems, despite that AMP is an effective algorithm on its own.  
The AMP algorithm and machinery has been successfully applied to a variety of problems beyond compressed sensing,
including but not limited to robust M-estimators \citep{donoho2016high}, SLOPE \citep{bu2020algorithmic},  
low-rank matrix estimation and PCA \citep{rangan2012iterative,montanari2021estimation,fan2020approximate,zhong2021approximate}, stochastic block models \citep{deshpande2015asymptotic}, phase retrieval \citep{ma2018optimization}, phase synchronization \citep{celentano2021local}, and generalized linear models \citep{rangan2011generalized,sur2019likelihood,sur2019modern,barbier2019optimal}. 
See \cite{feng2021unifying} for an accessible introduction of this machinery and its applications. 
Moreover, a dominant fraction of the AMP works focused on high-dimensional asymptotics (so that the problem dimension tends to infinity first before the number of iterations), except for \cite{rush2018finite} that derived finite-sample guarantees allowing the number of iterations to grow up to $O(\log n / \log\log n)$.

%%%%%%%%%%%%%%%%%%%%%%%%%%%%%%%%%%%%%%%%%%%%%%%%%%%%%%%%%%%%%%%%%%%%%%%%%%%%%%%%%%%%%%%%%%%%%%%%%%%%%%%%%%%%

\subsection{Notation}

Here, we provide a summary of notation to be used throughout the present paper. 
In general, scalars are denoted by lowercase letters, vectors are represented by boldface lowercase letters, while
 matrices are denoted by boldface uppercase letters. 
For every $q \in [1,\infty]$ and any vector $\bs{x} \in \real^{p}$, we use $\|\bs{x}\|_{q} \defn (\sum_{i=1}^{p} |x_{i}|^q)^{1/q}$ to represent the $\ell_{q}$-norm of $\bm{x}$, and  let $\lzero{\bs{x}}$ indicate the number of non-zero coordinates in $\bs{x}$. 
We denote by
\begin{align*}
	\left\langle \bs{x}\right\rangle \defn \frac{1}{p}\sum_{i=1}^p x_i,
	\quad \text{the average of the entries of the vector }
	\bs{x}\in \real^p.
\end{align*}
Additionally, let $\sigma_{\min}(\bs{M})$ and $\sigma_{\max}(\bs{M})$ denote respectively the  minimum and the maximum singular values of a matrix $\bs{M}$. 
Define $[n]\defn \{1,\cdots,n\}$ for an integer $n$.

For two functions $f(\cdot)$ and $g(\cdot)$, we often employ the convenient notation  $f(\delta) \lesssim g(\delta)$ (resp.~$f(\delta) \gtrsim g(\delta)$) to indicate that 
$$
	\lim_{\delta \goto \delta_0} f(\delta)/g(\delta) \leq 1 \qquad  \text{(resp. }\lim_{\delta \goto \delta_0} f(\delta)/g(\delta) \geq 1),
$$
where $\delta_{0}$ is a certain limiting point that will be clear from the context. 
We also write $f(\delta) \sim g(\delta)$ when both $f(\delta) \lesssim g(\delta)$
and $f(\delta) \gtrsim g(\delta)$ hold true. 
In addition, the soft-thresholding function is defined as 
\begin{align}
\label{eqn:soft-thresholding}
	\eta(x;\zeta) \defn (|x| - \zeta)_{+} \, \sign(x)
\end{align}
for any $x\in \mathbb{R}$ and a given threshold $\zeta\in \mathbb{R}^+$, 
where $z_+ \defn \max\{ z, 0\}$.  
Further, we let $\eta'(\cdot; \cdot)$ denote differentiation with respect to the first
variable. 
When a function is applied to a vector, it should be understood as being applied in a component-wise manner. 
Following conventional notation, we denote by $\partial f$ the sub-differential of a function $f$. 
When it comes to the $\ell_1$-norm $\|\cdot\|_1$, its sub-gradient at the point $\bs{x}\in \mathbb{R}^p$ can be any vector $\bs{v}=[v_i]_{1\leq i\leq p}$ satisfying
\begin{align*}
\left\lbrace\begin{matrix}
	v_i  = \sign(x_i), & \text{if }x_i \neq 0;\\[0.2mm]
	v_i \in [-1, 1], & \text{if } x_i = 0.
\end{matrix} \right.
\end{align*}
Moreover, a function $\psi: \mathbb{R}^2 \goto \mathbb{R}$ is said to be pseudo-Lipschitz if there exists a constant $L>0$ such that  
\begin{align}
\label{eqn:pseudo-lipschitz}
|\psi(x)-\psi(y)|\leq L(1+\|x\|_2+\|y\|_2)\|x-y\|_2 
\end{align}
holds for all $x, y\in \mathbb{R}^2$. Additionally, we shall often suppress a.s.~in the notation $\overset{\mathrm{a.s.}}{=}$ for almost sure convergence if it is clear from the context.

%%%%%%%%%%%%%%%%%%%%%%%%%%%%%%%%%%%%%%%%%%%%%%%%%%%%%%%%%%%%%%%%%%%%%%%%%%%%%%%%%%%%%%%%%%%%%%%%%%%%%%%%%%%%

\section{Risk characterization for the minimum $\ell_1$-norm interpolator}

\subsection{Modelling assumptions}
\label{sec:model-assumptions}

For notational simplicity, we shall often adopt the vector and matrix notation as follows
\begin{subequations}
\begin{align}
	\bm{z} \defn [z_i]_{1\leq i\leq n} &\in \mathbb{R}^n, 
	\qquad \bm{X} \defn [\bm{x}_1,\cdots,\bm{x}_n]^{\top} \in \mathbb{R}^{n\times p} , \\
	&\bm{y} \defn [y_i]_{1\leq i\leq n} = \bm{X} \bm{\theta}^{\star} + \bm{z}  \in \mathbb{R}^n. 
\end{align}
\end{subequations}
To formalize the problem setting, we first impose the following assumptions on the sampling process throughout the paper.

\begin{itemize}
	\item {\em Gaussian design.} We study i.i.d.~Gaussian design, where each design vector is independently drawn:  
\begin{equation}
	\bm{x}_i \overset{\text{i.i.d.}}{\sim} \normal\bigg(0,\frac{1}{n} \Ind_p \bigg),\qquad 
	1\leq i\leq n.
	\label{eq:Gaussian-design}
\end{equation}
Here, the scaling factor $1/n$ is introduced merely for normalization purpose.  
This tractable model is widely adopted when studying the high-dimensional asymptotics of Lasso (e.g.~\cite{bayati2011lasso,miolane2018distribution,su2017false}) and has been extended to other statistical learning problems 
(e.g., \cite{donoho2016high,karoui2013asymptotic,sur2019likelihood,thrampoulidis2018precise}).  
While Gaussian design is typically not satisfied in practice, 
it allows for useful mathematical insights that might shed light on practical contexts.

	\item {\em Gaussian noise.} It is assumed that the noise components are independent and obey
\begin{align}
	\bm{z} \sim \mathcal{N}(0,\sigma^2 \Ind_n). 
	\label{eq:Gaussian-noise}
\end{align}

\end{itemize}

\noindent
Under the above Gaussian design and Gaussian noise model, the generalization error \eqref{eqn:risk-form} of an estimator $\widehat{\bm{\theta}}$ should be defined when $(\xnew, \ynew)$ is drawn from the same assumption, i.e., $\ynew = \langle \xnew, \bm{\theta}^{\star} \rangle + \znew$ with $\xnew \sim \normal\big(0,\frac{1}{n} \Ind_p \big)$ and $\znew \sim \mathcal{N}(0,\sigma^2)$. This leads to
\begin{align}
\label{def:risk}
	\Risk(\widehat{\bs{\theta}}) 
	\defn \Exs\big[(\xnew^\top \widehat{\bs{\theta}} - \ynew)^2 \big] 
	 = \Exs\big[\big(\xnew^\top(\widehat{\bs{\theta}} - \thetastar)\big)^2\big] + \sigma^{2}
	 = \frac{1}{n} \big\| \widehat{\bs{\theta}} - \thetastar\big\|_2^2 + \sigma^{2}. 
\end{align}

In addition, we shall make assumptions regarding how the ground truth is generated, as formalized below.

\begin{itemize}

	\item {\em Linear sparsity.} Suppose that each coordinate of $\thetastar=[\coefi]_{1\leq i\leq p}$ is identically and independently drawn as follows 
\begin{equation}\label{eq:theta-distribution}
	\coefi \overset{\mathrm{i.i.d.}}{\sim} \epsilon \mathcal{P}_{M\sqrt{\delta}} + (1-\epsilon)\mathcal{P}_0,
\end{equation}
where $\mathcal{P}_{c}$ denotes the Dirac measure at point $c\in \mathbb{R}$, 
and $M>0$ is some given quantity that determines the magnitude of a non-zero entry. 
In words, each coordinate is non-zero (and with magnitude $M\sqrt{\delta}$) with probability $\epsilon$.  
Here, the scaling factor $\sqrt{\delta}$ is introduced solely for notational convenience, 
which ensures that the \emph{signal-to-noise-ratio} (SNR) obeys
\begin{align}
	\mathrm{SNR} \defn \frac{\mathbb{E}\big[ (\bs{x}^\top \thetastar)^{2} \big]}{\sigma^2} 
	= \frac{ \epsilon M^2 }{\sigma^2}.
	\label{eq:defn-SNR}
\end{align}
When $\epsilon$ is a fixed constant, 
the number of non-zero coordinates concentrates around $\epsilon \cdot p$, 
meaning that $\epsilon$ determines the sparsity level of $\thetastar$. 
Noteworthily, a model with linear sparsity lends itself well to high-dimensional applications with only moderate degrees of sparsity (for instance, in various problems in genomics, the relevant signals are observed to be spread out across a good fraction of the genome \citep{boyle2017expanded,tam2019benefits}).

\end{itemize}

\begin{remark}
It is worth noting that the linear sparsity regime often precludes consistency results in both estimation and support recovery, 
which is in stark contrast to the regime where the sparsity level is vanishingly small compared to the sample size \citep{bickel2009simultaneous,wainwright2009sharp,buhlmann2011statistics}. 
In fact, results featuring this regime often require an additional  adjustment due to the effect of undersampling, as discussed in \cite{el2013robust}. 
\end{remark}

%%%%%%%%%%%%%%%%%%%%%%%%%%%%%%%%%%%%%%%%%%%%%%%%%%%%%%%%%%%%%%%5
\subsection{Risk characterization}

In order to depict the shape of the risk curve, we first characterize 
the precise asymptotics of $\Risk(\widehat{\bs{\theta}}^{\mathsf{Int}}; \delta)$ with a fixed aspect ratio $0<\delta<1$. 
As alluded to previously, this is accomplished by considering sequences of instances of increasing sizes, along which the minimum $\ell_{1}$-norm interpolator (cf.~\eqref{eqn:sparse-intepolation}) has a non-trivial limiting risk behavior.

Towards this end, we consider a more general distribution on $\bm{\theta}^{\star}$ by assuming that
\begin{align}
	\text{The empirical distribution of }\bm{\theta}^{\star}\text{ converges weakly to a probability measure }P_{\Theta}. 
	\label{eq:empirical-Theta-assumptions}
\end{align}
The following theorem determines the exact asymptotics of $\Risk(\widehat{\bs{\theta}}^{\mathsf{Int}}; \delta)$ for any given $0<\delta <1$.

\begin{theos}[Risk of min $\ell_1$-norm interpolation]
\label{thm:interpolation-risk}
Consider the linear model \eqref{eq:model}, and suppose that the assumptions \eqref{eq:Gaussian-design}, \eqref{eq:Gaussian-noise} and 
\eqref{eq:empirical-Theta-assumptions} hold. Consider any given $0 < \delta < 1$. 
If $\mathbb{E}[\Theta^2] < \infty$ and $\mathbb{P}(\Theta \neq 0)>0$,
then the prediction risk of the minimum $\ell_{1}$-norm interpolator obeys
\begin{align}
	\lim_{\substack{n/p 
	= \delta \\ n,\, p\to\infty}} \Risk\big( \widehat{\bs{\theta}}^{\mathsf{Int}} \big)
	~\overset{\mathrm{a.s.}}{=}~ \taustar^2 .
\end{align}
Here, $(\taustar, \alphastar)$ stands for the unique solution to the following system of equations 
\begin{subequations}
	\label{eqn:fix-eqn}
		\begin{align}
		\label{eq:fix-1} \tau^2 & = \sigma^2 + \frac{1}{\delta}\mathbb{E}\left[ \big( \eta(\Theta+\tau Z; \alpha\tau)-\Theta\big) ^2\right] , \\
		\label{eq:fix-2} \delta & =\mathbb{P}\big( |\Theta+\tau Z| > \alpha\tau \big),
		\end{align}
\end{subequations}
where $\Theta \sim P_{\Theta}$, and $Z \sim \normal(0,1)$ and is independent of $\Theta$. 

\end{theos}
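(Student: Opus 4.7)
The overarching plan is to identify $\widehat{\bm{\theta}}^{\mathsf{Int}}$ as a limiting fixed point of an Approximate Message Passing (AMP) recursion and to extract its asymptotic risk via the associated state evolution. The natural entry point is the penalized Lasso estimator $\widehat{\bm{\theta}}_\lambda \defn \argmin_{\bs{\theta}} \frac{1}{2}\|\bs{y}-\bs{X}\bs{\theta}\|_2^2 + \lambda\|\bs{\theta}\|_1$. A standard convex-analytic argument shows that, for $\delta<1$, $\widehat{\bm{\theta}}_\lambda \to \widehat{\bm{\theta}}^{\mathsf{Int}}$ as $\lambda \to 0^+$. The Bayati--Montanari theory characterizes $\widehat{\bm{\theta}}_\lambda$ for any fixed $\lambda>0$ through an AMP iteration whose state evolution fixed points satisfy $\tau^2 = \sigma^2 + \delta^{-1}\mathbb{E}[(\eta(\Theta+\tau Z;\alpha\tau)-\Theta)^2]$ together with $\lambda = \alpha\tau(1 - \delta^{-1}\mathbb{P}(|\Theta+\tau Z|>\alpha\tau))$. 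Formally sending $\lambda\to 0$ and discarding the trivial root $\alpha\tau=0$ yields exactly the system \eqref{eqn:fix-eqn}, since the second Lasso state-evolution equation collapses to \eqref{eq:fix-2}.

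To make this formal limit rigorous, I would not simply take $\lambda\to 0$ in the final AMP characterization of Lasso; instead I would construct a single AMP sequence with parameters $(\alpha_t,\tau_t)$ that are updated epoch-by-epoch so as to track a sequence of regularization levels $\lambda_k\to 0$. Within each epoch, $(\alpha,\tau)$ are held fixed and classical state evolution applies: the empirical joint distribution of $(\bs{\theta}^t-\thetastar,\,\bs{X}^\top\bs{r}^t+\thetastar-\bs{\theta}^t)$ converges, for any pseudo-Lipschitz test function, to that of $(\Theta,\Theta+\tau Z)$. Between epochs, one updates the thresholds toward the solution of \eqref{eqn:fix-eqn}. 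This epoch-based construction is the key technical novelty, and it must be designed so that (i) the joint scalar recursion for $(\alpha_t,\tau_t)$ remains well-behaved as the effective regularization shrinks, and (ii) the AMP iterates $\bs{\theta}^t$ converge (in normalized $\ell_2$) to the minimum $\ell_1$-norm interpolator rather than to some other feasible point.

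The identification of the AMP limit with $\widehat{\bm{\theta}}^{\mathsf{Int}}$ would proceed by verifying the KKT optimality conditions for \eqref{eqn:sparse-intepolation} in the limit: feasibility $\bs{y}=\bs{X}\bs{\theta}^{\infty}$, and the existence of a dual variable in the subdifferential of $\|\cdot\|_1$ at $\bs{\theta}^{\infty}$ that lies in the row space of $\bs{X}$. Both follow from the stationarity of the AMP update at the fixed point $(\alphastar,\taustar)$: the residual scale encodes feasibility in the limit $\lambda_k\to 0$, and the soft-thresholding structure produces the required subgradient. Uniqueness of $(\alphastar,\taustar)$ solving \eqref{eqn:fix-eqn} can be imported from \cite{miolane2018distribution}, who analyzed the existence of state-evolution solutions. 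The risk formula then follows from \eqref{def:risk} combined with the first state-evolution equation \eqref{eq:fix-1}, which gives $n^{-1}\|\widehat{\bm{\theta}}^{\mathsf{Int}}-\thetastar\|_2^2 \overset{\mathrm{a.s.}}{\to} (\taustar)^2-\sigma^2$ and hence $\Risk(\widehat{\bm{\theta}}^{\mathsf{Int}})\to(\taustar)^2$.

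The main obstacle, and the one emphasized by the authors, is controlling the AMP dynamics without restricted strong convexity. Prior Lasso--AMP proofs use $\lambda>0$ to ensure strong convexity on the support, giving a direct contraction argument that pins the AMP fixed point to $\widehat{\bm{\theta}}_\lambda$. When $\lambda=0$ this lever is absent, so one must develop alternative structural control on the iterates across the changing-parameter schedule; choosing the epoch lengths and the rate $\lambda_k\to 0$ so that state-evolution approximation error does not accumulate is the delicate part. Once that epoch-based stability is in place, the remaining pieces --- the scalar analysis of \eqref{eqn:fix-eqn} and the KKT verification --- are comparatively routine.
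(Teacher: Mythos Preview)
Your overall strategy --- an AMP sequence whose thresholds track a vanishing schedule $\lambda_t\to 0$, state evolution at each step, and careful control of the iterates in the absence of restricted strong convexity --- matches the paper's. The final deduction of the risk via the pseudo-Lipschitz test $\psi(a,b)=(a-b)^2$ is also the same.

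The substantive difference is in how the AMP limit is identified with $\thetaint$. You propose verifying the KKT optimality conditions of the interpolation problem at a putative limit point $\bs{\theta}^\infty$: feasibility plus a row-space subgradient. The paper does not do this. It works entirely through the Lasso cost $\mathcal{C}_{\lambda_t}(\bs{\theta})=\tfrac{1}{2}\|\bs{y}-\bs{X}\bs{\theta}\|_2^2+\lambda_t\|\bs{\theta}\|_1$ and proves a structural lemma (a strict generalization of Bayati--Montanari's Lemma~3.1) that bounds $\|\bs{\theta}^t-\thetaint\|_2/\sqrt{p}$ directly once one checks (i) $\mathcal{C}_{\lambda_t}(\thetaint)\leq \mathcal{C}_{\lambda_t}(\bs{\theta}^t)+o(\lambda_t p)$, (ii) $\bs{\theta}^t$ has a $\mathcal{C}_{\lambda_t}$-subgradient of norm $o(\lambda_t\sqrt{p})$, and (iii) $\sigma_{\min}(\bs{X}_{S_t(\gamma)\cup S'})$ is bounded below for all small $S'$. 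Condition~(i) is handled by the monotonicity $\|\widehat{\bs{\theta}}_{\lambda_t}\|_1\nearrow\|\thetaint\|_1$; condition~(iii) is the concrete replacement for restricted strong convexity and is obtained by proving that the approximate support sets $S_t(\gamma)=\{i:|s_i^t|\geq 1-\gamma\}$ stabilize across iterations --- this is precisely where the specific decay and summability hypotheses on $\{\lambda_t\}$ enter. Your KKT route would first require showing that $\bs{\theta}^t$ converges in normalized $\ell_2$ to \emph{some} limit before any optimality check is possible, and that convergence is itself the hard part; the paper's lemma compares $\bs{\theta}^t$ to the known target $\thetaint$ at each $t$ and avoids that circularity. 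A minor point: uniqueness of $(\alphastar,\taustar)$ is not imported from \cite{miolane2018distribution} but proved internally, by showing $\lambda'(\alpha)>0$ at every zero of the calibration map $\alpha\mapsto\lambda(\alpha)$.
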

\begin{remark}
	It is noteworthy that Theorem~\ref{thm:interpolation-risk} is completely general regarding the distribution of $\coef$ as long as its empirical distribution converges to a fixed measure; in particular, it does not require $\Theta$ to follow the sparse distribution specified in the expression~\eqref{eq:theta-distribution}. 
\end{remark}

First, there exists a unique solution pair to the set of equations \eqref{eqn:fix-eqn} as asserted by Proposition~\ref{lem:uniqueness} (see Section~\ref{sec:proof-state-evolution} for more details). 
Experienced readers who are familiar with literature on Lasso shall immediately recognize the similarity between these equations and the ones used to determine the Lasso risk in the proportional regime \citep{bayati2011lasso}. We shall elaborate a bit more on their connections and differences in Section~\ref{Sec:connect-lasso}.

We now pause to interpret the above result. 
The risk of the minimum $\ell_1$-norm interpolator --- when the ratio $n/p$ is held fixed --- converges to a quantity $\taustar^2$, which is a function of $(\sigma, \delta, P_{\Theta})$ and can be determined by solving a system of two nonlinear equations with two unknowns. 
At a high level, the  equation~\eqref{eq:fix-1} indicates that 
$\taustar > \sigma$, which can be viewed as variance inflation as a result of undersampling. 
In addition, $\taustar$ taken together with the other parameter $\alphastar$ controls the sparsity level of $\widehat{\bs{\theta}}^{\mathsf{Int}}$. In fact, as can be seen from the equation~\eqref{eq:fix-2} and our analysis,  we have
\begin{align*}
\lim_{\substack{n/p 
	= \delta \\ n,\, p\to\infty}} \frac{1}{p} \lzero{\widehat{\bs{\theta}}^{\mathsf{Int}}} 
	~\overset{\mathrm{a.s.}}{=}~ \mathbb{P}\big( |\Theta+\taustar Z| > \alphastar \taustar\big) = \delta,
\end{align*}
which is essentially saying that the support size of $\widehat{\bs{\theta}}^{\mathsf{Int}}$ converges to $n$ in the limit. 
The proof of Theorem~\ref{thm:interpolation-risk} is established via analyzing 
a sequence of \emph{Approximate Message Passing} (AMP) updates with careful choices of parameters, such that the minimum $\ell_{1}$-norm solution is the fixed point of these updates. 
The state evolution formula that characterizes the large $n$ limit for each iterate is derived, and its large $t$ limit corresponds to the risk of 
the minimum $\ell_{1}$-norm solution. 
The readers are referred to Section~\ref{sec:main-proof-AMP} for details.

\paragraph{Multi-descent phenomenon.}

Having obtained an exact characterization of $\Risk(\widehat{\bs{\theta}}^{\mathsf{Int}})$ in a general manner,
we are ready to specialize Theorem~\ref{thm:interpolation-risk} to the ground-truth distribution \eqref{eq:theta-distribution}
and examine how $\taustar$ changes as a function of the aspect ratio $\delta$. 
Specifically, if denote $\nu \defn M/\tau$, then  the equations~\eqref{eqn:fix-eqn} simplify to
\begin{subequations}
\label{eqn:fix-eqn-sparse} 
\begin{align}
1 &= \frac{\nu^2}{M^2}\sigma^2 + \frac{\epsilon}{\delta} \mathbb{E}\left[\big( \eta(\sqrt{\delta}\nu+Z; \alpha)-\sqrt{\delta}\nu \big)  ^2\right] + \frac{1-\epsilon}{\delta} \mathbb{E}\left[\eta^2(Z; \alpha)\right] \\
\delta &= \epsilon\mathbb{P} \big( |\nu \sqrt{\delta}+ Z|> \alpha \big) + (1-\epsilon) \mathbb{P} ( |Z |> \alpha )
\end{align}
\end{subequations}
in the presence of the distribution \eqref{eq:theta-distribution}. 
From equation set~\eqref{eqn:fix-eqn-sparse}, we can readily examine how  $\taustar$ varies with $\delta$, 
which is the content of Section~\ref{sec:main-proof-double-descent} (along with the corresponding appendix).  
In particular, 
we can use \eqref{eqn:fix-eqn-sparse} to demonstrate that: the risk curve undergoes a phase transition in terms of the sparsity level $\epsilon$ --- as summarized in Theorem~\ref{thm:interpolation} --- such that the curve transitions from a single descent to multiple descent in the over-parameterized regime. 
To the best of our knowledge, this provides the first theoretical justification for the multiple-descent phenomenon associated with the minimum $\ell_{1}$-norm interpolator, and might shed light on understanding the behavior of other interpolators such as the M-estimators with a general family of objective functions.

\paragraph{Comparisons with ridgeless regression.}
\cite{hastie2019surprises} investigated the risk behavior of the ridge estimator when the penalized parameter $\lambda$ tends to zero --- which corresponds to the minimum $\ell_2$-norm interpolator in the over-parameterized regime --- and solidified a double-descent phenomenon as one increases the over-parameterized ratio $p/n$.  
To facilitate comparisons to their results, we first translate the results in \cite{hastie2019surprises} using our notation. 
Specifically, the generalization error of the minimum $\ell_2$ interpolator  --- denoted by $\widehat{\bs{\theta}}^{\mathsf{Int}, \ell_2}$ ---  obeys 
\begin{equation}
\label{eqn:risk-l2}
\lim_{\substack{n/p = \delta \\ n,\, p\to\infty}}
\Risk(\widehat{\bs{\theta}}^{\mathsf{Int}, \ell_2}) 
	~ \overset{\mathrm{a.s.}}{=} ~ 
\begin{cases}
\frac{\delta}{\delta-1}\sigma^2, & \text{if }\delta > 1 \\[0.2cm]
	\epsilon M^2(1-\delta) + \frac{1}{1-\delta}\sigma^2, & \text{if }\delta < 1
\end{cases}
\end{equation}
under the model~\eqref{eq:theta-distribution}. 
By calculating the derivative of the right-hand side of \eqref{eqn:risk-l2} w.r.t.~$\delta$, 
one can easily demonstrate that the exact asymptotics of $\Risk(\widehat{\bs{\theta}}^{\mathsf{Int}, \ell_2})$ decays with\footnote{Following the convention, we study the relation regarding $1/\delta = p/n$ instead of $\delta = n/p$.} $1/\delta$  when $\epsilon \leq \sigma^2/M^2$; 
otherwise, if $\epsilon > \sigma^2/M^2$,  then the risk curve undergoes a decreasing phase before hitting 
the point associated with $\delta = 1 - \frac{\sigma}{\sqrt{\epsilon} M}$, and starts to increase with $1/\delta$ afterward. 
Next, we single out a few key differences between their results and ours in Theorem~\ref{thm:interpolation}. 
\begin{itemize}
\item The current paper considers the case where the sparsity ratio of $\thetastar$ is held fixed across different random instances of increasing dimension, with the SNR frozen to be  $\epsilon M^{2}/\sigma^2$. 
The role of over-parametrization is studied when the minimum $\ell_{1}$-norm estimator (which naturally promotes sparse solutions) is fitted with full model dimension  $p$. 

In contrast, \cite{hastie2019surprises} studied the case where the underlying signal $\thetastar$ has a bounded $\ell_{2}$-norm and potentially dense.

\item Interestingly, Theorem~\ref{thm:interpolation} suggests that the minimum $\ell_{1}$-norm interpolator often exhibits more than two descent, thus revealing a fundamental difference between these two types of interpolation.

\item There exists a convenient closed-form expression for the minimum $\ell_2$-norm interpolator, 
	which assists in characterizing the precise asymptotics (i.e., one can decompose the risk formula into bias and variance terms, and pin down each term with the aid of random matrix theory).   
		Unfortunately, the minimum $\ell_{1}$-norm interpolator does not admit a concise closed-form expression, thus making it  considerably more challenging to analyze. In light of this, Section~\ref{sec:main-proof-double-descent} is devoted to the analysis of the above-mentioned nonlinear system of equations, with the aim of determining (local) monotoncity of the corresponding quantities of interest.

\end{itemize}

\subsection{Connections to the Lasso estimator}
\label{Sec:connect-lasso}

Apparently, the minimum $\ell_{1}$-norm interpolator~\eqref{eqn:sparse-intepolation} is closely related to the classical Lasso estimator studied extensively in high-dimensional statistics \citep{tibshirani1996regression}. 
Given a positive regularization parameter $\lambda > 0$, 
the Lasso estimates the regression coefficients by solving the following optimization problem 
\begin{align}
\label{eqn:lasso-definition}
\widehat{\bs{\theta}}_\lambda \defn 
\argmin_{\bs{\theta}\in\mathbb{R}^p}\left\lbrace \frac{1}{2}\left\| \bm y - \bs{X} \bs{\theta} \right\|_2^2 + \lambda \left\| \bs{\theta} \right\|_1 \right\rbrace.
\end{align}
As a consequence, the minimum $\ell_{1}$-norm interpolator corresponds to the limit of $\widehat{\bs{\theta}}_\lambda$ when taking $\lambda$ to  zero. 

Several prior works have attempted to characterize the exact asymptotics of the Lasso risk $\mathsf{Risk}(\widehat{\bs{\theta}}_\lambda)$ in the proportional regime. Specifically,  it has been proven that for any given $\lambda>0$, $\mathsf{Risk}(\widehat{\bs{\theta}}_\lambda)$ converges to a non-trivial limit $\taustar(\lambda)$. 
Here,  
$(\taustar(\lambda), \alpha^*(\lambda))$ represents the solution pair to the following set of nonlinear equations
\begin{subequations}
\label{eq:nonzero-lambda}
	\begin{align}
	\label{eq:nonzero-lambda-1} \tau^2 & = \sigma^2 + \frac{1}{\delta}\mathbb{E}\left[ \big( \eta(\Theta+\tau Z, \alpha\tau)-\Theta\big) ^2\right] ,\\
	\label{eq:nonzero-lambda-2} \lambda & = \alpha\tau\left( 1-\frac{1}{\delta}\mathbb{P}\big( |\Theta+\tau Z| > \alpha\tau \big) \right),
	\end{align}
\end{subequations}
where $\Theta \sim P_{\Theta}$ and $Z\sim \mathcal{N}(0,1)$ are independent random variables.  
The interested reader can consult \citet{bayati2011lasso,miolane2018distribution,celentano2020lasso},
which determined the Lasso risk using either the AMP machinery or the convex Gaussian min-max theorem.

As can be easily seen, the system of equations~\eqref{eq:nonzero-lambda} bears much resemblance to \eqref{eqn:fix-eqn}. 
More precisely, by directly setting $\lambda$ to $0$, the equation set~\eqref{eq:nonzero-lambda} reduces to the one in~\eqref{eqn:fix-eqn}. In other words, one has 
\begin{align}
\label{eqn:double-lim-lasso}
	\lim_{\lambda\to 0} \lim_{\substack{n/p = \delta \\ n,p\to\infty}} \mathsf{Risk}(\widehat{\bs{\theta}}_\lambda) 
	~\overset{\mathrm{a.s.}}{=}~ 
	\lim_{\lambda \to 0}[\taustar(\lambda)]^2
	  = \taustar^2. 
\end{align}
where $\taustar$ denotes the quantity in Theorem~\ref{thm:interpolation-risk}, and 
the last identity holds under certin continuity assumptions w.r.t.~the equations \eqref{eq:nonzero-lambda}.

Intuitively, Theorem~\ref{thm:interpolation-risk} can be directly established if it is legitimate to switch the order of limits between $\lambda$ and $p$ on the left hand side of expression~\eqref{eqn:double-lim-lasso},  given that the minimum $\ell_{1}$-norm interpolator is the limit of the Lasso by taking $\lambda$ to zero. 
However, formally establishing the validity of exchanging limits is quite challenging, since doing so normally requires the loss function being strongly convex (at least locally strongly convex around the solution point). Such a strong convexity property, however, is lacking in our problem structure when $\lambda$ is taken to zero. 
In fact, this presents a major roadblock to directly applying the established AMP theory for the Lasso estimator.

Fortunately, we can directly argue that exchanging the two limits leads to the same result, as formalized in the proposition below.  The proof of this result is postponed to Section~\ref{sec:lasso-limit-proof}.

\begin{props}[The Lasso limit when $\lambda\goto 0$]\label{prop:lasso-limit}
In the setting of Theorem~\ref{thm:interpolation-risk}, 
the Lasso risk obeys the following asymptotically exact characterization:
	\begin{enumerate}
	
		\item When $\delta < 1$, the asymptotic Lasso risk converges to the risk of min $\ell_1$-norm interpolator \eqref{eqn:sparse-intepolation}:
		\begin{align*}
		% \label{eqn:sparse-intepolation}
		\lim_{\lambda\goto 0}\lim_{\substack{n/p = \delta \\ n,p\to\infty}}
		\mathsf{Risk}(\widehat{\bs{\theta}}_\lambda)
		~\overset{\mathrm{a.s.}}{=}~ \taustar^2,
		\end{align*}
		with $\taustar$ being the solution to the system of equations \eqref{eqn:fix-eqn}.

		\item When $\delta > 1$, the asymptotic Lasso risk converges to the risk of the ordinary least-square solution, namely, 	
		\begin{align*}
		\lim_{\lambda\goto 0}\lim_{\substack{n/p = \delta \\ n,p\to\infty}}
		\mathsf{Risk}(\widehat{\bs{\theta}}_\lambda)
			~\overset{\mathrm{a.s.}}{=}~
		\frac{\delta}{\delta-1} \sigma^2.
		\end{align*}

	\end{enumerate}
\end{props}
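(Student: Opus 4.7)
The plan is to invoke the classical Bayati--Montanari asymptotic characterization of the Lasso risk at strictly positive regularization, and then rigorously pass to the limit $\lambda \to 0^+$. Concretely, for any fixed $\lambda > 0$, prior work yields $\lim_{n,p \to \infty,\, n/p = \delta} \mathsf{Risk}(\widehat{\bs{\theta}}_\lambda) \overset{\mathrm{a.s.}}{=} [\taustar(\lambda)]^2$, where $(\taustar(\lambda), \alphastar(\lambda))$ is the unique solution of system~\eqref{eq:nonzero-lambda}. The task then reduces to analyzing the behavior of this solution pair as $\lambda \to 0^+$ in each regime of $\delta$ and matching it to the claimed limits.

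For part (1) with $\delta < 1$, I would first establish uniform bounds on $(\taustar(\lambda), \alphastar(\lambda))$ for small $\lambda$. The lower bound $\taustar(\lambda) \geq \sigma$ follows directly from~\eqref{eq:nonzero-lambda-1}. To rule out $\alphastar(\lambda)\taustar(\lambda) \to 0$, note that then $\mathbb{P}(|\Theta + \tau Z| > \alpha\tau) \to 1$, making the factor $1 - \frac{1}{\delta}\mathbb{P}(\cdot)$ converge to $1 - 1/\delta < 0$, forcing the right-hand side of~\eqref{eq:nonzero-lambda-2} to be negative and contradicting $\lambda > 0$. The blow-up $\alphastar(\lambda)\taustar(\lambda) \to \infty$ is excluded analogously, since it would push~\eqref{eq:nonzero-lambda-2} to infinity; similarly, the second-moment assumption on $\Theta$ gives an upper bound on $\taustar(\lambda)$. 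With these bounds in hand, I would extract a convergent subsequence $(\taustar(\lambda_k), \alphastar(\lambda_k)) \to (\tau_0, \alpha_0)$ and pass to the limit in~\eqref{eq:nonzero-lambda} via dominated convergence. Equation~\eqref{eq:nonzero-lambda-1} preserves its form, while~\eqref{eq:nonzero-lambda-2} collapses to $0 = \alpha_0 \tau_0 \bigl(1 - \frac{1}{\delta}\mathbb{P}(|\Theta + \tau_0 Z| > \alpha_0 \tau_0)\bigr)$; since $\alpha_0 \tau_0 > 0$, this reduces to~\eqref{eq:fix-2}. By the uniqueness asserted in Proposition~\ref{lem:uniqueness}, one has $(\tau_0, \alpha_0) = (\taustar, \alphastar)$, so the entire sequence $\taustar(\lambda)^2$ converges to $\taustar^2$.

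For part (2) with $\delta > 1$, the limiting system~\eqref{eqn:fix-eqn} admits no solution because $\mathbb{P}(\cdot) = \delta > 1$ is impossible. Here equation~\eqref{eq:nonzero-lambda-2} instead forces $\alphastar(\lambda)\taustar(\lambda) \to 0$: if $\liminf \alpha\tau > 0$, then $\mathbb{P}(|\Theta + \tau Z| > \alpha\tau)$ stays strictly below $\delta$, the factor $1 - \frac{1}{\delta}\mathbb{P}(\cdot)$ stays bounded below by a positive constant, and $\lambda$ cannot vanish. Substituting $\alpha\tau \to 0$ into~\eqref{eq:nonzero-lambda-1}: since $|\eta(x;\alpha\tau) - x| \leq \alpha\tau$, the integrand $(\eta(\Theta + \tau Z;\alpha\tau) - \Theta)^2$ tends to $\tau^2 Z^2$ pointwise, dominated by $2\tau^2 Z^2 + 2(\alpha\tau)^2$ of finite expectation. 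Dominated convergence yields the limiting relation $\tau^2 = \sigma^2 + \tau^2/\delta$, hence $\lim_{\lambda \to 0^+} \taustar(\lambda)^2 = \delta\sigma^2/(\delta - 1)$, matching the well-known OLS asymptotic risk (which can be cross-checked independently via the Marchenko--Pastur law applied to $\mathrm{tr}((\bs{X}^\top \bs{X})^{-1})$).

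The main obstacle is controlling the joint behavior of $(\taustar(\lambda), \alphastar(\lambda))$ as $\lambda \to 0^+$. For $\delta < 1$, uniqueness of the limiting fixed point is essential to upgrade subsequential convergence to full convergence; without it, one cannot rule out oscillating behavior between different fixed points. For $\delta > 1$, the subtlety lies in the fact that the naive limit of~\eqref{eq:nonzero-lambda} has no admissible solution, so one must identify the degenerate regime $\alpha\tau \to 0$ in which the soft-thresholding operator collapses to the identity, thereby recovering the OLS geometry. A further delicate point throughout is justifying the applicability of dominated convergence uniformly along the subsequence, which in turn depends on the a priori uniform bounds on $\taustar(\lambda)$ and $\alphastar(\lambda)\taustar(\lambda)$ obtained at the outset.
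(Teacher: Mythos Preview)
Your plan is sound and for the case $\delta < 1$ it coincides with the paper's argument: both invoke the Bayati--Montanari characterization at fixed $\lambda>0$, establish compactness of $(\taustar(\lambda),\alphastar(\lambda))$, pass to the limit in~\eqref{eq:nonzero-lambda}, and appeal to the uniqueness in Proposition~\ref{lem:uniqueness}. The paper obtains compactness differently, though: rather than arguing by sign contradictions on~\eqref{eq:nonzero-lambda-2}, it cites the structural BM lemma (Lemma~\ref{lem:property-of-solution-bm11}) that $\lambda\mapsto\alphastar(\lambda)$ is monotone and bounded below by $\alpha_{\min}(\delta)>0$, and that $\alpha\mapsto\taustar(\alpha)$ is continuous. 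This sidesteps the circularity you flag in your last paragraph --- your direct argument that ``$\alpha\tau\to\infty$ pushes~\eqref{eq:nonzero-lambda-2} to infinity'' tacitly needs $\tau$ bounded, while your bound on $\tau$ needs $\alpha\tau$ bounded. The monotonicity route avoids this; your route can be repaired but requires more care than the sketch indicates.

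For $\delta>1$ the two proofs are genuinely different. The paper does \emph{not} go through the state-evolution equations at all. Instead it compares the Lasso and OLS estimators directly via their KKT conditions, writing $\widehat{\bs{\theta}}_\lambda-\widehat{\bs{\theta}}^{\mathsf{LS}}=-\lambda(\bs{X}^\top\bs{X})^{-1}\bs{s}$ with $\bs{s}\in\partial\|\widehat{\bs{\theta}}_\lambda\|_1$, and then bounds the risk gap by $O(\lambda)$ times quantities involving $\mathbb{E}[\sigma_{\min}^{-4}(\bs{X})]$, which is shown to be finite via a tail bound on the smallest singular value of a tall Gaussian matrix. Your approach stays entirely within the fixed-point framework: from $1-\tfrac{1}{\delta}\mathbb{P}(\cdot)\geq 1-\tfrac{1}{\delta}>0$ you correctly deduce $\alpha\tau\to 0$, and then the soft-threshold degenerates so that~\eqref{eq:nonzero-lambda-1} collapses to $\tau^2=\sigma^2+\tau^2/\delta$. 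Both are valid. The paper's route is more elementary in that it never invokes the BM asymptotic characterization in the overdetermined regime, and it delivers a quantitative finite-sample comparison as a byproduct. Your route is more uniform with the $\delta<1$ case and is arguably cleaner conceptually, but it relies on the BM risk formula being available for $\delta>1$ (it is) and on first verifying that $\taustar(\lambda)$ stays bounded before applying dominated convergence --- which follows here from the same degeneration argument, since $|\tau^2(1-1/\delta)-\sigma^2|\leq O(\tau\cdot\alpha\tau)+O((\alpha\tau)^2)$ forces $\tau$ bounded once $\alpha\tau\to 0$.
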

In words, the above result reveals that: while the connection between the set of equations~\eqref{eq:nonzero-lambda} and the Lasso risk was previously only shown for a positive $\lambda$,  such exact asymptotics continue be valid even in the limit when $\lambda$ approaches zero.

%%%%%%%%%%%%%%%%%%%%%%%%%%%%%%%%%%%%%%%%%%%%%%%%%%%%%%%%%%%%%%%%%%%%%%%%%%%%%%%%%%%%%%%%%%%%

\section{Key analysis}

This section presents the key ideas for proving our main results.
We start by presenting the proof strategy for Theorem~\ref{thm:interpolation-risk}, which is built upon the recently developed approximate message passing machinery. It is then followed by the proof of Theorem~\ref{thm:interpolation} that characterizes the geometric properties of the risk curve.

\subsection{Key analysis tool: approximate message passing}
\label{sec:main-proof-AMP}

The major technical enabler for proving Theorem~\ref{thm:interpolation-risk} lies in the recent development of an iterative algorithm called the \emph{Approximate Message Passing} (AMP) algorithms. As mentioned previously, \cite{bayati2011lasso} employed AMP to pin down the risk of the Lasso estimator with positive regularization. Motivated by this line of works,  this paper resorts to the AMP technique as a proof device towards understanding the risk behavior of the minimum $\ell_1$-norm interpolators.

For our purpose, we need to generalize the original AMP updates (\cite{bayati2011lasso}) --- which were designed to solve a single Lasso problem in the large-system limit --- to approximate a sequence of Lasso problems with changing (and converging) regularization parameters. 
To better illustrate this idea,  we shall first provide a brief review of how AMP is invoked to solve a single Lasso problem, followed by a generalization of this framework to accommodate the minimum $\ell_1$-norm interpolator.

\subsubsection{AMP for the Lasso estimator}

\paragraph{AMP updates for Lasso.} 
Recall that the soft-thresholding function is defined in expression~\eqref{eqn:soft-thresholding} and $\left\langle \cdot \right\rangle $ denotes the average of the coordinates for the target vector.
When initialized at $\t^0 = \bs{0}$ and $\bs{z}^{-1}=\bs{0}$, 
the AMP algorithm proceeds recursively in the following fashion 
\begin{subequations}
	\label{eq:amp-formula-linear}
	\begin{align}
	\label{eq:amp-formula-linear-1} 
	\t^{t+1} &= \eta(\bs{X}^\top \bs{z}^t + \t^t; \zeta_t); \\
	\label{eq:amp-formula-linear-2}
	\bs{z}^t &= \bs{y} - \bs{X}\t^t + \frac{1}{\delta} \bs{z}^{t-1}\left\langle \eta'(\bs{X}^\top \bs{z}^{t-1} + \t^{t-1} ; \zeta_{t-1})\right\rangle.
	\end{align}
\end{subequations} 
Here, $\{\zeta_{t}\}_{t=0}^{\infty}$ is an appropriate sequence of scalars to be selected. 
To approximate the Lasso solution with positive $\lambda > 0$ (defined in \eqref{eqn:lasso-definition}), \cite{bayati2011lasso} showed that it suffices to set 
\begin{align}
\label{eq:amp-formula-lasso}
	\zeta_t = \alphastar(\lambda)\cdot\tau_t(\lambda), \quad \text{for all } t\geq 0,
\end{align}
where $\alphastar(\lambda)$ is taken as the corresponding solution to the fixed-point equation~\eqref{eq:nonzero-lambda}
and $\tau_t(\lambda)$ shall be specified momentarily. 
Given this choice of parameters, \citet[Theorem 1.8]{bayati2011lasso} proved that the corresponding AMP update $\t^t$ converges to the Lasso solution in the following sense: as long as $\mathbb{E}[\Theta^2]<\infty$ and $\mathbb{P}(\Theta\neq 0)>0$, it holds that
\begin{align}
\label{eq:lasso-amp-gap}
	\lim_{t\goto\infty}\lim_{\substack{n/p = \delta \\ n,p\to\infty}}\frac{1}{p}\|\t^t - \widehat{\t}_{\lambda}\|_2^2 ~\overset{\mathrm{a.s.}}{=}~ 0.
\end{align}
We emphasize that this convergence result requires taking the limit of the model dimensions before taking the limit of the iteration steps; hence, it should be understood as high-dimensional asymptotics. 
Equipped with this result, one is able to study the limiting performance of Lasso via the AMP iterations at each fixed step $t$; the latter is made possible by the state evolution characterization to be introduced below.

\paragraph{State evolution.} 
Consider the AMP procedure~\eqref{eq:amp-formula-linear} with an arbitrary sequence of thresholds $\{\zeta_t\} > 0$. 
The state evolution sequence $\{\tau_t^2\}_{t= 0}^\infty$ is a one-dimensional iteration sequence, recursively defined for all $t\geq 0$ as follows
\begin{subequations}
\label{eqn:state-evolution}
\begin{align}
\label{eqn:state-evolution-1} 
\tau_{t+1}^2 &= \mathsf{F}(\tau_t^2, \zeta_t)\\
\label{eqn:state-evolution-2}
	\text{where }~\mathsf{F}(\tau^2, \zeta) & \defn \sigma^2 + \frac{1}{\delta}\mathbb{E}\left[ \big[ \eta(\Theta + \tau Z; \zeta) - \Theta\big]^2\right]
\end{align}
\end{subequations}
with initialization $\tau_0^2 = \sigma^2 + \mathbb{E}[\Theta^2]/\delta$. 
Here, $\Theta$ is the distribution of the true signal, and $Z\sim \NORMAL(0, 1)$ is independent of $\Theta$. 
The above sequence is known to characterize the limiting variance of the AMP recursion, as formalized by the following result. 
\begin{props}[Theorem 1.1, \cite{bayati2011lasso}]
	\label{prop:amp-fixed-t}
	Consider the linear model~\eqref{eq:model} and i.i.d.~Gaussian design. 
	If $\mathbb{E}[\Theta^2] < \infty$ and $\mathbb{P}(\Theta \neq 0)>0$,
	then for any positive sequence $\{\zeta_t\}$ and any pseudo-Lipschitz function $\psi$, it holds that 
	\begin{align*}
		\lim_{p\goto\infty}\frac{1}{p}\sum_{i=1}^p \psi(\theta_i^{t+1}, \theta^\star_i) ~\overset{\mathrm{a.s.}}{=}~ 
		\mathbb{E}\big[ \psi\big(\eta(\Theta  + \tau_t Z; \zeta_t), \Theta \big)\big],
	\end{align*}
	where $Z\sim\NORMAL(0, 1)$ is independent of $\Theta$.
\end{props}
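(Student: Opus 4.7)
The plan is to proceed by induction on the iteration index $t$, showing that at every fixed step $t$, the ``effective observation'' $\bs{h}^{t} \defn \bs{X}^\top \bs{z}^t + \t^t$ behaves in the large-$p$ limit like $\thetastar + \tau_t \bs{Z}$ for an independent standard Gaussian vector $\bs{Z}$, with $\tau_t$ given by the state-evolution recursion~\eqref{eqn:state-evolution}. Once this is established, the pseudo-Lipschitz test-function convergence follows by viewing $\frac{1}{p}\sum_i \psi(\theta_i^{t+1}, \theta_i^\star) = \frac{1}{p}\sum_i \psi\bigl(\eta(h^t_i; \zeta_t), \theta_i^\star\bigr)$ and invoking the empirical convergence of the pair $(\bs{h}^t, \thetastar)$ to the law of $(\Theta+\tau_t Z, \Theta)$.

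The key technical device is a conditioning (leave-one-out) argument in the spirit of Bolthausen: conditional on the sigma-algebra $\mathcal{F}_t$ generated by all past AMP iterates $(\t^0,\ldots,\t^t,\bs{z}^0,\ldots,\bs{z}^{t-1})$, the design matrix $\bs{X}$ is distributed as a Gaussian matrix subject to a finite family of linear constraints matching the past iterates. One decomposes $\bs{X}$ into its projection onto the span of these constraints plus an orthogonal Gaussian piece, and then shows that the new effective noise $\bs{X}^\top \bs{z}^t + \t^t - \thetastar$ decomposes, in the limit, into a linear combination of the past effective noises plus a fresh independent Gaussian contribution. The whole reason the Onsager correction term $\frac{1}{\delta}\bs{z}^{t-1}\langle \eta'(\cdot)\rangle$ appears in update~\eqref{eq:amp-formula-linear-2} is precisely to cancel the spurious drift that would otherwise arise from the projection onto past iterates; this cancellation is what makes the limiting noise mean-zero Gaussian with variance $\tau_t^2$ satisfying the recursion $\tau_{t+1}^2 = \mathsf{F}(\tau_t^2,\zeta_t)$.

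The induction step then has two ingredients. First, a concentration/law-of-large-numbers input for inner products of the form $\langle \eta(\bs{h}^s;\zeta_s), \eta(\bs{h}^r;\zeta_r)\rangle$ and $\langle \bs{z}^r, \bs{z}^s\rangle$ across past steps $r,s\le t$: each of these, divided by $p$ (respectively $n$), converges almost surely to the Gaussian expectation predicted by the joint state evolution. Second, one combines this concentration with the conditional-Gaussian representation above to verify that $(\bs{h}^0,\ldots,\bs{h}^{t+1},\thetastar)$ has the claimed joint Gaussian structure in the limit. Extending from coordinate-wise convergence to the pseudo-Lipschitz functional convergence is handled via the standard truncation argument permitted by the quadratic growth~\eqref{eqn:pseudo-lipschitz}: bounded-Lipschitz test functions convert into weak convergence of empirical measures, and the uniform second-moment control on $\bs{h}^{t+1}$ (provided by the state-evolution variance being finite) upgrades this to convergence against any pseudo-Lipschitz $\psi$.

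The principal obstacle is, as always with AMP, honestly handling the intricate dependence between iterates: every iterate $\t^{t+1}$ is a nonlinear function of $\bs{X}$ through all earlier iterates, so one cannot simply apply CLT-type arguments to $\bs{X}^\top \bs{z}^t$ while treating $\bs{z}^t$ as independent of $\bs{X}$. This is exactly what the conditioning calculus (and the precise algebraic form of the Onsager term) is designed to manage, and most of the work lies in carrying the joint covariance structure through the induction and checking that the residual (orthogonal) Gaussian part has variance converging to $\tau_{t+1}^2 - \text{(past contributions)}$ that assemble into the scalar recursion~\eqref{eqn:state-evolution}. Since the statement being proved here is exactly Theorem~1.1 of \cite{bayati2011lasso}, the present sketch is really a description of how to invoke their machinery; for the application to the minimum $\ell_1$-norm interpolator one will later need to strengthen this from fixed $t$ to a $t\to\infty$ limit along a specific choice of thresholds $\zeta_t$, which is where the additional structural properties developed in Section~\ref{sec:main-proof-AMP} enter.
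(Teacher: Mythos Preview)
Your sketch is a faithful outline of the Bolthausen-style conditioning argument that underlies Theorem~1.1 of \cite{bayati2011lasso}, and you correctly identify that this proposition is quoted verbatim from that reference rather than being re-derived here. The paper itself does not supply a proof of this statement; it simply records it as a known result and invokes it as a black box (see the sentence ``We now record the following properties'' preceding the proposition, and the remark ``Proposition~\ref{prop:amp-fixed-t} holds for general choices of $\{\zeta_t\}$, and is therefore directly applicable to our setting'' in the proof of Lemma~\ref{lem:l2-upper-bound}). So there is nothing to compare against: your description of the induction on $t$, the conditional-Gaussian decomposition of $\bs{X}$, the role of the Onsager term in cancelling drift, and the pseudo-Lipschitz upgrade via second-moment control is exactly the machinery of \cite{bayati2011dynamics,bayati2011lasso}, and your closing remark that the new content of the present paper lies in the passage to $t\to\infty$ with time-varying thresholds $\zeta_t$ is spot on.
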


In words, this proposition asserts that the coordinates of $\bs{X}^\top\bs{z}^t + \t^t$ have roughly the same distribution as $\Theta + \tau_t Z$. 
Taking $\psi(x,y) = (x-y)^{2}$ and combining this with the expression~\eqref{eqn:state-evolution-2} indicate that: the asymptotic risk of the AMP in the $t$-th iteration is characterized by $\tau_t^{2}$. 
Indeed, the state evolution $\tau_t^{2}$ quantifies how this asymptotic risk
evolves with the iteration count.  
If we let the iteration number $t$ tend to infinity, then $\tau_t^{2}$ converges to a nonzero limit --- i.e., the solution to the system of equations~\eqref{eq:nonzero-lambda} --- which is precisely the limiting risk for the Lasso estimator by virtue of the property~\eqref{eq:lasso-amp-gap}.

\subsubsection{AMP for the minimum $\ell_1$-norm interpolator}

As discussed above, when AMP adopts the choice of $\zeta_t = \alphastar(\lambda)\cdot\tau_t(\lambda)$, then in  each iteration, it makes progress towards the Lasso solution in the presence of a positive regularization parameter $\lambda$. 
Intuitively, one can run AMP in an epoch-based manner, and gradually reduce the value of $\lambda$ by taking a vanishing sequence of $\left\lbrace \lambda_t\right\rbrace$ and set $\zeta_t = \alphastar(\lambda_t)\cdot\tau_t(\lambda_t)$. Heuristically, each epoch solves a Lasso problem approximately with parameter $\lambda_{t}$ and, in the end, one can recover the minimum $\ell_1$-norm interpolator in the limit. Similar heuristics have been pointed out by \cite{donoho2010message} without a rigorous argument.

It turns out this intuition can be solidified 
as long as one selects the sequence $\left\lbrace \lambda_t\right\rbrace$ appropriately. 
Let us now describe our choice of the $\left\lbrace \lambda_t\right\rbrace$ sequence, and use them to construct $\left\lbrace \zeta_t\right\rbrace $ in the AMP updates. 

\paragraph{Choice of $\{\zeta_t\}$ in our setting.}
Our first step is to construct a positive sequence of $\left\lbrace \lambda_t\right\rbrace $ satisfying the following assumption:
\begin{asss}
\label{assump:lambda}
For every $t=1,2,\ldots$, define $\Lambda_t \defn \sum_{s=1}^t \lambda_s$. 
We assume that $\left\lbrace \lambda_t\right\rbrace_{t=1}^{\infty} $ satisfies the following conditions:
	\begin{itemize}
		\item $\lim_{t\goto\infty}\lambda_t = 0$ and $\lim_{t\goto\infty}\lambda_t/\lambda_{t+1}= 1$;
		\item $\sum_{j=t/2}^t \lambda_j \geq c \log t$ for every constant $c$ and sufficiently large $t$;
		\item The following two sequences are summable for every constant $c$,
		% \begin{subequations}
		\begin{align}
		\label{eqn:assmption-lambda-summable2}
		&\sum_{t=1}^{+\infty}  \exp\left\lbrace - c\Lambda_t\right\rbrace < \infty,
		\qquad \text{ and }~~~\sum_{t=1}^{+\infty}  \sqrt{l_t} < \infty,\\
		&\notag\text{where }~~
		l_t \defn \sum_{s=1}^t \big|\lambda_s-\lambda_{s+1}\big| 
			\exp\big( - c [\Lambda_t - \Lambda_s]\big). 
		\end{align}
	\end{itemize}
\end{asss}
In words, Assumption~\ref{assump:lambda} requires that $\left\lbrace \lambda_t\right\rbrace $ converges to $0$, but the convergence rate should be slow enough. 
We shall provide an example of $\{\lambda_t\}$ satisfying this assumption in Section~\ref{sec:verify-assumption}.
As will be made clear from the proof, 
having $\lambda_t \goto 0$ guarantees that the solution to the system of equations converges to the minimum $\ell_1$-norm solution, 
whereas other conditions ensure that the AMP iterates do not experience drastic changes in adjacent iterations, such that the state evolution formula still accurately describe the behavior of their limit. Specifically, under these other conditions, the difference between the support sets of consecutive iterates can be properly controlled.

With this choice of $\left\lbrace \lambda_t\right\rbrace$ sequence, we  define a series of nonlinear systems of equations with two unknowns, indexed by $t$ as follows:
\begin{gather}
\label{eq:fixed-point}
\begin{aligned}
\tau^2 & = \mathsf{F}(\tau^2, \alpha\tau),\\
\lambda_t & = \alpha\tau \left( 1 - \frac{1}{\delta}\mathbb{E}\big[ \eta'(\Theta + \tau Z; \alpha\tau)\big] \right),
\end{aligned}
\end{gather}
where the function $\mathsf{F}(\cdot,\cdot)$ is specified in expression~\eqref{eqn:state-evolution-2}. As usual, $Z$ is a standard Gaussian random variable that is independent of $\Theta$. 
Recognizing the existence and uniqueness property shown in Section~\ref{sec:proof-state-evolution}, we can guarantee that the equation set~\eqref{eq:fixed-point} yields a unique solution pair, which shall be denoted by $(\alphastart, \taustart)$. 
Further, we define the threshold $\zeta_{t}$ for our AMP updates~\eqref{eq:amp-formula-linear} as follows
\begin{align}
\label{eqn:choice-zeta-t}
	\zeta_t \defn \alphastart \cdot \tau_t
	\qquad \text{for all } t >0,
\end{align}
where $\zeta_0 \defn 1$ and $\tau_{t}$ corresponds to the state evolution formula provided in the expression~\eqref{eqn:state-evolution-1}. 
In view of the correspondence between the AMP updates and the Lasso estimator, iteration $t$ of our AMP updates takes a step towards approximating the Lasso estimator with parameter $\lambda_{t}$. 
As $\lambda_{t}$ converges to zero, the iteration procedure has the minimum $\ell_1$-norm interpolator as a limiting point. 
The informal intuition is illustrated in Figure~\ref{fig:amp}. 
\begin{figure}[t]
	\centering
	\includegraphics[width=0.7\textwidth]{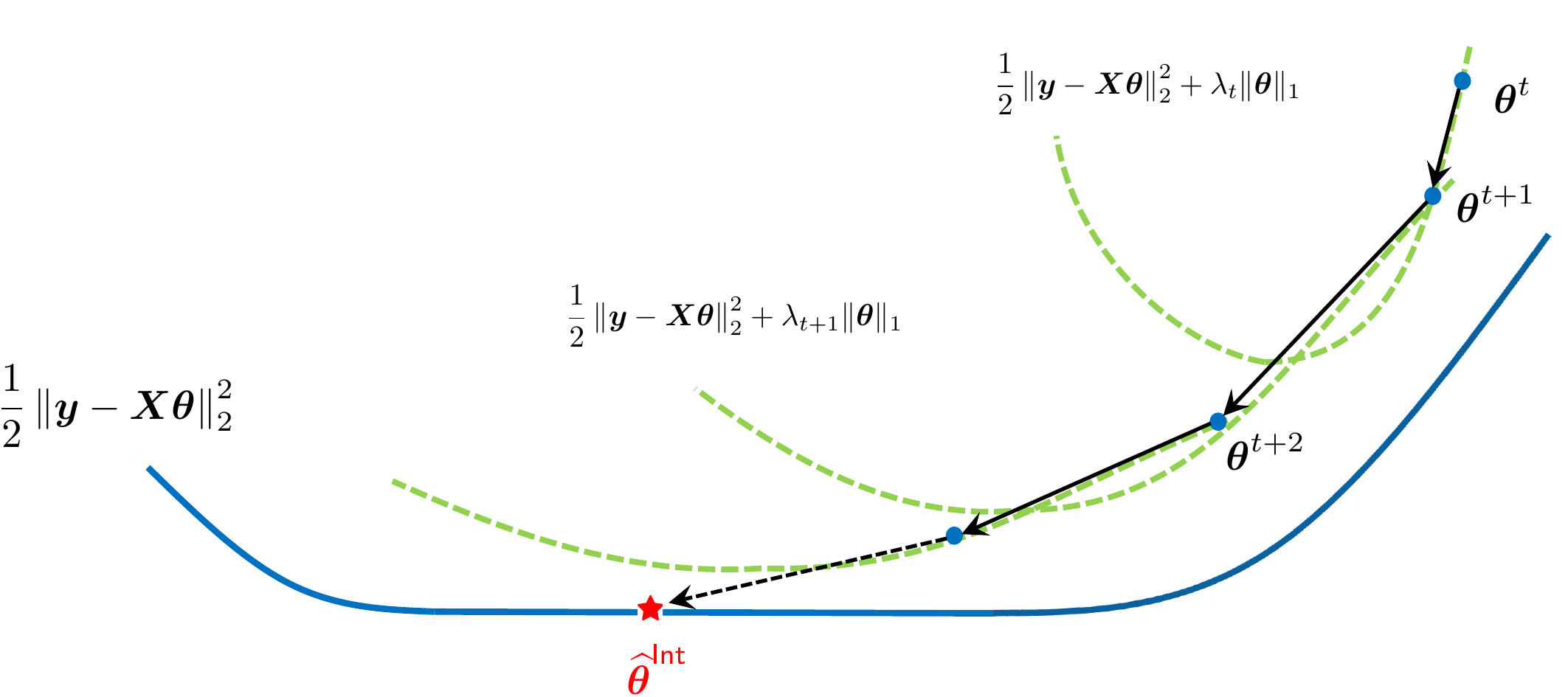}
	\caption{Illustration of the AMP updates for the minimum $\ell_1$-norm interpolator. At each step, $\bs{\theta}^{t+1}$ is computed as in the expression~\eqref{eq:amp-formula-linear-1}, with $\zeta_t$ chosen according to \eqref{eqn:choice-zeta-t}.
	The plateau of the blue curve stands for all the interpolators that satisfy $\bm y = \bm X \bm \theta$, among which $\thetaint$ has the smallest $\ell_1$-norm. 
	The curves in green stand for the Lasso loss functions (with parameter $\lambda_t$ changing with $t$) where AMP aims to move towards its minimizer in each $t$.}
	\label{fig:amp}
\end{figure}

We are now ready to state our main result on the risk of the min $\ell_1$-norm solution. 
\begin{theos}\label{thm:interpolation-risk-amp}
Consider the linear model~\eqref{eq:model} and i.i.d.~Gaussian design. 
	If $\mathbb{E}[\Theta^2] < \infty$ and $\mathbb{P}(\Theta\neq 0) > 0$,
then for any pseudo-Lipschitz function $\psi$, one has
	\begin{equation}
	\lim_{p\goto\infty} \frac{1}{p}\sum_{i=1}^p 
		\psi(\thetainti, \theta_i^\star) ~\overset{\mathrm{a.s.}}{=}~ \mathbb{E}\big[ \psi(\eta(\Theta + \taustar Z; \alphastar \taustar), \Theta)\big] ,
	\end{equation}
	where $Z\sim \NORMAL(0, 1)$ is independent of $\Theta$. 
	Here, $(\alphastar, \taustar)$ is the solution to the following equations 
	\begin{equation}\label{eq:fixed-point-zero}
	\tau^2 = \mathsf{F}(\tau^2, \alpha\tau);\quad 
	\frac{1}{\delta}\mathbb{E}\big[ \eta'(\Theta + \tau Z; \alpha\tau)\big] = 1.
	\end{equation}
\end{theos}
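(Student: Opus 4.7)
The plan is to run the AMP recursion in \eqref{eq:amp-formula-linear} with the time-varying thresholds $\zeta_t = \alphastart \tau_t$ prescribed in \eqref{eqn:choice-zeta-t}, and to identify its large-$t$ large-$p$ limit with the minimum $\ell_1$-norm interpolator $\thetaint$. This breaks down into three ingredients: (i) a state-evolution style distributional characterization of $\bs{\theta}^{t+1}$ for each fixed $t$, (ii) convergence of the fixed-point parameters $(\alphastart, \taustart) \to (\alphastar, \taustar)$ as $t\to\infty$, and (iii) convergence of the AMP iterates to $\thetaint$ in the sense $\lim_{t\to\infty}\lim_{p\to\infty}\frac{1}{p}\|\bs{\theta}^t - \thetaint\|_2^2 = 0$. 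Once (i)--(iii) are in place, the theorem follows by taking $t\to\infty$ after $p\to\infty$ and using the pseudo-Lipschitz continuity of $\psi$ (together with the uniform $L^2$ bound on the iterates implied by (i)) to interchange the limit with the empirical average.

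For step (i), I would first generalize Proposition~\ref{prop:amp-fixed-t}, stated for fixed thresholds, to the case where $\zeta_t$ depends on $t$ through $(\alphastart, \tau_t)$. Since our $\zeta_t$ is chosen as a deterministic function of the state evolution $\tau_t$, one can proceed inductively on $t$: the conditioning technique of \cite{bayati2011dynamics} characterizes the conditional law of $\bs{X}^\top \bs{z}^t + \bs{\theta}^t$ as $\Theta + \tau_t Z + o(1)$, and the soft-thresholding step with threshold $\alphastart \tau_t$ preserves the pseudo-Lipschitz convergence. This yields
\[
\lim_{p\to\infty} \frac{1}{p}\sum_{i=1}^p \psi(\theta_i^{t+1}, \theta_i^\star) \overset{\mathrm{a.s.}}{=} \mathbb{E}\big[\psi\big(\eta(\Theta + \tau_t Z; \alphastart \tau_t), \Theta\big)\big].
\]

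For step (ii), I would analyze the system \eqref{eq:fixed-point} as $\lambda_t \to 0^+$. Using the uniqueness result cited in Section~\ref{sec:proof-state-evolution} (Proposition~\ref{lem:uniqueness}), the solutions $(\alphastart,\taustart)$ are well-defined, and a continuity/compactness argument applied to the fixed-point equations shows that they converge to the unique solution of the limiting system \eqref{eq:fixed-point-zero}. Correspondingly, the state-evolution recursion $\tau_{t+1}^2 = \mathsf{F}(\tau_t^2, \alphastart \tau_t)$ has $\taustar^2$ as its limit, again by a monotonicity/continuity argument on $\mathsf{F}$.

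Step (iii) is the main obstacle and the heart of the argument. Since $\lambda_t \to 0$, we lose the positive-regularization-induced local strong convexity that \cite{bayati2011lasso} exploited to conclude \eqref{eq:lasso-amp-gap}, so we cannot simply invoke their bound for each $\lambda_t$ and pass to the limit. Instead, I would directly track $\frac{1}{p}\|\bs{\theta}^{t+1} - \bs{\theta}^t\|_2^2$ along the sequence and show that (a) the support set of $\bs{\theta}^t$ changes only mildly between consecutive iterations, where the aggregate support-change is controlled by the summability conditions on $\sum_s |\lambda_s - \lambda_{s+1}|\exp(-c(\Lambda_t - \Lambda_s))$ and $\sum_t \sqrt{\residualT}$ in Assumption~\ref{assump:lambda}, and (b) the KKT conditions for the Lasso at level $\lambda_t$ are asymptotically satisfied by $\bs{\theta}^t$ with residual that vanishes as $t\to\infty$. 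Combining (a) and (b) with the fact that $\bm{y} = \bm{X}\bs{\theta}^\star + \bm{z}$ and $\lambda_t\to 0$ shows that any limit point of $\bs{\theta}^t$ (in the high-dimensional scaling) is feasible for the interpolation constraint $\bm{y} = \bm{X}\bs{\theta}$ and has minimal $\ell_1$-norm among such feasible points; uniqueness of $\thetaint$ (generic for Gaussian designs) then forces the limit to coincide with $\thetaint$. The delicate point here, which the paper flags as the main technical novelty, is replacing strong convexity by these structural properties tied to the slow schedule $\{\lambda_t\}$; everything else in the proof is then a careful bookkeeping of the three limits $p\to\infty$, $t\to\infty$, and $\lambda_t\to 0$ taken in the prescribed order.
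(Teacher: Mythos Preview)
Your three-step outline mirrors the paper's structure: step (i) is Proposition~\ref{prop:amp-fixed-t} (which already covers arbitrary positive threshold sequences $\{\zeta_t\}$, so no extension is needed), step (ii) is handled by Propositions~\ref{prop:amp-fixd-point-convergence} and~\ref{prop:tau-t-limit}, and step (iii) is Lemma~\ref{thm:distance-limit}. The pseudo-Lipschitz wrapping at the end is exactly Section~\ref{sec:key-lemma}.

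The gap is in how you close step (iii). Your conclusion---that any limit point of $\bm{\theta}^t$ is feasible with minimal $\ell_1$-norm, and uniqueness of $\thetaint$ then forces coincidence---does not deliver the quantitative claim $\lim_t\lim_p\frac{1}{p}\|\bm{\theta}^t-\thetaint\|_2^2=0$. The min-$\ell_1$ program is a linear program with no strong convexity, so approximate feasibility plus approximate $\ell_1$-minimality does not imply $\ell_2$-closeness to the minimizer; uniqueness of $\thetaint$ supplies no stability constant uniform in $p$. The paper instead proves a quantitative structural lemma (Lemma~\ref{lem:lasso-structure}, a strict extension of \cite[Lemma~3.1]{bayati2011lasso}) that turns five checkable conditions---(1) a priori bound on $\|\bm{r}\|_2$, (2) $\mathcal{C}_{\lambda_t}(\bm{\theta}+\bm{r})\le\mathcal{C}_{\lambda_t}(\bm{\theta})+c_2\lambda_t p\varepsilon$, (3) $\|\mathrm{sg}(\mathcal{C}_{\lambda_t},\bm{\theta})\|_2\le\sqrt{p}\lambda_t\varepsilon$, (4) $\sigma_{\min}(\bm{X}_{S(c_3)\cup S'})\ge c_5$ for all $|S'|\le c_4 p$, (5) $\sigma_{\max}(\bm{X})\le c_6$---into the explicit bound $\|\bm{r}\|_2\le\sqrt{p}\,\xi(\varepsilon)$ with $\xi(\varepsilon)\to 0$. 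Your ingredients (a) and (b) are precisely what the paper uses to verify conditions (4) and (3) (via Lemmas~\ref{lem:S-convergence}--\ref{lem:condition-4} and Lemma~\ref{lem:sg-norm}), but they are \emph{inputs} to this lemma, not a substitute for it: condition (4) is the restricted-singular-value surrogate for strong convexity that actually converts near-optimality into $\ell_2$-closeness. You also do not address condition (2), which the paper handles by a separate monotone-convergence argument along the Lasso path: since $\lambda_t$ decreases, $\|\widehat{\bm{\theta}}_{\lambda_t}\|_1$ increases monotonically to $\|\thetaint\|_1$, whence $\mathcal{C}_{\lambda_t}(\bm{\theta}^t)-\mathcal{C}_{\lambda_t}(\thetaint)\ge\lambda_t(\|\widehat{\bm{\theta}}_{\lambda_t}\|_1-\|\thetaint\|_1)=-\lambda_t p\cdot o(1)$.
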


We now point out an immediate consequence of Theorem~\ref{thm:interpolation-risk-amp}. In view of the pseudo-Lipschitz property of the function $\psi(a, b)= (a-b)^2$, we can obtain Theorem~\ref{thm:interpolation-risk} as a corollary, namely, 
the limiting risk of the minimum $\ell_1$-norm solution obeys 
	\begin{align}
		\lim_{p\goto\infty} \frac{1}{p} \big\|\thetaint - \coef \big\|_2^2 ~\overset{\mathrm{a.s.}}{=}~ {\taustar}^2.
	\end{align}
This enables us to study the risk curve by examining the equations \eqref{eq:fixed-point-zero}.

\paragraph*{Proof ideas.} Before proceeding, let us highlight several key challenges and differences in this part of the proof in comparison to \cite{bayati2011lasso}. The complete details are deferred to Section~\ref{sec:proof-AMP}. 
As already mentioned, we look at a sequence of AMP updates, each targeting at solving a Lasso problem with a different regularization parameter $\lambda_{t}$ that obeys $\lim_{t\to\infty} \lambda_{t} = 0$. 
In the fixed $\lambda$ scenario, it is known that even if $p>n$,  the loss function around the Lasso estimate enjoys certain restricted strongly convexity. This is, however, not the case for the minimum $\ell_{1}$-norm interpolator, whose support size equals $n$; this implies that the condition number of $\bs{X}^{\top} \bs{X}$ (restricted to the support) might be very large. 
Consequently, it calls for the development of a new structural property tailored to the minimum $\ell_{1}$-norm solution, as we shall detail in Lemma~\ref{lem:lasso-structure}. 

Moreover, for each $\lambda_{t}$, the AMP iterations are contractive towards different fixed points (i.e., minimizers of different Lasso problems). One thus needs to investigate how the pesudo-state evolution point $\tau^{\star}_{t}$ varies with the iteration number $t$. 
In addition, to demonstrate that AMP converges to the new system of equations as specified in \eqref{eqn:fix-eqn}, 
at a high level, we construct some distance measure between $\bs{\theta}^{t+1}$ and $\bs{\theta}^{t}$ so as to guarantee that 
\begin{align}
\label{eqn:intuition}
	\mathsf{dist}(\bs{\theta}^{t+1}, \bs{\theta}^{t})
	\leq 
	\exp(-\lambda_t) \cdot \mathsf{dist}(\bs{\theta}^{t}, \bs{\theta}^{t-1})
	+
	c |\lambda_t - \lambda_{t+1}|.
\end{align}
In the case of a fixed $\lambda$, the above relation simplifies to 
$\mathsf{dist}(\bs{\theta}^{t+1}, \bs{\theta}^{t})
	\leq 
	\exp(-\lambda) \cdot \mathsf{dist}(\bs{\theta}^{t}, \bs{\theta}^{t-1}), $ 
which means that $\mathsf{dist}(\bs{\theta}^{t+1}, \bs{\theta}^{t})$ converges linearly and $\bs{\theta}^{t}$ converges to the corresponding limit. 
In contrast, the second term on the right-hand side of \eqref{eqn:intuition} reflects the price one needs to pay when $\lambda_{t}$ varies across iterations. 
Assumption~\ref{assump:lambda} is imposed to help ensure that these 
errors --- albeit accumulated over time --- stay bounded.

%%%%%%%%%%%%%%%%%%%%%%%%%%%%%%%%%%%%%%%%%%%%%%%%%%%%%%%%%%%%%%%%%%%%%%%%%%%%%%%%%%%%%%%%%%%%

\subsection{Analysis ingredients for the risk curve}
\label{sec:main-proof-double-descent}

Thus far, we have demonstrated that the risk curve of the minimum $\ell_{1}$-norm interpolator can be characterized by the solutions to the system of equations~\eqref{eqn:fix-eqn}.
In order to analyze the shape of the risk curve and establish Theorem~\ref{thm:interpolation}, 
this section takes a close look at the geometric properties of these solutions.  
For ease of exposition, let us assume without loss of generality that $\sigma^2=1$ throughout the proof; clearly, having a different value of $\sigma^{2}$ does not change the shape of the curve as long as the SNR remains unchanged.

\paragraph*{Roadmap of the proof.}
We start by providing a roadmap of our proof. 
To begin with, it is challenging to track the behavior of the solution $\taustar$ directly in the original form of the equations \eqref{eqn:fix-eqn}; 
in fact, $\taustar$ blows up as $\delta \goto 1$. 
Hence, we find it more convenient to work with a new parameter $\nu \defn M/\tau$ (resp.~$\nustar \defn M/\taustar$) that leads to alternative versions of Theorem~\ref{thm:interpolation} and \eqref{eqn:fix-eqn}.  
With this change of variables in place, it suffices to study how $\nustar$ behaves as one varies $\delta$. 
Towards this end,  we first eliminate the parameter $\a$ and express $\nu$ purely as a function of $\delta$. We then proceed to analyze the derivative of $\nustar(\delta)$ using the implicit function theorem, with a special focus on the sign of $\nustar'(\delta)$ when $\delta$ is close to $0$ or $1$, as well as when $\epsilon \goto 0$. As we shall argue momentarily, these steps suffice in establishing Theorem~\ref{thm:interpolation}.

\paragraph{An equivalent formulation.}
As mentioned above, let us denote $\nu \defn M/\tau$, and define two functions of $(\nu, \delta, \alpha)$ as follows
\begin{subequations}
	\begin{align}
	\label{eq:fix-5} 
	F_1(\nu, \delta, \alpha) & \defn \epsilon\mathbb{P}\left( \big|\nu \sqrt{\delta}+ Z\big|> \alpha \right) + (1-\epsilon) \mathbb{P}\left( \left| Z\right|> \alpha \right)-\delta, \\
	\label{eq:fix-6}
	F_2(\nu, \delta,\alpha) & \defn  \frac{\nu^2}{M^2}-1  + \frac{\epsilon}{\delta} \mathbb{E}\left[\big( \eta(\sqrt{\delta}\nu+Z;\alpha)-\sqrt{\delta}\nu \big) ^2\right] +\frac{1-\epsilon}{\delta} \mathbb{E}\left[\eta^2(Z; \alpha)\right] ,
	\end{align}
\end{subequations}
where $Z\sim \NORMAL(0, 1)$ and is independent of $\Theta \sim P_{\Theta}$.  
Under our assumptions on $\Theta$ (cf.~\eqref{eq:theta-distribution}), 
	solving the equations \eqref{eq:fix-1} and \eqref{eq:fix-2} can be accomplished by first finding the solutions $\big(\nustar(\delta), \alphastar(\delta)\big)$ to
\begin{align}\label{eq:fix-7}
	\begin{cases} F_1(\nu, \delta, \alpha) =0, \\
		 F_2(\nu, \delta, \alpha) =0,
	\end{cases}
\end{align}
and then mapping $\nustar$ back to $\taustar$. 

\subsubsection{Step 1: existence of the mapping $\nustar(\delta)$} 

We now attempt to eliminate the variable $\a$ in \eqref{eq:fix-7}, and expressing $\nustar$ as a function of $\delta$. For any $\delta\in (0, 1)$ and $\nu> 0$, direct computation of the derivative of the function $F_1(\nu, \delta, \alpha)$ yields 
\begin{align*}
\nabla_{\alpha} F_1(\nu, \delta, \alpha)  = - \epsilon \left[ \phi(\alpha-\sqrt{\delta}\nu) + \phi(\alpha+\sqrt{\delta}\nu)\right] - 2(1-\epsilon)\phi(\alpha) < 0.
\end{align*}
It is also straightforward to calculate the limiting values
\begin{align*}
\lim_{\alpha \goto 0^+} F_1(\nu, \delta, \alpha) = 1-\delta > 0; \quad \lim_{\alpha \goto +\infty} F_1(\nu, \delta, \alpha) = -\delta < 0.	
\end{align*}
Based on the above observations, given any $\delta$ and $\nu$, the function $F_1(\nu, \delta, \alpha)$ is monotonically non-increasing in $\alpha$, and can take both positive and negative values within the interval $(0,\infty)$. 
As a result, there exists a mapping from $(\nu, \delta) \to \alpha$ that satisfies $F_1(\nu, \delta, \alpha)=0$; with an abuse of notation, we often denote this function as $\alpha(\nu, \delta)$. 
Substitution into the function $F_2$ allows us to define 
\begin{equation}
\label{eq:f3}
F_3(\nu, \delta) \defn F_2(\nu, \delta, \alpha(\nu, \delta)).
\end{equation}
Here, the function $F_{3}$ depends solely on the two parameters $(\nu, \delta).$
Armed with the derivations above, solving \eqref{eq:fix-7} comes down to finding a solution to $F_3(\nu, \delta) = 0$. 

By construction of the function $F_3$, we know that the solutions to $F_3(\nu, \delta) = 0$ correspond to the solutions to the system of equations \eqref{eqn:fix-eqn}. 
As we shall demonstrate in Proposition~\ref{lem:uniqueness}, for every $0<\delta < 1$, there exists a unique pair of $(\tau, \a)$ satisfying \eqref{eqn:fix-eqn}; therefore, $F_3(\nu, \delta) = 0$ yields a unique solution for every $0 < \delta < 1$ --- which shall be denoted by ${\nustar}(\delta)$ in the sequel.
Correspondingly, the solution pair for \eqref{eqn:fix-eqn} shall be written as 
$(\taustar(\delta), \alphastar(\delta))$ where $ \alphastar(\delta) \defn \alpha(\taustar(\delta), \delta)$.
We also note that since both $F_1$ and $F_2$ are smooth functions with bounded derivatives w.r.t.~all parameters, ${\nustar}'(\delta)$ exists and is continuous.

\subsubsection{Step 2: derivative of $\nustar(\delta)$} 

With the mapping $\nustar(\delta)$ in place, we can translate Theorem~\ref{thm:interpolation} into statements about ${\nustar}'(\delta)$. 
Before doing so, recall that the risk incurred by using $\bs{\theta}=\bs{0}$ as the estimator satisfies 
\begin{align}
\label{eq:zero-limit}
	 \Risk(\bm{0}) = 
	\mathbb{E}\left[ (\inprod{\bs{x}_i}{\coef} + z_i)^2\right] = 
	1 + \frac{1}{n}  \|\coef\|_2^2  ~\overset{\mathrm{a.s.}}{\longrightarrow}~ 1 + \epsilon M^2 \eqqcolon \tau_0^2.
\end{align}
Let us define the corresponding value of $\nu$ as $\nu_0 \defn M/\tau_0$. 
Formally, to prove the first two claims in Theorem~\ref{thm:interpolation}, it suffices to establish the following proposition.

\begin{props}
\label{thm:interpolation-version}
	In the setting of Theorem~\ref{thm:interpolation}, for $\delta \in (0, 1)$, $\nustar(\delta)$ satisfies the following properties: 
	\begin{enumerate}
		\item $\lim_{\delta \goto 0^+} \nustar(\delta) = \nu_0$;
		\item There exist two constants $0 < \delta_1, \delta_2 < 1$ such that when $0<\delta <\delta_1$ and $\delta_2 < \delta < 1$, ${\nustar}'(\delta) < 0$.
	\end{enumerate}
\end{props}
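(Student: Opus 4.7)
The plan is to analyze the implicitly defined function $\nustar(\delta)$ through $F_3(\nustar(\delta),\delta)=0$ in \eqref{eq:f3}, where the auxiliary parameter $\alpha$ is eliminated via the strictly monotone (in $\alpha$) map $\alpha(\nu,\delta)$ determined by $F_1=0$. Throughout, set $\alphastar(\delta)\defn \alpha(\nustar(\delta),\delta)$. Both parts of the proposition then reduce to extracting the leading-order asymptotics of the pair $(\alphastar,\nustar)$ as $\delta\to 0^+$ and as $\delta\to 1^-$.

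For part (1), the first step is to argue $\alphastar(\delta)\to\infty$ as $\delta\to 0^+$: otherwise, along some subsequence $\alphastar$ stays bounded, and by monotonicity of $F_1$ in $\alpha$ together with $\lim_{\alpha\to 0^+}F_1(\nu,\delta,\alpha)=1-\delta>0$, the quantity $F_1(\nustar,\delta,\alphastar)$ is bounded away from $0$, contradicting $F_1=0$. Using the identity $\eta(x;\alpha)-x=-x\mathbf{1}_{|x|\leq\alpha}-\alpha\sign(x)\mathbf{1}_{|x|>\alpha}$ together with standard Mill's-ratio estimates, one shows
\[
\tfrac{\epsilon}{\delta}\mathbb{E}\bigl[(\eta(\sqrt{\delta}\nu+Z;\alpha)-\sqrt{\delta}\nu)^2\bigr] = \epsilon\nu^2+o(1), \qquad \tfrac{1-\epsilon}{\delta}\mathbb{E}[\eta^2(Z;\alpha)] = o(1)
\]
in the regime $\alpha\to\infty$, $\delta\to 0^+$. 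Substituting into $F_2=0$ leaves the limiting algebraic equation $\nu^2(1/M^2+\epsilon)=1$, whose unique positive root is $\nu_0=M/\sqrt{1+\epsilon M^2}$, giving part (1) by continuity of $\nustar$.

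For part (2), by the implicit function theorem, $\nustar'(\delta) = -\partial_\delta F_3(\nustar,\delta)/\partial_\nu F_3(\nustar,\delta)$. Near $\delta=0^+$, refining the Mill's-ratio expansion used above gives
\[
\nustar(\delta)^{\,2} \;=\; \nu_0^2\left(1 - \frac{2(1-\epsilon)}{\alphastar(\delta)^{\,2}} + o(\alphastar^{\,-2})\right),
\]
while $F_1=0$ forces $\bar\Phi(\alphastar) = \delta/2+o(\delta)$, so $\alphastar(\delta)$ is strictly decreasing in $\delta$; this monotonicity drives $\nustar(\delta)^{\,2}$ strictly downwards, giving $\nustar'(\delta)<0$ for all $\delta<\delta_1$. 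Near $\delta=1^-$, I will jointly expand $F_1$ and $F_2$ at $(\nu,\alpha)=(0,0)$, using the Taylor expansion $\mathbb{E}[\eta^2(Z;\alpha)] = 1 - 2\sqrt{2/\pi}\,\alpha + \alpha^2 + O(\alpha^3)$ and the analogous expansion of the signal-side term. Solving $F_1=0$ to leading order yields $\alphastar\sim(1-\delta)\sqrt{\pi/2}$, and substitution into $F_2=0$ produces $\nustar(\delta)^{\,2} = M^2(1-\delta)/\delta \cdot (1+O(1-\delta))$, a strictly decreasing function of $\delta$ near $\delta=1$; hence $\nustar'(\delta)<0$ for $\delta_2<\delta<1$.

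The main obstacle is the $\delta\to 1^-$ regime: because both $\alphastar$ and $\nustar$ vanish simultaneously, a single-variable expansion is insufficient, and one must track the joint expansion in $(\alpha,\nu,1-\delta)$ to sufficient order to extract the sign of $\nustar'$. Concretely, the leading value of $F_2$ at $(\nu,\alpha)=(0,0)$ equals $(1-\delta)/\delta$, and the $O(\alpha)$ and $O(\nu^2)$ corrections must be balanced carefully; moreover, one must verify that the denominator $\partial_\nu F_3$ does not vanish in a way that flips the sign of the ratio in the implicit-function formula. Controlling the cross terms of order $\alpha\nu^2$ via Gaussian integral expansions of both the signal and noise contributions in $F_2$ is the technical crux of this part of the argument.
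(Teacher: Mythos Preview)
Your overall architecture matches the paper's: reduce to the implicit function $F_3(\nustar,\delta)=0$, then extract the sign of $\nustar'(\delta)$ from endpoint asymptotics. Part~(1) and the $\delta\to 1^-$ half of part~(2) are essentially correct and equivalent to the paper's Lemma~\ref{lem:zero-order-limit} and Lemma~\ref{lem:nu-one-limit}; your Taylor expansion $\nustar^2\sim M^2(1-\delta)/\delta$ is exactly what the partial-derivative computation in Section~\ref{sec:delta-goto-1} encodes.

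The gap is in the $\delta\to 0^+$ regime. Your claimed expansion $\nustar^2=\nu_0^2\bigl(1-2(1-\epsilon)/\alphastar^2+o(\alphastar^{-2})\bigr)$ is obtained by keeping only the $(1-\epsilon)\delta^{-1}F_{23}\sim 2(1-\epsilon)\alphastar^{-2}$ contribution in $F_2=0$ and assuming the signal term $\epsilon\delta^{-1}F_{22}$ contributes nothing beyond $\epsilon\nustar^2$ at order $\alphastar^{-2}$. That assumption is false: expanding $F_{22}$ carefully (or see the argument around \eqref{eq:taylor-expansion}) shows $\delta^{-1}(F_{22}-\delta\nustar^2)=\delta^{-1}F_{23}+o(\alphastar^{-2})$, so the $\epsilon$-piece contributes an additional $2\epsilon/\alphastar^2$, and the coefficient in front of $\alphastar^{-2}$ is $\epsilon$-independent, not $2(1-\epsilon)$. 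In particular, at $\epsilon$ close to $1$ your argument would yield a vanishing coefficient and no sign information at all. The true coefficient happens to be positive, so the conclusion survives, but your derivation does not establish it.

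The paper takes a different route here that you should be aware of: rather than expanding $\nustar^2$ in $\alphastar$, it computes $\nustar'(\delta)$ directly via the formula \eqref{eq:derivative-nu} and discovers that the numerator $\nabla_\delta F_2\,\nabla_\alpha F_1-\nabla_\alpha F_2\,\nabla_\delta F_1$ has a \emph{leading-order cancellation} (both products are $\sim 2\alphastar^{-1}$). Resolving the sign therefore requires second-order expansions of all four partial derivatives (equations \eqref{eqn:F1-alpha-second}, \eqref{eqn:F1-delta-second}, \eqref{eqn:F2-delta-second}, \eqref{eqn:F2-alpha-second}), which is where most of the work in Section~\ref{sec:second-order-limit} goes. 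Your reparameterization by $\alphastar$ is a reasonable alternative that can sidestep this cancellation, but only if you correctly account for \emph{all} order-$\alphastar^{-2}$ contributions in $F_2$---which you have not done.
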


Clearly, if Proposition~\ref{thm:interpolation-version} were valid, 
then the first two claims in Theorem~\ref{thm:interpolation} would follow immediately by invoking the change of variables $\tau =  M/\nu$.
Now we discuss how to establish this proposition. 
For notational simplicity, we use $\nustar$ and $\alphastar$ to denote the unique solution to \eqref{eq:fix-1} and \eqref{eq:fix-2} for any $\delta\in (0, 1)$, which should be understood as $\nustar(\delta) $ and $\alphastar(\delta)$.
To begin with, recognizing the fact that $F_1(\nustar,\delta,\alphastar)=0$, 
the implicit function theorem implies that 
\begin{align}
\label{eqn:piazzolla}
\nabla_{\nu}\alpha(\nustar,\delta) = \left. - \frac{\nabla_{\nu}F_1(\nu,\delta,\alpha)}{\nabla_{\alpha}F_1(\nu,\delta,\alpha)}\right|_{(\nustar, \delta, \alphastar)} ; \quad \nabla_{\delta}\alpha(\nustar,\delta) = \left. - \frac{\nabla_{\delta}F_1(\nu,\delta,\alpha)}{\nabla_{\alpha}F_1(\nu,\delta,\alpha)}\right|_{(\nustar, \delta, \alphastar)}.	
\end{align}
We are now ready to derive an explicit expression of $\nustar'(\delta)$. 
A little algebra leads to 
\begin{align}
\label{eq:derivative-nu}
\notag \nustar'(\delta)  = \left. - \frac{\nabla_{\delta}F_3(\nu,\delta)}{\nabla_{\nu}F_3(\nu,\delta)} \right|_{(\nustar, \delta)}
&= \left. - \frac{\nabla_{\delta}F_2(\nu,\delta, \alpha) + \nabla_{\delta}\alpha(\nu,\delta)\nabla_{\alpha}F_2(\nu,\delta, \alpha)}{\nabla_{\nu}F_2(\nu,\delta, \alpha) + \nabla_{\nu}\alpha(\nu,\delta)\nabla_{\alpha}F_2(\nu,\delta, \alpha)}\right|_{(\nustar, \delta, \alphastar)} \\
& = \left. - \frac{\nabla_{\delta}F_2\nabla_{\alpha}F_1 - \nabla_{\alpha}F_2\nabla_{\delta}F_1}{\nabla_{\nu}F_2\nabla_{\alpha}F_1 - \nabla_{\alpha}F_2\nabla_{\nu}F_1}\right|_{(\nustar, \delta, \alphastar)},
\end{align}
where the second equality invokes the relation~\eqref{eqn:piazzolla}. 
This expression plays a crucial role in our subsequent analysis in understanding how $\taustar$ changes with $\delta$.

In order to establish Proposition~\ref{thm:interpolation-version}, we gather in the following two lemmas some key facts on the limiting behavior of $\nustar'(\delta)$ and $\nustar(\delta)$, when $\delta \goto 0^+$ and $\delta \goto 1^-$, respectively.  
All of these are stated with the assumptions of Theorem~\ref{thm:interpolation} imposed, with the proofs  
deferred to Section~\ref{sec:proof-derivative-lemmas}. 
\begin{lems}
\label{lem:nu-prime}
When $\delta \goto 0^+$, the numerator and denominator in \eqref{eq:derivative-nu} satisfy respectively the following properties
\begin{subequations}
\label{eqn:num-de}
	\begin{gather}
	\label{eq:numerator}
	\nabla_{\delta}F_2\nabla_{\alpha}F_1 - \nabla_{\alpha}F_2\nabla_{\delta}F_1 \sim - 2{\alphastar}^{-3}, \\
	\label{eq:denominator}
	\nabla_{\nu}F_2\nabla_{\alpha}F_1 - \nabla_{\alpha}F_2\nabla_{\nu}F_1
	\sim -4\nu_0^{-1}\phi(\alphastar),
	\end{gather}
\end{subequations}
where all the partial derivatives of $F_1$ and $F_2$ are evaluated at the point $(\nustar, \delta, \alphastar)$. 
Additionally, it holds that 
\begin{align}\label{eq:nu-zero-limit}
\lim_{\delta \goto 0^+} \nustar = \nu_0. 
\end{align}

\end{lems}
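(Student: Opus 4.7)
\textbf{Proof plan for Lemma~\ref{lem:nu-prime}.}

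The plan is to first pin down the asymptotic behavior of $(\nustar,\alphastar)$ as $\delta \to 0^+$, then compute each partial derivative of $F_1,F_2$ to sufficient order, and finally form the numerator and denominator in \eqref{eq:derivative-nu}. The starting observation is that, from $F_1(\nustar,\delta,\alphastar)=0$, the right-hand side $\delta$ tends to $0$, which forces $\alphastar\to\infty$; using Mill's ratio $1-\Phi(\alpha)=\phi(\alpha)\alpha^{-1}(1-\alpha^{-2}+3\alpha^{-4}+O(\alpha^{-6}))$ together with the Taylor expansion
$\mathbb{P}(|\sqrt{\delta}\nu+Z|>\alpha)=\mathbb{P}(|Z|>\alpha)+\delta\nu^2\alpha\phi(\alpha)+O(\delta^2\alpha^3\phi(\alpha))$,
one obtains the leading-order relation $\delta\sim 2\phi(\alphastar)/\alphastar$. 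Next, plugging the decomposition
\[
\mathbb{E}\big[(\eta(\sqrt{\delta}\nu+Z;\alpha)-\sqrt{\delta}\nu)^2\big]=\delta\nu^2\,\mathbb{P}(|X|\le\alpha)+\mathbb{E}\big[(Z-\alpha\,\text{sign}(X))^2\mathds{1}(|X|>\alpha)\big],
\]
together with the standard tail identity $B_0\defn\mathbb{E}[\eta^2(Z;\alpha)]=4\phi(\alpha)/\alpha^3+O(\phi(\alpha)/\alpha^5)$, into $F_2(\nustar,\delta,\alphastar)=0$ reduces this equation, in the limit, to $\nu_0^{-2}=M^{-2}+\epsilon$, which is \eqref{eq:nu-zero-limit}.

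With these limits in hand, I would compute each partial derivative. The $F_1$-derivatives are direct: $\nabla_\alpha F_1\sim -2\phi(\alphastar)$, $\nabla_\delta F_1\to -1$, and $\nabla_\nu F_1\sim 2\epsilon\delta\nu_0\alpha\phi(\alpha)$ via $\phi(\alpha-h)-\phi(\alpha+h)=2\alpha\phi(\alpha)h+O(h^3)$. For $F_2$ the key tool is the identity
$\mathbb{E}[(\eta(X;\alpha)-\sqrt{\delta}\nu)(\eta'(X;\alpha)-1)]=\sqrt{\delta}\nu\,\mathbb{P}(|X|\le\alpha)$
(obtained by separating the cases $|X|\le\alpha$ and $|X|>\alpha$), which after some simplification yields $\nabla_\nu F_2\to 2\nu_0/M^2+2\epsilon\nu_0=2/\nu_0$. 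A parallel calculation, combined with the expansion $4\phi(\alpha)/(\delta\alpha^2)=2/\alpha+2/\alpha^3-2\epsilon\nu_0^2\phi(\alpha)+O(1/\alpha^5)$ and the analogous contribution from $12\phi(\alpha)/(\delta\alpha^4)$, gives $\nabla_\alpha F_2\sim -2/\alpha$ at leading order. For $\nabla_\delta F_2$, most of the candidate $O(1/\delta)$ terms cancel after substituting the identity above, leaving the clean expression $\nabla_\delta F_2=-B_0/\delta^2+O(\alpha\phi(\alpha))\sim -1/(\alpha\phi(\alpha))$.

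Finally, I would substitute these expansions into the numerator and denominator of \eqref{eq:derivative-nu}. The denominator simplification is comparatively straightforward: both $\nabla_\nu F_2\nabla_\alpha F_1\sim -4\phi(\alpha)/\nu_0$ and $\nabla_\alpha F_2\nabla_\nu F_1=O(\phi^2(\alpha)/\alpha)$, so only the first contributes, giving \eqref{eq:denominator}. The numerator is the delicate step: at leading order, both $\nabla_\delta F_2\nabla_\alpha F_1\sim 2/\alpha$ and $\nabla_\alpha F_2\nabla_\delta F_1\sim 2/\alpha$ cancel exactly. To extract the residual $O(\alpha^{-3})$ behavior in \eqref{eq:numerator}, one must retain the next-order corrections in every factor---specifically, keep $B_0=4\phi(\alpha)/\alpha^3(1-6/\alpha^2+O(\alpha^{-4}))$ and $\delta^2=4\phi^2(\alpha)/\alpha^2(1-2/\alpha^2+O(\alpha^{-4}))$ in $\nabla_\delta F_2$, together with the $12\phi(\alpha)/(\delta\alpha^4)$ subleading term in $\nabla_\alpha F_2$, and then track the $1/\alpha^3$ coefficient in the subtraction.

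\textbf{Main obstacle.} The crux of the calculation is this delicate cancellation in the numerator: the $O(1/\alpha)$ pieces annihilate and the target $O(1/\alpha^3)$ asymptotic emerges only after consistent second-order Taylor/Mill's-ratio expansions of every partial derivative, while carefully verifying that the $\epsilon\nu^2\phi(\alpha)$-order contributions from $F_1$'s and $F_2$'s subleading pieces are bookkept correctly (they are negligible compared to $1/\alpha^3$ because $\phi(\alpha)$ decays super-polynomially). The limit~\eqref{eq:nu-zero-limit} is essentially a by-product of Step 1 once the leading order of $F_2=0$ is identified.
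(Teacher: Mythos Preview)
Your proposal is correct and follows essentially the same route as the paper: establish $\alphastar\to\infty$, $\delta\alphastar\sim 2\phi(\alphastar)$, and $\nustar\to\nu_0$ from $F_1=F_2=0$; read off the leading-order partial derivatives to obtain \eqref{eq:denominator} directly; and then, for \eqref{eq:numerator}, recognize the exact cancellation at order $\alphastar^{-1}$ and push the Mill's-ratio / Taylor expansions one order further to extract the $-2\alphastar^{-3}$ residual, with the $\epsilon\nu_0^2\phi(\alphastar)$-type corrections correctly discarded as super-polynomially small. The only cosmetic difference is that the paper writes $F_2$ via the explicit closed forms $F_{22},F_{23}$ (obtained from the Gaussian integral identity) and differentiates those, whereas you reach the same derivative asymptotics through probabilistic identities such as $\mathbb{E}[(\eta(X;\alpha)-\sqrt{\delta}\nu)(\eta'(X;\alpha)-1)]=\sqrt{\delta}\nu\,\mathbb{P}(|X|\le\alpha)$; both bookkeepings lead to the same second-order coefficients.
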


\begin{lems}
\label{lem:nu-one-limit}
	When $\delta \goto 1^-$, it satisfies that 
	\begin{equation}\label{eq:nu-one-limit}
	\lim_{\delta \goto 1^-}{\nustar}'(\delta) = - \infty. 
	\end{equation}
\end{lems}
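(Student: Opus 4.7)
My plan is to prove Lemma 4.2 by invoking the implicit derivative formula~\eqref{eq:derivative-nu} and performing a careful asymptotic analysis of each partial derivative in the numerator and denominator as $\delta \to 1^-$. The overall strategy mirrors that of Lemma 4.1, but the relevant limit now has both $\alphastar \to 0$ and $\nustar \to 0$ simultaneously, which creates delicate cancellations that determine the rate of blow-up.

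\textbf{Step 1: Boundary limits.} I would first argue that $\alphastar(\delta) \to 0^+$ and $\nustar(\delta) \to 0^+$ as $\delta \to 1^-$. From $F_1 = 0$, the convex combination $\epsilon \mathbb{P}(|\sqrt{\delta}\nustar+Z|>\alphastar) + (1-\epsilon)\mathbb{P}(|Z|>\alphastar) = \delta \to 1$ forces each probability up to $1$, which forces $\alphastar \to 0$. Plugging this into $F_2 = 0$, both expectations converge to $\mathbb{E}[Z^2] = 1$, so $\nustar^2/M^2 \to 1 - 1/\delta \to 0^-$; the continuity argument then gives $\nustar \to 0^+$.

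\textbf{Step 2: Leading-order asymptotics.} Using Gaussian Taylor expansions, $\mathbb{P}(|Z|>\alpha) = 1 - \sqrt{2/\pi}\,\alpha + O(\alpha^3)$, and similarly for the shifted probability with an $O(\alpha \nu^2)$ correction. Substituting into $F_1 = 0$ gives $\alphastar(\delta) = (1-\delta)\sqrt{\pi/2}\,(1+o(1))$. For $F_2 = 0$, Leibniz differentiation yields $\partial_\alpha \mathbb{E}[(\eta(c+Z;\alpha)-c)^2]|_{\alpha=0} = -2\mathbb{E}[Z\sign(c+Z)]$, which equals $-2\sqrt{2/\pi}$ at $c=0$. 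Expanding both expectations and solving the resulting relation produces $\nustar^2(\delta) = M^2(1-\delta)(1+o(1))$.

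\textbf{Step 3: Partial derivatives and conclusion.} Evaluated at $(\nustar, \delta, \alphastar)$ with $\delta \to 1^-$:
\begin{align*}
\nabla_\alpha F_1 &\to -2\phi(0) = -\sqrt{2/\pi}, \quad \nabla_\delta F_1 \to -1, \quad \nabla_\nu F_1 = 2\epsilon\delta\,\alphastar\nustar\phi(\alphastar)(1+o(1)),\\
\nabla_\alpha F_2 &\to -2\sqrt{2/\pi}, \quad \nabla_\delta F_2 \to -1, \quad \nabla_\nu F_2 = \tfrac{2\nustar}{M^2}(1+o(1)).
\end{align*}
The key observation is that $\nabla_\nu F_2$ is dominated by the direct $2\nu/M^2$ contribution at order $\sqrt{1-\delta}$, whereas the $\nu$-derivative of the expectations is only of order $\nu \alpha = O((1-\delta)^{3/2})$; similarly $\nabla_\nu F_1 = O((1-\delta)^{3/2}) = o(\nustar)$. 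Plugging into \eqref{eq:derivative-nu}, the numerator tends to the nonzero constant $(-1)(-\sqrt{2/\pi}) - (-2\sqrt{2/\pi})(-1) = -\sqrt{2/\pi}$, while the denominator is dominated by $\nabla_\nu F_2 \cdot \nabla_\alpha F_1 \sim -\frac{2\nustar}{M^2}\sqrt{2/\pi} \to 0^-$. Therefore
\[
\nustar'(\delta) \;=\; -\frac{-\sqrt{2/\pi}(1+o(1))}{-\tfrac{2\nustar}{M^2}\sqrt{2/\pi}(1+o(1))} \;=\; -\frac{M^2}{2\nustar}(1+o(1)) \;\longrightarrow\; -\infty,
\]
since $\nustar \to 0^+$, which proves the claim.

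\textbf{Main obstacle.} The subtle point is Step~3: correctly tracking the leading orders when $\alphastar$ and $\nustar$ vanish simultaneously. The hardest computation is $\partial_\nu F_2$, where a naive expansion of $\partial_c \mathbb{E}[(\eta(c+Z;\alpha)-c)^2]$ appears to be $O(1)$, but actually exhibits a cancellation reducing it to $O(\alpha\nu)$; this cancellation is what ensures that the $2\nu/M^2$ term dominates and drives the denominator to zero at the slower rate $\sqrt{1-\delta}$ (rather than $(1-\delta)^{3/2}$), giving exactly the $1/\sqrt{1-\delta}$ blow-up consistent with the $\nustar \sim M\sqrt{1-\delta}$ scaling from Step~2.
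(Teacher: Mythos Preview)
Your proof is correct and follows the same approach as the paper: show $\alphastar,\nustar\to 0$ as $\delta\to 1^-$, compute the limiting values of the six partial derivatives, and conclude that in \eqref{eq:derivative-nu} the numerator tends to a strictly negative constant while the denominator is asymptotic to $-4M^{-2}\phi(0)\,\nustar\to 0^-$. Your Step~2 (the precise rates $\alphastar\sim(1-\delta)\sqrt{\pi/2}$ and $\nustar\sim M\sqrt{1-\delta}$) is sharper than what the paper actually uses, and your value $\nabla_\alpha F_2\to -4\phi(0)$ differs from the paper's stated $(-4+2\epsilon)\phi(0)$ (yours is in fact consistent with the paper's own explicit formulas for $\nabla_\alpha F_{22}$ and $\nabla_\alpha F_{23}$), but either value yields a negative numerator so the conclusion is unaffected.
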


Note that the property~\eqref{eq:nu-zero-limit} in Lemma~\ref{lem:nu-prime} validates the first claim in Proposition~\ref{thm:interpolation-version} directly. 
Therefore, to prove Proposition~\ref{thm:interpolation-version}, 
we are only left with verifying the second claim in Proposition~\ref{thm:interpolation-version}. 
In view of Lemma~\ref{lem:nu-prime}, it is guaranteed that as $\delta\goto 0^+$, both the denominator and the numerator in \eqref{eq:derivative-nu} are negative. 
By continuity of $\nustar'(\delta)$, there exists some  $\delta_1>0$ such that when $0 < \delta < \delta_1$, one has $\nustar'(\delta) < 0$. 
Finally, Lemma~\ref{lem:nu-one-limit} immediately suggests that one can find $\delta_2>0$ such that: when $\delta_2 < \delta < 1$, one has $\nustar'(\delta) < 0$.
Taking these properties collectively concludes the proof of Proposition~\ref{thm:interpolation-version}.

\subsubsection{Step 3: limit behavior for the case with $\epsilon \goto 0$ }

Finally, let us move on to establishing the third and fourth claims of Theorem~\ref{thm:interpolation}. Specifically, fixing some $\delta > 0$, we shall study how the risk limit $\taustar$ behaves as $\epsilon$ varies, particularly as it tends to zero. 
Thus far, we have focused on the case when both $\epsilon$ and $M$ are regarded as fixed constants while the value of $\delta$ varies;  
in this case, the analyses were primarily performed w.r.t.~$\nustar'(\delta)$, 
since studying $\nustar(\delta)$ and $\nustar(\delta)/M$ are equivalent when $M$ is taken to be a fixed constant. 
However, in the case when we fix SNR (namely, $\epsilon M^2$) as opposed to $M$, one has $M\goto \infty$ as $\epsilon \goto 0$, 
and hence studying $\nustar(\delta)$ and studying $\nustar(\delta)/M$ are no longer equivalent. 
As a result, we need to analyze $\nustar(\delta)/M$ directly, that is, to examine the behavior of $\nustar(\delta)/M$ and $\nustar'(\delta)/M$ in a fixed-SNR regime as $\epsilon$ approaches zero.

The readers shall also bear in mind that we always focus on the derivative of the risk of the model~\eqref{eq:theta-distribution} with given $(M, \epsilon)$ but varying $\delta$. 
In other words, the function $\nustar(\delta): (0, 1)\mapsto \mathbb{R}_+$ is defined for any given $(M, \epsilon)$, and we do not associate the change of $(M, \epsilon)$ and the change of $\delta$ together. 
To emphasize that we now work with a \emph{fixed} ratio, we shall use $\delta_0$ in place of $\delta$. 
At this given ratio $\delta_0$, the quantities $\alphastar$ and $\nustar$ are treated as functions of $\epsilon$.

\paragraph*{Proof for part (d) of Theorem~\ref{thm:interpolation}.}
When $\epsilon \goto 0$, we first make a key observation on 
the behavior of  $\frac{\nustar'(\delta_0)}{M}$, as summarized in the lemma below. 
\begin{lems}\label{lem:epsilon}
	In the setting of Theorem~\ref{thm:interpolation}, given any fixed SNR$= \epsilon M^{2}$ and $\delta_0 \in (0, 1)$, the derivative $\nustar'$ (with respect to $\delta$) obeys 
	\begin{align}
	\label{eqn:brahms}
	 	\lim_{\epsilon\goto 0} \frac{\nustar'(\delta_0)}{M} > 0. 
	 \end{align}
\end{lems}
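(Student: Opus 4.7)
The strategy is to apply the explicit formula~\eqref{eq:derivative-nu} for $\nustar'(\delta)$ and extract its leading-order behaviour as $\epsilon \to 0$ (with $\epsilon M^2 = \mathrm{SNR}$ held fixed, which forces $M \to \infty$) at the fixed aspect ratio $\delta = \delta_0$. I will proceed in three stages.

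\textbf{Stage 1 (Limit of $(\alphastar, \nustar/M)$).} I would first identify the limits of $\alphastar$ and $\nustar/M$ as $\epsilon \to 0$. The feasibility of the system \eqref{eqn:fix-eqn-sparse}, together with Mills-ratio considerations, forces $\sqrt{\delta_0}\,\nustar = \sqrt{\delta_0}\,M(\nustar/M) \to \infty$: were $\sqrt{\delta_0}\,\nustar$ bounded, the two limit equations would collapse to $\alpha_0(1-\Phi(\alpha_0)) = \phi(\alpha_0)$, which has no positive solution by the strict Mills inequality. Granted that $\sqrt{\delta_0}\,\nustar\to\infty$, the event $\{|\sqrt{\delta_0}\,\nu+Z|>\alpha\}$ has probability tending to one, and \eqref{eqn:fix-eqn-sparse} reduces in the limit to $\delta_0 = 2(1-\Phi(\alpha_0))$ and $1 = r_0^2 + \mathbb{E}[\eta^2(Z;\alpha_0)]/\delta_0$, uniquely determining $(\alpha_0, r_0)\in(0,\infty)\times(0,1)$ with $r_0 = \lim_{\epsilon\to 0}\nustar/M$.

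\textbf{Stage 2 (Asymptotics of the six partial derivatives).} Using the explicit forms of $F_1$ and $F_2$ in \eqref{eq:fix-5}--\eqref{eq:fix-6}, I would compute the leading asymptotics of $\nabla_\nu F_1, \nabla_\alpha F_1, \nabla_\delta F_1, \nabla_\nu F_2, \nabla_\alpha F_2, \nabla_\delta F_2$ at $(\nustar,\delta_0,\alphastar)$. The terms tied to the tail event $\{|\sqrt{\delta_0}\,\nustar+Z|\le\alpha\}$ — in particular $\nabla_\nu F_1$ and the $\nu$- and $\delta$-derivatives arising from $g_1 \defn \mathbb{E}[(\eta(\sqrt{\delta}\nu+Z;\alpha)-\sqrt{\delta}\nu)^2]$ — decay exponentially in $M$ and are therefore negligible. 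After discarding these pieces, the nonvanishing limits read $\nabla_\alpha F_1 \to -2\phi(\alpha_0)$, $\nabla_\delta F_1 \to -1$, $\nabla_\delta F_2 \to (r_0^2-1)/\delta_0$, $\nabla_\alpha F_2 \to 2\alpha_0 - 4\phi(\alpha_0)/\delta_0$, while the deterministic piece $2\nu/M^2$ of $\nabla_\nu F_2$ yields $\nabla_\nu F_2 = 2r_0/M + o(1/M)$. This separation of scales — an $O(1/M)$ denominator against an $O(1)$ numerator in \eqref{eq:derivative-nu} — is precisely what produces a finite nonzero limit for $\nustar'(\delta_0)/M$.

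\textbf{Stage 3 (Combining and reading off the sign).} Substituting these expansions into \eqref{eq:derivative-nu} and invoking the identity $\mathbb{E}[\eta^2(Z;\alpha_0)]/\delta_0 = 1-r_0^2$, the limit collapses to a single closed-form scalar expression whose sign is governed by the comparison between $2\alpha_0(1-\Phi(\alpha_0))$ and $\phi(\alpha_0)(r_0^2+1)$. Using the closed-form $r_0^2 = -\alpha_0^2 + \alpha_0\phi(\alpha_0)/(1-\Phi(\alpha_0))$ derived in Stage~1, this reduces the claim to a transcendental inequality in $\alpha_0$ alone, which then determines the required sign of $\lim_{\epsilon\to 0}\nustar'(\delta_0)/M$.

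\textbf{Main obstacle.} The principal difficulty lies in Stage~3: establishing the strict sign of the limiting expression uniformly in $\alpha_0 \in (0,\infty)$, equivalently $\delta_0\in(0,1)$. The two quantities being compared are transcendental and agree to leading order in the large-$\alpha_0$ regime (both behave like $2\phi(\alpha_0)$), so the argument must control subleading corrections carefully. My plan is to parametrize by the Mills ratio $R(\alpha_0) \defn \phi(\alpha_0)/(1-\Phi(\alpha_0))$, rewrite the inequality purely in terms of $R$, and then exploit classical monotonicity properties and sharp asymptotic expansions of $R$ (e.g.\ $R(\alpha) > \alpha$ and the expansion $\alpha(1-\Phi(\alpha)) = \phi(\alpha)(1-\alpha^{-2}+3\alpha^{-4}-\cdots)$) to deduce the sign for every $\alpha_0 > 0$.
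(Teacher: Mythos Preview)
Your proposal is correct and follows essentially the same route as the paper: identify the limits $\alphastar \to \alpha_0 = -\Phi^{-1}(\delta_0/2)$ and $\nustar/M \to r_0$, compute the limiting partial derivatives appearing in the formula \eqref{eq:derivative-nu}, and reduce the sign question to a Mills-ratio inequality in $\alpha_0$ alone. The only minor variation is that you first establish $\sqrt{\delta_0}\,\nustar\to\infty$ by contradiction (via the strict Mills inequality $R(\alpha)>\alpha$) and then invoke exponential decay of the tail terms, whereas the paper bypasses this and controls $F_{22}-F_{23}$, $\nabla_\delta F_{22}$, $\nabla_\nu F_{22}$ directly via uniform integral bounds of the form $\int_0^\infty \nu\,\Phi(2\alpha_0-\sqrt{\delta_0}\nu)\,\mathrm{d}\nu$; this is a cosmetic difference, not a different strategy.
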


The proof of Lemma~\ref{lem:epsilon} contains two main parts, whose 
details are deferred to Section~\ref{sec:pf-lemma-epsilon}. 
First, letting $\a_0\defn -\Phi^{-1}(\delta_0/2)$ for this given $\delta_{0}$, we establish the following relation
\begin{align}
\label{eq:epsilon-limit-2}
\alphastar \goto \a_0; \quad \text{ and }~
\frac{\nustar}{M} \goto \sqrt{1 - 2\delta_0^{-1}[-\a_0\phi(\a_0) + (\a_0^2+1)\Phi(-\a_0)]},
\end{align}
as one takes $\epsilon \goto 0$; the details can be found in Section~\ref{sec:pf-lemma-epsilon}.
It is worth noting that both $\alphastar$ and $\frac{\nustar}{M}$ converge to fixed quantities that are determined only by $\delta_{0}$ in this limit. 

Equipped with these two limiting values, we proceed to consider the numerator and denominator of $\frac{\nustar'}{M}$, with the assistance of 
the expression~\eqref{eq:derivative-nu}. 
In fact, one can pin down the limiting orders of these two parts as follows
\begin{subequations}
\begin{align}
\label{eqn:nu-ratio-numerator}
\nabla_{\delta}F_2\nabla_{\alpha}F_1 - \nabla_{\alpha}F_2\nabla_{\delta}F_1
&  ~\goto~ 2\delta_0^{-2} \phi(\alpha_0)\left[ -2\a_0\phi(\a_0) + (\a_0^2+1)\delta_0\right] - 2 \delta_0^{-1} \left[ 2 \phi(\a_0) - \a_0\delta_0\right],
\end{align}
and 
\begin{align}
\label{eqn:nu-ratio-denominator}
	M(\nabla_{\nu}F_2\nabla_{\alpha}F_1 - \nabla_{\alpha}F_2\nabla_{\nu}F_1) &
	~\goto~ -4\phi(\a_0) \sqrt{1 - 2\delta_0^{-1}[-\a_0\phi(\a_0) + (\a_0^2+1)\Phi(-\a_0)]}.
\end{align}
\end{subequations}
The details can be found in Step 2 in Section~\ref{sec:pf-lemma-epsilon}.

Putting these together, we can conclude that
\begin{align*}
	\lim_{\epsilon \to 0}\frac{\nustar'(\delta_0)}{M} 
	&=
	\frac{2\delta_0^{-2}\left[ -2\a_0\phi^2(\a_0) + (\a_0^2+1)\delta_0\phi(\a_0) - 2 \delta_0\phi(\a_0) + \a_0\delta_0^2\right]}{4\phi(\a_0)\sqrt{1 - 2\delta_0^{-1}[-\a_0\phi(\a_0) + (\a_0^2+1)\Phi(-\a_0)]}}
	< 0,
\end{align*}
where the last inequality follows due to the fact that 
$\Phi(-\a_0) = \delta_0/2$ and the basic relation 
\begin{align*}
	\Phi(-\a_0) \in 
	\Big[\phi(\a_0)\bigg(\frac{1}{\a_0}-\frac{1}{\a_0^3}\bigg),
	~ \phi(\a_0)\bigg(\frac{1}{\a_0}-\frac{1}{\a_0^3} + \frac{1}{\a_0^5}\bigg)\Big].
\end{align*}

In summary, in view of Lemma~\ref{lem:epsilon}, 
we can conclude that there exists $\epsilonstar>0$, depending only on SNR and $\delta_0$, such that: when $\epsilon < \epsilonstar$, one has $\nustar'(\delta_0)/M < 0$. 
Translating this back to $\taustar = M/\nustar$ ensures the existence of an $\epsilonstar$ such that: when $\epsilon < \epsilonstar$, one has $\taustar'< 0$.
We have thus completed the proof of Part (d) of Theorem~\ref{thm:interpolation}.

\paragraph*{Proof of part (c) of Theorem~\ref{thm:interpolation}.}
The idea for proving this result is to find $\delta \in (0,1)$ such that the value of ${\taustar}^2(\delta)$ is strictly below $\Risk(\bm{0})$. 
Recognizing that ${\taustar}^2(\delta)$ decays to $\Risk(\bm{0})$ as $p/n$ approaches infinity, there must exist an ascending regime for $\taustar$ as a function of $p/n.$ 

More concretely, let us view $\nustar/M$ as a function of $\delta$ within the interval $\delta \in (0, 1)$.
Rewriting the relation~\eqref{eq:epsilon-limit-2} ensures that as $\epsilon \to 0$,
one has 
	\begin{align*}
	\frac{1}{\taustar(\delta)} = \frac{\nustar(\delta)}{M} \goto \sqrt{ \frac{\a\phi(\a) - \a^2\Phi(-\a)}{\Phi(-\a)}} \eqqcolon H(\delta)
	\qquad \text{for }~\a\defn -\Phi^{-1}(\delta/2).
	\end{align*}
It can be easily verified that the function $H(\cdot)$ is a continuous and  decreasing function of $\delta$ on $(0, 1)$.  
% thus $\lim_{\epsilon\to 0}\nustar(\delta)/M$ monotonically  decreases with $\delta$. 
Additionally, direct calculations yield 
\begin{align}
	\label{eqn:lim-h-function}
	\lim_{\delta\goto 1^-} H(\delta)= 0; 
	\qquad \lim_{\delta\goto 0^+} H(\delta)= 1.
\end{align}
As a result, the continuity of $H(\cdot)$ guarantees that there exists $\delta_{\mathrm{SNR}}>0$ such that 
\begin{align*} 
H(\delta) \defn \lim_{\epsilon \goto 0} \frac{\nustar(\delta)}{M} 
> \frac{1}{\sqrt{1+\mathrm{SNR}}} = \frac{1}{\sqrt{\Risk(\bm{0})}}  \in (0,1), \qquad \text{for }~\delta < \delta_{\mathrm{SNR}},
\end{align*}
where we recall $\mathrm{SNR} \defn \epsilon M^{2}$.
In other words, recognizing the relation $\taustar(\delta) \defn M/\nustar(\delta)$, we can show the existence of a regime for $\delta \in (0,1)$ where the $\epsilon$-limit of $\taustar(\delta)$ lies below $\sqrt{\Risk(\bm{0})}$. 

In addition, for any given $(\epsilon, M)$, recall that the limiting value (as $\delta \goto 0^+$) obeys 
\begin{align}
\label{eqn:lim-delta-limit-h-function}
	\lim_{\delta\goto 0^+} \frac{\nustar(\delta)}{M} 
	=
	\frac{1}{\sqrt{1+\mathrm{SNR}}} .
\end{align} 
It further implies that for any fixed $\delta_0 <\delta_{\mathrm{SNR}}$, one can find a corresponding $\epsilon_{0}$ depending on $\delta_{0}$ such that 
\begin{align*}
\taustar(\delta_0) < \lim_{\delta \to 0^+} \taustar(\delta)
\end{align*} 
holds for every $\epsilon \leq \epsilon_{0}$. 
Consequently, $\taustar(\delta)$ has an ascending phase w.r.t.~$p/n.$
Putting the above pieces together establishes the claimed result.

%%%%%%%%%%%%%%%%%%%%%%%%%%%%%%%%%%%%%%%%%%%%%%%%%%%%%%%%%%%%%%%%%%%%%%%%%%%%%%%%%%%%%%%%%%%%

\section{Numerical simulations and discussion}
\label{Sec:numerics}

This section conducts numerical experiments to confirm the applicability of our results in finite samples and non-Gaussian designs. 
Along the way, we shall also point out several directions worthy of future investigation.
%\footnote{Code to reproduce all the numerical results from this paper can be found at \url{https://colab.research.google.com/drive/1eKqj_Qw1k4emJZL0ObjaGH_PQPT58cib?usp=sharing}. \yuting{check the link}

\paragraph*{Finite-sample behavior.}

Although the theorems obtained in the paper are asymptotic in nature, our numerical experiments suggest that they are accurate descriptions of the risk behavior even when $p$ and $n$ are on the order of 10s or 100s. 
As an illustration, we plot in Figure~\ref{fig:sample_size} two cases when $n = 100$ and $n=1000$, respectively,  with $p/n$ varying between $[10^{-2},~ 10^2]$. In these plots, the multi-descent phenomenon already manifests itself in the case when $n=100$. 
\begin{figure}[t]
	\includegraphics[width=0.49\textwidth]{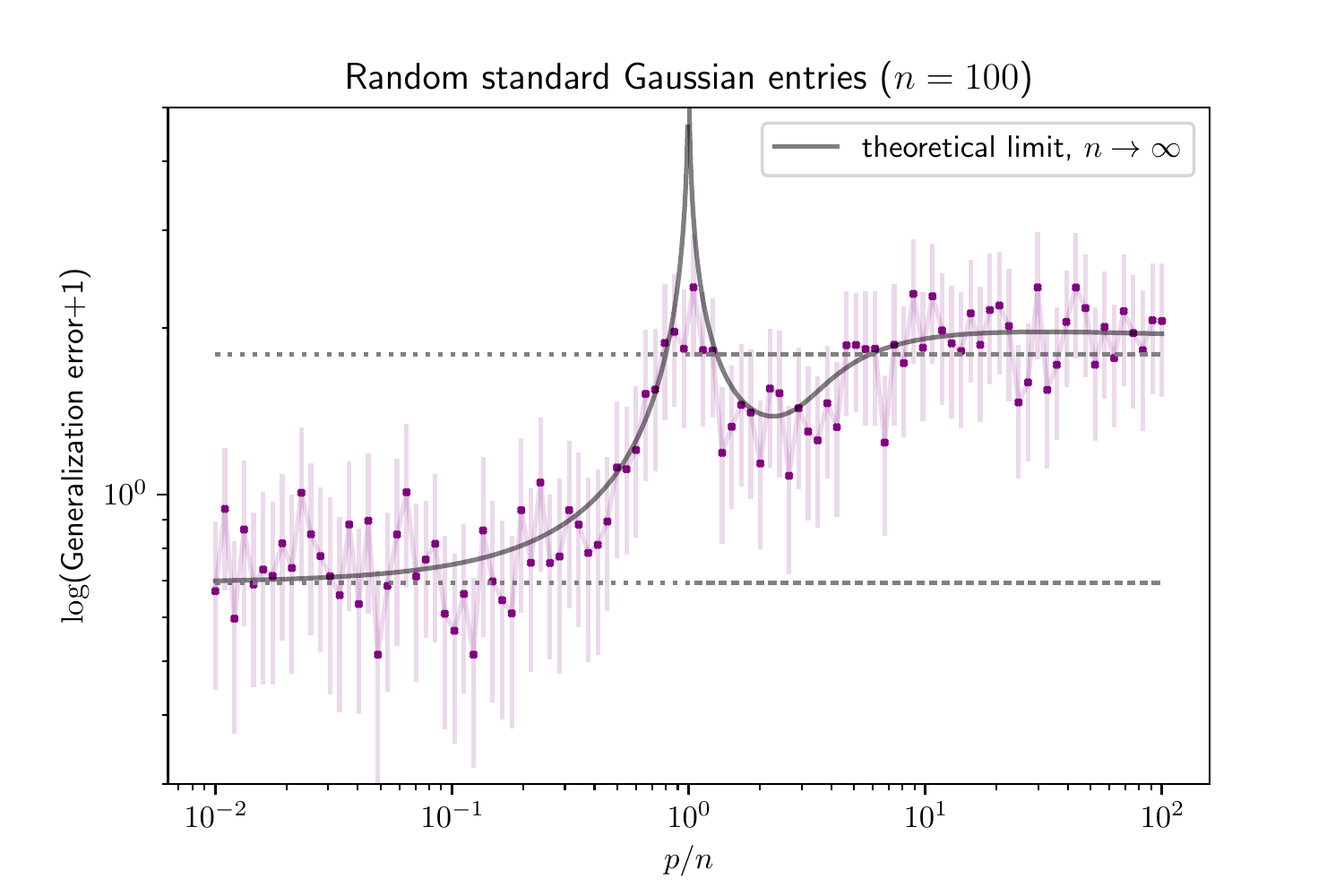}
	\includegraphics[width=0.49\textwidth]{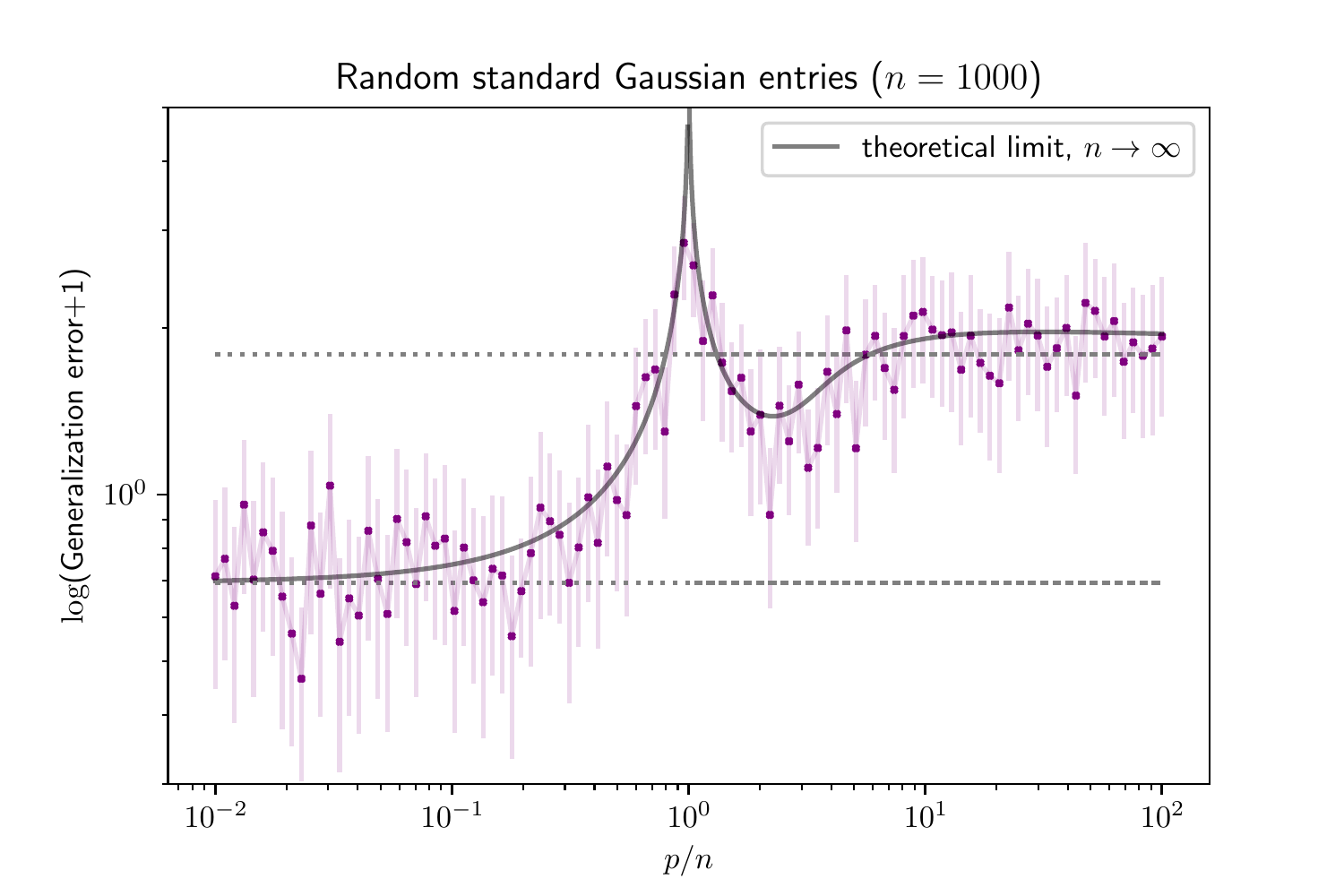}
	\caption{
	Finite-sample behavior. 
	The data are generated from a linear model~\eqref{eq:model} under i.i.d.~Gaussian design, where SNR$= 4$ and the sparsity level is $\epsilon = 0.05$. The sample size is set as $n=100$ in the left figure, and $n=1000$ in the right figure. 
	The theoretical curve, computed by solving the equations~\eqref{eqn:fix-eqn}, is displayed in solid line, where the limits with $p/n\goto 0$ and $p/n\goto\infty$ are plotted in dotted lines. 
	Here, both the $x$-axis and the $y$-axis are plotted in logarithmic scale.  
	We choose $100$ different values of $p/n$ in a way that the $\log(p/n)$'s are uniformly spaced over $[-2, 2]$.
	For each $p/n$, we generate a random instance, compute the minimum $\ell_1$-norm interpolator and its risk, and repeat this procedure for $30$ times. We report the average risk and error bar over 30 independent runs. 
	}
	\label{fig:sample_size}
\end{figure}

\paragraph*{Beyond Gaussian design.}
Thus far, our risk characterization focuses on the idealistic case with i.i.d.~Gaussian design matrices. 
There is no shortage of practical scenarios where such distributional assumptions are violated.  
To see whether our prediction continues to be valid beyond Gaussian design, 
we carry out several empirical experiments concerning design matrices that are composed of i.i.d.~non-Gaussian entries. 
Figure~\ref{fig:t-dist} illustrates two cases where the entries are generated from the Bernoulli distributions and the $t$-distribution with parameter $3$, respectively. 
Our theoretical risk characterization remains fairly accurate in these numerical experiments. 
This is perhaps not unexpected, due to a \emph{universality} phemoneon that has been justified in multiple other problems with i.i.d.~random design (see, e.g., \cite{bayati2015universality,oymak2018universality,montanari2017universality,chen2021universality}).  
These predictions might, however, be completely off when the covariates are correlated, 
meaning that the covariance structure of the design matrix plays a pivotal role in determining the shape of the risk curves.
Leveraging the current effort towards understanding Lasso under correlated designs \citep{celentano2020lasso}, we conjecture that the risk of the interpolator is dictated by a more complicated nonlinear system of equations that reflects the covariance structure.   
Given that the main message of this paper is to verify the existence of a multiple-descent phenomenon, 
we leave these more general cases to future investigation.

\begin{figure}[t]
	\includegraphics[width=0.49\textwidth]{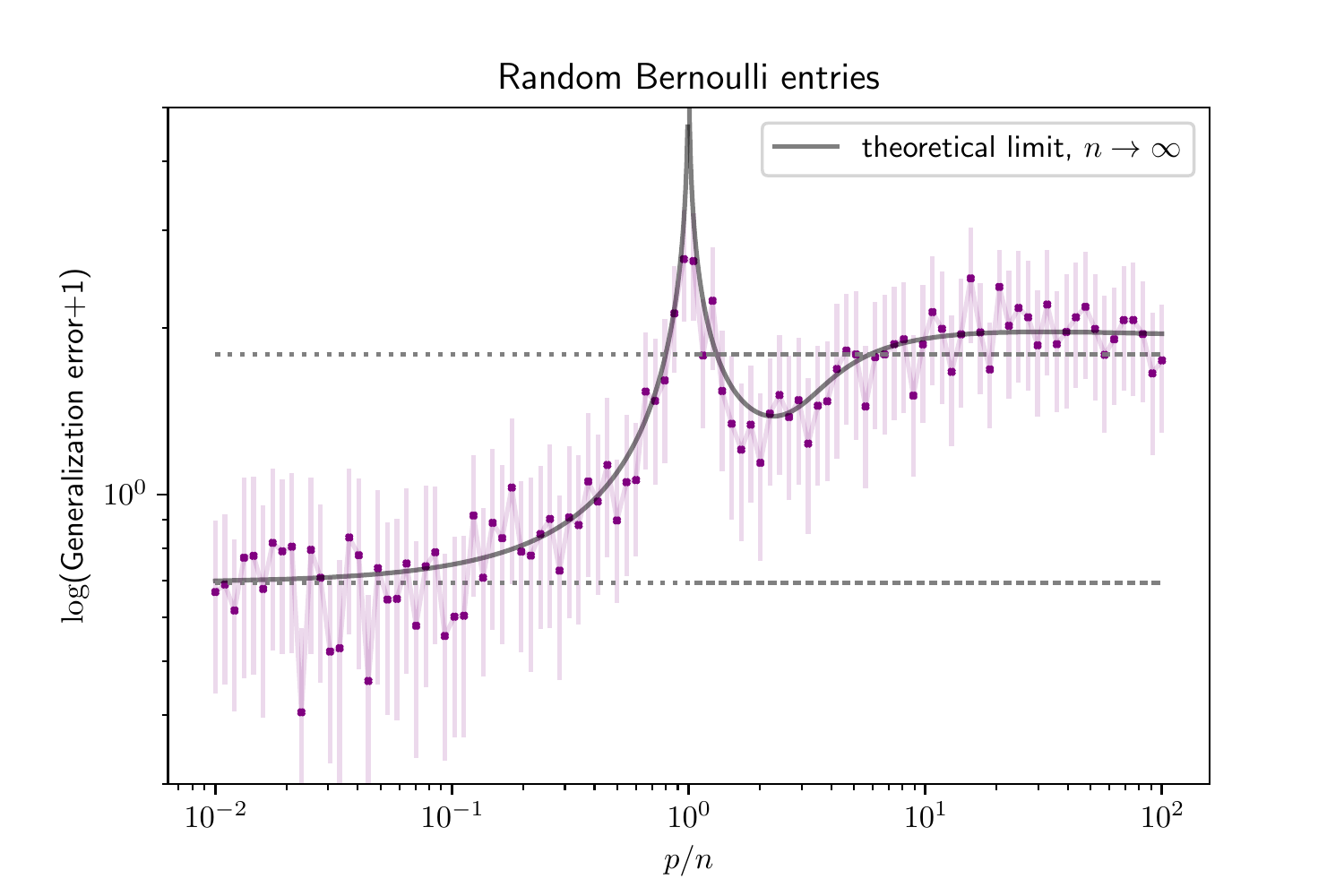}
	\includegraphics[width=0.49\textwidth]{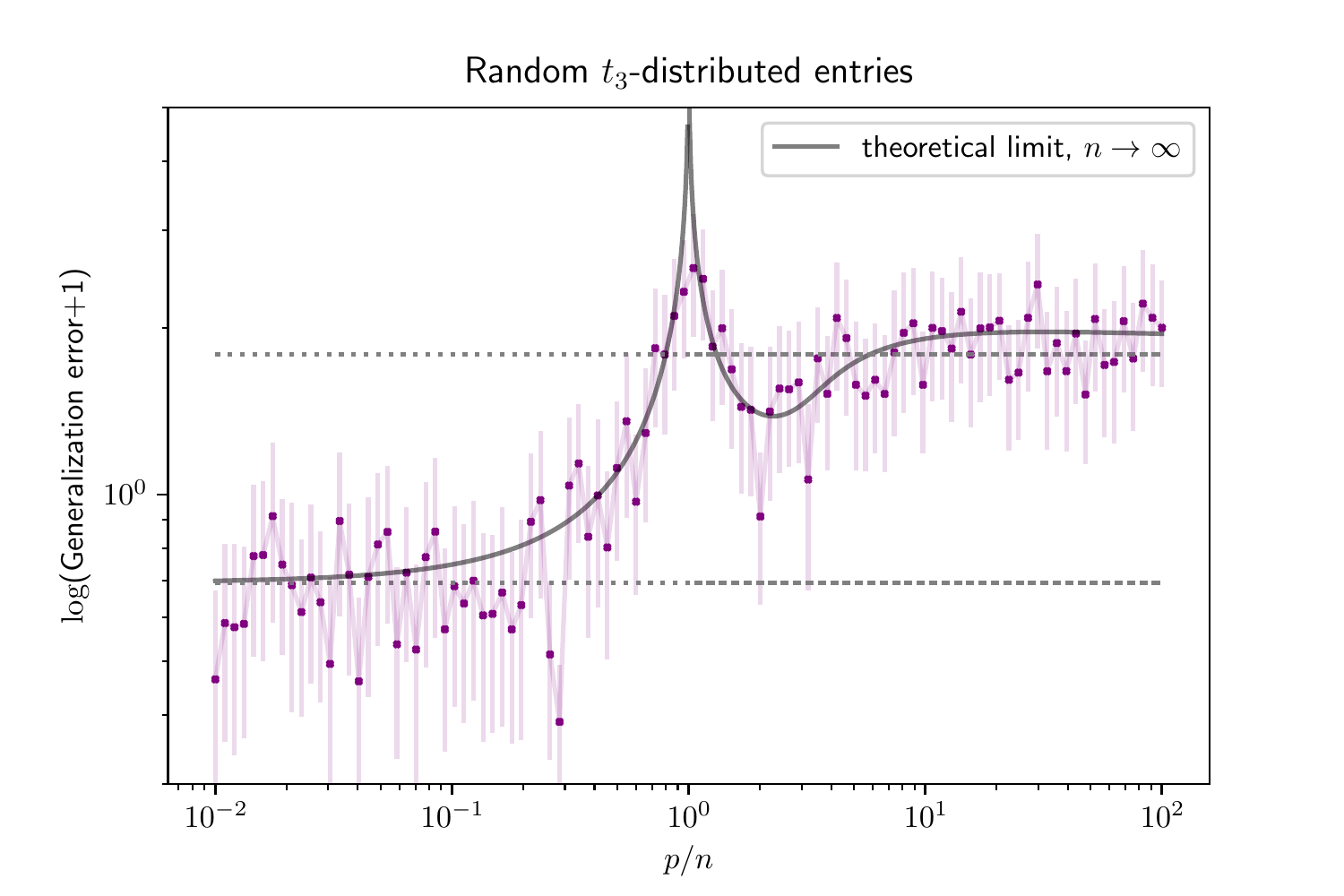}
	\caption{
	Experiments for non-Gaussian designs. 
	In these plots, the sample size is fixed as $n=100$, and the data is drawn from a linear model~\eqref{eq:model} with SNR$= 4$ and sparsity level $\epsilon = 0.05$. 
	% ; 
	The entries of the design matrix $\sqrt{n}\bs{X}$ are i.i.d. sampled from the $\mathsf{Bernoulli}(0.5)$ distribution for the left plot, and from $t(3)/\sqrt{3}$ distribution for the right plot (where the $1/\sqrt{3}$ is introduced to make the variance equals to 1). 
	The other experiment settings are the same with Figure~\ref{fig:sample_size}. 
	}
	\label{fig:t-dist}
\end{figure}

\paragraph*{Distributional characterization.}
 
We perform another series of numerical experiments about the minimum $\ell_1$-norm interpolators  $\widehat{\bs{\theta}}^{\mathsf{Int}}$ under i.i.d.~Gaussian design, and report in Figure~\ref{fig:coordinate} \emph{(i)} the empirical distribution of its $p$ coordinates over $30$ independent runs, and \emph{(ii)} the empirical distribution of the corresponding $\widehat{\bs{\theta}}^{\mathsf{Int}}$ coordinates when the underlying $\theta^{\star}_{i}$ is zero (resp.~non-zero). As can be seen from the plots, the estimates are close to being unbiased, with 
the estimates for non-zero entries exhibiting a higher level of uncertainty than the zero entries. However, how to develop a distributional theory remains unclear. A recent line of works \citep{bellec2019second,miolane2018distribution,celentano2020lasso} established distributional guarantees for a debiased Lasso estimator with positive regularization (so that the estimates after de-biasing exhibit Gaussian distributions). 
We conjecture that the analysis framework (via the convex Gaussian min-max theorem) developed in \cite{miolane2018distribution,celentano2020lasso} might be useful in establishing a fine-grained finite-sample distributional characterization for the interpolators of interest.

\begin{figure}[H]
	\includegraphics[width=0.33\textwidth]{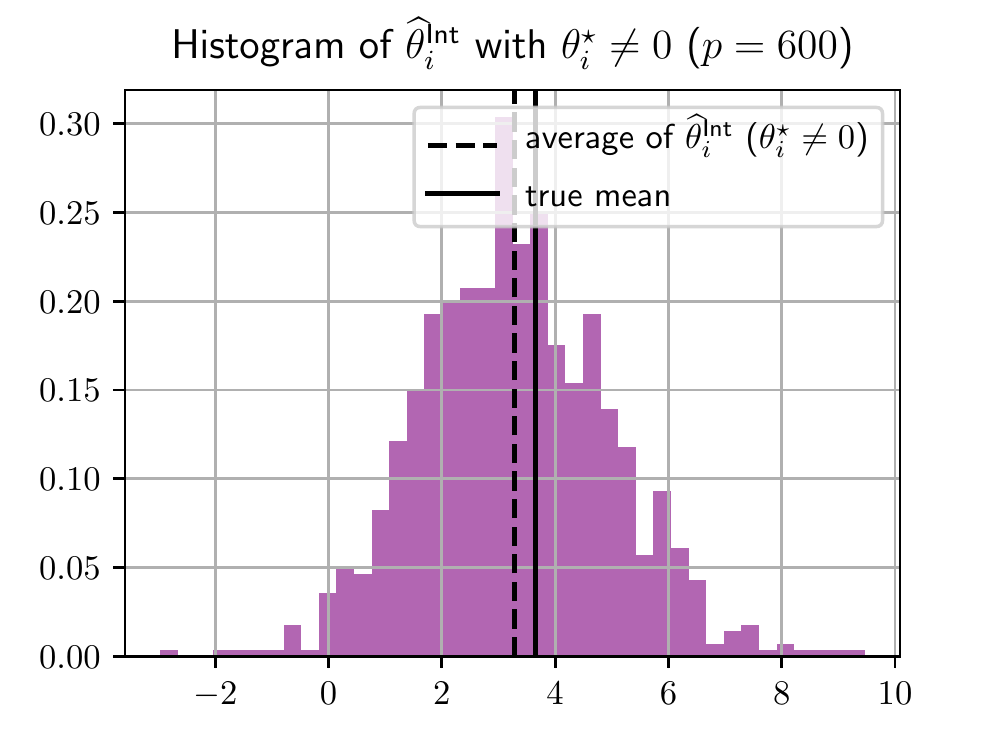}
	\includegraphics[width=0.33\textwidth]{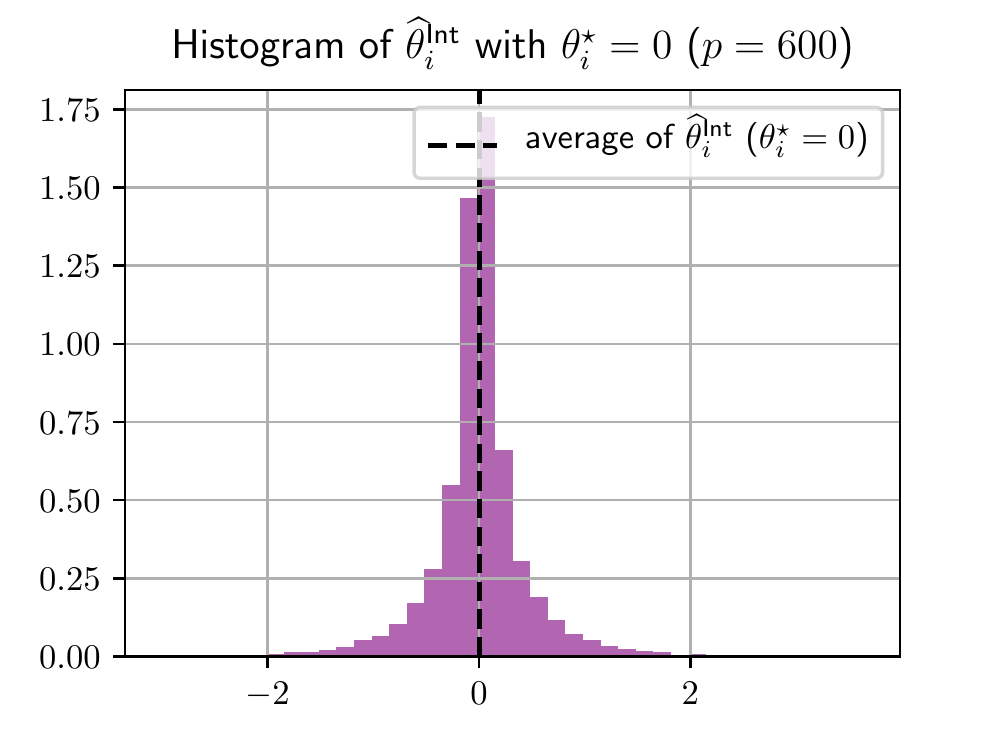}
	\includegraphics[width=0.33\textwidth]{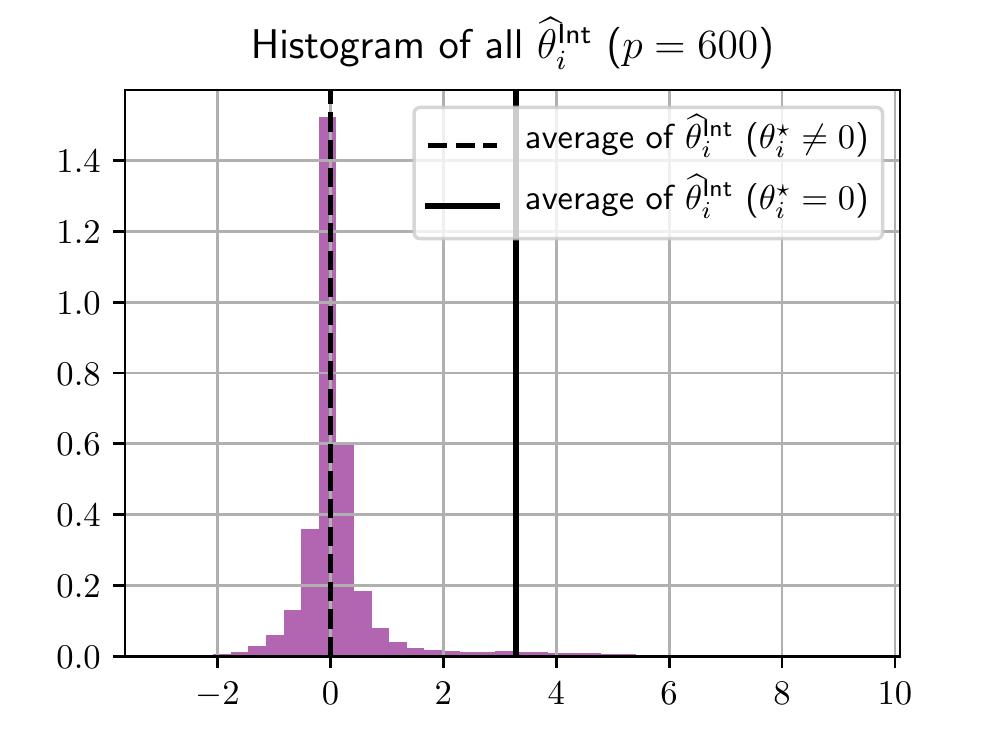}
	\includegraphics[width=0.33\textwidth]{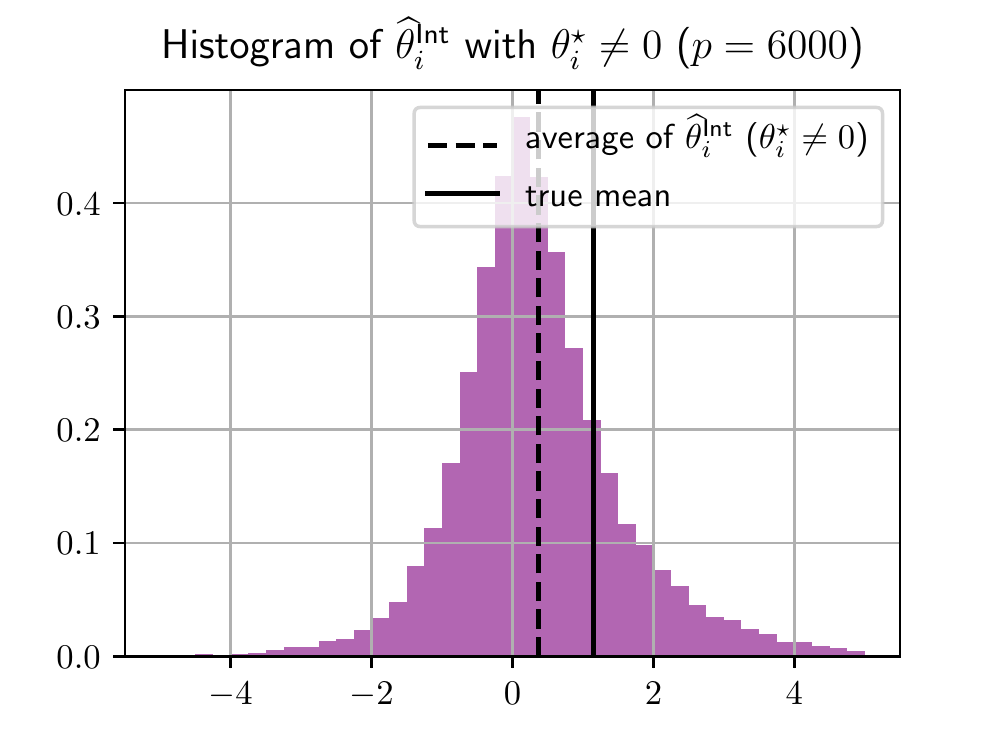}
	\includegraphics[width=0.33\textwidth]{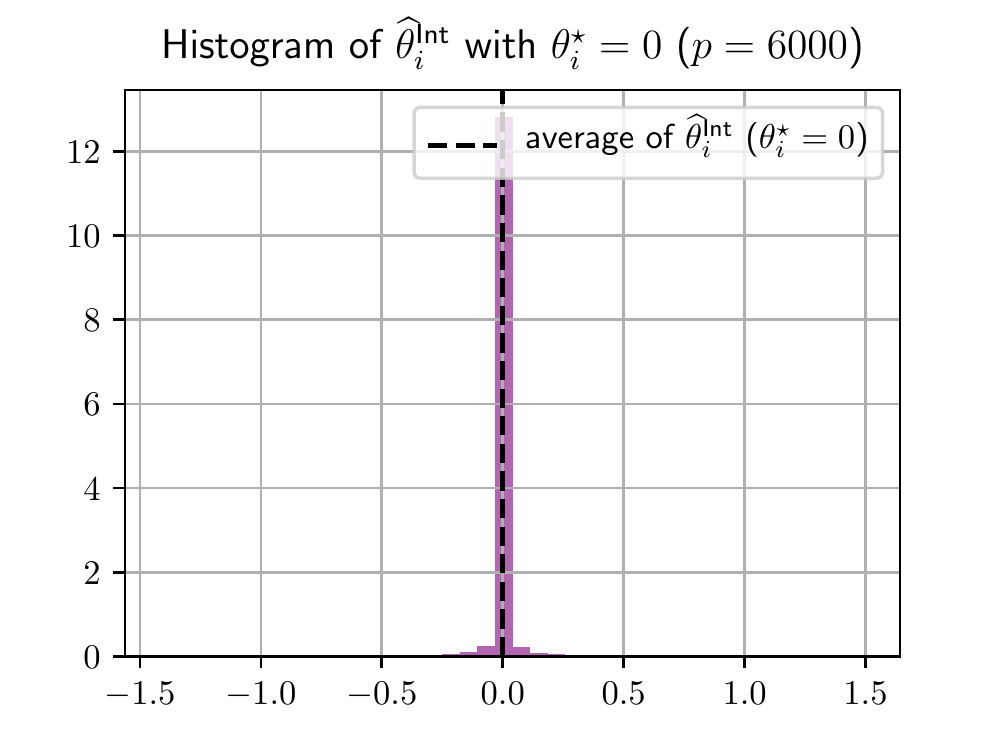}
	\includegraphics[width=0.33\textwidth]{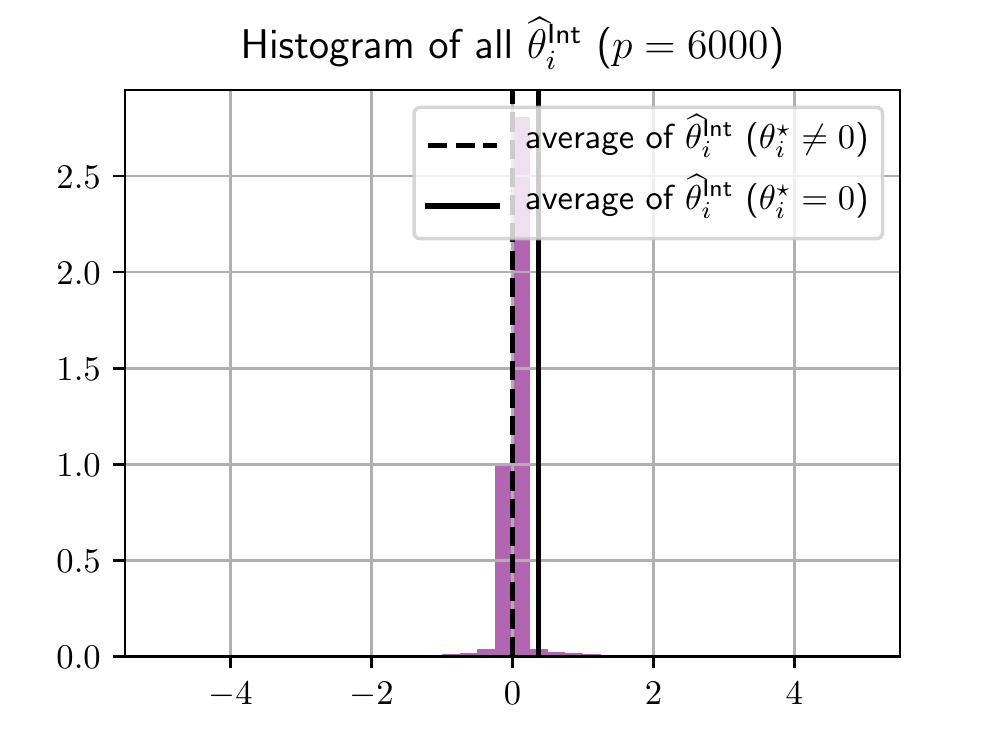}
	\caption{
	Empirical distribution for coordinates of $\thetainti$. 
	Here, we fix the sample size $n=100$, and generate data from the linear model~\eqref{eq:model} with i.i.d. Gaussian design, where SNR$ = 4$ and sparsity level $\epsilon = 0.05$.
	The other experiment settings are the same with Figure~\ref{fig:sample_size}. 
	We collect the empirical distribution of $\thetainti$'s coordinates (corresponding to those $i$ such that $\theta^\star_i\neq 0$ / $\theta^\star_i= 0$ / for every $i$, respectively) across all repeats, and generate their histograms. 
	In the top row of the plots, set $p=600$, and in the bottom row, set $p=6000$. 
	All the results reported are based on $30$ random trials. 
	The empirical averages and the ground truth $\theta^\star_{i}$ values are marked in the dotted vertical line and the solid vertical line respectively. 
	}
	\label{fig:coordinate}
\end{figure}

%%%%%%%%%%%%%%%%%%%%%%%%%%%%%%%%%%%%%%%%%%%%%%%%%%%%%%%%%%%%%%%%%%%%%%%%%%%%%%%%%%%%%%%%%%%%

%%%%%%%%%%%%%%%%%%%%%%%%%%%%%%%%%%%%%%%%%%%%%%%%%%%%%%%%%%%%%%%%%%%%%%%%%%%%%%%%%%%%%%%%%%%%

%%%%%%%%%%%%%%%%%%%%%%%%%%%%%%%%%%%%%%%%%%%%%%%%%%%%%%%%%%%%%%%%%%%%%%%%%%%%%%%%%%%%%%%%%%%%
% \newpage
\section*{Acknowledgment}

The authors would like to thank Linjun Zhang for discussing this open problem with Y.~Wei when she was visiting the statistics department at Rutgers University in 2020.
This work was partially supported by the NSF grants DMS 2147546/2015447 and CCF 2106778. 
Part of this work was done while Y.~Li and Y.~Wei were visiting the Simons Institute for the Theory of Computing.

\bibliographystyle{apalike}
\bibliography{lasso,lasso2}

%%%%%%%%%%%%%%%%%%%%%%%%%%%%%%%%%%%%%%%%%%%%%%%%%%%%%%%%%%%%%%%%%%%%%%%%%%%%%%%%%%%%%%%%%%%%5
%\newpage

\begin{center}
    {\large APPENDIX}
\end{center}
\appendix

%%%%%%%%%%%%%%%%%%%%%%%%%%%%%%%%%%%%%%%%%%%%%%%%%%%%%%%%%%%%%%%%%%%%%%%%%%%%%%%%%%%%%%%%%%%%5

\section{Proof of Proposition~\ref{prop:lasso-limit}}
\label{sec:lasso-limit-proof}

In what follows, we intend to establish the two claims separately.

\paragraph{Case I: $n/p>1$.}
In this part, we aim to prove that, for any $\delta > 1$,  the Lasso risk converges to the risk of the least-square estimator --- denoted by $\widehat{\bs{\theta}}^{\mathsf{LS}}$ --- as $\lambda \goto 0$. 
To begin with, the risk of $\widehat{\bs{\theta}}^{\mathsf{LS}}$ can be characterized using standard random matrix theory results; see, for example, \citet[Theorem 1]{hastie2019surprises}. 
As $p\goto \infty$, one has 
\begin{align}\label{eq:ols-risk}
	\mathsf{Risk}(\widehat{\bs{\theta}}^{\mathsf{LS}}) = \sigma^2 + \frac{1}{n}\mathbb{E}\big[\|\widehat{\bs{\theta}}^{\mathsf{LS}}  -\coef\|_2^2 \big]  ~{\goto}~ \sigma^2\frac{\delta}{\delta-1}.
\end{align}

In view of the KKT condition for the corresponding loss functions, we can see that the Lasso and the least-square estimator obey 
\begin{align*}
	\widehat{\bs{\theta}}_{\lambda} -\coef = (\bs{X}^\top\bs{X})^{-1}(\bs{X}^\top\noise - \lambda \bs{s})
	\qquad \text{and} \qquad
	\widehat{\bs{\theta}}^{\mathsf{LS}} -\coef = (\bs{X}^\top\bs{X})^{-1}\bs{X}^\top\noise. 
\end{align*}
	Here, $\bs{s}=[s_j]_{1\leq j\leq p}$ denotes the sub-gradient of the $\ell_1$ norm at point $\widehat{\bs{\theta}}_{\lambda}$, which obeys $s_j \in \left[ -1, 1\right] $. Thus, the risk of the Lasso satisfies 
\begin{align*}
	\mathbb{E}\big[\|\widehat{\bs{\theta}}_{\lambda}-\coef\|_2^2\big] 
	= \mathbb{E}\big[\|\widehat{\bs{\theta}}^{\mathsf{LS}}  -\coef\|_2^2\big] - 2\lambda \mathbb{E}\big[\inprod{(\bs{X}^\top\bs{X})^{-2}\bs{X}^\top\noise}{\bs{s}}\big] + \lambda^2 \mathbb{E}\big[\|(\bs{X}^\top\bs{X})^{-1}\bs{s}\|_2^2\big], 
\end{align*}
which  combined with the Cauchy-Schwarz inequality further leads to 
\begin{align}
& \  \left| \frac{1}{n}\mathbb{E}[\|\widehat{\bs{\theta}}_{\lambda} -\coef\|_2^2] - \frac{1}{n}\mathbb{E}[\|\widehat{\bs{\theta}}^{\mathsf{LS}} -\coef\|_2^2] \right| \notag\\
% \leq & \ 2\lambda \sqrt{\frac{1}{p}\mathbb{E}\left[\left\|  (\bs{X}^\top\bs{X})^{-1}\bs{X}^\top\noise\right\|_2^2\right] }\sqrt{\frac{1}{p}\mathbb{E}\left[\|(\bs{X}^\top\bs{X})^{-1}\bs{s}\|_2^2\right] } + \lambda^2 \frac{1}{p}\mathbb{E}\left[\|(\bs{X}^\top\bs{X})^{-1}\bs{s}\|_2^2\right] \\
%
\leq & \ 2\lambda \sqrt{\frac{1}{n}\mathbb{E}\left[\left\|  (\bs{X}^\top\bs{X})^{-1}\bs{X}^\top\noise\right\|_2^2\right] }\sqrt{\frac{1}{n}\mathbb{E}\big[\|(\bs{X}^\top\bs{X})^{-1}\|_2^2 \|\bs{s}\|_2^2 \big] } + \lambda^2 \frac{1}{n}\mathbb{E}\left[\|(\bs{X}^\top\bs{X})^{-1}\|_2^2 \|\bs{s}\|_2^2 \right] \notag\\
	\leq & \ 2\lambda \sqrt{\mathsf{Risk}(\widehat{\bs{\theta}}^{\mathsf{LS}})-\sigma^2 }\sqrt{\mathbb{E}\left[\frac{1}{\delta}\sigma_{\min}^{-4}(\bs{X}) \right] } + \frac{\lambda^2}{\delta} \mathbb{E}\left[\sigma_{\min}^{-4}(\bs{X}) \right] .
	\label{eq:least-square}
\end{align}
Now it is sufficient to control the two terms on the right-hand side above, and show that both terms converge to $0$ when $p\goto \infty. $ 
First, it has been shown in the proof of \citet[Lemma 4.1]{chen2005condition} that
\[
\mathbb{P}\left(\sigma_{\min}(\bm{X})\leq\frac{\sqrt{n}}{x^{2}}\right)<\frac{n^{n-p+1}}{(n-p+1)!}\frac{1}{x^{n-p+1}}\leq\left(\frac{e}{x}\right)^{n-p+1}
\]
for any $x>0$, 
where the last inequality comes from the well-known Stirling inequality $\sqrt{2\pi}m^{m+\frac{1}{2}}e^{-m}\leq m!$
\citep{robbins1955remark}. 
Consequently, 
\begin{align*}
\mathbb{E}\left[\sigma_{\min}^{-4}(\bm{X})\right] & \leq\bigg(\frac{2}{1-1/\sqrt{\delta}}\bigg)^{4}\mathbb{P}\left\{ \sigma_{\min}^{-1}(\bm{X})\leq\frac{2}{1-1/\sqrt{\delta}}\right\} +{\displaystyle \int}_{\big(\frac{2}{1-1/\sqrt{\delta}}\big)^{4}}^{\infty}\mathbb{P}\left\{ \sigma_{\min}^{-4}(\bm{X})>z\right\} \mathrm{d}z\\
 & \leq2\bigg(\frac{2}{1-1/\sqrt{\delta}}\bigg)^{4}+{\displaystyle \int}_{\big(\frac{2}{1-1/\sqrt{\delta}}\big)^{4}}^{\infty}\mathbb{P}\left\{ \sigma_{\min}(\bm{X})<\frac{1}{z^{1/4}}\right\} \mathrm{d}z\\
 & \leq2\bigg(\frac{2}{1-1/\sqrt{\delta}}\bigg)^{4}+{\displaystyle \int}_{\big(\frac{2}{1-1/\sqrt{\delta}}\big)^{4}}^{\infty}\left(\frac{e}{z^{1/8}n^{1/4}}\right)^{n-p+1}\mathrm{d}z\\
 & \leq4\bigg(\frac{2}{1-1/\sqrt{\delta}}\bigg)^{4}
\end{align*}
for sufficiently large $n$. 
In addition, by virtue of \eqref{eq:ols-risk}, it is guaranteed that 
\begin{align*}
	 \sqrt{\mathsf{Risk}(\widehat{\bs{\theta}}^{\mathsf{LS}})-\sigma^2 } \goto \sqrt{\frac{\sigma^2}{\delta-1}}.
\end{align*}
Substitution into \eqref{eq:least-square} yields 
\begin{align*}
	\lim_{\lambda\goto 0} \lim_{n\goto\infty} \frac{1}{n}\mathbb{E}[\|\widehat{\bs{\theta}}_{\lambda} -\coef\|_2^2] = \lim_{n\goto\infty} \frac{1}{n}\mathbb{E}\big[\|\widehat{\bs{\theta}}^{\mathsf{LS}} -\coef\|_2^2 \big]
\end{align*}
for any $0<\delta <1$ that is strictly bounded away from $1$.

\paragraph{Case II: $n/p < 1$.}

When $\delta < 1$, our goal is to demonstrate that $$\lim_{\lambda \goto 0} \lim_{p \goto \infty} \mathsf{Risk}(\widehat{\t}_{\lambda}) = \lim_{p \goto \infty} \mathsf{Risk}(\tint).$$ 
To this end, let us first state one known result about $\mathsf{Risk}(\widehat{\t}_{\lambda})$. 
Specifically, the lemma below associates the Lasso risk with the solution to the system of equations \eqref{eq:nonzero-lambda-1} and \eqref{eq:nonzero-lambda-2}.
\begin{lem}[Corollary 1.6 in \cite{bayati2011lasso}]\label{lem:lasso-risk}
	The system of equations \eqref{eq:nonzero-lambda} admits one unique solution pair $(\taustar(\lambda), \alphastar(\lambda))$. 
	With the Lasso problem formulated in \eqref{eqn:lasso-definition}, 
	it holds that 
	\begin{align*}
		\lim_{p \goto \infty} \mathsf{Risk}(\widehat{\t}_{\lambda}) = \big( {\taustar(\lambda)} \big)^2 ,
	\end{align*}
	as long as $\mathbb{P}(\coef \neq \bs{0}) > 0$. 
\end{lem}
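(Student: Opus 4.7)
The lemma has two assertions: (a) the system \eqref{eq:nonzero-lambda} has a unique solution pair $(\taustar(\lambda), \alphastar(\lambda))$, and (b) the asymptotic Lasso risk equals $(\taustar(\lambda))^2$. I would prove them in sequence, with AMP as the core engine for (b).

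\textbf{Step 1: Uniqueness.} I would reduce the two-variable system to a scalar equation in $\alpha$. Fix $\alpha > 0$ and view \eqref{eq:nonzero-lambda-1} as a scalar fixed-point equation $u = \mathsf{F}(u, \alpha\sqrt{u})$ in $u = \tau^2$. By direct differentiation, using $\eta'(\cdot;\zeta)\in\{0,1\}$ together with Stein's identity for Gaussians, one shows that the slope of the right-hand side in $u$ is strictly less than one, so the map is a contraction and produces a unique $\tau(\alpha)$ depending continuously on $\alpha$. Substitution into \eqref{eq:nonzero-lambda-2} reduces the system to $g(\alpha) = \lambda$ with $g(\alpha) \defn \alpha\tau(\alpha)\bigl(1 - \tfrac{1}{\delta}\mathbb{P}(|\Theta + \tau(\alpha)Z| > \alpha\tau(\alpha))\bigr)$. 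Under the hypothesis $\mathbb{P}(\Theta \neq \bs{0}) > 0$, one verifies that $g$ is strictly increasing with limits $g(0^+) = 0$ and $g(+\infty) = +\infty$, yielding a unique $\alphastar(\lambda)$ for every $\lambda > 0$.

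\textbf{Step 2: AMP machinery.} With $(\taustar,\alphastar)$ in hand, initialize AMP by $\t^0 = \bm 0$, $\bs z^{-1} = \bm 0$ and run \eqref{eq:amp-formula-linear} with the constant threshold $\zeta_t \equiv \alphastar\taustar$. Apply Proposition~\ref{prop:amp-fixed-t} with the pseudo-Lipschitz test function $\psi(a,b) = (a-b)^2$ to obtain, for each fixed $t$,
\begin{equation*}
\lim_{p \to \infty} \frac{1}{p}\|\t^{t+1} - \coef\|_2^2 \;\overset{\mathrm{a.s.}}{=}\; \delta(\tau_{t+1}^2 - \sigma^2),
\end{equation*}
where $\tau_t$ is the state-evolution sequence in \eqref{eqn:state-evolution}. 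The same contraction property used in Step~1 implies $\tau_t \to \taustar$ exponentially fast as $t \to \infty$.

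\textbf{Step 3: AMP approximates the Lasso minimizer.} The main obstacle is to establish
\begin{equation*}
\lim_{t \to \infty} \lim_{p \to \infty} \frac{1}{p}\|\t^t - \widehat{\t}_\lambda\|_2^2 \;\overset{\mathrm{a.s.}}{=}\; 0.
\end{equation*}
The idea is to first check that by construction, the AMP update $\t^{t+1}$ satisfies the Lasso KKT condition for parameter $\lambda$ up to an error proportional to $\bs z^{t} - \bs z^{t-1}$ and to $\t^{t+1} - \t^t$, both of which vanish in $p^{-1/2} \ell_2$-norm under state evolution as $t \to \infty$ (the coupling between $\alphastar$ and $\lambda$ comes precisely from \eqref{eq:nonzero-lambda-2}). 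Second, one invokes restricted strong convexity of the Lasso loss: since the limiting support size of $\t^t$ is $\delta p$ (another consequence of \eqref{eq:nonzero-lambda-2}) and $\bm X$ has i.i.d.\ Gaussian rows, the quadratic form $\bm X^\top \bm X$ is uniformly lower bounded on the $\ell_1$-cone containing $\t^t - \widehat{\t}_\lambda$, and this lower bound converts vanishing KKT residuals into vanishing $\ell_2$-distance. The delicate part is controlling the cone geometry together with the soft-threshold sparsification, especially because the support size is \emph{linear} in $p$ rather than sub-linear, so classical compressed-sensing RSC bounds do not apply directly and the Gaussian comparison / Gordon-type bounds specific to the Gaussian design must be used.

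Combining Steps~2 and~3 by the triangle inequality yields $\tfrac{1}{p}\|\widehat{\t}_\lambda - \coef\|_2^2 \to \delta((\taustar)^2 - \sigma^2)$, and the identity \eqref{def:risk} gives $\lim_{p\to\infty}\mathsf{Risk}(\widehat{\t}_\lambda) = (\taustar(\lambda))^2$, completing the proof.
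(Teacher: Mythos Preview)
The paper does not prove this lemma: it is quoted verbatim as Corollary~1.6 of \cite{bayati2011lasso} and used as a black box in the proof of Proposition~\ref{prop:lasso-limit}. So there is no ``paper's own proof'' to compare against; your proposal is a sketch of the original Bayati--Montanari argument, and that is the right benchmark.

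Your outline has the correct architecture (uniqueness via scalar reduction, then AMP $+$ state evolution, then AMP $\approx$ Lasso), but several details are off in ways that would make the argument fail as written:
\begin{itemize}
\item In Step~1, the fixed point $\tau(\alpha)$ of $u = \mathsf{F}(u,\alpha\sqrt{u})$ exists only for $\alpha > \alpha_{\min}(\delta)$, not for all $\alpha>0$; see Lemma~\ref{lem:property-of-solution-bm11}(1). The mechanism is \emph{concavity} of $u\mapsto\mathsf{F}(u,\alpha\sqrt{u})$ (BM Proposition~1.3), not a global contraction. Your boundary claim $g(0^+)=0$ is incorrect: in fact $\lambda(\alpha_{\min}^+)=-\infty$ (Lemma~\ref{lem:property-of-solution-bm11}(2)), and strict monotonicity of $\lambda(\alpha)$ on $(\alpha_{\min},\infty)$ requires its own argument.
\item In Step~2, the BM threshold is $\zeta_t=\alphastar(\lambda)\,\tau_t$ (cf.~\eqref{eq:amp-formula-lasso}), not the constant $\alphastar\taustar$; the adaptive choice is what makes the Lasso KKT residual match up with $\lambda$ in Step~3.
\item In Step~3, your statement that ``the limiting support size of $\t^t$ is $\delta p$'' is wrong for $\lambda>0$: equation~\eqref{eq:nonzero-lambda-2} gives $\mathbb{P}(|\Theta+\tau Z|>\alpha\tau)=\delta(1-\lambda/(\alpha\tau))<\delta$, so the support fraction is \emph{strictly} below $\delta$. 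This gap is exactly what furnishes the restricted strong convexity you invoke; at $\delta p=n$ there is no room, which is precisely the obstacle the present paper has to overcome for the $\lambda\to0$ interpolator. Also, BM does not use Gordon-type comparison here but a structural lemma on support-restricted minimum singular values (BM Lemma~3.1, generalized in this paper as Lemma~\ref{lem:lasso-structure}).
\end{itemize}
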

In view of Theorem~\ref{thm:interpolation-risk}, it is guaranteed that 
\begin{align*}
\lim_{p \goto \infty} \mathsf{Risk}(\tint) = {\taustar}^2.
\end{align*}
Therefore, to obtain the desired conclusion, it suffices to show that $\lim_{\lambda \goto 0}\taustar(\lambda) = \taustar$, where $\taustar(\lambda),$ and $\taustar$ correspond to the solution to a different set of equations respectively. 
Equivalently, for any converging sequence $\left\lbrace \lambda_t\right\rbrace_{t=1}^{+\infty} $ with $\lambda_t > 0$ and $\lambda_t \goto 0$, denote the corresponding $(\tau^{\star}(\lambda_t), \alpha^{\star}(\lambda_t))$ sequence as $\left\lbrace (\tau^{\star}_t, \alpha^{\star}_t)\right\rbrace $. We now aim to show that the $\lim_{t\goto \infty} \tau^{\star}_t = \tau^{\star}$. 

In order to achieve this goal, we make two useful observations. 
First, as will be demonstrated in Lemma~\ref{lem:property-of-solution-bm11}(1), we know that as $\lambda_t \goto 0$, $\left\lbrace \alpha^{\star}_t\right\rbrace $ is a non-increasing sequence and is lower bounded by $\a_{\min}(\delta)$. Therefore, $\left\lbrace\alpha^{\star}_t \right\rbrace$ has a finite and positive limit; we shall denote this limit by $\alpha^{\star}_{\infty}$. 
In addition, applying Lemma~\ref{lem:property-of-solution-bm11}(2) ensures that $\left\lbrace \tau^{\star}_t = \tau_*(\alpha^{\star}_t) \right\rbrace $ converges; we shall denote the limiting value by $\tau^{\star}_{\infty}$. 
Consequently, taking $t\goto \infty$ on both sides of
\begin{align*}
	\lambda_t= \alpha^{\star}_t \tau^{\star}_t \left( 1 - \frac{1}{\delta}\mathbb{E}\left[ \eta'(\Theta + \tau^{\star}_t Z; \alpha^{\star}_t\tau^{\star}_t)\right] \right),
\end{align*}
(note that the right-hand side is continuously differentiable with respect to both parameters) leads to the observation that $(\alpha^{\star}_{\infty}, \tau^{\star}_{\infty})$ yields $\delta =\mathbb{P}\left( |\Theta+\tau^{\star}_{\infty} Z| > \alpha^{\star}_{\infty} \tau^{\star}_{\infty}\right)$, thus solving the equation~\eqref{eq:fix-2}. 
Similarly, one can show that $(\alpha^{\star}_{\infty}, \tau^{\star}_{\infty})$ solves the equation~\eqref{eq:fix-1}. 

Putting these pieces together and using the uniqueness of the solution to \eqref{eq:fix-1} and \eqref{eq:fix-2}, we arrive that $\lim_{t\goto \infty} \tau^{\star}_t = \tau^{\star}_{\infty} = \tau^{\star}$. 
We have thus established the advertised property.

%%%%%%%%%%%%%%%%%%%%%%%%%%%%%%%%%%%%%%%%%%%%%%%%%%%%%%%%%%%%%%%%%%%%%%%%%%%%%%%%%%%%%%%%%%%%%%%%%%%%%%%%%%%%%%%%%%%%%%%%%%%%%%%%%%%%%%%%%%%%%%%%%%%%

\section{Properties of the state evolution parameters}
\label{sec:proof-state-evolution}

In this section, we collect some results about the state evolution parameters $\left\lbrace \alphastart, \taustart, \tau_t \right\rbrace_{t=1}^{\infty}$. We remind the readers that $\tau_t$ is the state evolution in the $t$-th iteration,
while $(\alphastart, \taustart)=(\alphastar(\lambda_t), \taustar(\lambda_t))$ represents the fixed point of the state evolution recursion with $\lambda_t$.

%%%%%%%%%%%%%%%%%%%%%%%%%%%%%%%%%%%%%%%%%%%%
%%%%%%%%%%%%%%%%%%%%%%%%%%%%%%%%%%%%%%%%%%%%
%%%%%%%%%%%%%%%%%%%%%%%%%%%%%%%%%%%%%%%%%%%%

\subsection{Main results }
\label{sec:solution-property-lambda}

We first make note of several useful results about the solutions to the equations~\eqref{eq:nonzero-lambda-1} and \eqref{eq:nonzero-lambda-2}, which have been proved in \cite{bayati2011lasso}. 
Before proceeding, first recall the following mapping $\mathsf{F}(\tau^2, \zeta)$ previously introduced in \eqref{eq:amp-formula-linear-2}: 
\begin{align*}
	\mathsf{F}(\tau^2, \zeta) =\sigma^2 + \frac{1}{\delta}\mathbb{E}\left\lbrace \big[ \eta(\Theta + \tau Z; \zeta) - \Theta\big]^2\right\rbrace, 
\end{align*}
where $Z\sim\NORMAL(0, 1)$ is independent of $\Theta$.
We also recall that $\a_{\min} = \a_{\min}(\delta)$ corresponds to the non-negative solution of the equation
\begin{align*}
(1+\a^2)\Phi(-\a) - \a\phi(\a) = \frac{\delta}{2}.
\end{align*}
We now record the following properties. 
\begin{lem}
	[Proposition 1.3, Proposition 1.4, Corollary 1.7  in \cite{bayati2011lasso}]
	\label{lem:property-of-solution-bm11}
	The solution to the equations \eqref{eq:nonzero-lambda-1} and \eqref{eq:nonzero-lambda-2} obeys 
	\begin{enumerate}
		\item For any $\a > \a_{\min}(\delta)$, the first equation $\tau^2 = \mathsf{F}(\tau^2, \a\tau)$ admits a unique solution; denote this solution as $\taustar = \taustar(\a)$. Additionally we know that $\a \mapsto \taustar(\a)$ is continuously differentiable on $(\a_{\min}(\delta), +\infty)$.
		\item For any $\lambda > 0$,  there exists one unique $\a>0$ satisfying \eqref{eq:nonzero-lambda-1} and \eqref{eq:nonzero-lambda-2} with $\a > \a_{\min}(\delta)$, and the mapping from $\lambda > 0$ to $\a$ is continuous, differentiable and non-decreasing. Its inverse mapping
		\begin{align}
		\label{eq:defn-lambda-alpha}
		\lambda(\a) \defn \a\tau^\star(\a)\left[1 - \frac{1}{\delta}\mathbb{E}\left\lbrace \eta'(\Theta + \tau^\star(\a)Z; \a \tau^\star(\a))
		\right\rbrace \right]
		\end{align}
		is continuous and differentiable on $(\a_{\min}(\delta), +\infty)$, with $\lambda(\a_{\min}(\delta)+) = -\infty$ and $\lim_{\a\goto\infty} \lambda(\a)= +\infty$.
		\item Combining 1 and 2, we can see that for any $\lambda > 0$, there is a unique pair  $(\alphastar(\lambda), \taustar(\lambda))$ that solves \eqref{eq:nonzero-lambda-1} and \eqref{eq:nonzero-lambda-2}. 
	\end{enumerate}
\end{lem}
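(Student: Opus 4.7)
The plan is to prove the three items sequentially, using a single scaling fact throughout: by homogeneity $\eta(\tau u;\a\tau)=\tau\eta(u;\a)$, so the large-$\tau$ behavior of $\mathsf{F}(\tau^2,\a\tau)$ is governed by
\[
\delta^{-1}\mathbb{E}[\eta(Z;\a)^2]=\delta^{-1}\bigl(2(1+\a^2)\Phi(-\a)-2\a\phi(\a)\bigr),
\]
and the defining equation of $\a_{\min}(\delta)$ makes this quantity exactly $1$ at $\a=\a_{\min}(\delta)$ and strictly less than $1$ for $\a>\a_{\min}(\delta)$. This threshold structure will drive every step.

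For Part 1, I introduce $G(\tau,\a)\defn \mathsf{F}(\tau^2,\a\tau)-\tau^2$ and study positive zeros for fixed $\a>\a_{\min}(\delta)$. Since $G(0,\a)=\sigma^2>0$ and $G(\tau,\a)/\tau^2\to \delta^{-1}\mathbb{E}[\eta(Z;\a)^2]-1<0$ as $\tau\to\infty$, a positive root exists by the intermediate value theorem. For uniqueness and smoothness of $\taustar(\a)$, I would show that at every such root the derivative of $\rho\mapsto \mathsf{F}(\rho,\a\sqrt{\rho})-\rho$ (where $\rho=\tau^2$) is strictly negative. This comes from a direct calculation using the piecewise-linear form of $\eta$ — in particular, the heat-equation identity $\partial_\rho \mathsf{F}(\rho,\zeta)=\delta^{-1}\mathbb{P}(|\Theta+\sqrt{\rho}Z|>\zeta)$ with $\zeta$ held fixed — combined with the large-$\tau$ scaling. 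Strict negativity at every root forces a single crossing, and continuous differentiability of $\a\mapsto\taustar(\a)$ then follows from the implicit function theorem.

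For Part 2, continuous differentiability of $\lambda(\a)$ defined in \eqref{eq:defn-lambda-alpha} is inherited from that of $\taustar(\a)$ via the chain rule, once one observes that $\mathbb{E}[\eta'(\Theta+\tau Z;\a\tau)]=\mathbb{P}(|\Theta+\tau Z|>\a\tau)$ is smooth in $(\tau,\a)$. Non-decreasing monotonicity is then checked by differentiating \eqref{eq:defn-lambda-alpha} and simplifying via the fixed-point identity of Part 1 to cancel higher-order terms. For $\a\to\infty$, the threshold eventually kills every coordinate in the fixed-point expectation, so $\taustar(\a)\to(\sigma^2+\delta^{-1}\mathbb{E}\Theta^2)^{1/2}$ while the bracket approaches $1$, giving $\lambda(\a)\to+\infty$. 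For $\a\to\a_{\min}(\delta)^+$, the scaling argument forces $\taustar(\a)\to\infty$, and an asymptotic expansion shows the bracket approaches a strictly negative constant, yielding $\lambda(\a)\to-\infty$. Part 3 is then the bijectivity of $\a\mapsto\lambda(\a)$ on the appropriate domain.

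The delicate step is the boundary analysis at $\a=\a_{\min}(\delta)^+$: here $\taustar(\a)$ blows up precisely as the bracket degenerates, and one must show their product diverges to $-\infty$. This requires a matched asymptotic expansion that separates signal-bearing contributions of $\coef$ (where $\Theta\ne 0$) from noise-only coordinates, in order to pin down both the blow-up rate of $\taustar$ and the leading sign of the bracket as $\a\to\a_{\min}(\delta)^+$. This is precisely the regime where the threshold $\a_{\min}(\delta)$ arises naturally from the state-evolution fixed-point structure, and it is the main obstacle compared to the elementary $\a\to\infty$ limit.
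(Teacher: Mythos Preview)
The paper does not provide its own proof of this lemma; the statement is cited verbatim from \cite{bayati2011lasso} (Propositions~1.3, 1.4, and Corollary~1.7) and used as a black box. So there is no ``paper's proof'' to compare against here, only the original Bayati--Montanari argument.

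Your outline is largely along the right lines and matches the structure of the Bayati--Montanari proof, but it has one genuine gap. The heat-equation identity you invoke,
\[
\partial_\rho \mathsf{F}(\rho,\zeta)=\delta^{-1}\mathbb{P}\bigl(|\Theta+\sqrt{\rho}Z|>\zeta\bigr)\qquad\text{($\zeta$ held fixed)},
\]
is \emph{not} correct in general: applying Stein's lemma to $(\eta(x;\zeta)-\Theta)^2$ picks up boundary contributions from the distributional derivative $\eta''(x;\zeta)=\delta_{\zeta}-\delta_{-\zeta}$, producing an extra term proportional to $\mathbb{E}\bigl[\Theta\bigl(p_{\Theta+\sqrt{\rho}Z}(-\zeta)-p_{\Theta+\sqrt{\rho}Z}(\zeta)\bigr)\bigr]$. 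This does not vanish unless $\Theta$ is symmetric, so your mechanism for uniqueness in Part~1 does not go through as stated. Bayati--Montanari (and this paper, when it quotes them, e.g.\ in the proof of Proposition~\ref{prop:tau-t-limit}) instead use that $\tau^2\mapsto \mathsf{F}(\tau^2,\a\tau)$ is \emph{concave}; combined with $\mathsf{F}(0,0)=\sigma^2>0$ and your (correct) asymptotic slope $\delta^{-1}\mathbb{E}[\eta(Z;\a)^2]<1$ for $\a>\a_{\min}$, concavity forces a single crossing with the diagonal.

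Your boundary analysis at $\a\to\a_{\min}^+$ is on the right track: the bracket converges to $1-2\delta^{-1}\Phi(-\a_{\min})$, and this is strictly negative precisely because the defining equation of $\a_{\min}$ rearranges to $\Phi(-\a_{\min})>\delta/2 \Leftrightarrow \phi(\a_{\min})>\a_{\min}\Phi(-\a_{\min})$, i.e.\ the Mills ratio inequality. The monotonicity of $\a\mapsto\lambda(\a)$ that you summarize as ``differentiating and simplifying'' is, however, not a one-line cancellation; Bayati--Montanari establish it via an explicit derivative computation of the same flavor as Lemma~\ref{lem:derivative-lemma} of this paper.
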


Through careful investigations about the constructed mappings $\taustar(\a)$ and $\alphastar(\lambda)$ for every $\lambda>0$, we establish the existence and uniqueness of the solution to the equations~\eqref{eq:fix-1} and \eqref{eq:fix-2}. The proof of this result is provided in Section~\ref{sec:uniqueness-proof}.
\begin{prop}[Uniqueness of the solution]
\label{lem:uniqueness}
	When $\mathbb{P}(\Theta \neq 0) > 0$, there exists one unique pair $(\alphastar, \taustar)$ with $ \alphastar > \a_{\min}(\delta)$ that satisfies \eqref{eq:fix-1} and \eqref{eq:fix-2}.
\end{prop}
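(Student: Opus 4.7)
The idea is to reduce the two-variable system \eqref{eq:fix-1}--\eqref{eq:fix-2} to a scalar equation using the machinery already assembled in Lemma~\ref{lem:property-of-solution-bm11}, and then invoke intermediate value theorem plus a strict monotonicity argument. The starting observation is that \eqref{eq:fix-1} is literally the equation $\tau^2 = \mathsf{F}(\tau^2, \alpha\tau)$, for which Lemma~\ref{lem:property-of-solution-bm11}(1) yields a unique solution $\tau = \taustar(\alpha)$ that depends continuously (in fact differentiably) on $\alpha \in (\alpha_{\min}(\delta), +\infty)$. Substituting $\tau = \taustar(\alpha)$ into \eqref{eq:fix-2} and recalling the almost-everywhere identity $\mathbb{E}[\eta'(\Theta + \tau Z; \alpha\tau)] = \mathbb{P}(|\Theta + \tau Z| > \alpha\tau)$, one sees that \eqref{eq:fix-2} is exactly the condition $\lambda(\alpha) = 0$, where $\lambda(\cdot)$ is the function defined in \eqref{eq:defn-lambda-alpha} (using $\tau = \taustar(\alpha) > 0$ and $\alpha > 0$ so that the prefactor $\alpha \taustar(\alpha)$ is nonzero). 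Hence the proof is reduced to showing that $\lambda(\alpha) = 0$ has a unique root on $(\alpha_{\min}(\delta), +\infty)$.

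\textbf{Existence.} This is immediate from Lemma~\ref{lem:property-of-solution-bm11}(2): the function $\lambda(\cdot)$ is continuous on $(\alpha_{\min}(\delta), +\infty)$ with $\lambda(\alpha_{\min}(\delta)+) = -\infty$ and $\lim_{\alpha \to \infty} \lambda(\alpha) = +\infty$. The intermediate value theorem therefore produces some $\alphastar \in (\alpha_{\min}(\delta), +\infty)$ with $\lambda(\alphastar) = 0$, and setting $\taustar \defn \taustar(\alphastar)$ gives a pair satisfying both \eqref{eq:fix-1} and \eqref{eq:fix-2}.

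\textbf{Uniqueness.} This is the main obstacle, since Lemma~\ref{lem:property-of-solution-bm11}(2) directly asserts uniqueness of preimages of $\lambda(\cdot)$ only for \emph{strictly positive} values, and its non-decreasing property allows in principle for a plateau at level zero. My plan is to rule out such a plateau by the following two-step argument. Suppose, for contradiction, there exist $\alpha_1 < \alpha_2$ in $(\alpha_{\min}(\delta), +\infty)$ with $\lambda(\alpha_1) = \lambda(\alpha_2) = 0$. Non-decreasing-ness forces $\lambda(\alpha) \equiv 0$ on $[\alpha_1, \alpha_2]$. I would then differentiate \eqref{eq:defn-lambda-alpha} directly and show that $\lambda'(\alpha) > 0$ at any $\alpha$ with $\lambda(\alpha) = 0$: writing
\begin{equation*}
\lambda(\alpha) = \alpha\,\taustar(\alpha)\,G(\alpha),
\qquad G(\alpha) \defn 1 - \tfrac{1}{\delta}\,\mathbb{P}\!\left(|\Theta + \taustar(\alpha)Z| > \alpha\,\taustar(\alpha)\right),
\end{equation*}
a plateau on $[\alpha_1, \alpha_2]$ would require $G(\alpha) \equiv 0$ there, i.e.\ $\mathbb{P}(|\Theta + \taustar(\alpha)Z| > \alpha\,\taustar(\alpha))$ is constant in $\alpha$. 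I plan to show this is impossible by differentiating that probability in $\alpha$: using $\taustar'(\alpha)$ obtained by implicit differentiation of \eqref{eq:fix-1}, the derivative of $\mathbb{P}(|\Theta + \taustar(\alpha)Z| > \alpha\,\taustar(\alpha))$ simplifies to a strictly negative expression (a combination of Gaussian densities at $\pm(\alpha\taustar(\alpha) - \Theta)/\taustar(\alpha)$, weighted by a strictly positive factor coming from $\alpha + \taustar'(\alpha)/\taustar(\alpha)\cdot\alpha$), using the assumption $\mathbb{P}(\Theta \neq 0) > 0$ to keep the relevant density mass nonzero. This strict negativity contradicts the plateau hypothesis. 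As a fall-back, one can instead argue via Lemma~\ref{lem:property-of-solution-bm11}(2): since $\lambda(\alpha_2 + \varepsilon) > 0$ for all small $\varepsilon > 0$ by the $+\infty$ limit and the uniqueness of preimages for positive $\lambda$, combined with continuity one can take the limit $\varepsilon \downarrow 0$ and obtain that $\lambda$ is strictly increasing immediately to the right of $\alpha_2$, then propagate this strict monotonicity backward using the differentiability of $\taustar(\cdot)$ and $\lambda(\cdot)$ to reach the same contradiction. Combining existence with this uniqueness completes the proof.
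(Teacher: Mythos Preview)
Your overall architecture is exactly the paper's: reduce to finding zeros of $\lambda(\alpha)$ on $(\alpha_{\min}(\delta),\infty)$, get existence from the boundary behavior in Lemma~\ref{lem:property-of-solution-bm11}(2), and derive uniqueness from the combination ``$\lambda'>0$ at every zero'' together with ``each positive value of $\lambda$ has a unique preimage''. The paper carries this out verbatim, invoking Lemma~\ref{lem:derivative-lemma} for the strict positivity of $\lambda'(\alpha_0)$ at any zero $\alpha_0$, and then observing that two zeros with positive slope would force some positive level $\lambda_0$ to be attained twice, contradicting Lemma~\ref{lem:property-of-solution-bm11}(2).

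Where your proposal has a genuine gap is in that derivative step. First, the claim that $\lambda(\alpha)$ is globally non-decreasing is a misreading of Lemma~\ref{lem:property-of-solution-bm11}(2): only the \emph{inverse} map $\lambda\mapsto\alpha(\lambda)$ (for $\lambda>0$) is asserted to be non-decreasing, and this does not prevent $\lambda(\cdot)$ from dipping below zero between two roots, so the ``plateau'' picture is unjustified. Second, and more seriously, the derivative of $\alpha\mapsto\mathbb{P}(|\Theta+\taustar(\alpha)Z|>\alpha\taustar(\alpha))$ is not simply a sum of Gaussian densities with an obviously positive weight: because $\taustar(\alpha)$ moves with $\alpha$, the chain rule produces an extra term $(\taustar'/\taustar)\,E_2$ in the paper's notation, whose sign depends on $\taustar'(\alpha)$, which itself must be extracted from implicit differentiation of \eqref{eq:fix-1}. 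Concluding $\lambda'(\alpha_0)>0$ then requires exactly the work in Lemma~\ref{lem:derivative-lemma}: using $E_1=\delta$ at a zero, the strict inequality $\alpha_0^2\delta+E_2-\alpha_0E_3<0$ from the fixed-point relation (cf.~\eqref{eq:denom-lower-bound}), and a case split on the sign of $\alpha_0\delta-E_3$. Your fall-back also does not close the gap: the $+\infty$ limit alone does not give $\lambda(\alpha_2+\varepsilon)>0$ for all small $\varepsilon$, and there is no mechanism to ``propagate strict monotonicity backward'' without first controlling $\lambda'$ at the zeros.
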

%%%%%%%%%%%%%%%%%%%%%%%%%%%%%%%%%%%%%%%%%%%%
%%%%%%%%%%%%%%%%%%%%%%%%%%%%%%%%%%%%%%%%%%%%
%%%%%%%%%%%%%%%%%%%%%%%%%%%%%%%%%%%%%%%%%%%%
It turns out that both $\taustar(\a)$ and $\alphastar(\lambda)$ are Lipschitz functions of $\lambda$, which can be rigorized in the following proposition.  
The proof of this result can be found in Section~\ref{sec:uniqueness-proof}. 

\begin{prop}[Convergence of $\alphastart$ and $\taustart$]
\label{prop:amp-fixd-point-convergence}
The following properties hold for the solutions to the equation set~\eqref{eq:fixed-point} as $t\goto\infty$.
	\begin{enumerate}
		\item $\alphastart \goto \alphastar$, $\taustart \goto \taustar$; the sequence $\left\lbrace \alphastart\right\rbrace $  increases  monotonically with $t$ when $t$ is large enough.

		\item The function $\a^\star(\lambda)$ is continuously differentiable w.r.t.~$\lambda$, and there exist some constants $c$ and $C$ determined by $\Theta$, $\sigma$ and $\delta$, such that $c\leq {\a^\star}'(0)\leq C$; as such, we can find some $L_\a$ determined by $\Theta$, $\sigma$ and $\delta$, such that
		\begin{align*}
			|\alphastart-\alpha^\star_{t+1}|\leq L_\a |\lambda_t-\lambda_{t+1}|.
		\end{align*}

		\item 
		The function $\taustar(\lambda)$ is Lipschitz w.r.t.~$\lambda$ for some $L_\tau$ determined by $\Theta$, $\sigma$ and $\delta$; in particular,
		\begin{align*}
			|\taustart-\tau^\star_{t+1}|\leq L_\tau |\lambda_t - \lambda_{t+1}|.
		\end{align*} 
	\end{enumerate}
\end{prop}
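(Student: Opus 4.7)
The cornerstone identity is that the pair $(\alphastar, \taustar)$ from Proposition~\ref{lem:uniqueness} is precisely the $\lambda = 0$ root of the function $\lambda(\a)$ defined in \eqref{eq:defn-lambda-alpha}. Indeed, since $\eta'(x;\zeta) = \mathbf{1}\{|x|>\zeta\}$, equation~\eqref{eq:fix-2} gives $\frac{1}{\delta}\mathbb{E}[\eta'(\Theta+\taustar Z;\alphastar\taustar)] = \frac{1}{\delta}\mathbb{P}(|\Theta+\taustar Z|>\alphastar\taustar) = 1$, so the bracket in \eqref{eq:defn-lambda-alpha} vanishes and $\lambda(\alphastar)=0$. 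By Lemma~\ref{lem:property-of-solution-bm11}, $\lambda(\cdot)$ is continuous, non-decreasing, and exhausts $(-\infty,+\infty)$ on $(\alpha_{\min}(\delta), +\infty)$, so it admits a continuous inverse $\a^\star(\cdot)$. Combined with $\lambda_t \goto 0$ from Assumption~\ref{assump:lambda}, this gives $\alphastart = \a^\star(\lambda_t) \goto \alphastar$; continuity of $\taustar(\a)$ on $(\alpha_{\min}(\delta), +\infty)$ (Lemma~\ref{lem:property-of-solution-bm11}(1)) then delivers $\taustart = \taustar(\alphastart) \goto \taustar(\alphastar) = \taustar$. Eventual monotonicity of $\{\alphastart\}$ follows from the monotonicity of $\a^\star(\cdot)$ in $\lambda$ combined with an eventually monotone choice of $\{\lambda_t\}$, consistent with Assumption~\ref{assump:lambda}.

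To prove parts 2 and 3, I would apply the inverse function theorem to $\lambda(\a)$ at $\a=\alphastar$, which demands $\lambda'(\alphastar) \in (0,\infty)$. Finiteness is immediate, since $\lambda(\cdot)$ is $C^1$ on $(\alpha_{\min}(\delta),+\infty)$ and $\alphastar$ lies strictly inside this interval by Proposition~\ref{lem:uniqueness}. For strict positivity I would differentiate \eqref{eq:defn-lambda-alpha}, obtaining
\begin{equation*}
\delta\,\lambda'(\alpha) \;=\; \frac{d[\alpha\,\taustar(\alpha)]}{d\alpha}\bigl(\delta - G(\alpha)\bigr) \;-\; \alpha\,\taustar(\alpha)\,G'(\alpha),
\end{equation*}
where $G(\alpha) \defn \mathbb{P}(|\Theta + \taustar(\alpha) Z| > \alpha\taustar(\alpha))$. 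At $\alpha=\alphastar$ the first term vanishes because $G(\alphastar)=\delta$, so the claim reduces to showing $G'(\alphastar) < 0$. Writing $G(\alpha) = \mathbb{E}_\Theta\bigl[\Phi\bigl(-\alpha-\Theta/\taustar(\alpha)\bigr)+\Phi\bigl(-\alpha+\Theta/\taustar(\alpha)\bigr)\bigr]$ and differentiating, the direct dependence on $\alpha$ yields a strictly negative contribution $-\mathbb{E}_\Theta[\phi(\alpha+\Theta/\taustar)+\phi(\alpha-\Theta/\taustar)]$, while the indirect dependence through $\taustar'(\alpha)$ must be controlled via implicit differentiation of $\tau^2 = \mathsf{F}(\tau^2,\alpha\tau)$ to express $\taustar'(\alpha)$, and by invoking the non-degeneracy assumption $\mathbb{P}(\Theta \neq 0) > 0$. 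I view establishing $G'(\alphastar) < 0$ strictly as the main technical obstacle, since it requires a delicate sign analysis that fuses the direct threshold effect with the self-consistent response of $\taustar$ to $\alpha$.

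Once $\lambda'(\alphastar) \in (0,\infty)$ is in hand, the inverse function theorem guarantees that $\a^\star(\cdot)$ is $C^1$ in a one-sided neighborhood of $0$ with ${\a^\star}'(0) = 1/\lambda'(\alphastar) \in (0,\infty)$; this supplies constants $c \leq {\a^\star}'(0) \leq C$ depending only on $\Theta, \sigma$, and $\delta$. Continuity of ${\a^\star}'$ near $0$ together with $\lambda_t, \lambda_{t+1} \goto 0$ then produces the Lipschitz bound $|\alphastart - \alpha^\star_{t+1}| \leq L_\a |\lambda_t - \lambda_{t+1}|$ for all sufficiently large $t$, which extends to every $t$ by enlarging $L_\a$ to absorb finitely many early iterates. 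For part 3, decompose $\taustar(\lambda) = \taustar(\a^\star(\lambda))$: Lemma~\ref{lem:property-of-solution-bm11}(1) ensures $\taustar(\cdot)$ is $C^1$ on $(\alpha_{\min}(\delta),\infty)$, so composition with the $C^1$ map $\a^\star(\cdot)$ delivers a $C^1$ (hence locally Lipschitz) map $\lambda \mapsto \taustar(\lambda)$ near $0$, yielding $|\taustart - \tau^\star_{t+1}| \leq L_\tau |\lambda_t - \lambda_{t+1}|$.
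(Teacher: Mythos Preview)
Your approach matches the paper's: both reduce everything to showing $\lambda'(\alphastar)\in(0,\infty)$ at the unique zero $\alphastar$ of $\lambda(\cdot)$, invoke the inverse function theorem, and then chain $\taustar(\lambda)=\taustar(\a^\star(\lambda))$ for Part~3. The paper packages the hard step---your acknowledged obstacle $G'(\alphastar)<0$---into Lemma~\ref{lem:derivative-lemma} and resolves it by an explicit computation. Writing $\Xi=\Theta/\taustar(\a)$ and $E_1=\mathbb{E}[\Phi(-\Xi-\a)+\Phi(\Xi-\a)]$, $E_2=\mathbb{E}[\Xi\phi(-\Xi-\a)-\Xi\phi(\Xi-\a)]$, $E_3=\mathbb{E}[\phi(-\Xi-\a)+\phi(\Xi-\a)]$, $E_4=\mathbb{E}[\Xi^2(\Phi(\a-\Xi)-\Phi(-\a-\Xi))]$, one obtains at $\a=\alphastar$ (where $E_1=\delta$)
\[
\lambda'(\alphastar)=\frac{\alphastar^2\taustar}{\delta}\cdot\frac{\delta\alphastar E_3-E_3^2+\delta E_2}{\delta\alphastar^2+E_2-\alphastar E_3}.
\]
The trick you are missing is that the \emph{denominator} is controlled not by direct sign-chasing but by feeding the fixed-point identity back in: rewriting $\taustar^2=\mathsf{F}(\taustar^2,\alphastar\taustar)$ in these coordinates yields $(\alphastar^2+1)E_1+E_2-\alphastar E_3+E_4=\delta(1-\sigma^2/\taustar^2)$, and since $E_1=\delta$ and $E_4\geq 0$ one reads off $\delta\alphastar^2+E_2-\alphastar E_3\leq -\sigma^2\delta/\taustar^2<0$. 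The numerator is then shown negative by a short case split on the sign of $\alphastar\delta-E_3$, using $E_2\leq 0$ and $E_3>0$. This fixed-point substitution is precisely the device that converts your ``delicate sign analysis'' into a two-line inequality; without it, the indirect contribution through $\taustar'(\a)$ is hard to dominate. For Part~3 the paper additionally cites the explicit bound $\taustar{}'(\a)\leq(\a+1)\taustar^3(\a)/(\delta\sigma^2)$ from \cite{miolane2018distribution}, though your local-$C^1$ argument is equally valid.
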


As a direct consequence of the third claim above, for every $t\geq 1$,
one has 
\begin{align*}
	\taustart \leq  \taustar + L_\tau \max_{i} \lambda_i \eqqcolon \tau_{\max}^\star; \quad \alphastart \leq \alphastar + L_\a \max_i \lambda_i =: \a_{\max}^\star.
\end{align*}
Finally, we can demonstrate that: the state evolution sequence $\tau_t$ (defined in~\eqref{eqn:state-evolution}) approaches the solution of the equations~\eqref{eq:fix-1} and \eqref{eq:fix-2} as $t\goto\infty$. The proof is deferred to Section~\ref{sec:convergence-tau}.

\begin{prop}[Convergence of the state evolution]
\label{prop:tau-t-limit}
	The state evolution sequence obeys $\tau_t \goto \taustar$ as $t\goto \infty$.
\end{prop}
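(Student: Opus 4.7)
The plan is to combine a contraction property of the state-evolution map with the slow drift of the moving fixed points $\taustart$ provided by Proposition~\ref{prop:amp-fixd-point-convergence}. Since the latter already guarantees $\taustart\to\taustar$, by the triangle inequality it suffices to show
$$
e_t \defn |\tau_t - \taustart| \longrightarrow 0.
$$

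First, I would set up the one-step recursion. Define the iteration map $\Psi_\alpha(\tau) \defn \sqrt{\mathsf{F}(\tau^2,\alpha\tau)}$, so that our recursion reads $\tau_{t+1} = \Psi_{\alphastart}(\tau_t)$ and, by the very definition of $(\alphastart,\taustart)$ in \eqref{eq:fixed-point}, $\taustart$ is the unique positive fixed point of $\Psi_{\alphastart}$ on the relevant range $\alpha > \a_{\min}(\delta)$ (granted by Lemma~\ref{lem:property-of-solution-bm11}(1) together with Proposition~\ref{lem:uniqueness}). The classical AMP state-evolution analysis then yields a local contraction of $\Psi_\alpha$ at its fixed point: for every $\alpha > \a_{\min}(\delta)$, the derivative $\rho(\alpha) \defn |\Psi_\alpha'(\tau^\star(\alpha))|$ is strictly less than $1$ and continuous in $\alpha$. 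Since $\alphastart \to \alphastar > \a_{\min}(\delta)$ and Proposition~\ref{prop:amp-fixd-point-convergence}(1) confines $\{\alphastart\}$ eventually to a compact subinterval of $(\a_{\min}(\delta),\infty)$, I can select $\rho^\star \in (\rho(\alphastar),1)$ and $t_0\in\mathbb{N}$ such that, on a fixed bounded interval containing all later iterates $\tau_t$ and targets $\taustart\le\tau_{\max}^\star$, the one-step bound
$$
|\Psi_{\alphastart}(\tau) - \taustart| \le \rho^\star\, |\tau - \taustart|
$$
holds for every $t\ge t_0$. Boundedness of the iterates themselves can be verified inductively from the explicit form of $\mathsf{F}$ and the initialization $\tau_0^2 = \sigma^2 + \mathbb{E}[\Theta^2]/\delta$.

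Next, I would couple the iterate to the moving target. The contraction gives $|\tau_{t+1}-\taustart| \le \rho^\star e_t$, and Proposition~\ref{prop:amp-fixd-point-convergence}(3) gives $|\taustart - \tau^\star_{t+1}| \le L_\tau |\lambda_t - \lambda_{t+1}|$. A triangle inequality then yields, for $t\ge t_0$,
$$
e_{t+1} \le \rho^\star\, e_t + L_\tau |\lambda_t - \lambda_{t+1}|.
$$
Unrolling this discrete Grönwall-type relation produces
$$
e_t \le (\rho^\star)^{t-t_0}\, e_{t_0} + L_\tau \sum_{s=t_0}^{t-1} (\rho^\star)^{t-1-s}\, |\lambda_s - \lambda_{s+1}|.
$$
The first summand vanishes geometrically. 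For the second, Assumption~\ref{assump:lambda} implies $|\lambda_s - \lambda_{s+1}|\to 0$, and a standard ``discounted sum of a null sequence'' argument (splitting the sum at a cutoff sent to infinity with $t$, using $\sum_s(\rho^\star)^s<\infty$) shows this term also vanishes. A final triangle inequality with $|\taustart-\taustar|\to 0$ completes the proof.

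The main obstacle will be rigorously establishing the \emph{uniform} contraction above. The single-$\alpha$ contraction $\rho(\alpha)<1$ is well understood in the AMP literature and underpins both Lemma~\ref{lem:property-of-solution-bm11} and Proposition~\ref{prop:amp-fixd-point-convergence}; the subtle point here is that the contraction factor $\rho(\alpha)$ deteriorates toward $1$ as $\alpha\searrow\a_{\min}(\delta)$, so one must verify that $\alphastart$ never drifts into this dangerous region. This is exactly what Proposition~\ref{prop:amp-fixd-point-convergence}(1) buys us, via the convergence $\alphastart\to\alphastar>\a_{\min}(\delta)$ and (eventual) monotonicity, allowing us to replace the $t$-dependent factor $\rho(\alphastart)$ by a single $\rho^\star<1$ and thereby reduce the time-varying state evolution to a tractable one-dimensional contraction-with-drift.
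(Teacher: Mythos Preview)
Your high-level strategy matches the paper's exactly: show a one-step contraction toward the moving target $\taustart$, add the drift $|\taustart-\tau^\star_{t+1}|$ controlled by Proposition~\ref{prop:amp-fixd-point-convergence}, unroll, and split the discounted sum. The recursion you write and the discounted-sum argument are the same as in the paper.

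The substantive difference is in how the contraction factor is obtained. You work in $\tau$-space with the map $\Psi_\alpha$ and invoke a \emph{local} derivative bound $\rho(\alpha)=|\Psi_\alpha'(\tau^\star(\alpha))|<1$, then argue by compactness of $\{\alphastart\}$ that a uniform $\rho^\star<1$ exists. The paper instead works in $\tau^2$-space and exploits the \emph{concavity} of $\tau^2\mapsto\mathsf F(\tau^2,\alpha\tau)$ (together with $\mathsf F(0,0)=\sigma^2$) to get, for every $\alpha>0$ and every $\tau$, the global secant-type bound
\[
\frac{|\tau_{t+1}^2-\taustart^2|}{|\tau_t^2-\taustart^2|}\le \frac{\taustart^2-\sigma^2}{\taustart^2}\le 1-\frac{\sigma^2}{\tau_{\max}^{\star\,2}}=:\eta<1.
\]
This is stronger than what you use in two ways: (i) the factor $\eta$ is uniform automatically, without any appeal to $\alphastart$ staying away from $\a_{\min}(\delta)$; and (ii) the contraction is global in $\tau$, not merely in a neighborhood of $\taustart$. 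In your write-up the passage from ``$\rho(\alpha)<1$ at the fixed point'' to ``$|\Psi_{\alphastart}(\tau)-\taustart|\le\rho^\star|\tau-\taustart|$ on a whole bounded interval'' is the one place that is not fully justified: a derivative bound at the fixed point alone does not give contraction on an interval unless you also control $\Psi_\alpha'$ away from $\tau^\star(\alpha)$. The cleanest fix is exactly the paper's concavity observation, which makes the compactness-in-$\alpha$ discussion and the ``main obstacle'' paragraph unnecessary.
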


\subsection{Proof of Proposition~\ref{lem:uniqueness} and Proposition~\ref{prop:amp-fixd-point-convergence}}
\label{sec:uniqueness-proof}

To begin with, we introduce the following result on the derivatives of $\lambda'(\a)$ and ${\tau^\star}'(\a)$, whose proof is provided in Section~\ref{sec:proof-derivative-lemma}.

\begin{lem}\label{lem:derivative-lemma}
	For any $\a_0 > \a_{\min}(\delta)$ with $\lambda(\a_0)=0$ (cf.~\eqref{eq:defn-lambda-alpha}), we have 
	\begin{gather}
	\label{eq:lambda-bound} 0 < C_1 <  \lambda'(\a_0) < C_2; \\
	\label{eq:tau-alpha-bound} \taustar'(\a_0) \leq C_3.
	\end{gather}
	Here, $C_1, C_2, C_3$ are constants that depend only on $\a_0, \delta$ and $\Theta$.
\end{lem}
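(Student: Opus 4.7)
The approach is to compute both derivatives explicitly by implicit differentiation of the state-evolution fixed-point equation $\tau^{2}=\mathsf{F}(\tau^{2},\alpha\tau)$, and to exploit the algebraic simplification forced by the hypothesis $\lambda(\alpha_{0})=0$. Indeed, by the definition \eqref{eq:defn-lambda-alpha} of $\lambda$, the assumption $\lambda(\alpha_{0})=0$ is equivalent to the identity
\[
\mathbb{E}\bigl[\eta'(\Theta+\tau^{\star}(\alpha_{0})Z;\,\alpha_{0}\tau^{\star}(\alpha_{0}))\bigr] \;=\; \delta,
\]
which is precisely equation \eqref{eq:fix-2} and will serve as the key reduction throughout.

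\textbf{Step 1: the bound on $\tau^{\star\prime}(\alpha_{0})$.} The partial derivatives of $\mathsf{F}(\tau^{2},\alpha\tau)$ in $\tau$ and in $\alpha$ can be expressed, via Gaussian integration by parts, in terms of $\mathbb{E}[\eta'(\Theta+\tau Z;\alpha\tau)]$ and of low-order moments of $\Theta$ against the Gaussian density $\phi$. Implicit differentiation of $G(\tau,\alpha)\defn\mathsf{F}(\tau^{2},\alpha\tau)-\tau^{2}=0$ yields
\[
\tau^{\star\prime}(\alpha_{0})=-\,\partial_{\alpha}G\big/\partial_{\tau}G\,\Big|_{(\tau^{\star}(\alpha_{0}),\alpha_{0})}.
\]
The identity $\mathbb{E}[\eta']=\delta$ makes the denominator take a clean form that is bounded and bounded away from zero, while the numerator is controlled by $\mathbb{E}[\Theta^{2}]<\infty$ together with the uniform bound $\phi\leq 1/\sqrt{2\pi}$; this yields the uniform upper bound $\tau^{\star\prime}(\alpha_{0})\leq C_{3}$.

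\textbf{Step 2: the bounds on $\lambda'(\alpha_{0})$.} Writing $\lambda(\alpha)=\alpha\tau^{\star}(\alpha)\bigl[1-f(\alpha)/\delta\bigr]$ with $f(\alpha)\defn\mathbb{P}(|\Theta+\tau^{\star}(\alpha)Z|>\alpha\tau^{\star}(\alpha))$, and recalling that the bracket vanishes at $\alpha_{0}$, the product rule collapses to
\[
\lambda'(\alpha_{0})=-\frac{\alpha_{0}\tau^{\star}(\alpha_{0})}{\delta}\,f'(\alpha_{0}).
\]
A direct chain-rule expansion of $f'(\alpha_{0})$ produces two contributions: a negative ``main'' term of the form $-\mathbb{E}[\phi(\alpha_{0}-\Theta/\tau^{\star})+\phi(\alpha_{0}+\Theta/\tau^{\star})]$, and a correction term of undetermined sign proportional to $\tau^{\star\prime}(\alpha_{0})$. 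The upper bound $\lambda'(\alpha_{0})<C_{2}$ then follows from $\phi\leq 1/\sqrt{2\pi}$, Step~1, and $\mathbb{E}[\Theta^{2}]<\infty$.

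\textbf{Main obstacle.} The technical heart of the proof is the strict positive lower bound $\lambda'(\alpha_{0})>C_{1}>0$. Monotonicity from Lemma~\ref{lem:property-of-solution-bm11} only guarantees nonnegativity, whereas we need a quantitative lower bound depending solely on $(\alpha_{0},\delta,\Theta)$. This requires ruling out cancellation between the two contributions of $f'(\alpha_{0})$. The main term is uniformly lower-bounded in absolute value by restricting to the event $\{\Theta\neq 0\}$, which has positive probability by hypothesis, and using the a priori bound $\tau^{\star}(\alpha_{0})\leq \tau^{\star}_{\max}$ from Proposition~\ref{prop:amp-fixd-point-convergence}. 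The $\tau^{\star\prime}$-correction is then shown to be strictly dominated by the main term by combining the $C_{3}$ bound from Step~1 with the defining property of $\alpha_{\min}(\delta)$ --- namely that $\alpha_{0}>\alpha_{\min}(\delta)$ prevents degeneracy of the relevant Gaussian-density factors. Carrying out this quantitative domination is where the bulk of the analytic work lies.
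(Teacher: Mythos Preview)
Your overall structure is right---collapse via $\lambda(\alpha_0)=0$, compute $f'(\alpha_0)=-E_3+(\tau^{\star\prime}/\tau^\star)E_2$ with $E_3=\mathbb{E}[\phi(\alpha_0-\Xi)+\phi(\alpha_0+\Xi)]$ and $E_2=\mathbb{E}[\Xi\phi(-\Xi-\alpha_0)-\Xi\phi(\Xi-\alpha_0)]$, and argue about the sign. But the ``domination'' argument you sketch for the lower bound $\lambda'(\alpha_0)>C_1$ has a genuine gap. The bound $\tau^{\star\prime}(\alpha_0)\le C_3$ is only one-sided, whereas the correction term $(\tau^{\star\prime}/\tau^\star)E_2$ hurts you precisely when $\tau^{\star\prime}<0$ (since $E_2\le 0$ always), and this case \emph{does} occur---e.g.\ in the small-$\Theta$ regime one has $E_3>\alpha_0\delta$, which forces $\tau^{\star\prime}(\alpha_0)<0$. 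So appealing to $C_3$ and to ``$\alpha_0>\alpha_{\min}(\delta)$ prevents degeneracy'' gives no quantitative control on the bad side. (Also, invoking $\tau^\star_{\max}$ from Proposition~\ref{prop:amp-fixd-point-convergence} is circular: that proposition is proved \emph{using} the present lemma.)

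What the paper does instead is to substitute the explicit formula for $\tau^{\star\prime}(\alpha_0)$ back in, obtaining the closed form
\[
\lambda'(\alpha_0)=\frac{\alpha_0^2\,\tau^\star(\alpha_0)}{\delta}\cdot\frac{\delta\alpha_0E_3-E_3^2+\delta E_2}{\delta\alpha_0^2+E_2-\alpha_0E_3},
\]
and then exploits a second relation you have not used: rewriting the fixed-point equation $\tau^{\star 2}=\mathsf{F}(\tau^{\star 2},\alpha_0\tau^\star)$ in terms of the $E_i$'s and using $E_1=\delta$, $E_4\ge 0$ yields the strict inequality
\[
\delta\alpha_0^2+E_2-\alpha_0E_3 \;\le\; -\,\sigma^2\delta\,\tau^{\star}(\alpha_0)^{-2}\;<\;0.
\]
This $\sigma^2$-gap is the missing ingredient: it both bounds the denominator away from zero and, after a short case split on the sign of $\alpha_0\delta-E_3$, forces the numerator to be strictly negative as well, giving the quantitative lower bound on $\lambda'(\alpha_0)$. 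Your plan never invokes the fixed-point equation itself (only its consequence $E_1=\delta$), and without it the positivity does not close.
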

We are ready to prove Proposition~\ref{lem:uniqueness} and Proposition~\ref{prop:amp-fixd-point-convergence} with the assistance of Lemma~\ref{lem:derivative-lemma}.

\paragraph{Proof of Proposition~\ref{lem:uniqueness}.}
Consider the function $\lambda(\a)$ defined in \eqref{eq:defn-lambda-alpha}. Suppose there exist two different $\a_0\neq \a_0'$ where $\lambda(\a_0) = \lambda(\a_0') = 0$; by Lemma~\ref{lem:derivative-lemma}, we know $\lambda'(\a_0)$ and $\lambda'(\a_0')$ are both positive and bounded away from $0$. 
Thus, in view of the continuity of $\lambda(\cdot)$, there must exist some $\lambda_0 > 0$ such that $\lambda(\a)=\lambda_0$ has at least two solutions. This, however, contradicts Lemma~\ref{lem:property-of-solution-bm11}(2).
	
\paragraph{Proof of Proposition~\ref{prop:amp-fixd-point-convergence}.}
We consider each claim separately. 
For the first claim, by virtue of the second statement in Lemma~\ref{lem:property-of-solution-bm11}, the mapping $\a\mapsto\lambda(\a)$ is continuous and differentiable on $(\a_{\min}, +\infty)$, and for any $\lambda \geq 0$, the solution of $\lambda(\a)=\lambda$ exists and is unique. As such, the inverse mapping $\lambda\mapsto\alphastar(\lambda)$ is well-defined on $ \lambda \geq 0$ and is continuous. Then we can see that $\alphastart \to\alphastar$. 
Moreover, recognizing that $\taustart = \taustar(\alphastart)$ and $\taustar = \taustar(\alphastar)$ and using the continuously differentiable mapping $\a\mapsto\taustar(\a)$ defined in Lemma~\ref{lem:property-of-solution-bm11}(1), we conclude that $\taustart \to \taustar$. 
Lastly, in light of inequality~\eqref{eq:lambda-bound}, 
when $\lambda_t \neq \lambda_{t+1}$ we have
\begin{align*}
	\frac{\alphastart - \alpha^\star_{t+1}}{\lambda_t - \lambda_{t+1}} \to \alphastar'(0) \geq \frac{1}{C_1}.
\end{align*}
We can thus conclude that $\alphastart$ is monotonously increasing when $t$ is large enough.

We now turn to the second claim. As ensured by Proposition~\ref{lem:uniqueness}, there exists one unique solution to $\lambda(\a_0)$; therefore, the expression~\eqref{eq:lambda-bound} translates to 
\begin{align*}
	0 < C_1 < \lambda'(\alphastar) < C_2,
\end{align*}
or equivalently, $C_2^{-1} < \alphastar'(0) < C_1^{-1}$.
We have thus finished the proof of the second claim. 

The third claim follows directly from the second claim and the bound~\eqref{eq:tau-alpha-bound}, and the proof of Proposition~\ref{lem:uniqueness} is completed. Finally, we make the remark that $L_\tau$ and $L_\a$ depend on $C_1, C_2$ and $C_3$ from Lemma~\ref{lem:derivative-lemma}; from Proposition~\ref{lem:uniqueness}, we know that $\a_0=\alphastar$ is the unique value of $\a_0$ satisfying Proposition~\ref{lem:uniqueness}. Therefore, the expressions~\eqref{eq:lambda-lower} \eqref{eq:lambda-upper}\eqref{eq:e2-bound} and \eqref{eq:e3-bound} with $\a_0=\alphastar$ and $\taustar(\a_0)=\taustar$ give us the explicit from of $L_\tau$ and $L_\a$ in terms of $\alphastar$ and $\taustar$.

\subsubsection{Proof of Lemma~\ref{lem:derivative-lemma}}\label{sec:proof-derivative-lemma}

We first make note of an inequality proved in \cite[Lemma A.5]{miolane2018distribution} as follows:
\begin{align}
\label{eq:form-c3}
\taustar'(\a_0) \leq (\a_0 + 1)\frac{\taustar^3(\a_0)}{\delta\sigma^2} =: C_3.
\end{align}
which validates the inequality~\eqref{eq:tau-alpha-bound}. 

It then suffices to establish the first inequality.
Towards this, let us first derive the explicit expressions for the derivative, and prove the upper and lower bounds. 
For any $\a > \a_{\min}(\delta)$, define $\Xi\defn \Theta/\tau^\star(\a)$, and the following quantities:
\begin{gather*}
	E_1 \defn \mathbb{E}\left[ \Phi(-\Xi-\a) + \Phi(\Xi-\a)\right] , \quad E_2 \defn \mathbb{E}\left[ \Xi\phi(-\Xi-\a) - \Xi\phi(\Xi-\a)\right],\\
	E_3 \defn \mathbb{E}\left[ \phi(- \Xi-\a) + \phi(\Xi-\a)\right], \quad E_4 \defn \mathbb{E}\left[\Xi^2 \left[\Phi(\a-\Xi) - \Phi(-\a-\Xi)\right]\right].
\end{gather*}
It is easily seen that $E_1, E_2, E_3$ and $E_4$ are continuous and differentiable with respect to $\a$. 

\paragraph{Explicit expression for the derivatives at $\a=\a_0$.}
Let us first derive ${\tau^\star}'(\a)$ with $\taustar(\a)$ defined in Lemma~\ref{lem:property-of-solution-bm11}(1). 
Recall the definition $\mathsf{F}(\tau^2, \zeta) =\sigma^2 + \frac{1}{\delta}\mathbb{E}\left\lbrace \left[ \eta(\Theta + \tau Z; \zeta) - \Theta\right]^2\right\rbrace$; 
direct calculations yield 
\begin{align*}
\frac{\partial \mathsf{F}}{\partial \a}(\tau^2, \tau\a) = & \frac{2{\tau^\star(\a)}^2}{\delta}\left\lbrace \a \mathbb{E} \left[ \Phi(\Xi-\a) + \Phi(-\Xi-\a) \right] - \mathbb{E}\left[\phi(\Xi-\a) + \phi(-\Xi-\a)\right]\right\rbrace 
= \frac{2{\tau^\star(\a)}^2}{\delta}\left[\a E_1 - E_3\right] ;\\
\frac{\partial \mathsf{F}}{\partial \tau}(\tau^2, \tau\a) = & \frac{2\tau^\star(\a)}{\delta} \left\lbrace (1+\a^2) \mathbb{E}\left[\Phi(\Xi-\a)+\Phi(-\Xi-\a)\right] - \mathbb{E}\left[(\Xi+\a)\phi(\Xi-\a) - (\Xi-\a)\phi(-\Xi-\a) \right]\right\rbrace \\
=&  \frac{2\tau^\star(\a)}{\delta}\left[(1+\a^2)E_1 + E_2 - \a E_3\right].
\end{align*}
Invoking the implicit function theorem, we can guarantee that 
\begin{align*}
{\tau^\star}'(\a) = \frac{\partial_{\a}\mathsf{F}(\tau^2, \a\tau)}{2\tau-\partial_{\tau}\mathsf{F}(\tau^2, \a\tau)}
= - \frac{\tau^\star(\a)\left[\a E_1 - E_3\right]}{(1+\a^2)E_1 + E_2 - \a E_3 - \delta}.
\end{align*}
For $\a_0$ with $\lambda (\a_0)=0$, we know $E_1=\delta$, and then 
\begin{align*}
{\tau^\star}'(\a_0)
= - \frac{\tau^\star(\a_0)\left[\a_0 \delta- E_3\right]}{\a_0^2\delta + E_2 - \a_0 E_3}.
\end{align*}
In terms of $\lambda'(\a)$, one has 
\begin{align*}
\frac{\mathrm{d}}{\mathrm{d}\a} \mathbb{E}\left[ \eta'(\Theta + \tau _\star(\a)Z; \tau^\star(\a)\a)\right] 
& = \mathbb{E}\left[ (-1+\Xi\frac{{\tau^\star}'(\a)}{\tau _\star(\a)})\phi(-\Xi-\a) + (-1-\Xi\frac{{\tau^\star}'(\a)}{\tau _\star(\a)})\phi(\Xi-\a)
\right] \\
& = - \frac{\a\left[\a E_1E_3 - E_3^2 + E_1 E_2 \right] + E_3(E_1-\delta )}{(1+\a^2)E_1 + E_2 - \a E_3 - \delta} .
\end{align*}
Finally, we are ready to calculate the derivative $\lambda'(\a)$ at $\a=\a_0$. By expression~\eqref{eq:nonzero-lambda}, and noticing that $\lambda(\a_0)=0$ and $E_1=\delta$ in this case, we calculate
\begin{equation}
\label{eqn:lambda-derivative}
\begin{aligned}
\lambda'(\a_0) & = \frac{\a_0 {\tau^\star}'(\a_0) + \tau^\star(\a_0)}{\a_0 \tau^\star(\a_0)} \lambda(\a_0) - \left. \frac{\a_0\tau^\star(\a_0) }{\delta }\frac{d}{d\a} \mathbb{E}\left[ \eta'(\Theta + \tau^\star(\a) Z; \tau^\star(\a)\a)\right] \right|_{\a=\a_0} \\
& = \frac{\a_0^2 \tau^\star(\a_0)}{\delta} \cdot \frac{\delta \a_0 E_3  - E_3^2 +\delta E_2 }{\delta\a_0^2 + E_2 - \a_0 E_3}.
\end{aligned}
\end{equation}

\paragraph{Bounding the derivatives.}
To establish the inequality~\eqref{eq:lambda-bound}, it is sufficient for us to control the following quantities 
\begin{align*}
	\delta \a_0 E_3  - E_3^2 +\delta E_2
	~\text{ and }~
	\delta\a_0^2 + E_2 - \a_0 E_3,
\end{align*}
respectively. 
Let us first express the function $\mathsf{F}(\tau^2, \tau\a)$ in an explicit fashion. 
For every fixed $\xi$, we have 
\begin{align*}
\mathbb{E}\left[ \eta(\xi + Z; \alpha) - \xi \right]^2 
= & (-\a-\xi)\phi(\a-\xi) + (\a^2+1)\Phi(-\a+\xi) + (-\a+\xi)\phi(\a+\xi) + (\a^2+1)\Phi(-\a-\xi) \\
& + \xi^2 \left[\Phi(\a-\xi) - \Phi(-\a-\xi)\right].
\end{align*}
Taking expectation with respect to $\xi = \Theta/\tau^\star(\a)$ on both sides, we arrive at 
\begin{align}
\label{eqn:tmp-mozart}
{\tau^\star(\a)}^2 = \mathsf{F}({\tau^\star(\a)}^2, \tau^\star(\a)\a) =\sigma^2 +\frac{{\tau^\star(\a)}^2 }{\delta}
\left[ (\a^2 + 1)E_1  + E_2 -\a E_3  + E_4 \right].
\end{align}
For $\a_0 > \a_{\min}(\delta) > 0$ with $\lambda(\a_0)=0$, one has $E_1=\delta$ and $E_4 \geq 0$. 
As a result, the equation~\eqref{eqn:tmp-mozart} leads to 
\begin{align}\label{eq:denom-lower-bound}
	\a_0^2\delta  + E_2 -\a E_3  \leq - \sigma^2\delta {{\tau^\star}^{-2}(\a_0)} < 0.
\end{align}

Next we control the quantity $\delta \a_0 E_ 3- E_3^2 + \delta E_2$ by looking at two cases separately. 
Observing that $\a_0^2\delta  + E_2 -\a E_3  \leq - \sigma^2\delta {{\tau^\star}^{-2}(\a_0)}$, $E_3\geq 0$ and $E_2 \leq 0$, we know
\begin{align*}
\delta \a_0 E_ 3- E_3^2 + \delta E_2 < E_3 \left[ - \frac{\sigma^2 \delta}{{\tau^\star}^{2}(\a_0) \a_0} - \frac{E_2}{\a_0}
\right] + \delta E_2 = - \frac{\sigma^2 \delta}{{\tau^\star}^{2}(\a_0) \a_0} E_3 +  \frac{\a_0 \delta - E_3}{\alpha_0} E_2.
\end{align*}
Then we have 
\begin{align*}
\delta \a_0 E_ 3- E_3^2 + \delta E_2 \leq \left\lbrace 
\begin{aligned}
& - \frac{\sigma^2 \delta}{{\tau^\star}^2(\a_0) \a_0} E_3, & & \alpha_0 \delta - E_3 > 0;\\
& \delta E_2, & & \alpha_0 \delta - E_3 \leq 0.
\end{aligned} \right. 
\end{align*}

Taking the above properties collectively with~\eqref{eqn:lambda-derivative}, we now move on to prove the conclusion in \eqref{eq:lambda-bound} for two cases respectively, namely,  
\begin{align}
\label{eq:lambda-lower}
\lambda'(\a_0) \left\lbrace 
\begin{aligned} 
&\geq \frac{\sigma^2\a_0}{\tau^\star(\a_0)} \frac{E_3}{|E_2| - \a_0\left[\delta\a_0-  E_3\right] } \geq \frac{\sigma^2\a_0}{\tau^\star(\a_0)} \frac{E_3}{|E_2|}, & \alpha_0 \delta - E_3 > 0;\\
&= \frac{\a_0^2 \tau^\star(\a_0)}{\delta}\frac{E_3\left[ \alpha_0 \delta - E_3 \right] +\delta E_2 }{\a_0 \left[ \alpha_0 \delta - E_3 \right] + E_2}\geq  \frac{\a_0^2 \tau^\star(\a_0)}{\delta} \min\left\lbrace \delta,  \frac{E_3}{\a_0} \right\rbrace = \min\left\lbrace \a_0^2 \tau^\star(\a_0), \frac{\a_0 \tau^\star(\a_0)E_3}{\delta} \right\rbrace , & \alpha_0 \delta - E_3 \leq 0.
\end{aligned}\right.
\end{align}

For the upper bound of $\lambda'(\a_0)$, we can directly verify that
\begin{align*}
\delta \a_0 E_3  - E_3^2 +\delta E_2 \geq \delta E_2 - 4,
\end{align*}
which combined with the expression~\eqref{eq:denom-lower-bound} leads to
\begin{align}\label{eq:lambda-upper}
\lambda'(\a_0) \leq \frac{\a_0^2 \tau^\star(\a_0)}{\delta} \frac{\delta|E_2|+4}{\sigma^2 \delta \taustar^{-2}(\a_0)}.
\end{align}

From our assumption $\mathbb{E}[\Theta^2] < \infty$, we can find $M$, such that $$\mathbb{E}[\Theta^2\textbf{1}\left\lbrace |\Theta|\leq M\right\rbrace ] \geq \mathbb{E}[\Theta^2]/2.$$ Then we know that $\mathbb{P}(|\Theta|\leq M) \geq \mathbb{E}[\Theta^2/M^2 \textbf{1}\left\lbrace |\Theta|\leq M\right\rbrace]\geq \mathbb{E}[\Theta^2]/(2M^2)$. Combining with the definition of $E_3$ yields
\begin{align}\label{eq:e3-bound}
E_3 \geq \mathbb{E}\left[ \phi(|\Xi| - \a_0) \right] \geq \mathbb{E}\left[ \phi\left(\frac{|\Theta|}{\taustar(\a_0)} + \a_0\right) \right]\geq \frac{\mathbb{E}[\Theta^2]}{2M^2}\phi\left( \frac{M}{\taustar(\a_0)}+\a_0\right) .
\end{align}
Also, it is easily seen that 
\begin{align}\label{eq:e2-bound}
|E_2|\leq\max_{x\in\mathbb{R}} \left\lbrace x\phi(x-\a_0) - x\phi(-x-\a_0) \right\rbrace,
\end{align}
where the right-hand side is positive and bounded away from $0$ whenever $\a_0$ is positive. Therefore, the right-hand side of the expression~\eqref{eq:lambda-lower} and the expression~\eqref{eq:lambda-upper} are both bounded away from $0$ and $\infty$ for any fixed $\a_0 \geq \a_{\min}$. Combining these two cases, we have proved the advertised inequality~\eqref{eq:lambda-bound}.

%%%%%%%%%%%%%%%%%%%%%%%%%%%%%%%%%%%%%%%%%%%%%%%%%%%%%%%%%%%%%%%%%%%%%%%%%%%%%%%%%%%%
\subsection{Proof of Proposition~\ref{prop:tau-t-limit}}\label{sec:convergence-tau}

From the proof of \citet[Proposition 1.3]{bayati2011lasso}, we know that the function $\tau^2 \mapsto \mathsf{F}(\tau^2, \a\tau)$ is concave for any $\a>0$ and $\Theta$ not equal to $0$. Therefore, we obtain
\begin{align*}
\Big|\frac{\tau_{t+1}^2 - \taustart^2}{\tau_{t}^2 - \taustart^2 } \Big| \leq \frac{\taustart^2-\sigma^2}{\taustart^2-0} \leq 1 - \frac{\sigma^2}{{\tau^\star_{\max}}^2} =: \eta,
\end{align*}
as $\tau_{t+1}^2 = \mathsf{F}(\tau^2, \a\tau)$ and $\sigma^{2} = \mathsf{F}(0, 0).$
Consequently, it leads to 
\begin{align*}
\left| \tau_{t+1}^2 -{\tau^\star_{t+1}}^2\right| \leq \eta \left| \tau_{t}^2 -\taustart^2\right| + \left| \taustart^2 -{\tau^\star_{t+1}}^2\right|.
\end{align*}
Without loss of generality, assume $\left\lbrace \lambda_t\right\rbrace$ decays with $t$. Invoking the above relation recursively, we obtain 
\begin{align*}
	\frac{\left| \tau_{t+1}^2 -{\tau^\star_{t+1}}^2\right|}{\eta^{t+1}} & \leq \frac{\left| \tau_{1}^2 -{\tau^\star_{1}}^2\right|}{\eta} + \sum_{s=1}^t\frac{\left| {\tau^\star_{s}}^2 -{\tau^\star_{s+1}}^2\right|}{\eta^s} \\
	\small{\text{(since $\taustar(\lambda)$ is $L_\tau$-Lipschitz )}} & \leq \frac{\left| \tau_{1}^2 -{\tau^\star_{1}}^2\right|}{\eta} + 2 \tau_{\max}^\star L_{\tau} \sum_{s=1}^{[t/2]}\frac{\left| {\lambda_{s}} -{\lambda_{s+1}}\right|}{\eta^s}+ 2 \tau_{\max}^\star L_{\tau} \sum_{s=1+[t/2]}^{t}\frac{\left| {\lambda_{s}} -{\lambda_{s+1}}\right|}{\eta^s} \\
	& \leq \frac{\left| \tau_{1}^2 -{\tau^\star_{1}}^2\right|}{\eta} +2 \tau_{\max}^\star L_{\tau} \frac{1}{\eta^{[t/2]}}\sum_{s=1}^{[t/2]}  [\lambda_s - \lambda_{s+1}] + 2 \tau_{\max}^\star L_{\tau} \frac{[ \lambda_{[t/2]} - \lambda_{[t/2]+1}]}{1-\eta}\eta^{-t}\\
	& \leq \frac{\left| \tau_{1}^2 -{\tau^\star_{1}}^2\right|}{\eta} +2 \tau_{\max}^\star L_{\tau} \lambda_1  \frac{1}{\eta^{[t/2]}}+ 2 \tau_{\max}^\star L_{\tau} \frac{[ \lambda_{[t/2]} - \lambda_{[t/2]+1}]}{1-\eta}\eta^{-t}.
\end{align*}
Re-arranging the above expression, we are left with
\begin{align*}
\left| \tau_{t}^2 -{\tau^\star_{t}}^2\right| \leq \frac{\left| \tau_{1}^2 -{\tau^\star_{1}}^2\right|}{\eta} \eta^t + 2\tau_{\max}^\star L_\tau  \lambda_1  \eta^{[t/2]}+ \frac{2\tau_{\max}^\star L_\tau}{1-\eta} [ \lambda_{[t/2]} - \lambda_{[t/2]+1}] \goto 0,\quad t\goto \infty.
\end{align*}
This completes the proof of Proposition~\ref{prop:tau-t-limit}.

%%%%%%%%%%%%%%%%%%%%%%%%%%%%%%%%%%%%%%%%%%%%%%%%%%%%%%%%%%%%%%%%%%%%%%%%%%%%%%%%%%%%%%%%%%%%%%%%%%%%%%%%%%%%%%%%%%%%%%%%%%%%%%%%%%%%%%%%%%%%%%%%%%%%

\section{Proofs about the AMP updates}
\label{sec:proof-AMP}

The goal of this section is to prove Theorem~\ref{thm:interpolation-risk-amp}. 
In Section~\ref{sec:key-lemma}, we state a key lemma (cf.~Lemma~\ref{thm:distance-limit}), which characterizes the convergence of the AMP updates (as $t\goto \infty$) to the minimum $\ell_1$-norm interpolator $\thetaint$. 
The proof of Theorem~\ref{thm:interpolation-risk-amp} is then built upon this lemma. 
Section~\ref{sec:proof-distance-lemma} is then devoted to the main proof of Lemma~\ref{thm:distance-limit} with auxiliary lemmas established in Section~\ref{sec:auxiliary-lemmas} and Section~\ref{sec:converge-var-cov}.

The main structure of the proof is similar to that of \cite{bayati2011lasso}; in the following text, we often refer to the paper as BM for simplification.
The major difference between the min $\ell_{1}$ scenario and a fixed $\lambda$ scenario (considered in \cite{bayati2011lasso} and other references) lies in the lack of restricted strong convexity around the solution point, which prevents us from translating the closeness in the loss function values to the proximity of the minimizers. 
We shall overcome this challenge by carefully investigating the AMP updates for decaying choices of the regularization parameter. 
Throughout this section, we make use of the properties for the state evolution parameters repeatedly (we refer the readers to Section~\ref{sec:proof-state-evolution} for more details). 

\subsection{Proof of Theorem~\ref{thm:interpolation-risk-amp}}
\label{sec:key-lemma}

The key enabler for obtaining Theorem~\ref{thm:interpolation-risk-amp} from Proposition~\ref{prop:amp-fixed-t} is the following result, which connects the minimum $\ell_1$-norm interpolator with the AMP iterations in expression~\eqref{eq:amp-formula-linear} (with the parameters selected according to \eqref{eqn:choice-zeta-t}). 

\begin{lem}
	\label{thm:distance-limit}
	The sequence produced by the AMP updates satisfies
	\begin{equation}\label{eq:distance-limit}
		\lim_{t\goto\infty}\lim_{p\goto\infty} \frac{1}{p}\|\t^t-\thetaint\|_2^2 ~\overset{\mathrm{a.s.}}{=} 0. 
	\end{equation}
\end{lem}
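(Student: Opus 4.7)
The plan is to combine three ingredients: (i) the one-step state-evolution characterization of each AMP iterate via Proposition~\ref{prop:amp-fixed-t}, (ii) a quasi-contraction estimate controlling the drift $\bs{\theta}^{t+1}-\bs{\theta}^{t}$ across iterations with slowly varying thresholds $\zeta_t=\alphastart\taustart$, and (iii) a new structural lemma (the promised Lemma~\ref{lem:lasso-structure}) playing the role that restricted strong convexity plays in the fixed-$\lambda$ analysis of \cite{bayati2011lasso}. Given these pieces, the argument will be: first the sequence $\{\bs{\theta}^t\}$ is Cauchy (in the normalized $\ell_2$-metric, after taking $p\to\infty$), and then its limit must coincide with $\thetaint$ because the AMP fixed-point relations in the $t\to\infty$ regime become exactly the KKT conditions for the minimum $\ell_1$-norm interpolator.

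First I would apply Proposition~\ref{prop:amp-fixed-t}, not just to $\psi(\theta_i^{t+1},\theta_i^{\star})=(\theta_i^{t+1}-\theta_i^{\star})^{2}$ but to joint functionals of $(\theta_i^{t+1},\theta_i^{t},\theta_i^{\star})$ obtained through the standard Gaussian-conditioning lift, so as to track $\tfrac{1}{p}\|\bs{\theta}^{t+1}-\bs{\theta}^{t}\|_2^{2}$ by a deterministic quantity depending on $(\tau_t,\tau_{t-1},\zeta_t,\zeta_{t-1})$. Expanding around the common fixed point, the derivative $\tfrac{1}{\delta}\mathbb{E}[\eta'(\Theta+\taustart Z;\alphastart\taustart)] = 1-\lambda_t/(\alphastart\taustart)$ supplies a contraction factor bounded above by $e^{-c\lambda_t}$ for some $c>0$, while the perturbation induced by replacing $(\taustart,\alphastart)$ by $(\tau^{\star}_{t+1},\alpha^{\star}_{t+1})$ is controlled by the Lipschitz estimates in Proposition~\ref{prop:amp-fixd-point-convergence}. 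This should yield
\begin{equation*}
\limsup_{p\to\infty}\tfrac{1}{p}\|\bs{\theta}^{t+1}-\bs{\theta}^{t}\|_2^{2}
\;\le\;
e^{-c\lambda_t}\,\limsup_{p\to\infty}\tfrac{1}{p}\|\bs{\theta}^{t}-\bs{\theta}^{t-1}\|_2^{2}
\;+\;C\,|\lambda_t-\lambda_{t+1}|^{2},
\end{equation*}
and iterating gives a bound of the type $\sqrt{l_t}$ appearing in Assumption~\ref{assump:lambda}. The summability conditions in Assumption~\ref{assump:lambda}, together with the logarithmic lower bound $\sum_{j=t/2}^{t}\lambda_j\ge c\log t$, ensure that $\sum_t\sqrt{l_t}<\infty$, which, combined with the convergence $\tau_t\to\taustar$ from Proposition~\ref{prop:tau-t-limit}, implies that $\{\bs{\theta}^t\}$ is Cauchy in $\tfrac{1}{\sqrt p}\|\cdot\|_2$ almost surely in the large-$p$ limit, and the state evolution limit identifies its limiting second moment as $\taustar^{2}$.

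To identify the Cauchy limit $\bs{\theta}^{\infty}$ with $\thetaint$, I would read off the asymptotic KKT conditions from the AMP updates: the update \eqref{eq:amp-formula-linear-1} shows that $\bs{X}^{\top}\bs{z}^{t}+\bs{\theta}^{t}-\bs{\theta}^{t+1}$ lies in $\alphastart\tau_t\cdot\partial\|\bs{\theta}^{t+1}\|_1$, while \eqref{eq:amp-formula-linear-2} together with Proposition~\ref{prop:amp-fixed-t} shows that the residual obeys $\tfrac{1}{n}\|\bs{y}-\bs{X}\bs{\theta}^{t}\|_2^{2}\to\taustart^{2}(1-\tfrac{1}{\delta}\mathbb{P}(|\Theta+\taustart Z|>\alphastart\taustart))^{2}\to 0$ as $\lambda_t\downarrow 0$, because the equation~\eqref{eq:fix-2} forces the probability inside to converge to $\delta$. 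Thus $\bs{\theta}^{\infty}$ is (asymptotically) feasible for the interpolation constraint and comes equipped with a dual vector of the required structure, which is precisely the configuration the structural Lemma~\ref{lem:lasso-structure} is designed to handle in lieu of restricted strong convexity; invoking it converts the $\ell_1$-near-optimality and approximate feasibility of $\bs{\theta}^{\infty}$ into the claimed $\ell_2$ proximity to $\thetaint$, yielding \eqref{eq:distance-limit}.

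The hard part is twofold. First, the one-step quasi-contraction must be propagated through a \emph{time-varying} denoiser whose contraction constant degenerates like $1-O(\lambda_t)$, so a crude Gronwall-type iteration does not close; this is precisely why the conditions in Assumption~\ref{assump:lambda} (both $\sum\exp(-c\Lambda_t)<\infty$ and $\sum\sqrt{l_t}<\infty$) are imposed, and matching them against the bounds produced by the Gaussian conditioning will be the most delicate bookkeeping in the proof. Second, because $\thetaint$ has support of size exactly $n$, the sub-matrix $\bs{X}_{S}$ need not be well conditioned, so the standard Lasso restricted-eigenvalue argument is unavailable; Lemma~\ref{lem:lasso-structure} must exploit the Gaussian-design, linear-sparsity geometry encoded by \eqref{eqn:fix-eqn} to produce a quantitative stability bound that replaces strong convexity, and coupling that bound to the explicit AMP subgradient certificate emerging from \eqref{eq:amp-formula-linear-1} will be the main technical obstacle.
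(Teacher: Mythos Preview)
Your ingredients are right, but the architecture has a real gap. You organize the argument as ``show $\{\bs{\theta}^t\}$ is Cauchy in $p^{-1/2}\|\cdot\|_2$ after $p\to\infty$, then identify the limit $\bs{\theta}^{\infty}$ with $\thetaint$ via KKT and Lemma~\ref{lem:lasso-structure}.'' The difficulty is that there is no vector $\bs{\theta}^{\infty}\in\mathbb{R}^p$ to which KKT applies: every state-evolution statement takes $p\to\infty$ first, so at finite $p$ the iterates need not converge, and your ``identify the limit'' step has no object to act on. The paper sidesteps this by applying Lemma~\ref{lem:lasso-structure} \emph{directly} to the pair $(\bs{\theta}^t,\thetaint)$ for each fixed $t$, producing $\|\bs{\theta}^t-\thetaint\|_2\le\sqrt{p}\,\xi(\varepsilon_t,c_1,\dots,c_6)$ with $\varepsilon_t\to 0$, and only then sending $t\to\infty$.

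The quasi-contraction you write down is exactly the engine of the paper (it is Lemma~\ref{lem:converge-cov}, which controls $\mathsf{R}_{t,t}-2\mathsf{R}_{t,t+1}+\mathsf{R}_{t+1,t+1}$ and hence $\tfrac{1}{p}\|\bs{\theta}^{t+1}-\bs{\theta}^t\|_2^2$), but its role is different from a standalone Cauchy step: it is used \emph{inside} the verification of the hypotheses of Lemma~\ref{lem:lasso-structure}. Concretely, it feeds (i) Condition~3, the subgradient bound $\|\mathrm{sg}(\mathcal{C}_{\lambda_t},\bs{\theta}^t)\|_2/(\sqrt{p}\lambda_t)\to 0$, through the sharper statement $\|\bs{\theta}^t-\bs{\theta}^{t-1}\|_2^2/(p\lambda_t^2)\to 0$; and (ii) Condition~4, the restricted-eigenvalue bound $\sigma_{\min}(\bs{X}_{S_t(\gamma)\cup S'})\ge c$, via a support-set stability statement $|S_{t_2}(\gamma)\setminus S_{t_1}(\gamma)|/p\to 0$ (Lemmas~\ref{lem:S-convergence}--\ref{lem:condition-4}) that you do not mention and that is what actually replaces restricted strong convexity. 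There is also a hypothesis you do not address at all: Condition~2 requires $\mathcal{C}_{\lambda_t}(\thetaint)\le\mathcal{C}_{\lambda_t}(\bs{\theta}^t)+c_2\lambda_t p\,\varepsilon_t$, which is not automatic since $\thetaint$ does not minimize $\mathcal{C}_{\lambda_t}$; the paper handles this by the monotonicity $\|\widehat{\bs{\theta}}_t\|_1\le\|\widehat{\bs{\theta}}_{t+1}\|_1\le\cdots\le\|\thetaint\|_1$ together with monotone convergence.
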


Let us first provide the proof of Theorem~\ref{thm:interpolation-risk-amp}
and defer the proof of Lemma~\ref{thm:distance-limit} to Section~\ref{sec:proof-distance-lemma}. 
To begin with, given any $t\geq 0$, in view of the pseudo-Lipschitz property of $\psi$ (cf.~\eqref{eqn:pseudo-lipschitz}), one has 
	\begin{align*}
	\left|\frac{1}{p}\sum_{i=1}^p \psi(\thetainti, \coefi)-\frac{1}{p}\sum_{i=1}^p \psi(\theta_i^{t+1},\coefi)\right| & \leq \frac{L}{p} \sum_{i=1}^p |\theta_i^{t+1}-\thetainti|\left(1 + \sqrt{(\thetainti)^2 + (\coefi)^2} + \sqrt{(\theta_i^{t+1})^2 + (\coefi)^2} \right)\\
	& \leq  \frac{L}{p} \|\t^{t+1}-\thetaint\|_2 \sqrt{\sum_{i=1}^p \left(1 + |\thetainti| + |\theta_i^{t+1}|+ 2|\coefi|\right)^2} \\
	& \leq L \frac{\|\t^{t+1}-\thetaint\|_2}{\sqrt{p}} \cdot \sqrt{4 + \frac{16\|\coef\|_2^2}{p} + \frac{4\|\t^{t+1}\|_2^2}{p} + \frac{4\|\thetaint\|_2^2}{p}}.
	\end{align*}
Regarding the right-hand side of the above relation, 
Lemma~\ref{thm:distance-limit} guarantees that $\lim_{p\goto\infty} \|\t^{t+1}-\thetaint\|_2/\sqrt{p} \goto 0$ almost surely as $t \goto \infty$; our assumptions about $\coef$ ensure that $\|\coef\|_2^2/p$ is bounded. By virtue of  Lemma~\ref{lem:l2-upper-bound}, the other two terms involving state evolution $\t^t$ and the $\ell_1$-minimization solution $\thetaint$ are also bounded. Putting these together, one can readily conclude that 
	\begin{align*}
	\lim_{p\goto\infty} \frac{1}{p} \sum_{i=1}^p \psi(\thetainti, \coefi) & = \lim_{t\goto\infty}\lim_{p\goto\infty} \frac{1}{p}\sum_{i=1}^p \psi(\theta_i^{t},\coefi) \quad (\text{Lemma~\ref{thm:distance-limit},  Lemma~\ref{lem:l2-upper-bound}})\\
	 & = \lim_{t\goto\infty} \mathbb{E}\left\lbrace \psi(\eta(\Theta  + \tau_tZ; \zeta_t), \Theta)\right\rbrace \quad \text{(Proposition~\ref{prop:amp-fixed-t})}\\
	& = \mathbb{E}\left\lbrace \psi(\eta(\Theta  + \taustar Z; \alphastar\taustar), \Theta)\right\rbrace. 
	\end{align*}
	Here, the last step makes use of Proposition~\ref{prop:tau-t-limit} and Proposition~\ref{prop:amp-fixd-point-convergence} which demonstrates that $\tau_t \goto \taustar$  and  $\alphastart\goto\alphastar$ as $t\goto\infty$. 
	The proof of Theorem~\ref{thm:interpolation-risk-amp} is thus complete.

\subsection{Proof of Lemma~\ref{thm:distance-limit}}
\label{sec:proof-distance-lemma}

This section is devoted to the proof of Lemma~\ref{thm:distance-limit}.
To this end, we first introduce a key result in Lemma~\ref{lem:lasso-structure} which characterizes the conditions under which, the $\ell_2$-norm of the perturbation $\|\bs{r}\|_2$ can be controlled whenever the difference between
$\mathcal{C}_{\lambda_t}(\t + \bs{r})$ and $\mathcal{C}_{\lambda_t}(\t)$ (cf.~\eqref{eq:definition-lasso-loss}) is small. 
The proof of this lemma can be found in Section~\ref{sec:lasso-structure-proof}.
With Lemma~\ref{lem:lasso-structure} in place, proving Lemma~\ref{thm:distance-limit} boils down to verifying each required condition.  
To accomplish this goal, 
we make use of a series of results in Lemmas~\ref{lem:l2-upper-bound}-\ref{lem:condition-4}, followed by the complete proof of Lemma~\ref{thm:distance-limit}.
The proofs of these auxiliary results are deferred to Section~\ref{sec:auxiliary-lemmas}. 

We define the Lasso problem associated with the regularization parameter $\lambda_t$ as follows
\begin{align}
	\label{eq:definition-lasso-loss}
	\widehat{\t}_t = \widehat{\t}(\lambda_t; \bs{X}, \bs{y}) = \argmin_{\t}\mathcal{C}_{\lambda_t}(\t), \quad \text{where} \quad \mathcal{C}_{\lambda_t}(\t) \defn \frac{1}{2}\|\bs{y} - \bs{X}\t\|_2^2 + \lambda_t \|\t\|_1.
\end{align}
As we shall make clear momentarily, each iterate of our AMP updates aims to take a step closer to the minimizer of 
$\mathcal{C}_{\lambda_t}(\t)$. More connections with these Lasso estimators shall be pointed out in the sequel.

\begin{lem}\label{lem:lasso-structure}
	There exists a function $\xi(\varepsilon, c_1, \dots, c_6)$ such that,
	if $\t, \bs{r}\in\mathbb{R}^p$ satisfy the following conditions:
	\begin{enumerate}
		\item $\|\bs{r}\|_2 \leq c_1 \sqrt{p}$; \label{condition-1-lasso}
		
		\item $\mathcal{C}_{\lambda_t}(\t + \bs{r}) \leq \mathcal{C}_{\lambda_t}(\t) + c_2\lambda_t p\varepsilon$;\label{condition-2-lasso}

		\item There exists $\mathrm{sg}(\mathcal{C}_{\lambda_t}, \t)\in \partial \mathcal{C}_{\lambda_t}(\t)$, such that $\|\mathrm{sg}(\mathcal{C}_{\lambda_t}, \t)\|_2 \leq \sqrt{p}\lambda_t \varepsilon$;
		\label{condition-3-lasso}
		
		\item Let $\bs{s} = (1/\lambda_t) [\bs{X}^\top(\bs{y}-\bs{X}\t)+sg(\mathcal{C}_{\lambda_t}, \t)] \in \partial \|\t\|_1$, and $S(c_3)=\left\lbrace i\in [p]: |s_i|\geq 1-c_3\right\rbrace $. Then for any $S'\subseteq [p]$, $|S'|\leq c_4p$, we have $\sigma_{\min}(\bs{X}_{S(c_3)\cup S'})\geq c_5$;
		\label{condition-4-lasso}

		\item $\sigma_{\max}(\bs{X}) \leq c_6$,
		\label{condition-5-lasso}
	\end{enumerate}
	then $\|\bs{r}\|_2 \leq \sqrt{p} \xi(\varepsilon, c_1, \dots, c_6)$, and for any $c_1, \dots, c_6 > 0$, one has $\xi(\varepsilon, c_1, \dots, c_6) \goto 0$ as $\varepsilon \goto 0$. 
\end{lem}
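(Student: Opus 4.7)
The plan is to exploit the composite structure of $\mathcal{C}_{\lambda_t}(\bs{\theta}) = \tfrac{1}{2}\|\bs{y}-\bs{X}\bs{\theta}\|_2^2 + \lambda_t\|\bs{\theta}\|_1$ — a sum of a (degenerate) quadratic and a convex but not strongly convex $\ell_1$ penalty. Because the Hessian $\bs{X}^\top\bs{X}$ is rank deficient when $p>n$, global strong convexity is unavailable; instead, Conditions~\ref{condition-4-lasso} and~\ref{condition-5-lasso} supply a \emph{restricted} eigenvalue property on well-chosen support sets, and I will combine this with a first-order expansion of $\mathcal{C}_{\lambda_t}$ at $\bs{\theta}$ to convert the approximate-optimality Condition~\ref{condition-2-lasso} into an $\ell_2$ bound on $\bs{r}$.

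The first step is a clean expansion. Writing $\mathrm{sg}(\mathcal{C}_{\lambda_t},\bs{\theta}) = -\bs{X}^\top(\bs{y}-\bs{X}\bs{\theta}) + \lambda_t \bs{s}$ for the subgradient pair of Condition~\ref{condition-4-lasso}, a direct calculation yields
\begin{equation*}
\mathcal{C}_{\lambda_t}(\bs{\theta}+\bs{r}) - \mathcal{C}_{\lambda_t}(\bs{\theta}) = \langle \mathrm{sg}(\mathcal{C}_{\lambda_t},\bs{\theta}), \bs{r}\rangle + \tfrac{1}{2}\|\bs{X}\bs{r}\|_2^2 + \lambda_t\big[\|\bs{\theta}+\bs{r}\|_1 - \|\bs{\theta}\|_1 - \langle\bs{s},\bs{r}\rangle\big],
\end{equation*}
in which the bracketed term is nonnegative by convexity of $\|\cdot\|_1$. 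Bounding the inner product by $|\langle\mathrm{sg}(\mathcal{C}_{\lambda_t},\bs{\theta}),\bs{r}\rangle|\le c_1\lambda_t p\varepsilon$ via Cauchy--Schwarz together with Conditions~\ref{condition-1-lasso} and~\ref{condition-3-lasso}, and then invoking Condition~\ref{condition-2-lasso}, produces the two key estimates
\begin{equation*}
\|\bs{X}\bs{r}\|_2^2 \le 2(c_1+c_2)\lambda_t p\varepsilon, \qquad \|\bs{\theta}+\bs{r}\|_1 - \|\bs{\theta}\|_1 - \langle\bs{s},\bs{r}\rangle \le (c_1+c_2)p\varepsilon.
\end{equation*}

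The second step uses the $\ell_1$ estimate to localize $\bs{r}$. For every $i \in S^c := [p]\setminus S(c_3)$ the definition of $S(c_3)$ forces $|s_i|<1-c_3$, and this in turn forces $\theta_i=0$, since $\theta_i\neq 0$ would give $|s_i|=1$ via the $\ell_1$ subgradient. Hence, for $i\in S^c$, $|\theta_i+r_i|-|\theta_i|-s_i r_i = |r_i| - s_i r_i \ge (1-|s_i|)|r_i| \ge c_3|r_i|$, and summing yields $\|\bs{r}_{S^c}\|_1 \le (c_1+c_2)\,p\varepsilon/c_3$. I now let $S' \subseteq S^c$ collect the $\lfloor c_4 p\rfloor$ indices of largest $|r_i|$ within $S^c$, and set $T := S(c_3)\cup S'$. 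A pigeonhole bound on the tail yields $\|\bs{r}_{T^c}\|_\infty \le \|\bs{r}_{S^c}\|_1/\lfloor c_4 p\rfloor$, whence $\|\bs{r}_{T^c}\|_2^2 \le \|\bs{r}_{T^c}\|_\infty\|\bs{r}_{S^c}\|_1$ is of order $\varepsilon^2 p$. For the complementary piece, the splitting $\bs{X}_T\bs{r}_T = \bs{X}\bs{r} - \bs{X}_{T^c}\bs{r}_{T^c}$ combined with Condition~\ref{condition-5-lasso} gives $\|\bs{X}_T\bs{r}_T\|_2 \le \sqrt{2(c_1+c_2)\lambda_t p\varepsilon} + c_6\|\bs{r}_{T^c}\|_2$, and Condition~\ref{condition-4-lasso} (which applies to this particular $S'$) converts this into $\|\bs{r}_T\|_2 \le \|\bs{X}_T\bs{r}_T\|_2/c_5$. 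Adding the two pieces yields $\|\bs{r}\|_2 \le \sqrt{p}\,\xi(\varepsilon,c_1,\ldots,c_6)$ with $\xi\to 0$ as $\varepsilon\to 0$, using that $\lambda_t$ is uniformly bounded along the AMP sequence by $\lambda_1$.

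The main obstacle is precisely the loss of strong convexity flagged in the preamble to the lemma: the raw estimate $\|\bs{X}\bs{r}\|_2^2 \lesssim \lambda_t p\varepsilon$ does not by itself control $\|\bs{r}\|_2$ when $\bs{X}^\top\bs{X}$ is rank deficient. The substitute — identifying the saturated-subgradient set $S(c_3)$ on which $\bs{r}$ is essentially free, showing that $\bs{r}_{S^c}$ is forced to be $\ell_1$-small, truncating its tail by pigeonhole, and applying the restricted-eigenvalue hypothesis on $T=S(c_3)\cup S'$ — is the technical heart of the argument, and what makes the bound robust as $\lambda_t$ decays to zero along the AMP iterations.
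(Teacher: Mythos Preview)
Your proposal is correct and follows essentially the same route as the paper. The paper establishes the two key estimates $\|\bs{X}\bs{r}\|_2^2 \le 2(c_1+c_2)\lambda_t p\varepsilon$ and $\|\bs{r}_{\bar S}\|_1 - \langle \bs{s}_{\bar S},\bs{r}_{\bar S}\rangle \le (c_1+c_2)p\varepsilon$ via the identical first-order expansion you wrote, and then defers the remaining restricted-eigenvalue argument to Lemma~3.1 of Bayati--Montanari; you simply carry that part out explicitly (pigeonhole on the tail of $\bs r_{S(c_3)^c}$, then $\sigma_{\min}$ on $T=S(c_3)\cup S'$), working directly with $S(c_3)^c$ rather than first passing through $\bar S=\mathrm{supp}(\bs{\theta})^c$.
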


A few remarks are in order. First, we write $\xi(\varepsilon, c_1, \dots, c_6)$ to emphasize that the function does not depend on $\bs{X}$, $\mathcal{C}_{\lambda_t}$ or the constructions of $\t, \bs{r}$.
Secondly, Lemma~\ref{lem:lasso-structure} is a generalization of BM-Lemma 3.1. Since BM is concerned with the Lasso estimator with a single positive $\lambda$, the corresponding lemma only requires $\mathcal{C}_{\lambda}(\t + \bs{r}) \leq \mathcal{C}_{\lambda}(\t)$, and is employed for the Lasso loss function with $\t+\bs{r}$ being the Lasso solution. In this case, $\mathcal{C}_{\lambda}(\t + \bs{r}) \leq \mathcal{C}_{\lambda}(\t)$ holds true for every $\t$ by definition. 
In our setting, however, the minimum $\ell_1$-norm solution $\thetaint$ is not the minimizer of $\mathcal{C}_{\lambda_t}$ --- recognizing the fact that we aim to apply Lemma~\ref{lem:lasso-structure} with $\t +\bs{r}  = \thetaint$. BM-Lemma 3.1 therefore does not apply directly. 
Hence, it requires us to generalize their lemma in a way suitable to our setting.

\paragraph{Proof of Lemma~\ref{thm:distance-limit}.}
Now suppose that one can find constants $(c_1, \dots, c_6)$ and a sequence $\left\lbrace \varepsilon_t\right\rbrace $ satisfying $\lim_{t\goto\infty}\varepsilon_t=0$, such that the five conditions in Lemma~\ref{lem:lasso-structure} hold almost surely with the choice of $\varepsilon = \varepsilon_t$, $\t = \t^t$ and $\t + \bs{r} = \thetaint$. 
As a result, we know that for each $t$, it holds that 
\begin{align*}
\|\t^t-\thetaint\|_2 \leq \sqrt{p}\xi(\varepsilon_t, c_1, \dots, c_6) 
\end{align*}
almost surely. 
Further taking $t\goto \infty$ yields
\begin{align*}
\lim_{t\goto\infty} \frac{\|\t^t-\thetaint\|_2}{\sqrt{p}} \leq \lim_{t\goto\infty} \xi(\varepsilon_t, c_1, \dots, c_6) = 0,
\end{align*}
where the last equality follows from $\lim_{t\goto\infty}\varepsilon_t = 0$ and the conclusion in Lemma~\ref{lem:lasso-structure} that $\xi(\varepsilon, c_1, \dots, c_6)\goto 0$ as $\varepsilon \goto 0$.
Thus we complete the proof of Lemma~\ref{thm:distance-limit}. 
It remains to construct these $(c_1, \dots, c_6)$ 
and a converging sequence of $\varepsilon_t$, such that the five conditions in Lemma~\ref{lem:lasso-structure} hold almost surely as $p\goto\infty$.

To begin with, we make the observation that the last condition of Lemma~\ref{lem:lasso-structure} follows directly from the classical random matrix theory where $ \sigma_{\max}(\bs{X}) \goto 1 + \frac{1}{\sqrt{\delta}}$ almost surely (see, e.g., \cite{bai2010spectral}). 
Thus it suffices to verify the other four conditions.

\begin{itemize}

\item \emph{Condition~\ref{condition-1-lasso} of Lemma~\ref{lem:lasso-structure}.}
We introduce the following lemma, which develops upper bounds on the $\ell_2$-norm of both the $\ell_1$-interpolation solution and the AMP iterations. 
The proof of this result is deferred to Section~\ref{sec:proof-l2-upper-bound}. 
\begin{lem}
	\label{lem:l2-upper-bound}
	There exists a constant $C$ that only depends on $\Theta$, $\sigma$ and $\delta$, 
	such that, almost surely
	\begin{subequations}
		\begin{equation}\label{eq:l2-bound-1}
		\lim_{t\goto\infty} \lim_{p\goto\infty} \frac{1}{p}\|\t^t\|_2^2 < C;\\
		\end{equation}
		\begin{equation}\label{eq:l2-bound-2}
		\lim_{p\goto\infty} \frac{1}{p}\|\thetaint\|_2^2 < C.
		\end{equation}
	\end{subequations}
\end{lem}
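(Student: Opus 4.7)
The plan is to treat the two bounds with different techniques.

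\emph{For the AMP-iterate bound.} Apply Proposition~\ref{prop:amp-fixed-t} with the test function $\psi(a,b)=a^2$. Pseudo-Lipschitzness is immediate from $|a_1^2-b_1^2|\leq(|a_1|+|b_1|)|a_1-b_1|\leq(1+\|a\|_2+\|b\|_2)\|a-b\|_2$, so Proposition~\ref{prop:amp-fixed-t} yields
\begin{align*}
\lim_{p\to\infty}\tfrac{1}{p}\|\t^{t+1}\|_2^2 \overset{\mathrm{a.s.}}{=}\mathbb{E}\big[\eta^2(\Theta+\tau_tZ;\zeta_t)\big]\leq \mathbb{E}[\Theta^2]+\tau_t^2,
\end{align*}
where the inequality uses the contractivity $|\eta(x;\zeta)|\leq|x|$ of soft thresholding. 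By Proposition~\ref{prop:tau-t-limit}, $\tau_t\to\taustar<\infty$; therefore the right-hand side is uniformly bounded in $t$ by a constant depending only on $(\Theta,\sigma,\delta)$, and passing $t\to\infty$ delivers the first bound.

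\emph{For the $\thetaint$ bound.} This is more delicate, since the crude estimate $\|\thetaint\|_2\leq\|\thetaint\|_1$ is off by a factor of $\sqrt p$. The plan is to compare $\thetaint$ with the positively-regularized Lasso estimator $\widehat{\t}_\lambda$. For each fixed $\lambda>0$, BM's Theorem~1.8 (cf.~\eqref{eq:lasso-amp-gap}) yields $\lim_t\lim_p\|\t^t(\lambda)-\widehat{\t}_\lambda\|_2^2/p\overset{\mathrm{a.s.}}{=}0$, where $\t^t(\lambda)$ denotes the fixed-$\lambda$ AMP iterates. Combining this with the bound from the previous paragraph applied to $\t^t(\lambda)$, whose state evolution converges to $\taustar(\lambda)$, gives
\begin{align*}
\limsup_{p\to\infty}\tfrac{1}{p}\|\widehat{\t}_\lambda\|_2^2 \overset{\mathrm{a.s.}}{\leq}\mathbb{E}[\Theta^2]+\big(\taustar(\lambda)\big)^2.
\end{align*}
By Proposition~\ref{prop:amp-fixd-point-convergence}, $\taustar(\lambda)\to\taustar$ as $\lambda\downarrow 0$, so the right-hand side is uniformly bounded in $\lambda\in(0,\lambda_0]$ for some $\lambda_0>0$. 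For each fixed realization $(\bm X,\bm z)$, the map $\lambda\mapsto\widehat{\t}_\lambda$ is continuous on $(0,\infty)$ with $\widehat{\t}_\lambda\to\thetaint$ as $\lambda\downarrow 0$, so $\|\widehat{\t}_\lambda\|_2^2\to\|\thetaint\|_2^2$ pointwise in $p$. A diagonal extraction $\lambda_p\downarrow 0$ then transfers the uniform bound to $\thetaint$.

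The principal obstacle is the interchange of limits $\lambda\to 0^+$ and $p\to\infty$: the uniform bound on $\|\widehat{\t}_\lambda\|_2^2/p$ is post-limit in $p$, while the convergence $\widehat{\t}_\lambda\to\thetaint$ is pre-limit. To make the diagonal extraction rigorous, one can exploit the Lipschitz continuity of $\taustar(\cdot)$ and $\alphastar(\cdot)$ near $\lambda=0$ from Proposition~\ref{prop:amp-fixd-point-convergence}, in concert with the KKT characterization of $\widehat{\t}_\lambda$ giving sharp control on the finite-$p$ deviation of $\|\widehat{\t}_\lambda\|_2^2/p$ from its state-evolution limit, so as to choose $\lambda_p\downarrow 0$ slowly enough that the error term vanishes.
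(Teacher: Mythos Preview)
Your treatment of \eqref{eq:l2-bound-1} is essentially identical to the paper's.

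For \eqref{eq:l2-bound-2}, however, you have correctly identified the obstacle and not actually overcome it. The sketch you give for closing the gap --- Lipschitz continuity of $\taustar(\cdot),\alphastar(\cdot)$ together with ``KKT characterization giving sharp control on the finite-$p$ deviation'' --- does not go through with the ingredients at hand: BM's Theorem~1.8 is a purely qualitative $\lim_t\lim_p$ statement with no finite-sample rate, so there is nothing to feed into a diagonal extraction. You would also have to argue that the almost-sure events (one for each $\lambda$) can be intersected, which forces you onto a countable grid of $\lambda$'s and then a continuity argument in $\lambda$ at finite $p$; none of this is written. More to the point, the limit interchange $\lambda\to 0^+$ versus $p\to\infty$ is precisely the delicate issue the paper is built around (cf.\ the discussion surrounding Proposition~\ref{prop:lasso-limit}), so invoking it as a black box inside a supporting lemma is backwards.

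The paper bypasses all of this with a direct geometric argument that never touches the $\lambda\to 0$ limit. Write $\thetaint=\thetaint_{\parallel}+\thetaint_{\perp}$, where $\thetaint_{\parallel}=\bm X^\top(\bm X\bm X^\top)^{-1}\bm X\thetaint$ is the projection onto $\mathrm{row}(\bm X)$. Since $\bm X\thetaint_{\parallel}=\bm y$, random matrix bounds on $\sigma_{\min}(\bm X\bm X^\top)$ give $\|\thetaint_{\parallel}\|_2^2\lesssim\|\bm y\|_2^2$. For the orthogonal piece, $\thetaint_{\perp}$ lives in $\mathrm{row}(\bm X)^\perp$, a uniformly random $(p-n)$-dimensional subspace; Kashin's theorem (BM-Lemma~F.1) then gives $\|\thetaint_{\perp}\|_2^2\lesssim\|\thetaint_{\perp}\|_1^2/p$. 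Finally $\|\thetaint\|_1$ is bounded by the $\ell_1$-norm of any interpolator, in particular the minimum-$\ell_2$ one $\bm X^\top(\bm X\bm X^\top)^{-1}\bm y$, whose norm is again controlled by $\|\bm y\|_2$ and $\|\coef\|_2$. Everything reduces to quantities with obvious $p^{-1}$-normalized limits.
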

By virtue of this lemma, there exists a constant $c_1>0$ such that $\ltwo{\bs{r}} = \ltwo{\thetaint -\t^t} \leq c_1 \sqrt{p}$, which validates 
Condition~\ref{condition-1-lasso} of Lemma~\ref{lem:lasso-structure}.

\item		
\emph{Condition~\ref{condition-3-lasso} of Lemma~\ref{lem:lasso-structure}.}
Similar to the constructions in BM (pg.~25), let us denote 
\begin{align}\label{eq:def-s}
s_i^t \defn 
\left\lbrace \begin{matrix}
\text{sign}(\theta_i^t), & \text{if } \theta_i^t\neq 0;\\
\frac{1}{\zeta_{t-1}} \left\lbrace [\bs{X}^\top\bs{z}^{t-1}]_i + \theta_i^{t-1}\right\rbrace , & \text{otherwise}, 
\end{matrix}\right. 
\quad \text{or}\quad 
\bs{s}^t \defn \frac{1}{\zeta_{t-1}}\left(\t^{t-1}+\bs{X}^\top \bs{z}^{t-1}-\t^t\right).
\end{align}
In view of the AMP updates~\eqref{eq:amp-formula-linear-1}, we can verify the equivalence of these two expressions above, and that vector 
\begin{align}\label{eq:sub-gradient}
\text{sg}(\mathcal{C}_{\lambda_t}, \t^t) \defn \lambda_t\bs{s}^t-\bs{X}^\top(\bs{y}-\bs{X}\t^t)
\end{align}
is a valid sub-gradient of $\mathcal{C}_{\lambda_t}$ at $\t^{t}$. 
With these notation in place, we introduce the following lemma which controls the $\ell_2$-norm of $\mathrm{sg}(\mathcal{C}_{\lambda_t}, \t^t)$.  
The proof of this result can be found in Section~\ref{sec:proof-sg-norm}. 

\begin{lem}\label{lem:sg-norm}
Under our choices of $\left\lbrace \lambda_t\right\rbrace $ and $\left\lbrace \zeta_t\right\rbrace $, the sub-gradient of $\mathcal{C}_{\lambda_t}$ at point $\t^t$ defined in \eqref{eq:sub-gradient} satisfy
	\begin{equation}
		\lim_{t\goto\infty} \lim_{p\goto\infty} \frac{1}{\sqrt{p}\lambda_t}\|\mathrm{sg}(\mathcal{C}_{\lambda_t}, \t^t)\|_2 ~\overset{\mathrm{a.s.}}{=}~ 0.
	\end{equation}
\end{lem}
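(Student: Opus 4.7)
} The plan is to re-express the sub-gradient using the AMP recursion so that all large terms cancel, leaving only differences of consecutive iterates and their sub-gradients. Starting from $\mathrm{sg}(\mathcal{C}_{\lambda_t}, \t^t) = \lambda_t \bs{s}^t - \bs{X}^\top(\bs{y}-\bs{X}\t^t)$ and using \eqref{eq:amp-formula-linear-2} to write $\bs{X}^\top(\bs{y}-\bs{X}\t^t) = \bs{X}^\top \bs{z}^t - \delta^{-1}\langle\eta'_{t-1}\rangle\,\bs{X}^\top \bs{z}^{t-1}$, combined with the identity $\bs{X}^\top \bs{z}^{s-1} = \zeta_{s-1}\bs{s}^{s} + \t^{s} - \t^{s-1}$ (valid for $s=t$ and $s=t+1$ by \eqref{eq:def-s} and the soft-thresholding KKT conditions), I obtain the clean decomposition
\begin{align*}
\mathrm{sg}(\mathcal{C}_{\lambda_t}, \t^t) \;=\; B_t\,\bs{s}^t \;-\; \zeta_t(\bs{s}^{t+1}-\bs{s}^t) \;+\; \frac{\langle\eta'_{t-1}\rangle}{\delta}(\t^t - \t^{t-1}) \;-\; (\t^{t+1}-\t^t),
\end{align*}
where $B_t \defn \lambda_t - \zeta_t + \delta^{-1}\langle\eta'_{t-1}\rangle\,\zeta_{t-1}$ is a scalar that is engineered to vanish in the joint limit.

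The first task is to verify that $|B_t|/\lambda_t \to 0$ as $t\to\infty$ after $p\to\infty$. By Proposition~\ref{prop:amp-fixed-t} applied to $\psi(x,y) = \mathds{1}\{|x|>\zeta_{t-1}\}$, we have $\langle\eta'_{t-1}\rangle \to \mathbb{E}[\eta'(\Theta + \tau_{t-1} Z;\zeta_{t-1})]$ almost surely, and Proposition~\ref{prop:tau-t-limit} and Proposition~\ref{prop:amp-fixd-point-convergence} give $\tau_t,\taustart\to\taustar$ and $\alphastart\to\alphastar$. The fixed-point equation \eqref{eq:fixed-point-zero} encoding $\lim \lambda_t=0$ forces $\mathbb{E}[\eta'(\Theta+\taustar Z;\alphastar\taustar)]=\delta$, so both $\lambda_t$ and $\zeta_t - \delta^{-1}\langle\eta'_{t-1}\rangle\zeta_{t-1}$ converge to $0$; a first-order expansion around $\taustar$ using the Lipschitz bounds $|\alphastart - \alphastar_{t+1}|, |\taustart-\taustar_{t+1}| = O(|\lambda_t-\lambda_{t+1}|)$ from Proposition~\ref{prop:amp-fixd-point-convergence} then lets us extract an error of order $|\lambda_t-\lambda_{t+1}| + |\tau_t-\taustart|$, which Assumption~\ref{assump:lambda} guarantees is $o(\lambda_t)$.

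The second and harder task is to control the iterate-difference terms. The strategy is to establish a contraction-style inequality of the form \eqref{eqn:intuition}, namely
\begin{align*}
\frac{1}{\sqrt{p}}\|\t^{t+1}-\t^t\|_2 \;\lesssim\; e^{-c\lambda_t}\,\frac{1}{\sqrt{p}}\|\t^t-\t^{t-1}\|_2 \;+\; C\,|\lambda_t-\lambda_{t+1}|
\end{align*}
almost surely in the large-$p$ limit, and similarly for $\|\bs{s}^{t+1}-\bs{s}^t\|_2/\sqrt{p}$. To obtain this, I would apply Proposition~\ref{prop:amp-fixed-t} with the pseudo-Lipschitz test function $\psi(x,y,x',y') = (\eta(x;\zeta_t)-\eta(x';\zeta_{t-1}))^2$ so as to reduce $p^{-1}\|\t^{t+1}-\t^t\|_2^2$ to a Gaussian expectation in the limiting variables $(\Theta + \tau_t Z, \Theta+\tau_{t-1}Z)$; Lipschitz dependence of $\eta(\,\cdot\,;\zeta)$ on the threshold and the bounds $|\tau_t-\tau_{t-1}|, |\zeta_t-\zeta_{t-1}|=O(|\lambda_t-\lambda_{t+1}|)$ (via the state-evolution contraction around the $t$-th fixed point combined with Proposition~\ref{prop:amp-fixd-point-convergence}) yield the above recursion. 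Iterating this recursion and invoking the summability conditions in Assumption~\ref{assump:lambda}, in particular $\sum_t \sqrt{l_t}<\infty$ with $l_t = \sum_{s\le t}|\lambda_s-\lambda_{s+1}|e^{-c(\Lambda_t-\Lambda_s)}$, shows that $\|\t^{t+1}-\t^t\|_2/(\sqrt{p}\,\lambda_t) \to 0$ and analogously for $\bs{s}^{t+1}-\bs{s}^t$.

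The principal obstacle I anticipate is precisely this second step: in Bayati--Montanari the contraction is with respect to a \emph{single} fixed point, whereas here the target $(\alphastart,\taustart)$ drifts with $t$, so one must simultaneously run the AMP contraction and absorb a time-varying drift whose cumulative effect (after division by the vanishing quantity $\lambda_t$) still has to disappear. This is where Assumption~\ref{assump:lambda} on the slow, controlled decay of $\{\lambda_t\}$ is essential. Once this contraction-plus-drift bound is in place, combining it with the estimate $|B_t|=o(\lambda_t)$ from the first step and the trivial bound $\|\bs{s}^t\|_2 \le \sqrt p$ in the decomposition above gives $\|\mathrm{sg}(\mathcal{C}_{\lambda_t},\t^t)\|_2/(\sqrt p\,\lambda_t)\to 0$ almost surely, which is the claim of Lemma~\ref{lem:sg-norm}.
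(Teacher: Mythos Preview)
Your proposal is essentially correct and rests on the same underlying engine as the paper --- the contraction-plus-drift estimate for consecutive iterate differences --- but you use a different algebraic decomposition of the sub-gradient. The paper stops one step earlier: it writes
\[
\mathrm{sg}(\mathcal{C}_{\lambda_t},\t^t)=\tfrac{\lambda_t}{\zeta_{t-1}}(\t^{t-1}-\t^t)-\bs{X}^\top(\bs{z}^t-\bs{z}^{t-1})+\tfrac{\lambda_t-\zeta_{t-1}(1-\omega_t)}{\zeta_{t-1}}\bs{X}^\top\bs{z}^{t-1},
\]
with $\omega_t=\delta^{-1}\langle\eta'_{t-1}\rangle$, and then bounds the middle term by $\sigma_{\max}(\bs{X})\|\bs{z}^t-\bs{z}^{t-1}\|_2$. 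This buys a small simplification: the first term already carries a factor $\lambda_t$, so after dividing by $\lambda_t$ it only needs $\|\t^t-\t^{t-1}\|_2/\sqrt p\to0$, and one never needs to introduce or control the sub-gradient difference $\bs{s}^{t+1}-\bs{s}^t$. Your version applies the identity $\bs{X}^\top\bs{z}^s=\zeta_s\bs{s}^{s+1}+\t^{s+1}-\t^s$ once more, which eliminates all matrix products but at the cost of an extra term $\zeta_t(\bs{s}^{t+1}-\bs{s}^t)$ and of needing the stronger estimates $\|\t^{t+1}-\t^t\|_2/(\sqrt p\,\lambda_t)\to0$ and $\|\bs{s}^{t+1}-\bs{s}^t\|_2/(\sqrt p\,\lambda_t)\to0$. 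Since $s_i^t=g_{\zeta_{t-1}}(\theta_i^{t-1}+(\bs{X}^\top\bs{z}^{t-1})_i)$ with $g_\zeta(u)=\zeta^{-1}(u-\eta(u;\zeta))$ a $1/\zeta$-Lipschitz function, the $\bs{s}$-difference is controlled by exactly the same two-step state-evolution calculation you outline for $\t$; so the extra work is modest. Both routes ultimately reduce to the covariance-recursion estimate (the paper's Lemma~\ref{lem:converge-cov}) that yields $\mathbb{E}[(Z_t-Z_{t-1})^2]=o(\lambda_t^2)$ under Assumption~\ref{assump:lambda}.

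Two small technical points to fix in your write-up. First, the indicator $\psi(x,y)=\mathds{1}\{|x|>\zeta_{t-1}\}$ is not pseudo-Lipschitz, so Proposition~\ref{prop:amp-fixed-t} does not apply directly to $\langle\eta'_{t-1}\rangle$; the paper (following Bayati--Montanari) handles this via a separate a.s.\ limit for empirical averages of such indicators. Second, your two-argument test function $\psi(x,x')=(\eta(x;\zeta_t)-\eta(x';\zeta_{t-1}))^2$ needs the \emph{two-step} state-evolution statement (Lemma~\ref{lem:amp-fixed-t-2-step} in the paper), not the single-step Proposition~\ref{prop:amp-fixed-t}. Neither affects the overall correctness of your plan.
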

In other words, there exists a sequence of $\{\varepsilon_{t}\}$ approaching zero such that: for each $t$, it holds almost surely that
$\|\mathrm{sg}(\mathcal{C}_{\lambda_t}, \t^t)\|_2 \leq \sqrt{p}\lambda_t \varepsilon_{t}$.

\item \emph{Condition~\ref{condition-4-lasso} in Lemma~\ref{lem:lasso-structure}.}
As defined above, the vector $\bs{s}^t$ in the expression~\eqref{eq:def-s} 
satisfies
\begin{align*}
	\bs{s}^t =\lambda_t^{-1}
\left[\bs{X}^\top(\bs{y}-\bs{X}\t^t)+\mathrm{sg}(\mathcal{C}_{\lambda_t}, \t^t)\right] \in \partial \|\t^t\|_1,
\end{align*}
where $\mathrm{sg}(\mathcal{C}_{\lambda_t}, \t^t) \in \partial \mathcal{C}_{\lambda_t}(\t^t)$ (cf.~\eqref{eq:sub-gradient}) is a valid sub-gradient of $\mathcal{C}_{\lambda_t}$. 
It turns out that the approximate support set of $\t^{t}$ does not vary too much across iterations of the AMP algorithm. 
This is a high-level reason why $\sigma_{\min}(\bs{X}_{S(c_3)\cup S'})$ can be bounded away from zero, despite the fact that the loss function is not strongly convex.   
This observation is rigorized in the lemma below.

\begin{lem}\label{lem:S-convergence}
Given every $\gamma\in (0, 1)$and $t\geq 1$, define the set
	\begin{align}
	S_t(\gamma) = \left\lbrace i\in [p]: |s_i^t|\geq 1-\gamma\right\rbrace ,
	\end{align}
	with $\bs{s}^t$ defined in \eqref{eq:def-s}. 
	Then for any $\xi > 0$ there exists $t_{\star} = t_{\star}(\xi, \gamma) < \infty$ 
	such that, for all $t_2\geq t_1 \geq t_{\star}$, one has 
	\begin{align*}
	|S_{t_2}(\gamma)/S_{t_1}(\gamma)| < p\xi,
	\end{align*}
	almost surely as $p\goto \infty$.
\end{lem}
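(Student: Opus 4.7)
The plan is to reduce the claim to a statement about the joint (multi-time) state evolution for the pre-threshold AMP quantity $\bs{h}^t\defn\t^{t-1}+\bs{X}^{\top}\bs{z}^{t-1}$, and then exploit the fact that $\tau_t\to\taustar$ forces the two iterates to become perfectly correlated coordinate-wise as $t_1,t_2\to\infty$, which in turn squeezes the symmetric difference $S_{t_2}(\gamma)\setminus S_{t_1}(\gamma)$ down.

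The first step would be to rewrite the membership criterion of $S_t(\gamma)$ purely in terms of $\bs{h}^t$. Combining the vector form $\bs{s}^t=\zeta_{t-1}^{-1}(\bs{h}^t-\t^t)$ with $\t^t=\eta(\bs{h}^t;\zeta_{t-1})$ and examining the two branches of the soft-thresholding function yields the equivalence $|s_i^t|\ge 1-\gamma\iff|h_i^t|\ge(1-\gamma)\zeta_{t-1}$. Consequently,
\[
\frac{|S_{t_2}(\gamma)\setminus S_{t_1}(\gamma)|}{p}=\frac{1}{p}\sum_{i=1}^{p}\mathbb{1}\!\left\{|h_i^{t_2}|\ge(1-\gamma)\zeta_{t_2-1}\right\}\mathbb{1}\!\left\{|h_i^{t_1}|<(1-\gamma)\zeta_{t_1-1}\right\}.
\]

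Next I would apply a two-time extension of Proposition~\ref{prop:amp-fixed-t}: for every pseudo-Lipschitz $\varphi:\real^2\to\real$,
\[
\frac{1}{p}\sum_{i=1}^{p}\varphi(h_i^{t_1},h_i^{t_2})\overset{\mathrm{a.s.}}{\longrightarrow}\Exs\big[\varphi(\Theta+\tau_{t_1-1}Z_1,\Theta+\tau_{t_2-1}Z_2)\big],
\]
where $(Z_1,Z_2)$ is a centered bivariate Gaussian, independent of $\Theta\sim P_\Theta$, with unit marginal variances and a cross-covariance $\Sigma_{t_1,t_2}$ determined by a bivariate state-evolution recursion. Since the indicator is bounded and its discontinuity set has zero mass under the limit bivariate law (the Gaussian term $\tau_{t-1}Z$ smooths out any potential atom of $\Theta$ at the threshold), a standard mollification argument upgrades this pseudo-Lipschitz convergence to convergence at the indicator, yielding
\[
\lim_{p\to\infty}\frac{|S_{t_2}(\gamma)\setminus S_{t_1}(\gamma)|}{p}=\mathbb{P}\big(|\Theta+\tau_{t_2-1}Z_2|\ge(1-\gamma)\zeta_{t_2-1},\,|\Theta+\tau_{t_1-1}Z_1|<(1-\gamma)\zeta_{t_1-1}\big)
\]
almost surely. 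Sending $t_1,t_2\to\infty$ and using Proposition~\ref{prop:tau-t-limit} and Proposition~\ref{prop:amp-fixd-point-convergence} to conclude $\tau_{t-1}\to\taustar$ and $\zeta_{t-1}=\alphastart\tau_{t-1}\to\alphastar\taustar$, together with $\Sigma_{t_1,t_2}\to\taustar^2$, the two Gaussians collapse in $L^2$ to a common $\taustar Z$, the two indicator events become complementary, and their intersection has probability zero. Dominated convergence then supplies the required $t_\star=t_\star(\xi,\gamma)$.

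The main obstacle is the last step, namely establishing $\Sigma_{t_1,t_2}\to\taustar^2$ for our adaptive-threshold AMP. For the usual fixed-$\lambda$ AMP this is a direct contraction argument because a single map is iterated; here the pseudo-fixed point $(\alphastart,\taustart)$ itself drifts with $t$. I anticipate handling this by a perturbed contraction: combine the Lipschitz estimates $|\alphastart-\alpha^\star_{t+1}|\le L_\alpha|\lambda_t-\lambda_{t+1}|$ and $|\taustart-\tau^\star_{t+1}|\le L_\tau|\lambda_t-\lambda_{t+1}|$ from Proposition~\ref{prop:amp-fixd-point-convergence} with the summability of $\sum_s|\lambda_s-\lambda_{s+1}|\exp(-c(\Lambda_t-\Lambda_s))$ from Assumption~\ref{assump:lambda} to show that each bivariate SE step still contracts at rate $\exp(-\lambda_t)$ and the accumulated drift is absolutely summable, forcing the limit $\Sigma_{t_1,t_2}\to\taustar^2$. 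A secondary task is to verify that the joint state evolution itself remains valid under our epoch-based iteration; this follows the same conditioning template used in the proof of Lemma~\ref{thm:distance-limit} and introduces no essentially new ingredients.
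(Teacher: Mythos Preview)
Your proposal is correct and follows essentially the same approach as the paper. Both arguments rewrite the membership criterion via $|h_i^t|\ge(1-\gamma)\zeta_{t-1}$, invoke the two-time state evolution (the paper's Lemma~\ref{lem:amp-fixed-t-2-step}, i.e., BM~Theorem~4.2) to identify the $p\to\infty$ limit as a probability $P_{t_1,t_2}$, and then show this probability vanishes uniformly for $t_1,t_2\ge t_\star$ by proving $\mathsf{R}_{t_1,t_1}-2\mathsf{R}_{t_1,t_2}+\mathsf{R}_{t_2,t_2}\to 0$ via a perturbed contraction that combines the Lipschitz bounds of Proposition~\ref{prop:amp-fixd-point-convergence} with the summability in Assumption~\ref{assump:lambda}---exactly the content of Lemmas~\ref{lem:converge-cov} and~\ref{lem:converge-r-general}. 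The only cosmetic difference is that where you invoke ``$L^2$ collapse plus dominated convergence'', the paper (following BM--Lemma~3.5) writes the explicit Chebyshev-plus-density bound $P_{t_1,t_2}\le (4\varepsilon^2)^{-1}\mathbb{E}[(Z_{t_1-1}-Z_{t_2-1})^2]+4\varepsilon/\sqrt{2\pi\mathsf{R}_{t_1-1,t_1-1}}$ and optimizes over $\varepsilon$; this is precisely how one makes your dominated-convergence step quantitative and uniform in $t_1,t_2\ge t_\star$.
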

The proof of Lemma~\ref{lem:S-convergence} is provided in Section~\ref{sec:converge-var-cov}. 
Recall that BM-Lemma C.5 establishes similar results for the AMP iterates corresponding to a fixed $\lambda.$ 
Here, we aim to approximate the Lasso solution for different parameter $\lambda_{t}$ at each step of the AMP iteration. Therefore it requires us to consider auxiliary state-evolution formulas for each $t$ separately in order to establish Lemma~\ref{lem:S-convergence}. 

Based on Lemma~\ref{lem:S-convergence}, one can derive the following result concerning the constrained singular value of $\bs{X}$. 
\begin{lem}\label{lem:condition-4}
	There exists constraints $\gamma_1 \in (0, 1)$, $\gamma_2, \gamma_3 > 0$ and $t_{\min} < \infty$ such that for any $t\geq t_{\min}$,
	\begin{equation}
	\min_{S'}\left\lbrace \sigma_{\min}(\bs{X}_{S_t(\gamma_1)\cup S'}): S'\subseteq [p], |S'|\leq \gamma_2 p \right\rbrace \geq \gamma_3,
	\end{equation}
	almost surely as $p\goto\infty$.
\end{lem}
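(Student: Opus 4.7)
The strategy combines Lemma~\ref{lem:S-convergence} with a classical uniform bound on the smallest singular value of Gaussian submatrices. The key observation is that for a \emph{finite} iteration index $t_0$ at which $\lambda_{t_0}$ is strictly positive, the asymptotic cardinality $|S_{t_0}(\gamma_1)|/p$ sits strictly below $\delta$; Lemma~\ref{lem:S-convergence} then transfers this bound uniformly to all later iterations, leaving enough slack to accommodate the extra subset $S'$.

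First, pick $t_0$ large enough that $\tau_{t_0-1}$, $\alpha^\star_{t_0}$, and $\tau^\star_{t_0}$ are close to their respective limits (guaranteed by Proposition~\ref{prop:tau-t-limit} and Proposition~\ref{prop:amp-fixd-point-convergence}), yet with $\lambda_{t_0}$ still bounded away from zero. Invoking Proposition~\ref{prop:amp-fixed-t} with a smooth approximation of $\mathbf{1}\{|x|/\zeta_{t_0-1} \geq 1-\gamma_1\}$ yields, almost surely,
\begin{align*}
\lim_{p\to\infty} \frac{|S_{t_0}(\gamma_1)|}{p} \;=\; \mathbb{P}\big(|\Theta + \tau_{t_0-1} Z| \geq (1-\gamma_1)\zeta_{t_0-1}\big).
\end{align*}
At $\gamma_1 = 0$ the right-hand side reduces to the limiting Lasso support fraction $\delta\big(1 - \lambda_{t_0}/(\alpha^\star_{t_0}\tau^\star_{t_0})\big)$, which is strictly below $\delta$ so long as $\lambda_{t_0} > 0$ (see the fixed-point equations in Section~\ref{sec:solution-property-lambda}). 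By continuity in $\gamma_1$, one can then choose $\gamma_1 \in (0, 1)$ small enough so that this limit is bounded above by some $\delta_0 < \delta$.

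Next, apply Lemma~\ref{lem:S-convergence} with the threshold $\gamma_1$ chosen above and a small slack $\xi > 0$ to obtain $t_{\min} \geq t_0$ such that $|S_t(\gamma_1)\setminus S_{t_0}(\gamma_1)| \leq p\xi$ for every $t \geq t_{\min}$ almost surely, hence $|S_t(\gamma_1)| \leq (\delta_0 + \xi)\, p$. Selecting $\xi, \gamma_2 > 0$ with $\delta_0 + \xi + \gamma_2 < \delta$, we conclude that for every $t \geq t_{\min}$ and every $S' \subseteq [p]$ with $|S'| \leq \gamma_2 p$,
\begin{align*}
|S_t(\gamma_1)\cup S'| \;\leq\; (\delta_0 + \xi + \gamma_2)\, p \;<\; \delta p \;=\; n.
\end{align*}

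Finally, we invoke the standard uniform lower bound on the smallest singular value of Gaussian submatrices: for each $\beta \in (0, 1)$ there exists $\gamma_3 = \gamma_3(\beta) > 0$ such that, almost surely as $p \to \infty$,
\begin{align*}
\inf\big\{\sigma_{\min}(\bs{X}_S) : S \subseteq [p],\ |S| \leq \beta n\big\} \;\geq\; \gamma_3.
\end{align*}
This follows from standard concentration of $\sigma_{\min}$ for fixed rectangular Gaussian matrices combined with a union bound over the $\binom{p}{|S|}$ subsets; the exponential decay in the singular-value tail dominates the combinatorial growth precisely because $\beta < 1$. Taking $\beta = (\delta_0 + \xi + \gamma_2)/\delta$ completes the proof. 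The main technical obstacle is Step~1, where one must couple $\gamma_1$ with a finite $t_0$ for which $\lambda_{t_0}$ is bounded away from zero: without this anchoring, passing directly to the limit $t \to \infty$ would force $|S_t(\gamma_1)|/p$ up to $\delta$ and annihilate the slack needed for $S'$.
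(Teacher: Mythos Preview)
Your strategy---anchor at a finite $t_0$, use Lemma~\ref{lem:S-convergence} to control $|S_t(\gamma_1)\setminus S_{t_0}(\gamma_1)|$, then invoke a singular-value lower bound---matches the outline of BM-Proposition~3.6, to which the paper defers. Steps~1--3 are essentially sound, and you correctly flag the central obstruction in your closing remark: for any fixed $\gamma_1>0$ one has $\lim_{t\to\infty}\lim_{p\to\infty}p^{-1}|S_t(\gamma_1)|=\mathbb{P}\big(|\Theta+\taustar Z|>(1-\gamma_1)\alphastar\taustar\big)>\delta$, so without anchoring all slack for $S'$ disappears.

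The gap is that Step~4 does not actually resolve this obstruction. The claim that $\inf_{|S|\leq\beta n}\sigma_{\min}(\bs{X}_S)\geq\gamma_3>0$ holds almost surely for \emph{every} $\beta\in(0,1)$, justified by ``the exponential decay in the singular-value tail dominates the combinatorial growth precisely because $\beta<1$'', is false as stated: the sub-Gaussian tail $e^{-nt^2/2}$ beats the $\binom{p}{\beta n}$ combinatorial factor only when roughly $(1-\sqrt{\beta})^2\gtrsim \beta\,\delta^{-1}\log\!\big(e/(\beta\delta)\big)$, which forces $\beta$ well below~$1$. And in your construction $\beta$ is \emph{necessarily} close to~$1$: applying Lemma~\ref{lem:S-convergence} with anchor $t_1=t_0$ already requires $t_0\geq t_\star(\xi,\gamma_1)$, hence $\lambda_{t_0}$ small, hence $\delta_0\approx\delta\big(1-\lambda_{t_0}/(\alphastar\taustar)\big)$ near~$\delta$. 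The repair in BM-Proposition~3.6 is not a sharper uniform bound but a structural split: the combinatorial union is taken only over the \emph{small} residual $S''=(S_t\setminus S_{t_0})\cup S'$ of size $(\xi+\gamma_2)p$, whose entropy $pH(\xi+\gamma_2)$ can be driven to zero, while the single anchor set $S_{t_0}(\gamma_1)$ is handled separately through its measurability with respect to the AMP iterates rather than by a worst-case subset argument. That separation is the missing idea in your Step~4.
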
 

Based on the conclusion of Lemma~\ref{lem:l2-upper-bound}, the proof of Lemma~\ref{lem:condition-4} follows verbatim as of BM-Proposition 3.6, and is thus omitted here. Note that apart from Lemma~\ref{lem:l2-upper-bound}, the proof requires BM-Lemma 3.4, which holds for AMP iterations with general choices of $\left\lbrace \zeta_t\right\rbrace $, which can be directly adapted to accommodate our setting.

As a direct consequence of Lemma~\ref{lem:condition-4}, Condition~\ref{condition-4-lasso} in Lemma~\ref{lem:lasso-structure} follows immediately with the choice of $\bs{s}=\bs{s}^t$.

\item \emph{Condition~\ref{condition-2-lasso} in Lemma~\ref{lem:lasso-structure}.}
To verify this condition, we only need to find a vanishing sequence $\{\varepsilon_{t}\}$ such that 
$\mathcal{C}_{\lambda_t}(\t^t) - \mathcal{C}_{\lambda_t}(\thetaint) \geq - c_2\lambda_t p\varepsilon_t$. 
Towards this end, we shall control $\mathcal{C}_{\lambda_t}(\t^t) - \mathcal{C}_{\lambda_t}(\thetaint)$ as follows. 
First, recalling that $\widehat{\t}_t$ is the minimizer of $\mathcal{C}_{\lambda_t}(\cdot)$ (cf.~\eqref{eq:definition-lasso-loss}). Hence, 
\begin{align}
\label{eq:second-condition}
\notag 
\frac{ \mathcal{C}_{\lambda_t}(\t^t) - \mathcal{C}_{\lambda_t}(\thetaint)}{p}  \geq\frac{ \mathcal{C}_{\lambda_t}(\widehat{\t}_t) - \mathcal{C}_{\lambda_t}(\thetaint) }{p} &= \frac{1}{2p}\|\bs{y}-\bs{X}\widehat{\t}_t\|_2^2 + \frac{\lambda_t }{p} \left[ \|\widehat{\t}_t\|_1 - \|\thetaint\|_1\right] \\
&\geq \frac{\lambda_t}{p} \left[ \|\widehat{\t}_t\|_1 - \|\thetaint\|_1\right].
\end{align}
In the following, we aim to show that $ \|\widehat{\t}_t\|_1 - \|\thetaint\|_1 \geq - c_2 \varepsilon_{t}$ for some vanishing sequence $\{\varepsilon_t\}.$
From the definition of $\mathcal{C}_{\lambda_t}$ in the expression~\eqref{eq:definition-lasso-loss}, for every $\t$ we can express  
\begin{align*}
\mathcal{C}_{\lambda_t}(\t) = \mathcal{C}_{\lambda_{t+1}}(\t) + (\lambda_{t}-\lambda_{t+1})\|\t\|_1;
\end{align*}
both of the right-hand side terms $\mathcal{C}_{\lambda_{t+1}}(\t)$ and $(\lambda_{t}-\lambda_{t+1})\|\t\|_1$ are convex and non-negative functions of $\t$, and are minimized at $\widehat{\t}_{t+1}$ and $\bs{0}_p$, respectively. 
For any $\t \in\mathbb{R}^p$ with $\|\t\|_1 > \|\widehat{\t}_{t+1}\|_1 \geq 0$, it is easily seen that $\mathcal{C}_{\lambda_{t+1}}(\t)\geq\mathcal{C}_{\lambda_{t+1}}(\widehat{\t}_{t+1})$ and $
(\lambda_{t}-\lambda_{t+1})\|\t\|_1 > (\lambda_{t}-\lambda_{t+1})\|\widehat{\t}_{t+1}\|_1 $; it follows that $\mathcal{C}_{\lambda_t}(\t)> \mathcal{C}_{\lambda_t}(\widehat{\t}_{t+1}) \geq  \mathcal{C}_{\lambda_t}(\widehat{\t}_t)$. Thus we reach the conclusion that $\|\widehat{\t}_t\|_1 \leq \|\widehat{\t}_{t+1}\|_1$.
Additionally, by virtue of Lemma~\ref{lem:l2-upper-bound}, we obtain that
\begin{align*}
\lim_{p\goto\infty}\frac{\|\thetaint\|_1}{p} \leq \lim_{p\goto\infty}\frac{\|\thetaint\|_2}{\sqrt{p}} \leq \sqrt{C}
\end{align*}
is bounded almost surely. As a summery, the sequence $\|\widehat{\t}_t\|_1/p$ enjoys the following two properties 
\begin{itemize}
\item $\|\widehat{\t}_t\|_1/p \leq \|\widehat{\t}_{t+1}\|_1 /p\leq \dots \leq \|\thetaint\|_1/p$, and almost surely as $p \goto \infty$, $ \lim_{t\goto \infty} \widehat{\t}_t = \thetaint$;

\item $\|\thetaint\|_1/p$ is bounded almost surely.
\end{itemize}
In view of the monotone convergence theorem, we reach the conclusion that
\begin{align*}
\lim_{t\goto\infty} \frac{\|\thetaint\|_1 - \|\widehat{\t}_t\|_1}{p} = 0 \quad \Longrightarrow\quad \frac{\|\thetaint\|_1 - \|\widehat{\t}_t\|_1}{p}\leq \varepsilon_t,
\end{align*}
for a vanishing sequence $\left\lbrace \varepsilon_t\right\rbrace $. Combining with expression~\eqref{eq:second-condition}, we arrive at 
$
\mathcal{C}_{\lambda_t}(\t^t) - \mathcal{C}_{\lambda_t}(\thetaint)   \geq -c_2 \lambda_t p \varepsilon_t.
$
Thus we verify the condition~\ref{condition-2-lasso} in Lemma~\ref{lem:lasso-structure}.

\end{itemize}

Taking the above results collectively, we complete the proof of Lemma~\ref{thm:distance-limit}.

\subsection{Proof of supporting lemmas to Lemma~\ref{thm:distance-limit}
}
\label{sec:auxiliary-lemmas}

\subsubsection{Proof of Lemma~\ref{lem:lasso-structure}}\label{sec:lasso-structure-proof}

As experienced readers might have already noticed, the statement of Lemma~\ref{lem:lasso-structure} is very similar to that of BM-Lemma 3.1;  
the only difference lies in the Condition~\ref{condition-2-lasso} and Condition~\ref{condition-3-lasso}. 
A closer inspection at the proof of BM-Lemma 3.1 reveals that these two conditions are only used to establish BM-(3.4) and BM-(3.5).
Therefore, as long as we can show BM-(3.4) and BM-(3.5) for our setting, the proof of our lemma will be completed, with the rest part following verbatim from the proof of BM-Lemma 3.1.

Let us first adapt BM-(3.4) and BM-(3.5) to our setting.
Throughout, we shall use $\xi(\varepsilon)$ to denote a function of constants $c_1, \dots, c_6>0$ and of $\varepsilon$ such that $\xi(\varepsilon) \goto 0$ as $\varepsilon \goto 0$. Additionally, we shall use $S = \text{supp}(\t)\subseteq [p]$. 
Formally, it suffices for us to show that, one can find such $\xi(\varepsilon)$ where 
\begin{subequations}
	\begin{equation}\label{eq:convex-1}
	\frac{\|\bs{r}_{\bar{S}}\|_1- \inprod{\bs{s}_{\bar{S}}}{\bs{r}_{\bar{S}}}}{p}  \leq \xi(\varepsilon);
	\end{equation}
	\begin{equation}\label{eq:convex-2}
	\|\bs{X}\bs{r}\|_2^2 \leq p \xi(\varepsilon). 
	\end{equation}
\end{subequations}
Given these two inequalities, the proof of Lemma~\ref{lem:lasso-structure} follows directly from BM-Lemma 3.1. 
For the sake of brevity, we only establish the  aforementioned two inequalities, and refer readers to \cite{bayati2011lasso} for the rest of the proof.

\paragraph{Verifying the expressions~\eqref{eq:convex-1} and \eqref{eq:convex-2}.}
With $\bs{s}$ defined in condition~\ref{condition-4-lasso}, we obtain
\begin{align*}
c_2 \varepsilon \geq & \frac{\mathcal{C}_{\lambda_t}(\t+\bs{r}) - \mathcal{C}_{\lambda_t}(\t)}{p\lambda_t} \quad \text{(Condition 2)}\\
= & \frac{\|\t_S+\bs{r}_S\|_1-\|\t_S\|_1}{p} + \frac{\|\bs{r}_{\bar{S}}\|_1}{p}+\frac{\|\bs{y}-\bs{X}\t - \bs{Xr}\|_2^2 -\|\bs{y}-\bs{X}\t\|_2^2 }{2p\lambda_t}\\
\overset{(\mathrm{i})}{=} & \frac{\|\t_S+\bs{r}_S\|_1-\|\t_S\|_1- \inprod{\text{sign}(\t_S)}{\bs{r}_S}}{p}  + \frac{\|\bs{r}_{\bar{S}}\|-\inprod{\bs{s}_{\bar{S}}}{\bs{r}_{\bar{S}}}}{p} + \frac{\lambda_t \inprod{\bs{s}}{\bs{r}} - \inprod{\bs{y}-\bs{X}\t}{\bs{Xr}} + \frac{1}{2}\|\bs{Xr}\|_2^2}{p\lambda_t} \\
\overset{(\mathrm{ii})}{=}  &  \frac{\|\t_S+\bs{r}_S\|_1-\|\t_S\|_1- \inprod{\text{sign}(\t_S)}{\bs{r}_S}}{p}  + \frac{\|\bs{r}_{\bar{S}}\|-\inprod{\bs{s}_{\bar{S}}}{\bs{r}_{\bar{S}}}}{p} +  \frac{ \inprod{\text{sg}(\mathcal{C}_{\lambda_t}, \t)}{\bs{r}} }{p\lambda_t} + \frac{\|\bs{Xr}\|_2^2}{2p\lambda_t} . \quad \text{(Condition 4)}.
\end{align*}
Here, $(\mathrm{i})$ follows from the fact that $\bs{s}\in \partial \|\t\|_1$, and thus $\text{sign}(\t_S)=\bs{s}_S$.
The equality $(\mathrm{ii})$ comes from the definition~\eqref{eq:sub-gradient}. 

Invoking the Cauchy-Schwarz inequality and Condition~\ref{condition-3-lasso} yields $\left|\inprod{\text{sg}(\mathcal{C}_{\lambda_t}, \t)}{\bs{r}}/(p\lambda_t )\right| \leq c_1 \varepsilon$.
Substitution into the above inequality with a little algebra leads to
\begin{equation}
\frac{\|\t_S+\bs{r}_S\|_1-\|\t_S\|_1- \inprod{\text{sign}(\t_S)}{\bs{r}_S}}{p}  + \frac{\|\bs{r}_{\bar{S}}\|-\inprod{\bs{s}_{\bar{S}}}{\bs{r}_{\bar{S}}}}{p} + \frac{\|\bs{Xr}\|_2^2}{2p\lambda_t} \leq (c_1+c_2) \varepsilon.
\end{equation}
It can be easily seen that these three terms on the left-hand side above are all non-negative. Therefore, the inequalities~\eqref{eq:convex-1} and \eqref{eq:convex-2} follow directly.

\subsubsection{Proof of Lemma~\ref{lem:l2-upper-bound}}
\label{sec:proof-l2-upper-bound}

The proof of this lemma is adapted from BM-Lemma 3.2 with modifications tailored to the minimum $\ell_{1}$-norm solution. 
As mentioned previously, Proposition~\ref{prop:amp-fixed-t} holds for general choices of $\{\zeta_t\}$, and is therefore directly applicable to our setting. Hence, the AMP iterates satisfy 
\begin{align*}
\lim_{t\goto\infty}\lim_{p\goto\infty}\frac{1}{p}\|\t^t\|_2^2 = \lim_{t\goto\infty} \mathbb{E}\left[ \eta^2(\Theta + \tau_tZ; \zeta_t)\right] \leq \mathbb{E}[\Theta^2] + {\taustar}^2.
\end{align*}
In other words, asymptotically $\frac{1}{p}\|\t^t\|_2^2$ is bounded by some constant that depends only  on $\mathbb{E}[\Theta^2]$ and $\taustar$. 

Consequently, to prove Lemma~\ref{lem:l2-upper-bound}, it suffices to establish the relation~\eqref{eq:l2-bound-2}.
Let us first decompose the minimum $\ell_1$-norm interpolator $\thetaint$ into the projection onto the row space of $\bs{X}$, and the residual. Formally, we write
\begin{align*}
	\thetaint = \thetaint_{\perp} + \thetaint_{||},
\end{align*}
where $\thetaint_{||}=\bs{X}^\top(\bs{XX}^\top)^{-1}\bs{X}\thetaint$. 
It is straightforward to verify that $\bs{X}\thetaint_{||}=\bs{X}\thetaint = \bs{y}$. We can view $\thetaint_{\perp}$ as the projection of $\thetaint$ onto the orthogonal space of $\text{row}(\bs{X})$, which is a uniformly random $(p-n)$-dimensional subspace of $\mathbb{R}^p$. By Kashin Theorem (BM-Lemma F.1), there exists a universal constant $c_1>0$ depending on $\delta$ such that
$\|\thetaint_{\perp}\|_2^2 \leq c_1 \|\thetaint_{\perp}\|_1^2/p$, almost surely as $p\goto\infty$. In addition, regarding the limiting value for the eigenvalues of Wishart matrices, it is known that there exists a constant $c_2$ depending only on $\delta$ such that $\|\thetaint_{||}\|_2^2 \leq c_2 \|\bs{X}\thetaint_{||}\|_2^2$ almost surely as $p\goto\infty$ (see BM-Lemma F.2). Therefore, we arrive at
\begin{align}
\notag \lim_{p\goto\infty} \frac{1}{p}\|\thetaint\|_2^2 
& = \lim_{p\goto\infty}\left[  \frac{1}{p} \|\thetaint_{\perp}\|_2^2 +  \frac{1}{p}\|\thetaint_{||}\|_2^2\right] \\
\notag & \leq \lim_{p\goto\infty} 
\left[ c_1 \frac{\|\thetaint_{\perp}\|_1^2}{p^2} +  \frac{\|\thetaint_{||}\|_2^2}{p}\right] \\
\notag (\text{by Cauchy-Schwarz}) &  
\leq \lim_{p\goto\infty} \left[ 2c_1 \left(\frac{\|\thetaint\|_1^2}{p^2} 
+ \frac{\|\thetaint_{||}\|_2^2}{p}\right) +  \frac{\|\thetaint_{||}\|_2^2}{p}\right]\\
& \leq  \lim_{p\goto\infty} \left[ 2c_1 \frac{\|\thetaint\|_1^2}{p^2} + (2c_1+1) c_2 \frac{\|\bs{y}\|_2^2}{p}\right],
\label{eqn:l2-norm-intepolator}
\end{align}
where the last step uses $\bs{X}\thetaint_{||}=\bs{X}\thetaint = \bs{y}$. 

As a consequence, it suffices to upper bound the quantities $\|\thetaint\|_1/p$ and $\|\bs{y}\|_2^2/p$.  
First, by our model assumption, $y_i\overset{\mathrm{i.i.d.}}{\sim} \NORMAL(0, \|\coef\|_2^2/n + \sigma^2)$; % \yuting{$1/n$?}
it immediately follows that $\lim_{p\goto\infty}\|\bs{y}\|_2^2/p$ is bounded. 
In addition, we note that $\bs{a} = \bs{X}^\top(\bs{XX}^\top)^{-1}\bs{y}$ satisfies the condition $\bs{Xa}=\bs{y}$.
Since $\thetaint$ has the minimum $\ell_1$-norm over all linear interpolators, we can guarantee that 
\begin{align*}
	\|\thetaint\|_1 \leq \|\bs{X}^\top(\bs{XX}^\top)^{-1}\bs{y}\|_1.
\end{align*}
Finally, by virtue of BM-Lemma F.2, we obtain $\sigma_{\max}((\bs{XX}^\top)^{-1}) \leq c_3$ for some $c_3>0$ depending on $\delta$, almost surely as $p\goto\infty$. Collecting these components together yields
\begin{align*}
\left(\frac{\|\thetaint\|_1}{p}\right)^2 \leq \frac{1}{p}\|\bs{X}^\top(\bs{XX}^\top)^{-1}\bs{y}\|_2^2 \leq \frac{\bs{y}^\top (\bs{XX}^\top)^{-1}\bs{y}}{p} \leq 2 \frac{\|\coef\|_2^2}{p} + 2 c_3 \frac{\|\noise\|_2^2}{p}.
\end{align*}
Therefore, $\lim_{p \to \infty} (\frac{\|\thetaint\|_1}{p})^2 \leq 2 \mathbb{E}[\Theta^2] + 2c_3\sigma^2$ holds almost surely. 
Combining this with the expression~\eqref{eqn:l2-norm-intepolator}, we see that $\lim_{p\goto\infty} \frac{1}{p}\|\thetaint\|_2^2 $ is upper bounded by a universal constant that depends only  on $\delta$.

Putting these two parts together finishes the proof of Lemma~\ref{lem:l2-upper-bound}.

\subsubsection{Proof of Lemma~\ref{lem:sg-norm}}
\label{sec:proof-sg-norm}

To prove Lemma~\ref{lem:sg-norm}, we aim to upper bound the target quantity $\|\mathrm{sg}(\mathcal{C}_{\lambda_t}, \t^t)\|_2$ by three terms, and look at each term separately. 
For ease of exposition, let us define
\begin{align*}
\omega_t = \frac{1}{\delta} \left\langle\eta'(\bs{X}^\top \bs{z}^{t-1} + \t^{t-1}; \zeta_{t-1})\right\rangle .
\end{align*}
The AMP iterate~\eqref{eq:amp-formula-linear-2} then translates into $\bs{y} - \bs{X}\t^t = \bs{z}^t - \omega_t \bs{z}^{t-1}$. 
With this piece of notation in mind, we can rewrite the sub-gradient (cf.~\eqref{eq:sub-gradient}) as
\begin{align*}
\text{sg}(\mathcal{C}_{\lambda_t}, \t^t) & = \lambda_t \bs{s}^t - \bs{X}^\top (\bs{z}^t-\bs{z}^{t-1}) - (1-\omega_t)\bs{X}^\top \bs{z}^{t-1} \\
& = \frac{\lambda_t}{\zeta_{t-1}}\left[ \zeta_{t-1}\bs{s}^t -  \bs{X}^\top\bs{z}^{t-1}	\right] - \bs{X}^\top (\bs{z}^t-\bs{z}^{t-1}) + \frac{\left[ \lambda_t - \zeta_{t-1}(1-\omega_t) \right]}{\zeta_{t-1}}\bs{X}^\top \bs{z}^{t-1}\\
& = \frac{\lambda_t}{\zeta_{t-1}}(\t^{t-1}-\t^t) - \bs{X}^\top (\bs{z}^t-\bs{z}^{t-1}) + \frac{\left[ \lambda_t - \zeta_{t-1}(1-\omega_t) \right]}{\zeta_{t-1}}\bs{X}^\top \bs{z}^{t-1}.
\end{align*}
Applying the triangle inequality leads to 
\begin{align}\label{eq:sg-decomposition}
\frac{1}{\sqrt{p}\lambda_t} \|\text{sg}(\mathcal{C}_{\lambda_t}, \t^t)\|_2 \leq 
\frac{1}{\zeta_{t-1}}\frac{\|\t^{t-1}-\t^t\|_2}{\sqrt{p}} 
+ \sigma_{\max}({\bs{X}}) \frac{\|\bs{z}^t-\bs{z}^{t-1}\|_2}{\sqrt{p}\lambda_t} 
+\sigma_{\max}({\bs{X}}) \frac{\|\bs{z}^{t-1}\|_2}{\sqrt{p}} \frac{\left[ \lambda_t - \zeta_{t-1}(1-\omega_t) \right]}{\zeta_{t-1}\lambda_t }.
\end{align}
The proof of Lemma~\ref{lem:sg-norm} then boils down to analyzing the three terms on the right-hand side of \eqref{eq:sg-decomposition}.
We first invoke the following lemma describing the convergence of the AMP updates, which will be proved in Section~\ref{sec:convergence-amp-estimate}. 
\begin{lem}
\label{lem:convergence-x-z}
	The AMP iterates obey the following convergence guarantees  
	\begin{align*}
	\lim_{t\goto\infty}\lim_{p\goto\infty} \frac{1}{p\lambda_t^2}\|\t^t - \t^{t-1}\|_2^2 = 0, 
	\qquad \lim_{t\goto\infty}\lim_{p\goto\infty} \frac{1}{p\lambda_t^2}\|\bs{z}^t - \bs{z}^{t-1}\|_2^2 = 0.
	\end{align*}
\end{lem}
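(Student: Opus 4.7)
My plan is to establish a one-step contraction inequality of the form foreshadowed in equation~\eqref{eqn:intuition}, namely
\[
\frac{\|\t^{t+1}-\t^t\|_2}{\sqrt{p}}\leq (1-c\lambda_t)\,\frac{\|\t^t-\t^{t-1}\|_2}{\sqrt{p}} + C\,|\lambda_t-\lambda_{t+1}|
\]
(in the almost-sure large-$p$ limit), and then iterate it and divide by $\lambda_t$ to obtain the first claim; an analogous argument, combined with the residual identity $\bs{z}^t-\bs{z}^{t-1}=-\bs{X}(\t^t-\t^{t-1})+(\omega_t-\omega_{t-1})\bs{z}^{t-1}$ and the operator-norm bound $\sigma_{\max}(\bs{X})\to 1+1/\sqrt{\delta}$, will transfer the result from $\t$ to $\bs{z}$ using Lemma~\ref{lem:l2-upper-bound} to control $\|\bs{z}^{t-1}\|_2/\sqrt{p}$.

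First I would derive a joint state-evolution that tracks two consecutive iterates simultaneously. By extending Proposition~\ref{prop:amp-fixed-t} to pseudo-Lipschitz test functions on $\mathbb{R}^3$, one obtains, almost surely as $p\to\infty$,
\[
\frac{1}{p}\|\t^{t+1}-\t^t\|_2^2\;\longrightarrow\;\mathbb{E}\bigl[\bigl(\eta(\Theta+\tau_tZ_t;\zeta_t)-\eta(\Theta+\tau_{t-1}Z_{t-1};\zeta_{t-1})\bigr)^2\bigr],
\]
where $(Z_{t-1},Z_t)$ is a centered Gaussian pair whose covariance $\Sigma_{t-1,t}$ satisfies a two-dimensional recursion driven by the map $\mathsf{F}$ from~\eqref{eqn:state-evolution-2}. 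I would then decompose the increment inside the expectation into a ``threshold drift'' piece controlled by $|\zeta_t-\zeta_{t-1}|\leq(L_\alpha\tau^\star_{\max}+\alpha^\star_{\max}L_\tau)|\lambda_t-\lambda_{t+1}|$ (an immediate consequence of Proposition~\ref{prop:amp-fixd-point-convergence}) and an ``input drift'' piece governed by $\Sigma_{t-1,t}$. The contractive factor $(1-c\lambda_t)$ would be extracted from the concavity of $\tau^2\mapsto\mathsf{F}(\tau^2,\alpha\tau)$ used in Proposition~\ref{prop:tau-t-limit}, together with the identity $\frac{1}{\delta}\mathbb{E}[\eta'(\Theta+\taustart Z;\zeta_t)]=1-\lambda_t/(\alphastart\taustart)$ furnished by the second equation of~\eqref{eq:fixed-point}, which is exactly what generates a spectral gap of order $\lambda_t$ for the off-diagonal entry of $\Sigma_{t-1,t}$ to track the diagonal.

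Iterating the one-step inequality and writing $\Lambda_t=\sum_{s\le t}\lambda_s$ would yield
\[
\frac{\|\t^{t+1}-\t^t\|_2}{\sqrt{p}}\;\lesssim\;e^{-c\Lambda_t}\,\frac{\|\t^1-\t^0\|_2}{\sqrt{p}}\;+\;\sum_{s=1}^{t}|\lambda_s-\lambda_{s+1}|\,e^{-c(\Lambda_t-\Lambda_s)},
\]
whose second term is precisely the quantity $l_t$ in Assumption~\ref{assump:lambda}. After dividing by $\lambda_t$ the exponential term is suppressed because the condition $\sum_{j=t/2}^{t}\lambda_j\geq c\log t$ forces $e^{-c\Lambda_t}/\lambda_t\to 0$ for any $c$; the ratio $l_t/\lambda_t$ vanishes because $\sum_t\sqrt{l_t}<\infty$ combined with $\lambda_t/\lambda_{t+1}\to 1$ forces $l_t=o(\lambda_t)$. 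Squaring and passing to the limit then delivers the first statement, and the residual identity above delivers the second.

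The principal obstacle I anticipate is producing the contractive factor $(1-c\lambda_t)$ in the second step. Unlike in the fixed-$\lambda$ setting of~\cite{bayati2011lasso}, here the joint covariance $\Sigma_{t-1,t}$ satisfies a \emph{non-autonomous} recursion whose ``diagonal'' fixed point drifts with $t$; one must quantify simultaneously (i) how fast $\Sigma_{t-1,t}$ contracts toward its diagonal under the two-step recursion and (ii) how far the diagonal itself moves when $\lambda_t$ changes. This is precisely the step that substitutes for the restricted-strong-convexity argument available when $\lambda>0$ is fixed, and it is where the Lipschitz control of $(\taustart,\alphastart)$ from Proposition~\ref{prop:amp-fixd-point-convergence} and the slow-variation conditions in Assumption~\ref{assump:lambda} must be combined into a careful perturbation analysis around the state-evolution fixed point.
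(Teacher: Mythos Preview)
Your approach is essentially the paper's. In Section~\ref{sec:convergence-amp-estimate} the paper also passes to the large-$p$ limit via the two-step state evolution (its Lemma~\ref{lem:amp-fixed-t-2-step}), obtains exactly your decomposition
\[
\lim_{p\to\infty}\frac{1}{p}\|\t^t-\t^{t-1}\|_2^2\;\leq\; 2(\zeta_t-\zeta_{t-1})^2+2\,\mathbb{E}\big[(Z_t-Z_{t-1})^2\big],
\]
and then bounds the second term via a separately stated lemma (Lemma~\ref{lem:converge-cov}) that gives the iterated estimate $c_0e^{-c\Lambda_t}+4L_\tau\tau^\star_{\max}l_t$ on the covariance discrepancy $\mathsf{R}_{t,t}-2\mathsf{R}_{t,t+1}+\mathsf{R}_{t+1,t+1}$. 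That lemma is proved by tracking the three-dimensional vector $\bs{y}_t=(\tau_{t-1}^2,\tau_t^2,\mathsf{R}_{t-1,t-1}-2\mathsf{R}_{t-1,t}+\mathsf{R}_{t,t})$ and bounding the spectral radius of the Jacobian of its update map at the drifting fixed point $(\taustart^2,\taustart^2,0)$ by $1-\lambda_t/(2\alpha^\star_{\max}\tau^\star_{\max})$, using precisely the identity $\frac{1}{\delta}\mathbb{E}[\eta'(\Theta+\taustart Z;\alphastart\taustart)]=1-\lambda_t/(\alphastart\taustart)$ you cite. Your one-step scalar inequality is a slightly lossy rephrasing of this vector contraction---the paper must carry the $(y_{t,1},y_{t,2})$ components along because the third coordinate of the update depends on them---but you already flagged exactly this coupling as ``the principal obstacle,'' so your plan matches the paper's in substance.

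One concrete error: your residual identity for $\bs{z}$ is wrong. Subtracting the AMP recursions gives $\bs{z}^t-\bs{z}^{t-1}=-\bs{X}(\t^t-\t^{t-1})+\omega_t\bs{z}^{t-1}-\omega_{t-1}\bs{z}^{t-2}$, not $+(\omega_t-\omega_{t-1})\bs{z}^{t-1}$. Since $\omega_t\to 1$ as $\lambda_t\to 0$, the correct identity yields a recursion with coefficient near one and does \emph{not} transfer the $\t$-bound to $\bs{z}$ via the operator-norm argument you propose. The paper sidesteps this entirely by invoking the proof of \cite[Lemma~4.3]{bayati2011lasso} (which is valid for any positive threshold sequence) to get directly
\[
\lim_{p\to\infty}\frac{1}{p}\|\bs{z}^t-\bs{z}^{t-1}\|_2^2\;=\;\lim_{p\to\infty}\frac{1}{p}\|\t^t-\t^{t-1}\|_2^2
\]
from the state-evolution characterization of the $\bs{z}$-sequence, so no residual-identity manipulation is needed.
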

Note that, in the limit, Proposition~\ref{prop:amp-fixd-point-convergence} ensures that $\zeta_t \goto \zetastar$, and is thus  bounded away from $0$.
Combining Lemma~\ref{lem:convergence-x-z} and the choice of $\lambda_{t}$ ensures $\frac{1}{\zeta_{t-1}}\frac{\|\t^{t-1}-\t^t\|_2}{\sqrt{p}}$ converges to zero as $t\goto \infty$. 
In addition, since $\sigma_{\max}(\bs{X})$ is almost surely bounded in the limit, the second term on the right-hand side of the expression~\eqref{eq:sg-decomposition} also converges to $0$.

It remains to consider the third term on the right-hand side of the expression~\eqref{eq:sg-decomposition}. 
To this end, one can first characterize the large-system limit of $\|\bs{z}^t\|_2$ as follows
\begin{align}
\label{eqn:tmp-zt}
\lim_{t\goto\infty} \lim_{p\goto\infty} \frac{\|\bs{z}^{t}\|_2}{\sqrt{p}} = \lim_{t\goto\infty} \tau_t = \taustar, \quad \text{almost surely},
\end{align}
as shown in BM-Lemma 4.1.
In addition, the result in BM-Lemma F.3 demonstrates that $\forall t\geq 1$,
$$\lim_{p\goto \infty} \left[1-\omega_t\right] \overset{\text{a.s.}}{=} 1 - \frac{1}{\delta}\mathbb{E}\left[ \eta'(\Theta + \tau_t Z; \zeta_t)\right].$$
By construction, we know $\lambda_t = \taustart \alphastart \left( 1 - \frac{1}{\delta}\mathbb{E}\left[ \eta'(\Theta + \taustart Z;  \taustart \alphastart)\right] \right)$. Hence,
\begin{align*}
\left|\frac{1}{\alphastart\taustart}-\frac{1-\omega_t}{\lambda_t}\right|   &\overset{\mathrm{a.s.}}{\goto}  \frac{1}{\lambda_t} \left|\frac{\lambda_t}{\alphastart\taustart}-\left(1-\frac{1}{\delta}\mathbb{E}\left[ \eta'(\Theta + \tau_t Z; \tau_t \alphastart)\right]\right)\right|  \\
 & = \frac{1}{\delta\lambda_t} \left|\mathbb{E}\left[ \eta'(\Theta + \taustart Z; \alphastart\taustart)\right] -  \mathbb{E}\left[ \eta'(\Theta + \tau_t Z; \tau_t \alphastart)\right]\right| \\
 & \leq \frac{1}{\delta\lambda_t} \mathbb{E}\left[ \left| \Phi\left( \alphastart - \frac{\Theta}{\taustart}\right)  -\Phi\left( \alphastart - \frac{\Theta}{\tau_t}\right) \right|\right] + \frac{1}{\delta\lambda_t} \mathbb{E}\left[ \left| \Phi\left( \alphastart + \frac{\Theta}{\taustart}\right)  -\Phi\left( \alphastart + \frac{\Theta}{\tau_t}\right) \right|\right] \\
 & \leq \frac{2\mathbb{E}[|\Theta|]}{\delta\taustart\tau_t}\frac{|\tau_t-\taustart|}{\lambda_t},
\end{align*}
where the second inequality comes from the Lipschitz property of $\Phi(\cdot)$. 
If we take $t\goto\infty$, then the limiting values of $\taustart\goto\taustar$ and $\tau_t\goto\taustar$ and Lemma~\ref{lem:converge-cov} immediately indicate that
\begin{align*}
\frac{2\mathbb{E}[|\Theta|]}{\delta\taustart\tau_t} \overset{\mathrm{a.s.}}{\goto} \frac{2\mathbb{E}[|\Theta|]}{\delta\taustar^2};\quad \frac{\left|\tau_t-\taustart\right|}{\lambda_t} \overset{\mathrm{a.s.}}{\goto}  0.
\end{align*}
Combining this with the model construction that $\mathbb{E}[|\Theta|] < +\infty$, we obtain $$(1-\omega_t)/\lambda_t \overset{\mathrm{a.s.}}{\goto}  1/(\alphastar\taustar),$$ and it follows that
\begin{align*}
\lim_{t\goto\infty} \lim_{p\goto\infty} \frac{\left[ \lambda_t - \zeta_{t-1}(1-\omega_t) \right]}{\zeta_{t-1}\lambda_t } \overset{\mathrm{a.s.}}{=} \lim_{t\goto\infty} \left[ \frac{1}{\zeta_{t-1}} - \frac{1}{\alphastar\taustar}\right] =0.
\end{align*}
Taken collectively with the expression~\eqref{eqn:tmp-zt}, the third term of the
inequality~\eqref{eq:sg-decomposition} converges to zero. 
This concludes the proof.

%%%%%%%%%%%%%%%%%%%%%%%%%%%%%%%%%%%%%%%%%%%%%%%%

\subsection{Proof of Lemma~\ref{lem:S-convergence} and Lemma~\ref{lem:convergence-x-z}}
\label{sec:converge-var-cov}

% \red{xiao yueyue kuai lai!}\yue{\faHourglassHalf\faHourglassHalf\faHourglassHalf}
The goal of this section is to prove Lemma~\ref{lem:S-convergence} and Lemma~\ref{lem:convergence-x-z}, 
% which involve multiple AMP iterations. By virtue of Lemma~\ref{lem:amp-fixed-t}, the state evolution sequence $\left\lbrace \tau_t^2\right\rbrace $ can be viewed as the ``variance'' of the AMP recursions. 
% To show Lemma~\ref{lem:S-convergence} and Lemma~\ref{lem:convergence-x-z}, we need 
which requires us to characterize the changes between two AMP iterates $\t^s$ and $\t^t$ in different iterations $s\neq t$. 
Towards this end, in Section~\ref{sec:definition-var-cov}, we define the covariance between AMP iterates, and state two auxiliary lemmas about the convergence rate of these covariances (cf.~Lemma~\ref{lem:converge-cov} and Lemma~\ref{lem:converge-r-general}). In Section~\ref{sec:proof-S-convergence} and Section~\ref{sec:convergence-amp-estimate}, we invoke these two lemmas to derive Lemma~\ref{lem:S-convergence} and Lemma~\ref{lem:convergence-x-z} respectively, which is then followed by the proofs of these two lemmas.

\subsubsection{Auxiliary definitions and lemmas}
\label{sec:definition-var-cov}

By virtue of Proposition~\ref{prop:amp-fixed-t}, the state evolution sequence $\lbrace \tau_t^2\rbrace_{t=0}^\infty $ can be viewed as the large $n$ ``variance'' of the AMP recursions. 
We generalize this notion to consider the correlations between $\t^s$ and $\t^t$ when $s \neq t$. 
Formally, define a sequence of scalars $\left\lbrace \mathsf{R}_{s, t}\right\rbrace_{s, t\geq 0}$ recursively as in BM-(4.13):
\begin{align}
	\label{eq:def-r}
	\mathsf{R}_{s+1, t+1} \defn \sigma^2 +\frac{1}{\delta}\mathbb{E}\left\lbrace \left[ \eta(\Theta+Z_s; \zeta_s)-\Theta\right] \left[ \eta(\Theta+Z_t; \zeta_t)-\Theta\right]\right\rbrace. 
\end{align}
Here $(Z_s, Z_t) \in \real^{2}$ are jointly Gaussian, independent of $\Theta$, with zero mean and
\begin{align}
\label{eqn:Z-cov-structure}
	\mathbb{E}[Z_s^2] = \mathsf{R}_{s, s}, 
	\quad \mathbb{E}[Z_t^2] =\mathsf{R}_{t, t}, 
	\quad \mathbb{E}[Z_sZ_t] = \mathsf{R}_{s, t}. 
\end{align}
The boundary conditions are given by $\mathsf{R}_{0, 0} = \sigma^2 + \mathbb{E}[\Theta^2]/\delta$, and 
\begin{align}
\label{eq:def-r-boundary}
\mathsf{R}_{0, t+1} = \sigma^2 + \frac{1}{\delta} \mathbb{E}\left\lbrace \left[\eta(\Theta + Z_t; \zeta_t) - \Theta\right](-\Theta) \right\rbrace ,
\end{align}
where $Z_t\sim \NORMAL(0, \mathsf{R}_{t, t})$ is independent of $\Theta$. 
Comparing these with the definition of state evolution formula in expressions~\eqref{eqn:state-evolution-1} and \eqref{eqn:state-evolution-2},
we can immediately see that $\mathsf{R}_{t, t} =\tau_t^2$ for $t\geq 0$.
We record the following result from \cite{bayati2011lasso}. 

\begin{lem}[Theorem 4.2 in \cite{bayati2011lasso}]\label{lem:amp-fixed-t-2-step}
	Under the setting of Proposition~\ref{prop:amp-fixed-t}, given any pseudo-Lipschitz function $\psi: \real^3 \to \real$, it holds that 
	\begin{align*}
		\lim_{p\goto\infty}\frac{1}{p}\sum_{i=1}^p \psi(\theta_i^s + (\bs{X}^\top \bs{z}^s)_i, \theta_i^t + (\bs{X}^\top \bs{z}^t)_i,\theta^\star_i) \overset{\mathrm{a.s.}}{=} \mathbb{E}\left\lbrace \psi(\Theta + Z_s, \Theta + Z_t,\theta^\star_i)
	\right\rbrace ,
	\end{align*}
	where $(Z_s, Z_t)$ are jointly Gaussian, independent from $\Theta$, mean-zero and satisfy~\eqref{eqn:Z-cov-structure}.
\end{lem}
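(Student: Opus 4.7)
The statement is the joint two-time generalization of Proposition~\ref{prop:amp-fixed-t}, which gave the single-time ($s=t$) state evolution characterization. My plan is to prove it by induction on $r \defn \max(s,t)$, following the template of Bayati--Montanari via the Bolthausen conditioning method. The base case $s=t=0$ is trivial given the initialization $\t^0 = \bs{0}$ and $\bs{z}^{-1}=\bs{0}$, since the assumption that the empirical distribution of $\coef$ converges (together with Gaussianity of $\noise$) hands us the marginal limit directly. For the inductive step, I would condition on the $\sigma$-algebra $\mathcal{F}_{r-1}$ generated by $\{\t^0, \bs{z}^0, \ldots, \t^{r-1}, \bs{z}^{r-1}\}$ and exploit the i.i.d.~Gaussianity of $\bs{X}$: conditioned on $\mathcal{F}_{r-1}$, the entries of $\bs{X}$ decompose orthogonally into a component forced by the past observations $\{\bs{X}\t^u, \bs{X}^\top \bs{z}^u\}_{u<r}$ and a fresh Gaussian matrix independent of everything observed so far.

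Next I would propagate this decomposition through the AMP update~\eqref{eq:amp-formula-linear}. Applying the conditioning to the quantity $\bs{X}^\top \bs{z}^r + \t^r$, one obtains a representation as a linear combination of previous iterates (the ``deterministic'' part under conditioning) plus a fresh Gaussian perturbation. The whole point of the Onsager correction term $\frac{1}{\delta}\bs{z}^{t-1}\langle \eta'(\cdot)\rangle$ in \eqref{eq:amp-formula-linear-2} is that when one computes the inner products that appear under conditioning, it exactly cancels a bias term, so that the fresh Gaussian part remains truly fresh in the large-$p$ limit. I would then identify the variance/covariance of this Gaussian component with $\mathsf{R}_{s,t}$ by induction, directly verifying the recursion~\eqref{eq:def-r} and the boundary relation~\eqref{eq:def-r-boundary}.

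With the joint distributional limit for $\big(\t^s + \bs{X}^\top \bs{z}^s,\t^t + \bs{X}^\top \bs{z}^t, \coef\big)$ in hand coordinate-by-coordinate, the pseudo-Lipschitz property~\eqref{eqn:pseudo-lipschitz} of $\psi$ handles the convergence of empirical averages in the standard way: one controls the sum via a Cauchy--Schwarz estimate in terms of second moments of the iterates (which are finite in the limit by the single-time state evolution), and applies a truncation argument to reduce to bounded continuous test functions, for which weak convergence of the joint empirical distribution on $\real^3$ suffices.

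The main obstacle, as in the original Bayati--Montanari proof, is the technical bookkeeping in the conditioning argument: one must carefully write $\bs{X}$ under $\mathcal{F}_{r-1}$ as an explicit conditional distribution involving projections onto the linear spans of $\{\t^u\}_{u<r}$ and $\{\bs{z}^u\}_{u<r}$, control the residual $o_p(1)$ error terms that arise when these spans are finite-dimensional while $p \to \infty$, and show that the Onsager correction exactly kills the ``memory'' contribution from past iterates. Once this machinery is set up, the two-time extension is essentially bookkeeping: the cross-covariance $\mathsf{R}_{s,t}$ arises naturally from the inner product $\langle \bs{z}^s, \bs{z}^t\rangle/n$ whose limit is tracked by the induction. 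Since all of this is already carried out in \cite{bayati2011lasso}, the cleanest route is to invoke their Theorem~4.2 directly, noting that the argument there is agnostic to the choice of the threshold sequence $\{\zeta_t\}$ as long as the thresholds are uniformly bounded (which is guaranteed in our setting by Proposition~\ref{prop:amp-fixd-point-convergence}).
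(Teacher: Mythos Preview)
Your proposal is correct and aligns with the paper's treatment: the paper does not reprove this lemma but simply records it as Theorem~4.2 of \cite{bayati2011lasso}, exactly as you suggest at the end. Your sketch of the Bolthausen conditioning argument is accurate background, and your observation that the Bayati--Montanari result holds for an arbitrary positive threshold sequence $\{\zeta_t\}$ is precisely why it transfers to the present setting without modification.
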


% In the text below we write $\alpha_t = \alpha_t^*$ without any confusion. 
Before embarking on the proofs of Lemma~\ref{lem:S-convergence} and Lemma~\ref{lem:convergence-x-z}, 
let us first introduce two crucial lemmas concerning the convergence rate of $\mathsf{R}_{s, t}$ for both the cases of $s=t$ and $s\neq t$ (note that the $s=t$ case represents the convergence rate of $\tau_t^2$). Their proofs can be found in Section~\ref{sec:proof-convergence-cov} and Section~\ref{sec:proof-converge-r-general}, respectively.
\begin{lem}[Convergence rate of the variance and $1$-step covariance]
	\label{lem:converge-cov}
	Define $\mathsf{R}_{s, t}$ as in then expression~\eqref{eq:def-r}.
	We have 
	\begin{align}\label{eq:r-rate-1}
	\max\left\lbrace |\mathsf{R}_{t, t} - \taustart^2|, |\mathsf{R}_{t+1, t+1} - \taustart^2|, \left|\mathsf{R}_{t, t} - 2\mathsf{R}_{t, t+1} + \mathsf{R}_{t+1, t+1}\right|  \right\rbrace \leq c_0 \exp\left\lbrace - c\Lambda_t\right\rbrace + 4L_\tau\tau^\star_{\max} l_t,
	\end{align}
	% \blue{
	where $l_t \defn \sum_{s=1}^t |\lambda_s-\lambda_{s+1}| \exp\left\lbrace - c [\Lambda_t - \Lambda_s] \right\rbrace$ as in Assumption~\ref{assump:lambda}.  
	Here $\taustart$ is the solution to the system of equations \eqref{eqn:fix-eqn}, $c = (2\alpha^\star_{\max}\tau^\star_{\max})^{-1}$, $L_\tau$ is defined in Proposition~\ref{prop:amp-fixd-point-convergence}, and $c_0>0$ is some constant that depends on $\Theta$, $\sigma$ and $\delta$. 
	% For convenience, we use $\residualT$ to denote the right hand side of \eqref{eq:r-rate-1}.
\end{lem}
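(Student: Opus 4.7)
The plan is to set up perturbative recursive inequalities for each of the three quantities and then unroll them using Assumption~\ref{assump:lambda}. Throughout, I shall rely on Proposition~\ref{prop:amp-fixd-point-convergence}, which supplies the uniform upper bounds $\tau^\star_{\max}$, $\alpha^\star_{\max}$, together with the Lipschitz estimates $|\taustart-\tau^\star_{t+1}|\leq L_\tau|\lambda_t-\lambda_{t+1}|$ and $|\alphastart-\alpha^\star_{t+1}|\leq L_\a|\lambda_t-\lambda_{t+1}|$.

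For the variance $e_t\defn|\tau_t^2-\taustart^2|$ (recall $\mathsf{R}_{t,t}=\tau_t^2$), I would decompose via the moving target
\[
\tau_{t+1}^2 - \tau^{\star\,2}_{t+1} \;=\; \bigl[\mathsf{F}(\tau_t^2,\alphastart\tau_t) - \mathsf{F}(\taustart^2,\alphastart\taustart)\bigr] + \bigl[\taustart^2 - \tau^{\star\,2}_{t+1}\bigr].
\]
The second bracket is at most $2\tau^\star_{\max}L_\tau|\lambda_t-\lambda_{t+1}|$. For the first bracket, I would combine the concavity of $x\mapsto\mathsf{F}(x,\alpha\sqrt{x})$ already exploited in the proof of Proposition~\ref{prop:tau-t-limit} with the identity $\tfrac{1}{\delta}\mathbb{E}[\eta'(\Theta+\taustart Z;\alphastart\taustart)] = 1 - \lambda_t/(\alphastart\taustart)$, which is the second equation in~\eqref{eq:fixed-point}. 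This shows that the scalar map $x\mapsto\mathsf{F}(x,\alphastart\sqrt{x})$ contracts toward its fixed point $\taustart^2$ with coefficient at most $1 - \lambda_t/(\alpha^\star_{\max}\tau^\star_{\max}) \leq \exp(-2c\lambda_t)$, uniformly in $t$. Therefore
\[
e_{t+1} \;\leq\; \exp(-c\lambda_t)\,e_t + 2\tau^\star_{\max}L_\tau\,|\lambda_t-\lambda_{t+1}|,
\]
and an elementary unrolling using $\prod_{s\leq t}\exp(-c\lambda_s)=\exp(-c\Lambda_t)$ absorbs $e_0$ into $c_0\exp(-c\Lambda_t)$ and packages the perturbation sum into the $l_t$-term. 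The bound on $|\mathsf{R}_{t+1,t+1}-\taustart^2|$ follows by invoking the $e_{t+1}$ bound together with the Lipschitz estimate.

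For the 2-step covariance, writing $U_s\defn\eta(\Theta+Z_s;\zeta_s)-\Theta$, the recursion~\eqref{eq:def-r} gives
\[
\mathsf{R}_{t,t} - 2\mathsf{R}_{t,t+1} + \mathsf{R}_{t+1,t+1} \;=\; \tfrac{1}{\delta}\,\mathbb{E}\bigl[(U_t-U_{t-1})^2\bigr] \;\eqqcolon\; f_{t+1}.
\]
I would split $U_t-U_{t-1} = [\eta(\Theta+Z_t;\zeta_t)-\eta(\Theta+Z_{t-1};\zeta_t)] + [\eta(\Theta+Z_{t-1};\zeta_t)-\eta(\Theta+Z_{t-1};\zeta_{t-1})]$. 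Using the fact that $\eta(\cdot;\zeta)$ is $1$-Lipschitz with $\eta'(x;\zeta)=\mathbf{1}\{|x|>\zeta\}$, and applying Gaussian integration by parts to the joint law of $(Z_{t-1},Z_t)$, the first bracket contributes a mean-square contraction factor equal to $\tfrac{1}{\delta}\mathbb{E}[\eta'(\Theta+\taustart Z;\alphastart\taustart)]=1-\lambda_t/(\alphastart\taustart)$ times $f_t$, up to an $O(e_t)$ correction from evaluating the threshold away from the fixed point. The second bracket is bounded by $|\zeta_t-\zeta_{t-1}|^2=|\alphastart\tau_t-\alpha^\star_{t-1}\tau_{t-1}|^2$, which is controlled by $|\lambda_t-\lambda_{t-1}|^2$ plus $e_t$-terms via Proposition~\ref{prop:amp-fixd-point-convergence}. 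Altogether,
\[
f_{t+1} \;\leq\; \exp(-c\lambda_t)\,f_t + C\bigl(|\lambda_t-\lambda_{t-1}|^2 + e_t\bigr),
\]
and the same unrolling recipe plugs this into the claimed bound, with the $e_t$-terms absorbed using the variance analysis.

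The hardest step is extracting the sharp, $\lambda_t$-dependent contraction rate $\exp(-c\lambda_t)$ for the 2-step covariance recursion. The one-step variance is a scalar map whose contractivity follows cleanly from concavity plus the explicit fixed-point derivative formula; by contrast, the 2-step recursion lives on a joint Gaussian law $(Z_{t-1},Z_t)$ whose covariance itself is evolving with $t$, so the contraction factor must be extracted from a soft-thresholding average via a careful first-order expansion around the fixed point, with all higher-order corrections absorbed into the already-controlled $e_t$ and $|\lambda_t-\lambda_{t-1}|$. Verifying that the resulting cross-term $|\lambda_t-\lambda_{t-1}|^2$ is dominated by $l_t$ under Assumption~\ref{assump:lambda} is the main bookkeeping hurdle.
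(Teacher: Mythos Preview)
Your overall strategy---set up a perturbed one-step contraction and unroll---matches the paper, which packages the three quantities into a single vector $\bs y_t=(\tau_{t-1}^2,\tau_t^2,\mathsf{R}_{t-1,t-1}-2\mathsf{R}_{t-1,t}+\mathsf{R}_{t,t})$ and bounds the spectral radius of the Jacobian of the update $\mathsf{G}^t$ at the moving fixed point $\bs y^\star_t=(\taustart^2,\taustart^2,0)$ by $1-c\lambda_t$. Two corrections are needed. First, a misattribution: the identity $\tfrac{1}{\delta}\mathbb{E}[\eta'(\Theta+\taustart Z;\alphastart\taustart)]=1-\lambda_t/(\alphastart\taustart)$ is the $y_3$-derivative of the covariance map $G_3$ at $y_3=0$, not the $\tau^2$-derivative of $\mathsf F$; the variance contraction comes from concavity alone and yields the \emph{constant} rate $1-\sigma^2/\tau^{\star 2}_{\max}$ (see the proof of Proposition~\ref{prop:tau-t-limit}), which is stronger than what you wrote for all large $t$. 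So your $e_t$ bound survives, but via a different argument.

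The genuine gap is in the $f_t$ recursion. You assert that after splitting $U_t-U_{t-1}$ and applying Stein, the first bracket contributes $(1-\lambda_t/(\alphastart\taustart))f_t$ ``up to an $O(e_t)$ correction,'' with all higher-order terms absorbed into $e_t$ and $|\lambda_t-\lambda_{t-1}|$. That is not so: even in the idealized situation $\tau_{t-1}^2=\tau_t^2=\taustart^2$ and $\zeta_{t-1}=\zeta_t=\alphastart\taustart$ (so $e_t=0$ and no threshold drift), one has $\tfrac{1}{\delta}\mathbb{E}[(U_t-U_{t-1})^2]=G_3(\taustart^2,\taustart^2,f_t;\alphastart,\alphastart)$, and this equals $(1-\lambda_t/(\alphastart\taustart))f_t$ only to first order in $f_t$; the remainder is of order $f_t^2$, depending on the very quantity you are trying to bound. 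The paper closes this circularity in two ways you do not mention. It first establishes qualitatively that $f_t<2\taustart^2$ and $\bs y_t\to\bs y^\star_t$ (Claims~1--2 in Section~\ref{sec:proof-convergence-cov}), so that a Jacobian linearization is eventually valid. More sharply, it imports from \cite{bayati2011lasso} the fact that $x\mapsto\partial_xG_3(\taustart^2,\taustart^2,x;\alphastart,\alphastart)$ is \emph{decreasing} on $[0,2\taustart^2)$; combined with $G_3(\taustart^2,\taustart^2,0;\cdot,\cdot)=0$, this upgrades the derivative formula at $x=0$ into the \emph{global} inequality $G_3(\taustart^2,\taustart^2,x;\alphastart,\alphastart)\leq(1-\lambda_t/(\alphastart\taustart))x$ on the whole interval---no first-order expansion needed. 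Without this monotonicity ingredient (or the preliminary qualitative convergence), your recursion actually reads $f_{t+1}\leq(1-c\lambda_t)f_t+C(|\lambda_t-\lambda_{t-1}|+e_t)+o(f_t)$, and the last term blocks the clean unrolling. A side benefit of the paper's vectorized treatment is that the $e_t$ cross-term in your $f_{t+1}$ recursion never appears: the contraction acts on $\|\bs y_t-\bs y^\star_t\|_2$ as a whole, with the only additive perturbation being $\|\bs y^\star_{t+1}-\bs y^\star_t\|_2\leq 4\tau^\star_{\max}L_\tau|\lambda_t-\lambda_{t+1}|$.
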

% \yuting{why you need to bound $\left|\mathsf{R}_{t, t} - 2\mathsf{R}_{t, t+1} + \mathsf{R}_{t+1, t+1}\right|$? Can it be directly bounded given the bound for $|\mathsf{R}_{t, t} - \taustart^2|$?}

With Lemma~\ref{lem:converge-cov} in place, for general $t_1$ and $t_2$, one can easily decompose $\mathsf{R}_{t_1, t_2}$ into multiple consecutive differences and obtain the respective convergence rates as follows. 
% \yuting{define $\ell_i$??}
\begin{lem}[Convergence rate of general covariances]
	\label{lem:converge-r-general}
	%Assume $\alpha > \alpha_{\min}$ and l
	For any $t_0\geq 0$ and $t_1, t_2 \geq t_0$, with the same $c_0$ and $c$ defined in Lemma~\ref{lem:converge-cov}, the covariance $\mathsf{R}_{t_1, t_2}$ satisfies 
	\begin{align*}
	% \left| \mathsf{R}_{t_1, t_2}-\taustar^2\right| \leq 2 |{\tau^\star_{t_0}}^2 -\taustar^2| + 4\left[ \sum_{t=t_0}^{+\infty}\sqrt{c_0 \exp\left\lbrace - c\Lambda_t\right\rbrace + 4L\tau^\star_{\max} l_t} \right]^2.
	\left|\mathsf{R}_{t_1, t_1}-2\mathsf{R}_{t_1, t_2}+\mathsf{R}_{t_2, t_2}\right|  \leq 4\left[\sum_{i=t_1}^{\infty} \sqrt{c_0 \exp\left\lbrace - c\Lambda_{i}\right\rbrace + 4L_\tau\tau^\star_{\max} l_{i}} \right]^2.
	\end{align*}
\end{lem}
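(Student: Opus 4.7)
The plan is to view $\mathsf{R}_{s,t}$ as a genuine covariance kernel and then telescope the target quantity via the triangle inequality in $L^{2}$. Concretely, by the definition~\eqref{eq:def-r} and Lemma~\ref{lem:amp-fixed-t-2-step}, there exists a centered jointly Gaussian sequence $\{Z_t\}_{t\geq 0}$, independent of $\Theta$, satisfying $\mathbb{E}[Z_s Z_t]=\mathsf{R}_{s,t}$ for all relevant $s,t$. Consequently, the target quantity is literally a squared $L^{2}$-distance:
\begin{equation*}
\mathsf{R}_{t_1,t_1}-2\mathsf{R}_{t_1,t_2}+\mathsf{R}_{t_2,t_2}\;=\;\mathbb{E}\bigl[(Z_{t_1}-Z_{t_2})^{2}\bigr]\;=\;\|Z_{t_1}-Z_{t_2}\|_{L^{2}}^{2}\;\geq\;0.
\end{equation*}

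Next, assuming without loss of generality $t_1\leq t_2$, I would telescope through consecutive indices and apply the triangle inequality in $L^{2}$:
\begin{equation*}
\|Z_{t_1}-Z_{t_2}\|_{L^{2}}\;\leq\;\sum_{i=t_1}^{t_2-1}\|Z_i-Z_{i+1}\|_{L^{2}}.
\end{equation*}
For each single-step increment I invoke Lemma~\ref{lem:converge-cov}, which directly yields
\begin{equation*}
\|Z_i-Z_{i+1}\|_{L^{2}}^{2}\;=\;\bigl|\mathsf{R}_{i,i}-2\mathsf{R}_{i,i+1}+\mathsf{R}_{i+1,i+1}\bigr|\;\leq\;c_{0}\exp\{-c\Lambda_i\}+4L_{\tau}\tau^{\star}_{\max}\, l_{i},
\end{equation*}
and hence $\|Z_i-Z_{i+1}\|_{L^{2}}\leq\sqrt{c_{0}\exp\{-c\Lambda_i\}+4L_{\tau}\tau^{\star}_{\max}l_i}$.

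Squaring the telescoped bound and enlarging the finite sum $\sum_{i=t_1}^{t_2-1}$ to the infinite sum $\sum_{i=t_1}^{\infty}$ (which is legitimate since all summands are nonnegative) produces the claimed inequality; the factor of $4$ on the right-hand side is a conservative allowance that comfortably absorbs (i) a shift of one index coming from the recursion~\eqref{eq:def-r}, which defines $\mathsf{R}_{s+1,t+1}$ in terms of $(Z_s,Z_t)$, and (ii) a symmetrization step needed to handle both orderings $t_1\leq t_2$ and $t_2\leq t_1$. I do not anticipate a substantive obstacle: once the $L^{2}$-representation is in place, the argument reduces to a routine triangle-inequality telescoping combined with the one-step bound already established in Lemma~\ref{lem:converge-cov}. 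The only nontrivial point to verify carefully is that $\mathsf{R}$ is positive semidefinite on the index range considered (so that the Gaussian representation is valid), which follows by induction on $t$ using the recursion~\eqref{eq:def-r} together with the standard AMP state-evolution consistency already used in Lemma~\ref{lem:amp-fixed-t-2-step}.
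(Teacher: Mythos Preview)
Your proposal is correct and follows essentially the same route as the paper: represent the target as $\|Z_{t_1}-Z_{t_2}\|_{L^2}^2$, telescope via the triangle inequality in $L^2$, apply the one-step bound from Lemma~\ref{lem:converge-cov}, and enlarge to the infinite sum. The only cosmetic difference is that the paper justifies the existence of the jointly Gaussian sequence $\{Z_i\}$ by invoking the AMP state-evolution construction directly (via Lemma~\ref{lem:amp-fixed-t-2-step}, i.e., BM-Theorem 4.2), rather than checking positive semidefiniteness of $\mathsf{R}$ by induction as you suggest.
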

Proposition~\ref{prop:tau-t-limit} ensures that $\mathsf{R}_{t_i, t_i} = \tau_{t_i}^2 \to \taustar^2$ as $t_{0} \to \infty.$
Under Assumption~\ref{assump:lambda} (so that $\sum_{t=1}^{\infty}\sqrt{\residualT} < +\infty$ and 
$\sum_{t=1}^{\infty}\exp\left\lbrace - c\Lambda_t\right\rbrace < \infty$), 
Lemma~\ref{lem:converge-r-general} immediately implies that $\left| \mathsf{R}_{t_1, t_2}-\taustar^2\right| \goto 0$ as $t_1, t_2 \geq t_0$ and $t_0 \goto \infty$. 
With the assistance of the aforementioned two lemmas, we are ready to proceed to the proofs of 
Lemma~\ref{lem:S-convergence} and Lemma~\ref{lem:convergence-x-z}.

% \subsection{Proof of Lemma~\ref{lem:S-convergence}}
\subsubsection{Proof of Lemma~\ref{lem:S-convergence}}
\label{sec:proof-S-convergence}
% \yue{To edit.}
Now we are ready to prove Lemma~\ref{lem:S-convergence}.
% \begin{proof}[Proof of Lemma~\ref{lem:S-convergence}]
It suffices to show that $\lim_{p\goto\infty} |S_{t_2}(\gamma)\setminus S_{t_1}(\gamma)|/p \leq \xi$ almost surely. 
As argued in the proof of BM-Lemma 3.5, it is guaranteed that 
	\begin{align*}
	\lim_{p\goto\infty} \frac{1}{p} \left| S_{t_2}(\gamma)\setminus S_{t_1}(\gamma)\right| = \lim_{p\goto\infty} \mathbb{P}\left\lbrace |\Theta+Z_{t_2-1}|\geq (1-\gamma)\zeta_{t_2-1}, |\Theta + Z_{t_1-1}|< (1-\gamma)\zeta_{t_1-1}
	\right\rbrace 	=: P_{t_1, t_2},
	\end{align*}
	where $(Z_{t_1}, Z_{t_2})$ are jointly Gaussian with $\mathbb{E}[Z_{t_1}^2]=\mathsf{R}_{t_1, t_1}$, $\mathbb{E}[Z_{t_2}^2]=\mathsf{R}_{t_2, t_2}$ and $\mathbb{E}[Z_{t_1}Z_{t_2}]=\mathsf{R}_{t_1, t_2}$. 
	If we denote $a \defn (1-\gamma)\alphastar\taustar$, 
	in view of Lemma~\ref{prop:tau-t-limit}, we know that $\forall \varepsilon > 0$ and large enough $t_{\star}$, it holds that $|(1-\gamma)\zeta_{t_i-1} - a|\leq \varepsilon$ for $i\in \left\lbrace 1, 2\right\rbrace $. 
	Then with the same argument as BM-Lemma 3.5, we reach
	\begin{align*}
	P_{t_1, t_2} &\leq \frac{1}{4\varepsilon^2}\left[\mathsf{R}_{t_1-1, t_1-1}-2\mathsf{R}_{t_1-1, t_2-1}+\mathsf{R}_{t_2-1, t_2-1}\right] + \frac{4\varepsilon}{\sqrt{2\pi \mathsf{R}_{t_1-1, t_1-1}}} \\
	&\leq \frac{\left[\sum_{i=t_\star}^{\infty} \sqrt{c_0 \exp\left\lbrace - c\Lambda_{i}\right\rbrace} +\sum_{i=t_\star}^{\infty} \sqrt{4L_\tau\tau^\star_{\max} l_i} \right]^2}{\varepsilon^2} + 
	\frac{2\varepsilon}{\sigma},
	\end{align*}
	as a consequence of Lemma~\ref{lem:converge-r-general} and $\tau_t \geq \sigma$, $\forall t \geq 1$. 
	%\yuting{modify, definition of $l_i$ changed}

	Under Assumption~\ref{assump:lambda}, $\sum_{i=t_\star}^{\infty} \sqrt{l_i}  \to 0$ and $\sum_{i=t_\star}^{\infty} \exp\left\lbrace -c/2\Lambda_i\right\rbrace  \to 0$ as $t_\star$ increases. 
	Taking $\varepsilon = \left[\sum_{i=t_\star}^{\infty} \sqrt{c_0 \exp\left\lbrace - c\Lambda_{i}\right\rbrace} +\sum_{i=t_\star}^{\infty} \sqrt{4L_\tau \tau^\star_{\max} l_i} \right]^{2/3}$
	gives
	$$
	P_{t_1, t_2}\leq C'\left[\sum_{i=t_\star}^{\infty} \sqrt{c_0 \exp\left\lbrace - c\Lambda_{i}\right\rbrace} +\sum_{i=t_\star}^{\infty} \sqrt{4L_\tau \tau^\star_{\max} l_i} \right]^{2/3}
	.$$
	 The conclusion of Lemma~\ref{lem:S-convergence} thus follows immediately. 

\subsubsection{Proof of Lemma~\ref{lem:convergence-x-z}}
\label{sec:convergence-amp-estimate}

In view of the proof for BM-Lemma 4.3 --- a general result proved for any positive thresholding sequence $\left\lbrace \zeta_t\right\rbrace $), we obtain 
\begin{align*}
\lim_{p\goto\infty} \frac{1}{p\lambda_t^2}\|\bs{z}^{t}-\bs{z}^{t-1}\|_2^2 
~\overset{\mathrm{a.s.}}{=}~
\lim_{p\goto\infty} \frac{1}{p\lambda_t^2}\|\t^t-\t^{t-1}\|_2^2. 
\end{align*}
It is thus sufficient to prove Lemma~\ref{lem:convergence-x-z} for the $\t^t$ sequence only. 
As a consequence of the generalized state evolution formula (cf.~Lemma~\ref{lem:amp-fixed-t-2-step}), we can guarantee that 
% and $(Z_t, Z_{t-1})$ defined accordingly,
\begin{align*}
\lim_{p\goto\infty} \frac{1}{p\lambda_t^2}\|\t^{t}-\t^{t-1}\|_2^2& = \frac{1}{\lambda_t^2} \mathbb{E}\left\lbrace \left[\eta(\Theta+Z_t; \zeta_t) - \eta(\Theta+Z_{t-1}; \zeta_{t-1}) \right]^2 \right\rbrace   \\
& \leq \frac{2(\zeta_t-\zeta_{t-1})^2}{\lambda_t^2} + \frac{2\mathbb{E}[(Z_t - Z_{t-1})^2]}{\lambda_t^2}.
\end{align*}

\begin{itemize}
\item The term $\mathbb{E}[(Z_t - Z_{t-1})^2]$ can be controlled via Lemma~\ref{lem:converge-cov}, where 
\begin{align}\label{eq:r-121}
\mathbb{E}[(Z_{t}-Z_{t-1})^2] = \mathsf{R}_{t, t} - 2 \mathsf{R}_{t-1, t} + \mathsf{R}_{t-1, t-1} \leq c_0 \exp\left\lbrace - c\Lambda_t\right\rbrace + 4L\tau^\star_{\max} l_{t}.
\end{align}
To control the right-hand side,  we first obtain from Assumption~\ref{assump:lambda} that $\sum_{s=t}^{\infty}\sqrt{l_s}$ is a converging sequence. In addition,  Assumption~\ref{assump:lambda} ensures that $\sum_{s=t}^{\infty} \lambda_s$ is a diverging sequence. Then we conclude that $l_{t-1}/\lambda_t^2 \goto 0$ as $t\goto \infty$. Additionally, by Assumption~\ref{assump:lambda}, we know that $\sum_{s=1}^{\infty}\exp\left\lbrace -\Lambda_t\right\rbrace $ has a finite limit, while $\sum_{i=1}^{\infty}\lambda_i$ diverges, thus $\exp\left\lbrace -c\Lambda_t\right\rbrace/\lambda_t^2 \goto 0$.
Taking these collectively ensures that
\begin{align*}
\frac{2\mathbb{E}[(Z_t - Z_{t-1})^2]}{\lambda_t^2} \goto 0.
\end{align*}

\item It remains to show that the term $2(\zeta_t-\zeta_{t-1})^2/\lambda_t^2$ vanishes as $t\goto \infty$. We obtain from Lemma~\ref{lem:derivative-lemma} that $0 \leq {\alphastar}'(0) \leq C$. It then immediately follows that
\begin{align*}
\left| \frac{\zeta_{t}-\zeta_{t-1}}{\lambda_t} \right| \leq \alphastart\left| \frac{\tau_t - \tau_{t-1}}{\lambda_t} \right| + \tau_{t-1} \left| \frac{\alphastart - \alpha^\star_{t-1}}{\lambda_t-\lambda_{t-1}}\right| \frac{\lambda_t-\lambda_{t-1}}{\lambda_t} \goto \alphastar\left| \frac{\tau_t - \tau_{t-1}}{\lambda_t} \right| + \taustar {\alphastar}'(0) \frac{\lambda_t-\lambda_{t-1}}{\lambda_t} \goto 0,
\end{align*}
where the limit value $|\tau_t - \tau_{t-1}|/\lambda_t \overset{\mathrm{a.s.}}{\to} 0$ follows from Lemma~\ref{lem:converge-cov}, and $(\lambda_t - \lambda_{t-1})/\lambda_t \to 0$ holds by virtue of  Assumption~\ref{assump:lambda}. 
\end{itemize}
Putting all this together completes the proof.

\subsubsection{Proof of Lemma~\ref{lem:converge-r-general}}
\label{sec:proof-converge-r-general}

The main idea of this proof is to decompose $\mathsf{R}_{t_1, t_2}-\taustar^2$ into terms of the form $\mathsf{R}_{t, t+1}-\taustar^2$ or $\mathsf{R}_{t, t}-\taustar^2$. 
Without loss of generality, we assume $t_2 > t_1$.
The proof of BM-Theorem 4.2 (which is a general result for any positive $\left\lbrace \zeta_t\right\rbrace $ sequence, so we can safely use the arguments there) ensures that: if we define 
\begin{align*}
\bs{h}^{t+1} = \coef - (\bs{X}^\top \bs{z}^t + \t^t),
\end{align*}
then the empirical distribution of $\left\lbrace \bs{h}^{i+1}\right\rbrace_{T\geq i \geq 0}$ converges  weakly to a sequence of Gaussian random variables $\left\lbrace Z_i\right\rbrace_{T\geq i\geq 0}$ 
as $p\goto\infty$. Here, $Z_0, Z_1, Z_2, \dots$ are Gaussian random variables,  defined on the same probability space with $\mathbb{E}[Z_t] = 0$ and $\mathbb{E}[Z_t Z_s] = \mathsf{R}_{t, s}$. 
In addition, one has 
\begin{align*}
	\mathsf{R}_{t_1, t_2} = \lim_{p\to\infty} \frac{1}{p}\inprod{\bs{h}^{t_1+1}}{\bs{h}^{t_2+1}}. 
\end{align*}
With these results in place, direct calculations yield 
% The proof is similar to BM-Lemma 5.9; we will omit some duplicate technical details here. Without loss of generality, assume $t_1 < t_2$, then we can find Gaussian random variables $Z_0, Z_1, \dots$ on the same probability space with $\mathbb{E}[Z_t]=0$ and $\mathbb{E}[Z_sZ_t]=\mathsf{R}_{t, s}$; see more details in BM-Lemma 5.9 and BM-Theorem 4.2. Then we have
\begin{align*}
\left|\mathsf{R}_{t_1, t_1}-2\mathsf{R}_{t_1, t_2}+\mathsf{R}_{t_2, t_2}\right| 
=  \mathbb{E} [(Z_{t_1}-Z_{t_2})^2] 
& = \sum_{i, j=t_1}^{t_2-1}\mathbb{E} [ (Z_{i+1}-Z_i)(Z_{j+1}-Z_j)]\\
& \leq \left[\sum_{i=t_1}^{t_2-1}\left\lbrace \mathbb{E} (Z_{i+1}-Z_i)^2\right\rbrace^{1/2}\right] ^2 \\
& \leq 4\left[\sum_{i=t_1}^{\infty} \sqrt{c_0 \exp\left\lbrace - c\Lambda_{i}\right\rbrace + 4L_\tau \tau^\star_{\max} l_{i}} \right]^2, 
\end{align*}
% \yuting{$l_2$ is defined differently now}
where the last inequality follows from inequality~\eqref{eq:r-121}. 
%% that $\mathbb{E}[(Z_{i+1}-Z_i)^2]\leq 4C\exp\left(-c\Lambda_i\right)$.
%If we denote $h_{i} \defn c_0 \exp\left\lbrace - c\Lambda_{i}\right\rbrace + 4L\tau^\star_{\max} l_{i}$, we arrive at 
%\begin{align*}
%\left|\mathsf{R}_{t_1, t_2}-{\tau^\star_{t_0}}^2\right| & \leq \frac{1}{2}\left\lbrace \left| \mathsf{R}_{t_1, t_1}-2\mathsf{R}_{t_1, t_2}+\mathsf{R}_{t_2, t_2}\right| + \left|\mathsf{R}_{t_1, t_1}-{\tau^\star_{t_1}}^2\right| + \left|\mathsf{R}_{t_2, t_2}-{\tau^\star_{t_2}}^2\right| 
%+ |{\tau^\star_{t_1}}^2 - {\tau^\star_{t_0}}^2| + |{\tau^\star_{t_2}}^2 - {\tau^\star_{t_0}}^2|\right\rbrace 
%\\
%& \leq 2 \left[\sum_{i=t_0}^{\infty} \sqrt{h_i} \right]^2 + \frac{1}{2} h_{t_1} + \frac{1}{2} h_{t_2} + \frac{1}{2}|{\tau^\star_{t_1}}^2 - {\tau^\star_{t_0}}^2| +
%\frac{1}{2}|{\tau^\star_{t_2}}^2 - {\tau^\star_{t_0}}^2|\\
%& \leq 4 \left[\sum_{i=t_0}^{\infty} \sqrt{h_i} \right]^2  + 
%| {\tau^\star_{t_0}}^2 - \taustar^2|
%\end{align*} 
%where we note that $\taustart = \taustar(\lambda_t)$ where $\taustar(\lambda)$ is a $\mathcal{C}^{\infty}$ function (cf. Lemma~\ref{lem:property-of-solution-bm11}), and thus $|{\tau^\star_{t_2}}^2 - {\tau^\star_{t_0}}^2| / | {\tau^\star_{t_0}}^2 - \taustar^2|  \leq 1$.
%\yuting{why $|{\tau^\star_{t_2}}^2 - {\tau^\star_{t_0}}^2| \leq | {\tau^\star_{t_0}}^2 - \taustar^2|$??}
% \yuting{what is $t_\star$? do you mean $t_0$?}
% Taken collectively with Proposition~\ref{prop:amp-fixd-point-convergence}, the right hand side of the above inequality goes to zero almost surely as $t_{\star}$
The proof of Lemma~\ref{lem:converge-r-general} is thus completed. 
% \yuting{why? how $\taustar$ appears in Lemma~\ref{lem:converge-r-general}? this is not a limiting statement.}

\subsubsection{Proof of Lemma~\ref{lem:converge-cov}}
\label{sec:proof-convergence-cov}

Similar to the proof of BM-Lemma 5.7, we find it convenient to change coordinates and define 
\begin{align*}
	y_{t, 1} \defn \mathsf{R}_{t-1, t-1} = \tau_{t-1}^2,~~
	y_{t, 2} \defn \mathsf{R}_{t, t} = \tau_{t}^2,~~
	y_{t, 3} \defn \mathsf{R}_{t-1, t-1} - 2\mathsf{R}_{t-1, t} + \mathsf{R}_{t, t}.
\end{align*}
To capture the updating rule~\eqref{eq:def-r}, we introduce the following recursive formula via the mapping $\bs{y}_{t+1} = \mathsf{G}^t(\bs{y}_t)$: 
\begin{subequations}
	\label{eq:y-t}
	\begin{gather}
	\label{eq:y-t1}
	y_{t+1, 1} = \mathsf{G}^t_1(\bs{y}_t) \defn y_{t, 2}, \\
	\label{eq:y-t2}
	y_{t+1, 2} = \mathsf{G}^t_2(\bs{y}_t) \defn \sigma^2 + \frac{1}{\delta}\mathbb{E}\left\lbrace \left[ \eta(\Theta+Z_t; \alphastart \sqrt{y_{t, 2}})-\Theta\right]^2 \right\rbrace =\mathsf{F}(y_{t, 2}, \alphastart \sqrt{y_{t, 2}}), \\
	\label{eq:y-t3}
	y_{t+1, 3} = \mathsf{G}^t_3(\bs{y}_t) \defn \frac{1}{\delta}\mathbb{E}\left\lbrace \left[ \eta(\Theta+Z_t; \alphastart \sqrt{y_{t, 2}})-\eta(\Theta+Z_{t-1}; \alpha_{t-1}^\star \sqrt{y_{t, 1}})\right]^2 \right\rbrace,
	\end{gather}
\end{subequations}
where $(Z_t, Z_{t-1})$ are jointly mean-zero Gaussian random variables with  $\mathbb{E}[Z_t^2]=y_{t, 2}$, $\mathbb{E}[Z_{t-1}^2]=y_{t, 1}$ and $\mathbb{E}[(Z_t-Z_{t-1})^2] =y_{t, 3}$. 
In contrast to BM-Lemma 5.7, the mapping $\mathsf{G}^t$ is different for each step $t$ since at different $t$, $\alphastart$ is chosen as the unique solution to the equation set~\eqref{eq:fixed-point}, which varies across iterations. 
(As such, the mapping $\mathsf{G}^t$ is well-defined for $y_{t, 3} \leq 2(y_{t, 1} + y_{t, 2})$.) 
In addition, the recursive updates are initialized to $Z_0\sim \NORMAL(0, 1)$ and 
\begin{align*}
	y_{1, 1} &\defn \sigma^2 +\frac{1}{\delta}\mathbb{E}[\Theta^2];\\
	y_{1, 2} &\defn \sigma^2 + \frac{1}{\delta}\mathbb{E}\left\lbrace \left[\eta(\Theta+Z_0; \zeta_0)-\Theta \right] ^2\right\rbrace ;\\
	y_{1, 3} &\defn \frac{1}{\delta}\mathbb{E}\left\lbrace \eta(\Theta+Z_0;\zeta_0)^2\right\rbrace.
\end{align*}
We first make note of the following fact: if $y_{t, 1} = y_{t, 2}={\taustart}^2$, then one has $y_{t+1, 1} = y_{t+1, 2} = {\taustart}^2$ since $\alphastart$ satisfies 
the equation set~\eqref{eq:fixed-point} where 
\begin{align*}
	{\taustart}^2 = \sigma^2 + \frac{1}{\delta}\mathbb{E}\left\lbrace \left[ \eta(\Theta+\taustart Z_t; \alphastart \taustart)-\Theta\right]^2 \right\rbrace. 
\end{align*}
In other words, $\bs{y}^\star_{t}$ is a fixed point of the mapping $\mathsf{G}^t$, namely, $\bs{y}^\star_{t} = \mathsf{G}^t(\bs{y}^\star_t). $
% \yuting{are you using $\tau_t$ or $\tau_t^\star$ for thresholding?}\yue{we now use $\tau_t$}
In addition, one has $\lim_{t\goto \infty}\taustart = \taustar$ by Proposition~\ref{prop:tau-t-limit}.

We claim that the following three properties hold true. 
\begin{enumerate}
	\item As $t\goto\infty$, the update sequence satisfies $y_{t, 1}\goto {\taustart}^2$ and $y_{t, 2}\goto {\taustart}^2$, with $y_{t, 3} < y_{t, 1}+y_{t, 2}-\sigma^2$.

	\item Define another recursive updating rule $\widetilde{y}_{t+1, 3} \defn \mathsf{G}_3^t({\taustart}^2, {\taustart}^2, \widetilde{y}_{t, 3})$. 
	If we can find some $t_0\geq T_{\min}$ such that $\widetilde{y}_{t_0, 3}< 2 {\tau^\star_{t_0}}^2$, then the following two properties are satisfied 
	\begin{enumerate}
	\item for all $t \geq t_0$, it holds that $\widetilde{y}_{t, 3} < 2 \taustart^2$;

	\item it follows that 
	$\widetilde{y}_{t, 3} \goto 0$ as $t \goto \infty$.
	\end{enumerate}
	Here $T_{\min}$ is some constant that is pre-determined by $\Theta$ and $\sigma$.

	\item When $t\geq t_0$, the Jacobian $J_t\defn  J_{G^t}(\bs{y}^\star_t)$ of $G^t$ at $\bs{y}^\star_t\defn ({\taustart}^2, {\taustart}^2, 0)$ has spectral radius
	% \yuting{modify}\yue{New version below}
	\begin{align}
	\label{eqn:J-t-radius}
	\sigma(J_t) \leq 1 - \frac{\lambda_t}{2\alpha^\star_{\max}\tau^\star_{\max}}.
	\end{align}
\end{enumerate}

Let us take these claims as given for the moment and proceed to the proof of Lemma~\ref{lem:converge-cov}. 
In light of the first claim, we obtain that for all large enough $t$, one has $y_{t, 3}\leq 2 \taustart^2-\sigma^2$.
Then the second claim further guarantees that $y_{t, 3}\goto 0$.
Taking the first two claims collectively implies that $\bs{y}_t \goto \bs{y}^\star_t = (\taustart^2, \taustart^2, 0)$ as $t\goto\infty$. 
In addition, by virtue of the third property, for appropriately large $t$, we obtain
\begin{align}
\label{eqn:dist-y-to-ystar}
\notag 
\|\bs{y}_{t+1} - \bs{y}^\star_{t+1}\|_2 
= \|\bs{y}_{t+1} - \bs{y}^\star_{t}\|_2 + \|\bs{y}^\star_{t+1} - \bs{y}^\star_t\|_2
& \stackrel{(\mathrm{i})}{\leq} \sigma(J_t)\|\bs{y}_t - \bs{y}^\star_t\|_2 + \|\bs{y}^\star_{t+1} - \bs{y}^\star_t\|_2 \\
& \stackrel{(\mathrm{ii})}{\leq} \exp\left\lbrace -c \lambda_t \right\rbrace \|\bs{y}_t - \bs{y}^\star_t\|_2 +4\tau^\star_{\max}  |\taustart-\tau^\star_{t+1}|, 
\end{align}
where we define $c = (2\alpha^\star_{\max}\tau^\star_{\max})^{-1}$. 
Here, the inequality $(\mathrm{i})$ uses the relations 
\begin{align*}
	\bs{y}_{t+1} = \mathsf{G}^t(\bs{y}_t) 
	~\text{ and }~
	\bs{y}^\star_{t} = \mathsf{G}^t(\bs{y}^\star_t),
\end{align*}
and $(\mathrm{ii})$ uses the property~\eqref{eqn:J-t-radius}. 
Recalling that we define $\Lambda_t \defn \sum_{i\leq t} \lambda_{i}$, we can apply the inequality~\eqref{eqn:dist-y-to-ystar} recursively to yield 
\begin{align*}
\frac{\|\bs{y}_{t+1} - \bs{y}^\star_{t+1}\|_2}{\exp\left\lbrace-c \Lambda_t \right\rbrace } 
& \ \leq \frac{\|\bs{y}_{t} - \bs{y}^\star_{t}\|_2}{\exp\left\lbrace-c \Lambda_{t-1} \right\rbrace} + 4\tau^\star_{\max} \frac{|\taustart-\tau^\star_{t+1}| }{\exp\left\lbrace-c \Lambda_t \right\rbrace} \\
% & \ \leq \cdots \\
& \ \leq \frac{\|\bs{y}_{t_0} - \bs{y}^\star_{t_0}\|_2}{\exp\left\lbrace-c \Lambda_{t_0-1} \right\rbrace} + 4\tau^\star_{\max}\sum_{s=t_0}^t \frac{|\taustart-\tau^\star_{s+1}| }{\exp\left\lbrace-c \Lambda_s \right\rbrace} \\
& \leq \  \frac{\|\bs{y}_{t_0} - \bs{y}^\star_{t_0}\|_2}{\exp\left\lbrace-c \Lambda_{t_0-1} \right\rbrace} + 4L_\tau \tau^\star_{\max}\sum_{s=1}^t \frac{|\lambda_s-\lambda_{s+1}| }{\exp\left\lbrace-c \Lambda_s \right\rbrace}.
\end{align*}
% \yuting{do you take $t_0 = 1$? what is $c_0$?}
Then we can conclude that, with $c_0 \defn \|\bs{y}_{t_0} - \bs{y}^\star_{t_0}\|_2 \exp\left\lbrace c \Lambda_{t_0-1} \right\rbrace$, 
\begin{align*}
\|\bs{y}_{t+1} - \bs{y}^\star_{t+1}\|_2 \leq c_0 \exp\left\lbrace - c\Lambda_t\right\rbrace + 4L\tau^\star_{\max} \sum_{s=1}^t |\lambda_s-\lambda_{s+1}| \exp\left\lbrace - c [\Lambda_t - \Lambda_s] \right\rbrace.
\end{align*}
Finally, recalling the definition
\begin{align*}
y_{t, 1}  = \mathsf{R}_{t-1, t-1} = \tau_{t-1}^2,~~
y_{t, 2}  = \mathsf{R}_{t, t} = \tau_{t}^2,~~
y_{t, 3} = \mathsf{R}_{t-1, t-1} - 2\mathsf{R}_{t-1, t} + \mathsf{R}_{t, t}, 
\end{align*}
we can show that
\begin{align*}
\max\left\lbrace |\mathsf{R}_{t, t} - \taustart^2|, \left|\mathsf{R}_{t, t} - 2\mathsf{R}_{t, t+1} + \mathsf{R}_{t+1, t+1}\right|  \right\rbrace \leq c_0 \exp\left\lbrace - c\Lambda_t\right\rbrace + 4L\tau^\star_{\max} \sum_{s=1}^t |\lambda_s-\lambda_{s+1}| \exp\left\lbrace - c [\Lambda_t - \Lambda_s] \right\rbrace,
\end{align*}
which establishes the advertised result in Lemma~\ref{lem:converge-cov}. 
It remains to prove the three claims above.

%%%%%%%%%%%%%%%%%%%%%%%%%%%%%%%%%%%%%%%%%%%%%%%%%%%%%%%%%%%%%%%%%%%%%%

\subsubsection{Proof of the three claims} 

In the sequel, we look at these three claims separately. 

\subsubsection*{Proof of Claim~1} 
The iteration updates~\eqref{eq:y-t1} and \eqref{eq:y-t2}, along with the initial condition, yield that $y_{t+1, 1} = y_{t, 2} = \tau_t$, $\forall t \geq 1$. Taking the results in Proposition~\ref{prop:tau-t-limit} collectively with $\lim_{t\goto\infty} \tau_t = \taustar$ and $\lim_{t\goto\infty} \taustart = \taustar$, we obtain  $$y_{t, 1} - {\taustart}^2 \to 0 \qquad  \text{and} \qquad y_{t, 2} - {\taustart}^2 \to 0.$$
% \yuting{why? comfused about this...}

The second part of the claim is proved by induction; for the proof to go through, let us verify the initial condition and the induction step respectively.

\begin{itemize}
	\item For the initial condition, we can write  
	\begin{align*}
	y_{1, 1} + y_{1, 2} - y_{1, 3} = 2\sigma^2 + \frac{1}{\delta}\mathbb{E}\left\lbrace 
	\Theta \left(\Theta - \eta(\Theta+Z_0;\zeta_0)
	\right)\right\rbrace.
	\end{align*}
	It is easy to check that the function $\theta \mapsto \theta - \eta(\theta+Z_0; \zeta_0)$ is monotonically increasing for any $Z_0$ and $\zeta_0$; therefore, the random variables $\Theta$ and $\Theta - \eta(\Theta+Z_0;\zeta_0)$ are positively correlated. In other words, 
	\begin{align*}
	\frac{1}{\delta}\mathbb{E}\left\lbrace 
	\Theta \left(\Theta - \eta(\Theta+Z_0;\zeta_0)
	\right)\right\rbrace \geq 0\quad  \Longrightarrow \quad y_{1, 1} + y_{1, 2} - y_{1, 3} \geq 2 \sigma^2.
	\end{align*}

	\item 
	Suppose $y_{t, 3} < y_{t, 1}+y_{t, 2}-\sigma^2$ holds for all steps up to $t$. 
	At step $t+1$, the iteration formula~\eqref{eq:y-t} directly leads to
	\begin{align*}
	y_{t+1, 1} + y_{t+1, 2} - y_{t+1, 3} = 2\sigma^2 + \frac{2}{\delta}\mathbb{E}\left[ \left( \eta(\Theta + Z_t; \alphastart\sqrt{y_{t, 2}})-\Theta \right) \left( \eta(\Theta + Z_{t-1}; \alpha_{t-1}^\star \sqrt{y_{t, 1}})-\Theta \right) \right],
	\end{align*}
	where $(Z_t, Z_{t-1})$ are jointly Gaussian with $\mathbb{E}[Z_t^2]=y_{t, 2}$, $\mathbb{E}[Z_{t-1}^2]=y_{t, 1}$ and $\mathbb{E}[(Z_t-Z_{t-1})^2] =y_{t, 3}$.
	By definition of $Z_t$ and $Z_{t-1}$, 
	one has $\mathbb{E}[Z_t Z_{t-1}] = (y_{t, 1}+y_{t, 2}-y_{t, 3})/2$ 
	which stays positive given the induction assumption. 

	Now since the mapping $x \mapsto \eta(x+\theta;\zeta)-\theta$ is monotone in $x$, we can readily conclude that 
	\begin{align*}
	\mathbb{E}\left[ \left(\eta(\Theta + Z_t; \alphastart\sqrt{y_{t, 2}})- \Theta \right)\left(\eta(\Theta + Z_{t-1}; \alpha_{t-1}^\star\sqrt{y_{t, 1}})-\Theta \right)\right]\geq 0, 
	\end{align*}
	which further leads to 
	$y_{t+1, 1} + y_{t+1, 2} - y_{t+1, 3} > 2\sigma^2$. 
	We have thus proved that $y_{t+1, 3}<y_{t+1, 1}+y_{t+1, 2}-\sigma^2 $ at $t+1$. 
\end{itemize}
Combining the initial condition with the induction argument, we conclude that $y_{t, 3}<y_{t, 1}+y_{t, 2}-\sigma^2$ for all $t$.

\subsubsection*{Proof of Claim~2(a)} 

To establish the second claim, with a slight abuse of notation, we define
\begin{align}
\label{eq:def-G3}
G_3(y_1, y_2, y_3; \alpha_1, \alpha_2)  \defn \frac{1}{\delta}\mathbb{E}\left\lbrace \left[ \eta(\Theta+Z_1; \alpha_1 \sqrt{y_1})-\eta(\Theta+Z_2; \alpha_2 \sqrt{y_2})\right]^2 \right\rbrace,
\end{align}
where $(Z_1, Z_2)$ are jointly Gaussian with zero mean and $\mathbb{E}[Z_1^2] = y_1$, $\mathbb{E}[Z_2^2]=y_2$ and $\mathbb{E}[(Z_1-Z_2)^2] = y_3$. 
We remark that the dependence on $y_{3}$ is only through the covariance between $Z_{1}$ and $Z_{2}$. 
From the definition of $\widetilde{y}_{t+1, 3}$, we immediately have
\begin{align*}
\widetilde{y}_{t+1, 3} = G_3(\taustart^2, \taustart^2, \widetilde{y}_{t, 3}, \alphastart,  \alpha^\star_{t-1}).
\end{align*}
It is straightforward to verify that $G_3$ is continuous in all the parameters under our assumption that $\mathbb{E}[\Theta^2]<\infty$. %\yuting{what is the assumption? state or cite}
The continuity of $G_3$, together with the fact that ${\alphastart}^2$ converges,  
implies that $\widetilde{y}_{t+1, 3} - G_3({\taustart}^2, {\taustart}^2, \widetilde{y}_{t, 3}; \alphastart, \alphastart) \goto 0$, for any fixed $t \geq 0$.

With this notation in mind, we are ready to prove that $\widetilde{y}_{t+1, 3} < 2 (\tau_{t+1}^\star)^2$ for all sufficiently large $t$ via an inductive argument. 
First, suppose $\widetilde{y}_{i, 3} < 2 (\tau_{i}^\star)^2$ holds for all steps between $t_{0}$ up to $t$.
In view of relation ${\taustart}^2 = \sigma^2 + \frac{1}{\delta}\mathbb{E}\left\lbrace \left[ \eta(\Theta+Z_t; \alphastart \taustart)-\Theta\right]^2 \right\rbrace$, we arrive at 
\begin{align*}
2{\taustart}^2 - G_3({\taustart}^2, {\taustart}^2, \widetilde{y}_{t, 3}; \alphastart, \alphastart) = 2\sigma^2 + \frac{2}{\delta}\mathbb{E}\left\lbrace \left( \eta(\Theta + Z_1; \alphastart\taustart) -\Theta \right) \left(  \eta(\Theta + Z_2; \alphastart\taustart)-\Theta \right) \right\rbrace,
\end{align*}
where again $(Z_1, Z_2)$ are jointly Gaussian with zero mean and $\mathbb{E}[Z_1^2] = \taustart^2$, $\mathbb{E}[Z_2^2]=\taustart^2$ and $\mathbb{E}[Z_1 Z_2] = 2{\taustart}^2-\widetilde{y}_{t, 3}$.
%where $\mathbb{E}[Z_1Z_2] = 2{\taustart}^2-\widetilde{y}_{t, 3}$. 
Under the induction assumption $\widetilde{y}_{t, 3} \leq 2{\taustart}^2$, the two random variables $ \eta(\Theta + Z_1; \alphastart\taustart)-\Theta $ and $ \eta(\Theta + Z_2; \alphastart\taustart)-\Theta $ are positively or zero correlated. Then it is guaranteed that   
\begin{align}
\label{eq:g3-tau}
\frac{2}{\delta}\mathbb{E}\left\lbrace \left( \eta(\Theta + Z_1; \alphastart\taustart) -\Theta \right) \left(  \eta(\Theta + Z_2; \alphastart\taustart)-\Theta \right) \right\rbrace\geq 0 \quad \Longrightarrow \quad 2{\taustart}^2 - G_3({\taustart}^2, {\taustart}^2, \widetilde{y}_{t, 3}; \alphastart, \alphastart) \geq 2\sigma^2 .
\end{align}

Now to prove the upper bound for $\widetilde{y}_{t+1, 3}$, it is sufficient for us to control the difference between 
$G_3({\taustart}^2, {\taustart}^2, \widetilde{y}_{t, 3}; \alphastart, \alphastart)
$ and $ \widetilde{y}_{t+1, 3}$.
Towards this end, let us invoke the Lipschitz property for soft-thresholding function
\begin{align*}
 	\left|\eta(\Theta+Z_1; \alphastart \ts)-\eta(\Theta+Z_1; \a^\star_{t-1} \ts)\right|\leq \taustart |\alphastart-\alpha_{t-1}^\star|,
 \end{align*}
which leads to  
% thus $G_3({\taustart}^2, {\taustart}^2, \widetilde{y}_{t, 3}; \alphastart, \alphastart) \leq 2{\taustart}^2 - 2\sigma^2$. With $(Z_1, Z_2)$ being jointly Gaussian random variables with $\mathbb{E}[Z_1^2] = \mathbb{E}[Z_2^2]={\ts}^2$ and  $\mathbb{E}[(Z_1-Z_2)^2]=\widetilde{y}_{t, 3}$, we know
\begin{equation}\label{eq:g3-y}
\begin{aligned}
& |G_3({\taustart}^2, {\taustart}^2, \widetilde{y}_{t, 3}; \alphastart, \alphastart) - \widetilde{y}_{t+1, 3}| \\
= &  |G_3({\taustart}^2, {\taustart}^2, \widetilde{y}_{t, 3}; \alphastart, \alphastart) - G_3({\taustart}^2, {\taustart}^2, \widetilde{y}_{t, 3}; \alpha_{t-1}^\star, \alphastart) | \\
= &\  \frac{1}{\delta} \mathbb{E} \left| \left[\eta(\Theta+Z_1; \alphastart \ts)-\eta(\Theta+Z_1; \a^\star_{t-1} \ts)\right] \left[\eta(\Theta+Z_1; \alphastart \ts)+\eta(\Theta+Z_1; \a^\star_{t-1} \ts)-2\eta(\Theta+Z_2; \alphastart \ts)\right]\right| \\
\leq & \   \frac{\taustart |\alphastart-\alpha_{t-1}^\star|}{\delta} \mathbb{E} \left|  \eta(\Theta+Z_1; \alphastart \ts)+\eta(\Theta+Z_1; \a^\star_{t-1} \ts)-2\eta(\Theta+Z_2; \alphastart \ts)\right|\\
\stackrel{(\mathrm{i})}{\leq}  & \ 
\frac{\ts|\alphastart-\a_{t-1}^\star|}{\delta} \cdot 4(\mathbb{E}[|\Theta|]+\tau^\star_{\max})
\leq \frac{4(\mathbb{E}[|\Theta|]+\tau^\star_{\max})\tau^\star_{\max}}{\delta} |\alphastart-\a^\star_{t-1}|.
\end{aligned}
\end{equation}
Here, the inequality $(\mathrm{i})$ follows since 
$\mathbb{E}\left[ \left| \eta(\Theta+Z_i; \alphastart \tau^\star_s)\right| \right] \leq \mathbb{E}\left[ |\Theta + Z_1| \right] \leq \mathbb{E}[|\Theta|] + \tau^\star_{\max}$ with $i\in \left\lbrace 1,2\right\rbrace $ and $s\in \left\lbrace t, t-1\right\rbrace $.
% ${\taustart}^2 \geq \frac{1}{\delta}\mathbb{E}\left\lbrace \left[ \eta(\Theta+Z_t; \alphastart \taustart)-\Theta\right]^2 \right\rbrace \geq \frac{1}{2}\mathbb{E}[\eta(\Theta+Z_t; \alphastart \taustart)]^2 - \Exs[\Theta^2]$

Combining the inequalities~\eqref{eq:g3-tau} and \eqref{eq:g3-y} yields 
\begin{align*}
\widetilde{y}_{t+1, 3} \leq 2 {{\tau_{t+1}^\star}}^2 + \frac{4(\mathbb{E}[|\Theta|]+\tau^\star_{\max})\tau^\star_{\max}}{\delta} |\alphastart-\a_{t-1}^\star| + 2(\taustart^2-{\tau^\star_{t+1}}^2) - 2\sigma^2.
\end{align*}
As $t\goto\infty$, we know from Proposition~\ref{prop:tau-t-limit} that $ |\alphastart-\a_{t-1}^\star| \goto 0$, as well as $|\taustart^2-{\tau^\star_{t+1}}^2| \goto 0$; as such, we can always find some $T_{\min}$ determined by $\Theta$ and $\sigma^{2}$, such that the $\frac{4(\mathbb{E}[|\Theta|]+\tau^\star_{\max})\tau^\star_{\max}}{\delta} |\alphastart-\a_{t-1}^\star| + 2|\taustart^2-{\tau^\star_{t+1}}^2| - 2\sigma^2 < 0$. 
In this case, we conclude that $\widetilde{y}_{t+1, 3} < 2 {{\tau_{t+1}^\star}}^2$, and then by induction, $\widetilde{y}_{t, 3} < 2 {{\tau_{t+1}^\star}}^2$, $\forall t \geq t_0$.
This completes the proof of Claim 2(a).
% Combining these, we know that, as long as we start from a sufficiently large $t_0$ and $\widetilde{y}_{t_0, 3} \leq 2 {\tau_{t_0}^\star}^2$, we have $\widetilde{y}_{t, 3} \leq 2 {\tau_{t}^\star}^2$ for all $t\geq t_0$. 

\subsubsection*{Proof of Claim~2(b)} 

It turns out that the function $G_3({\taustart}^2, {\tau_{t}^\star}^2, \widetilde{y}_{t, 3}; \alphastart, \alphastart)$ is the same as the $\mathsf{G}_*$ function defined in BM-Pg 36. 
We record here  two key observations from BM-Pg 36 regarding this function. 
\begin{itemize}
	\item The derivative of function $G_{3}$ with respect to its third argument satisfies 
	\begin{align}\label{eq:g-derivative}
	\left. \frac{\partial}{\partial x}G_3({\taustart}^2, {\tau_{t}^\star}^2, x; \alphastart, \alphastart)\right|_{x=0} = \frac{1}{\delta}\mathbb{E}\left\lbrace \eta'(\Theta + \taustart Z; \alphastart \taustart )\right\rbrace = 1-\frac{\lambda_t}{\alphastart\taustart};
	\end{align}
	\item The mapping $x\mapsto \frac{\partial}{\partial x}G_3({\taustart}^2, {\tau_{t}^\star}^2, x; \alphastart, \alphastart)$ is decreasing in $[0, 2\taustart^2)$.
	\end{itemize}

	In addition, we claim that $G_3({\taustart}^2, {\tau_{t}^\star}^2, 0; \alphastart, \alphastart) = 0$. In order to see this, by construction in \eqref{eq:def-G3}, we have 
		\begin{align*}
		G_3({\taustart}^2, \taustart^2, 0; \alphastart, \alphastart) = \frac{1}{\delta}\mathbb{E}\left\lbrace \left[ \eta(\Theta+Z_1; \alphastart \taustart)-\eta(\Theta+Z_2; \alphastart \taustart)\right]^2 \right\rbrace,
		\end{align*}
	where $\mathbb{E}[Z_1^2] = \mathbb{E}[Z_2^2] = \taustart^2$ and $\mathbb{E}[(Z_1-Z_2)^2] = 0$.
    From this construction, we know $Z_1 = Z_2$, almost surely; it follows that $\eta(\Theta+Z_1; \alpha_1 \taustart) = \eta(\Theta+Z_1; \alpha_1 \taustart)$, almost surely. We therefore obtain $G_3({\taustart}^2, {\tau_{t}^\star}^2, 0; \alphastart, \alphastart) = 0$.

Taking these observations collectively 
 guarantees that: under the condition $\widetilde{y}_{t, 3} < 2 {\tau_{t}^\star}^2$, one has  
\begin{align*}
G_3({\taustart}^2, {\tau_{t}^\star}^2, \widetilde{y}_{t, 3}; \alphastart, \alphastart) \leq (1-\lambda_t/\alphastart\taustart) \widetilde{y}_{t, 3} \leq (1-(\alpha_{\max}^{\star} \tau_{\max}^{\star})^{-1} \lambda_t)\widetilde{y}_{t, 3}.
\end{align*}
% \yuting{$G_3({\taustart}^2, {\tau_{t}^\star}^2, 0; \alphastart, \alphastart) = 0?$}\yue{Added above.}
In view of the inequality~\eqref{eq:g3-y}, we have 
$$|G_3({\taustart}^2, {\taustart}^2, \widetilde{y}_{t, 3}; \alphastart, \alphastart) - \widetilde{y}_{t+1, 3}| \leq C|\alphastart-\a_{t-1}^\star|,$$ 
where $C$ is determined by $\Theta$ and $\sigma^2$. Putting these pieces together leads to 
\begin{align}
	\label{eq:g3-y-2}
	\widetilde{y}_{t+1, 3} \leq (1-(\alpha_{\max}^{\star} \tau_{\max}^{\star})^{-1} \lambda_t)\widetilde{y}_{t, 3} + C|\a_{t+1}^\star-\alphastart|.
\end{align}
Invoking the above relation recursively,  we obtain, for $c_1 \defn (\alpha_{\max}^{\star} \tau_{\max}^{\star})^{-1}$, that
\begin{align}
	\widetilde{y}_{t+1, 3} 
	& \leq 
	\prod_{i=0}^{t-t_0} (1-c_1\lambda_{t-i}) \widetilde{y}_{t_0}
	+ C \sum_{i=0}^{t-t_0} \prod_{j=1}^i (1-c_1\lambda_{t+1-j})|\alpha^\star_{t+1-i} - \alpha^\star_{t-i}| \notag\\
	& \leq 
	e^{-c_1 \sum_{i=t_0}^{t}\lambda_i} \widetilde{y}_{t_0}
	+ 
	C \sum_{i=t_0}^{t} e^{-c_1\sum_{j=i+1}^t \lambda_j} |\alpha^\star_{i+1} - \alpha^\star_{i}| \notag\\
	& = 
	e^{-c_1 \sum_{i=t_0}^{t}\lambda_i} \widetilde{y}_{t_0}
	+ 
	C \sum_{i=t_0}^{t/2} e^{-c_1\sum_{j=i+1}^t \lambda_j} |\alpha^\star_{i+1} - \alpha^\star_{i}|
	+
	C \sum_{i=t/2}^{t} e^{-c_1\sum_{j=i+1}^t \lambda_j} |\alpha^\star_{i+1} - \alpha^\star_{i}|.
	\label{eq:condition-1357}
\end{align}
% \yuting{check the part below}
Observing that $\widetilde{y}_{t_0, 3}< 2 {\tau^\star_{t_0}}^2$ and the sequence 
$\sum_{i=t_0}^{t}\lambda_i$ diverges to infinity, we know that the first term on the right-hand side of \eqref{eq:condition-1357} converges to zero as $t$ increases. 
Moreover, as proved in Proposition~\ref{prop:amp-fixd-point-convergence}, the sequence $\left\lbrace \alphastart\right\rbrace $ is monotone when $t$ is sufficiently large, and we then see that $\sum_{t=t/2}^{+\infty} |\a_{t+1}^\star-\alphastart|$ converges to zero. 
In the meantime, $e^{-c_1\sum_{j=i+1}^t \lambda_j} \leq e^{-c_1 \lambda_t} \leq e^{-c_1 \max_t \lambda_t}$ is always controlled by some constant. 
As a result, the third term on the right-hand side of \eqref{eq:condition-1357} also vanishes. 
It remains to control the second term on the right-hand side of \eqref{eq:condition-1357}. 
To this end, we make the observation that $|\alpha^\star_{i+1} - \alpha^\star_{i}| \leq 2\alpha_{\max}^\star$ and 
\begin{align*}
	\sum_{i=t_0}^{t/2} e^{-c_1\sum_{j=i+1}^t \lambda_j}
	\leq 
	t  e^{-c_1\sum_{j=t/2}^t \lambda_j} =   e^{(\log t - c_1\sum_{j=t/2}^t \lambda_j)} \to 0,
\end{align*}
where the last inequality follows from Assumption~\ref{assump:lambda}. Thus, the second term on the right-hand side of \eqref{eq:condition-1357} also has a zero limit. 
Taking these collectively, we establish the advertised result in Claim~2(b).

\subsubsection*{Proof of Claim 3.} 
To begin with, by some direct algebra, we can express the Jacobian matrix of mapping $\mathsf{G}^t$ at $\bs{y}_\star = ({\tau_t^\star}^2, {\tau_t^\star}^2, 0)$ by 
\begin{align*}
	J_{\mathsf{G}^t}(\bs{y}_\star) = \begin{pmatrix}
	0 & 1 & 0 \\
	0 & \left. \frac{\mathrm{d}}{\mathrm{d} \tau^2} \mathsf{F}(\tau^2, \alphastart\tau)\right|_{{\tau_t^\star}^2} & 0 \\
	\cdots & \cdots & \left. \frac{\mathrm{d}}{\mathrm{d} y_3}\mathsf{G}_3^t({\tau_t^\star}^2, {\tau_t^\star}^2, y_3)\right|_{y_3 = 0}
\end{pmatrix}.
\end{align*}
It is easily seen that the maximal eigenvalues of the above matrix satisfies the following relation
\begin{align}
	\label{eq:G-radius}
	\sigma(J_{\mathsf{G}^t}(\bs{y}_\star)) = \max\left\lbrace \left. \frac{\mathrm{d}}{\mathrm{d} \tau^2} \mathsf{F}(\tau^2, \alphastart\tau)\right|_{{\tau_t^\star}^2} , \left. \frac{\mathrm{d}}{\mathrm{d} y_3}\mathsf{G}_3^t({\tau_t^\star}^2, {\tau_t^\star}^2, y_3)\right|_{y_3 = 0} \right\rbrace .
\end{align}
To control $\sigma(J_{\mathsf{G}^t}(\bs{y}_\star))$, it is sufficient for us to bound each term respectively. 
For the first term, from the proof of BM-Proposition 1.3, we know that the function $\tau^2 \mapsto \mathsf{F}(\tau^2, \a\tau)$ is concave, for any $\a>0$ and $\Theta$ that is not identically $0$; therefore, we obtain
\begin{align*}
\left. \frac{\mathrm{d}}{\mathrm{d} \tau^2} \mathsf{F}(\tau^2, \alphastart\tau)\right|_{{\tau_t^\star}^2} \leq \frac{\mathsf{F}(\taustart^2, \alphastart\taustart) - \mathsf{F}(0, 0)}{\taustart^2-0}=\frac{\taustart^2-\sigma^2}{\taustart^2} \leq 1 - \frac{\sigma^2}{{\tau^\star_{\max}}^2} \defn \eta.
\end{align*}
For the second term, with the function $G_3$ defined in expression~\eqref{eq:def-G3}, we write 
\begin{align*}
\left. \frac{\mathrm{d}}{\mathrm{d} y_3} \mathsf{G}_3^t({\tau_t^\star}^2, {\tau_t^\star}^2, y_3)\right|_{y_3 = 0} & = \left. \frac{\mathrm{d}}{\mathrm{d} x} G_3({\tau_t^\star}^2, {\tau_t^\star}^2, x; \alpha_{t-1}^\star, \alphastart)\right|_{x = 0}\\
& = \left. \frac{\mathrm{d}}{\mathrm{d} x} \mathbb{E}\left[ \eta(\Theta+ a(x) Z + b(x) W; \alphastart {\tau_t^\star})\eta(\Theta + {\tau_t^\star} Z; \alpha_{t-1}^\star {\tau_t^\star})\right]\right|_{x=0}.
\end{align*}
Here $Z$ and $W$ are independent standard Gaussian random variables and we denote
\begin{align}
\label{eqn:a-and-b}
	a(x) \defn {\tau_t^\star}-x/(2{\tau_t^\star})
	\qquad\text{and }~b(x)\defn \sqrt{x-x^2/(4{\tau_t^\star}^2)}. 
\end{align}
% where $x \in [0, 4{\tau_t^\star}^2].$
% We shall omit the dependency of $a$, $b$ to $x$ in the text below. 
Further define
\begin{align}
\label{eq:def-g}
g(\Theta, Z, W; x) \defn \eta(\Theta + a(x) Z + b(x) W; \alphastart {\tau_t^\star})\Big[\eta(\Theta + {\tau_t^\star} Z; \alpha_{t-1}^\star \taustar)- \eta(\Theta + \taustar Z; \alphastart \tau^\star_t)\Big].
\end{align}
With this notation in place, we can write 
\begin{equation}
\label{eq:g3-bound-1}
\begin{aligned}
\left. \frac{\mathrm{d}}{\mathrm{d} y_3} \mathsf{G}_3^t(\taustart^2, \taustart^2, y_3)\right|_{y_3 = 0}  
& \ =  \left. \frac{\mathrm{d}}{\mathrm{d} y_3} G_3(\taustart^2, \taustart^2, x;
\alphastart, \alphastart )\right|_{x = 0} + \left.\frac{\mathrm{d}}{\mathrm{d} x}  \mathbb{E}\left\lbrace g(\Theta, Z, W; x)\right\rbrace \right|_{x=0}\\
& \ = 1 - \frac{\lambda_t}{\taustart\alphastart}   + \left.\frac{\mathrm{d}}{\mathrm{d} x} \mathbb{E}\left\lbrace g(\Theta, Z, W; x)\right\rbrace \right|_{x=0},
\end{aligned}
\end{equation}
where the last equality follows from the expression~\eqref{eq:g-derivative}.
To control the spectral radius of the Jacobian, it suffices to control the last term on the right-hand side above. 
We claim that it satisfies the following property: 
\begin{align}
\label{eq:diff-derivative}
\left.\frac{\mathrm{d}}{\mathrm{d} x} \mathbb{E}\left\lbrace g(\Theta, Z, W; x)\right\rbrace \right|_{x=0} \leq 2 |\alphastart-\a_{t-1}^\star|.
\end{align}
Taking this claim as given for the moment, we can translate the expression~\eqref{eq:g3-bound-1} into 
\begin{align*}
\left. \frac{\mathrm{d}}{\mathrm{d} y_3} \mathsf{G}_3^t(\taustart^2, \taustart^2, y_3)\right|_{y_3 = 0} \leq 1 - \frac{\lambda_t}{\alpha^\star_{\max}\tau^\star_{\max}} + 2|\alphastart-\a_{t-1}^\star|.
\end{align*}
% Note that the $\alphastart$ sequence monotonically converges to $\alphastar$ as asserted by Proposition~\ref{prop:amp-fixd-point-convergence}, and $\sum_{t=1}^{\infty}\lambda_t$ diverges to infinity. % We can then see the $|\alphastart-\a_{t-1}^\star|$ is of smaller order than the $\lambda_t$ term; namely, we obtain 
Combining the results of Proposition~\ref{prop:amp-fixd-point-convergence} and Assumption~\ref{assump:lambda} gives
\begin{align*}
	|\alphastart-\a_{t-1}^\star| \leq L_{\a} |\lambda_t - \lambda_{t-1}| \leq  \frac{\lambda_t}{4\alpha^\star_{\max}\tau^\star_{\max}} 
\end{align*}
% \yuting{require $t$ be large enough?? add explanation} 
for sufficiently large $t$.
Thus we complete the proof of expression~\eqref{eqn:J-t-radius}. 
The only thing that is left is to establish the inequality~\eqref{eq:diff-derivative}, which shall be done as follows.

\paragraph{Proof of the inequality~\eqref{eq:diff-derivative}.}
Throughout this part, we denote $a=a(x)$ and $b=b(x)$ for simplicity if there is no confusion. 
Direct calculation of the derivative for the expression~\eqref{eq:def-g} gives 
\begin{align*}
	\left.\frac{\mathrm{d}}{\mathrm{d} x} \mathbb{E}\left\lbrace g(\Theta, Z, W; x)\right\rbrace \right|_{x=0} = (\mathrm{I}) + (\mathrm{II}),
\end{align*}
where 
\begin{subequations}
	\begin{align*}
	(\mathrm{I}) \defn \left. \mathbb{E}\left\lbrace - \frac{Z}{2\taustart} \left[ \eta(\Theta + \taustart Z; \alpha_{t-1}^\star \taustart)-\eta(\Theta + \taustart Z; \alphastart \taustart)\right] \eta'(\Theta + a Z + b W; \alphastart \taustart)\right\rbrace\right|_{x=0};
	\end{align*}
	\begin{align*}
	(\mathrm{II}) \defn \left. \mathbb{E}\left\lbrace - \frac{W}{2 b} \left[ \eta(\Theta + \taustart Z; \alpha_{t-1}^\star \taustart)-\eta(\Theta +\taustart Z; \alphastart \taustart)\right] \eta'(\Theta + a Z + b W; \alphastart \taustart)\right\rbrace\right|_{x=0}.
	\end{align*}
\end{subequations}
% Note that $b(x)\goto 0$ when $x\goto 0$, so the second term is much harder to deal with. 
Next we shall control each term respectively. 
For the first term, invoking the Lipschitz property yields $|\eta'(x; 
\zeta)| \leq 1$ and$|\eta(x;\zeta_1)-\eta(x;\zeta_2)|\leq |\zeta_1-\zeta_2|$, which further give
\begin{align}
\label{eqn:part-I-brahms}
(\mathrm{I}) \leq \mathbb{E}\left[\left| \frac{Z}{2}\right| \cdot  |\a_{t-1}^\star-\alphastart|\right] \leq |\a_{t-1}^\star-\alphastart|.
\end{align}
It remains to study the second term (II), for which the main difficulty lies in the fact that $b(x) \goto 0$ as $x\goto 0$.
It is easy to see that the limit value of $(\mathrm{II})$ is equal to the limiting value of $(\mathrm{IIa})+(\mathrm{IIb})$, where
\begin{align*}
(\mathrm{IIa}) & \defn - \frac{1}{2b} \mathbb{E}\left\lbrace W \left[ \eta(\Theta + \taustart Z; \alpha_{t-1}^\star \taustart)-\eta(\Theta +\taustart Z; \alphastart \taustart)\right] \textbf{1}(\Theta + aZ + b W\geq \alphastart \taustart
% , |W|\geq 2\sqrt{x}Z
)\right\rbrace;\\
(\mathrm{IIb}) & \defn - \frac{1}{2b}\mathbb{E}\left\lbrace W\left[ \eta(\Theta + \taustart Z; \alpha_{t-1}^\star \taustart)-\eta(\Theta +\taustart Z; \alphastart \taustart)\right] \textbf{1}(\Theta + a Z + b W\leq -\alphastart \taustart
%, |W|\geq 2\sqrt{x}Z
)\right\rbrace.
\end{align*}
The analysis of the two parts are quite similar, so we only only discuss the first part. % Below we use $\mathbb{E}_{\Theta, Z}$ to denote the conditional expectation over $W$, given $\Theta$ and $Z$.
Denote 
\begin{align}
\label{eq:def-mu}
\mu(x; Z, \Theta, W) \defn \left[ \eta(\Theta + \taustart Z; \alpha_{t-1}^\star \taustart)-\eta(\Theta +\taustart Z; \alphastart \taustart)\right] \textbf{1}(\Theta + a(x)Z + b(x) W\geq \alphastart \taustart),
\end{align}
To establish expression~\eqref{eq:diff-derivative}, it suffices to show that
\begin{align}\label{eq:limit-mu}
	\lim_{x\goto 0} \left| \frac{1}{2b(x)}\mathbb{E}[W \mu(x; Z, \Theta, W)] \right| \leq \left|\alphastart-\alpha^\star_{t-1}\right| .
\end{align}

The remaining part of the current section is then devoted to the proof of~\eqref{eq:limit-mu}. 
Towards this, we make the key observation that 
$\mathbb{E}[\mu(x; Z, \Theta, W) \mid W = w]$ is a Lipschitz function in $w.$
In order to see this, first notice that the convoluted density of $p_{aZ}* p_\Theta$ is bounded over all $a\in [\taustart/2, \taustart]$.
Indeed, we can calculate the convolution density by
\begin{align}\label{eq:convolution-density}
(p_{a Z}* p_\Theta)(z) = \int_{-\infty}^{\infty} \frac{1}{a\sqrt{2\pi}}\exp\left\lbrace -\frac{(\Theta-z)^2}{2a^2 }\right\rbrace \mathrm{d} P_{\Theta} \leq \frac{1}{a\sqrt{2\pi}}\int_{-\infty}^{\infty}\mathrm{d} P_{\Theta} = \frac{1}{a\sqrt{2\pi}} \leq \frac{1}{\taustart}, \quad \forall z\in \mathbb{R}.
\end{align}
Next, for any $x\in (0, \taustart^2)$ and $w_1 < w_2$, direct calculations yield 
\begin{align*}
& \ \Big|\mathbb{E}[\mu(x; Z, \Theta, w_1)] - \mathbb{E}[\mu(x; Z, \Theta, w_2)] \Big| \\
= & \ 
\Big| \mathbb{E} \big[ (\eta(\Theta + \taustart Z; \alpha_{t-1}^\star \taustart)-\eta(\Theta +\taustart Z; \alphastart \taustart))\cdot( \textbf{1}(\Theta + a(x)Z + b(x) w_1\geq \alphastart \taustart) - \textbf{1}(\Theta + a(x)Z + b(x) w_2\geq \alphastart \taustart)\big] \Big| 
\\
\leq & \ \taustart |\alphastart-\alpha^\star_{t-1}| \cdot \mathbb{P}\left\lbrace \alphastart \taustart-b(x) w_2\leq \Theta + a(x) Z \leq \alphastart \taustart-b(x) w_1\right\rbrace  \\
 \leq & \  b(x) |\alphastart-\alpha^\star_{t-1}||w_1-w_2|,
\end{align*}
where the last inequality follows from the expression~\eqref{eq:convolution-density} and the fact that $a(x)\in[\taustart/2, \taustar]$ when $0<x<\taustart^2$. 

Now we proceed to control $\mathbb{E}[W \mu(x; Z, \Theta, W)].$
% We recall that the expectation is taken with respect to $\Theta$ and $Z$.
% Therefore, 
First we can write 
\begin{align*}
\left|\mathbb{E}\left[ W\mu(x; Z, \Theta, W) - W\mu(x; Z, \Theta, 0)\right]\right|  =  &\  \mathbb{E}\left\lbrace W\mathbb{E} \left[\mu(x; Z, \Theta, W) - \mu(x; Z, \Theta, 0)  \vert W\right] \right\rbrace \\
\leq & \  b(x) |\alphastart-\alpha^\star_{t-1}| \mathbb{E}[W^2 ] \\
=&  \  b(x) |\alphastart-\alpha^\star_{t-1}|. 
\end{align*}
Additionally, by symmetry, we know that
\begin{align*}
\mathbb{E}\left[W\mu(x; Z, \Theta, 0)\right] = 0,
\end{align*}
since $W\sim \NORMAL(0, 1)$ is independent from $\mu(x; Z, \Theta, 0)$. 
Putting everything together, we conclude that
\begin{align*}
	\left| \frac{1}{2b(x)}\mathbb{E}[W \mu(x; Z, \Theta, W)]\right| 
	 \leq \frac{1}{2}|\alphastart-\alpha^\star_{t-1}|,
\end{align*}
thus concluding the proof of the inequality~\eqref{eq:limit-mu}.
Similarly, one can control $(\mathrm{IIb})$.
Taking these collectively with \eqref{eqn:part-I-brahms}, 
we validated the inequality~\eqref{eq:diff-derivative}.

%%%%%%%%%%%%%%%%%%%%%%%%%%%%%%%%%%%%%%%%%%%%%%%%

%%%%%%%%%%%%%%%%%%%%%%%%%%%%%%%%%%%%%%%%%%%%%%%%%%%%%%%%%%%%%%%%%%%%%%%%%%%%%%%%%%%%%%%%%%%%%%%%%%%%%%%%%%%%%%%%%%%%%%%%%%%%%%%%%%%%%%%%%%%%%%%%%%%%

\section{Proofs about the risk curve}
\label{sec:proof-risk-curve}

\subsection{Proof of Lemma~\ref{lem:nu-prime} and Lemma~\ref{lem:nu-one-limit}}
\label{sec:proof-derivative-lemmas}

In this section, we present the proofs of Lemma~\ref{lem:nu-prime} and Lemma~\ref{lem:nu-one-limit} by analyzing each term inside the derivative~\eqref{eq:derivative-nu}. 
The proofs are divided into several parts: first in Section~\ref{sec:expression-of-derivative}, we derive the explicit expressions of $F_1(\nu, \delta, \alpha)$, $F_2(\nu, \delta, \alpha)$ and their partial derivatives, which serve as the basis for subsequent analyses. 
Their limiting values are computed in Section~\ref{sec:delta-goto-1} for the case when $\delta \goto 1^-$, which in turn establishes Lemma~\ref{lem:nu-one-limit}. 

When it comes to the case $\delta \goto 0^+$, we start by stating two crucial lemmas concerning the growth of $\alphastar$ and $\nustar$, and  some partial derivatives, as in Lemma~\ref{lem:zero-order-limit} and \ref{lem:one-order-limit}. 
Properties~\eqref{eq:denominator} and \eqref{eq:nu-zero-limit} are direct consequences of these two lemmas. For property~\eqref{eq:numerator}, it turns out that the first-order approximations of both the 
minuend and subtrahend cancel out.
Therefore, we need to resort to the second-order computation of these two terms, which is postponed to Section~\ref{sec:second-order-limit}.
Putting these together completes the proof of Lemma~\ref{lem:nu-prime}. 
Finally, the proofs of auxiliary Lemma~\ref{lem:zero-order-limit} and \ref{lem:one-order-limit} are deferred to the end of this section. 

For notational simplicity, throughout this section, we denote $F_{1}(\nu, \delta, \a)$ as $F_1$, and similarly for $F_2$ and all other partial derivative functions when the values of $(\nu, \delta, \a)$ are clear from the context.

\subsubsection{Expressions of $F_1$ and $F_2$}
\label{sec:expression-of-derivative}

Let us first express $F_1$ and $F_2$
as functions of the density and cumulative density functions of the standard Gaussian distribution (denoted as $\phi(\cdot)$ and $\Phi(\cdot)$ respectively). 
For $F_1$,  it is easily seen that 
\begin{equation}
\label{eq:f1}
F_1(\nu, \delta, \alpha)  = \epsilon \left[ \Phi(-\alpha+\sqrt{\delta}\nu) + \Phi(-\alpha-\sqrt{\delta}\nu)\right] + 2(1-\epsilon) \Phi(-\alpha)-\delta.
\end{equation}
When it comes to $F_2$, invoking the equality~\eqref{eq:auxillary-integral} in Lemma~\ref{lem:formula}, we arrive at the following decomposition
\begin{align}
\label{eqn:f2-decomposition}
 	F_2 = F_{21} + \epsilon \delta^{-1} F_{22} + (1-\epsilon)\delta^{-1} F_{23},
 \end{align} 
where
\begin{equation}\label{eq:f2}
\begin{aligned}
F_{21}(\nu, \delta,\alpha)   \defn & \frac{\nu^2}{M^2}-1 ;\\
F_{22}(\nu, \delta,\alpha) \defn &  \int_{-\infty}^{\infty} ( \eta(\sqrt{\delta}\nu+z,\alpha)-\sqrt{\delta}\nu) ^2\phi(z) \mathrm{d}z \\
=& \ (\b-\a)\phi(\a+\b) + (-\b-\a)\phi(\a-\b)  \\
&+ (\a^2+1-\delta\nu^2)\left[ \Phi(-\a-\b) + \Phi(-\a+\b) \right]  + \delta\nu^2;\\
F_{23}(\nu, \delta,\alpha) \defn & \int_{-\infty}^{\infty} \eta^2(z, \alpha) \phi(z)  \mathrm{d}z = 2\left[ -\a\phi(\a) + (\a^2+1)\Phi(-\a)\right] .                                
\end{aligned}
\end{equation}
Direct computation of the partial derivatives yield the following expressions.

\paragraph{Partial derivatives of $F_1$.} For $F_1$,
we have the following three partial derivatives with respect to $\a,\delta$ and $\nu$ respectively, where  
\begin{equation}\label{eq:f1-derivative}
\begin{aligned}
\nabla_{\alpha} F_1(\nu, \delta, \alpha) & = - \epsilon \left[ \phi(\alpha-\sqrt{\delta}\nu) + \phi(\alpha+\sqrt{\delta}\nu)\right] - 2(1-\epsilon)\phi(\alpha);\\
\nabla_{\delta} F_1(\nu, \delta, \alpha) & = \epsilon \frac{\nu}{2\sqrt{\delta}} \left[ \phi(\alpha-\sqrt{\delta}\nu) - \phi(\alpha+\sqrt{\delta}\nu)\right] - 1;\\
\nabla_{\nu} F_1(\nu, \delta, \alpha) & = \epsilon \sqrt{\delta}\left[ \phi(\alpha-\sqrt{\delta}\nu) - \phi(\alpha+\sqrt{\delta}\nu)\right].
\end{aligned}
\end{equation}
\paragraph{Partial derivatives of $F_2$.} 
With respect to $\a$, from the decomposition~\eqref{eqn:f2-decomposition}, one obtains 
\begin{align}
\label{eqn:sonata}
 	\nabla_{\alpha} F_{2} = \epsilon \delta^{-1}\nabla_{\alpha} F_{22} + (1-\epsilon)\delta^{-1} \nabla_{\alpha} F_{23},
 \end{align} 
where
\begin{align*}
\nabla_{\alpha} F_{22}(\nu, \delta, \alpha) = & -2\left[ \phi(\a+\b) + \phi(\a-\b)\right] + 2\a\left[ \Phi(-\a-\b) + \Phi(-\a+\b)\right] ;\\
\nabla_{\alpha} F_{23}(\nu, \delta, \alpha) = & - 4\left[ \phi(\alpha) - \a\Phi(-\a)\right].
\end{align*}
With respect to $\delta$, a little algebra leads to 
\begin{align}
\label{eqn:F2-der-tmp}
	\notag \nabla_{\delta}F_{2}(\nu, \delta, \alpha) 
	&= -\delta^{-2} \left[ \epsilon F_{22} + (1-\epsilon) F_{23} \right]  + \epsilon \delta^{-1} \nabla_{\delta} F_{22} \\
	&=-\epsilon \delta^{-2}\left[ F_{22} - \delta\nabla_{\delta} F_{22}  \right]  -(1-\epsilon)\delta^{-2} F_{23}. 
\end{align}
We then further evaluate the right-hand side of the above equation. Recognizing that $$\nabla_{\delta} F_{22}(\nu, \delta, \alpha) =  \nu^2\left[ \Phi(\a-\b)-\Phi(-\a-\b)\right],$$ we can guarantee that 
\begin{align*}
	F_{22} - \delta\nabla_{\delta} F_{22} = -(\a-\b)\phi(\a+\b) - (\a+\b)\phi(\a-\b) + (\a^2+1)\left[ \Phi(-\a-\b) + \Phi(-\a+\b) \right].
\end{align*}
Plugging this into the expression~\eqref{eqn:F2-der-tmp} yields the final expression of $\nabla_{\delta}F_{2}.$
Finally, with respect to $\nu$, it is straightforward to verify that 
\begin{align*}
	\nabla_{\nu} F_2(\nu, \delta, \alpha) = 2M^{-2}\nu + \epsilon\delta^{-1} \nabla_{\nu} F_{22},
\end{align*}
where 
\begin{align*}
	\nabla_{\nu} F_{22}(\nu, \delta, \alpha) = 2\nu\delta \left[ \Phi(\a-\b)-\Phi(-\a-\b)\right].
\end{align*}

%%%%%%%%%%%%%%%%%%%%%%%%%%%%%%%%%%%%%%%%

\subsubsection{The limit of $\delta\goto 1^-$: proof of Lemma~\ref{lem:nu-one-limit}}\label{sec:delta-goto-1}

Equipped with these close-form expressions, we are ready to study the  
the behaviors of $\alphastar$ and $\nustar$ when $\delta \goto 1^-$, and check the limiting orders of all terms in the expression~\eqref{eq:derivative-nu}.

\paragraph{Limiting orders of $\alphastar$ and $\nustar$.} 
We first make the observation that for any given $\nu, \alpha > 0$, $F_1(\nu, \delta, \a)$ is strictly decreasing in $\a$ and increasing in $\nu$. 
Also we note that $\nustar < M$ as $M/\nustar = \taustar > 1.$
Then it is easy to check that 
\begin{align*}
	0 = F_1(\nustar, \delta, \alphastar) < F_1(M, \delta, \alphastar) < F_1(M, \delta, 0) = 1-\delta.
\end{align*}
Taking $\delta \goto 1^-$, one can deduce that $F_1(M, \delta, \alphastar) \goto 0$, which further indicates that $\alphastar \goto 0$. 
Now putting  $\alphastar \goto 0$ together with the expression~\eqref{eq:f2}, we reach 
\begin{align*}
	F_{22}(\nustar, \delta, \alphastar) \goto 1; \quad F_{23}(\nustar, \delta, \alphastar) \goto 1 \quad \Longrightarrow\quad  F_2(\nustar, \delta, \alphastar) \goto \frac{\nustar^2}{M^2}.
\end{align*}
Combining this with the fact that $F_2(\nustar, \delta, \alphastar) =0$ ensures 
$\nustar \goto 0$.

\paragraph{Limiting order of $\nustar'(\delta)$.}
With the limiting values of $\alphastar$ and $\nustar$ in place, we are ready to check the limiting orders of all the terms in expression~\eqref{eq:derivative-nu}. 
First, taking the expressions of $F_1$, $F_{2}$ and their derivatives in Section~\ref{sec:expression-of-derivative} collectively with some algebra, 
we can guarantee that 
\begin{gather*}
\nabla_{\a}F_1 \sim -2 \phi(0) ; \quad \nabla_{\delta} F_1 \sim -1; \quad \nabla_{\nu} F_1 \sim 2 \epsilon \delta \nustar \alphastar \phi(0);\\
\nabla_{\a}F_2 \sim  (-4+2\epsilon)\phi(0) ; \quad \nabla_{\delta} F_2 \sim - 1; \quad \nabla_{\nu} F_2 \sim 2M^{-2}\nustar,
\end{gather*}
when $\delta \goto 1^{-}$ and all the partial derivatives are evaluated at the point $(\nustar, \delta, \alphastar)$.
Substituting these relations into \eqref{eq:derivative-nu} yields 
\begin{gather*}
\nabla_{\delta}F_2\nabla_{\alpha}F_1 - \nabla_{\alpha}F_2\nabla_{\delta}F_1 \sim  -2 (1-\epsilon)\phi(0);\\
\nabla_{\nu}F_2\nabla_{\alpha}F_1 - \nabla_{\alpha}F_2\nabla_{\nu}F_1\sim -4M^{-2} \phi(0) \nustar.
\end{gather*}
When $\epsilon$ is bounded away from $1$ and when $M$ bounded away from $\infty$,  
one can easily see that $\nustar'(\delta) \goto - \infty$ as $\delta \goto 1^-$ (since $0 < \nustar \goto 0$). We thus conclude the proof of Lemma~\ref{lem:nu-one-limit}.

%%%%%%%%%%%%%%%%%%%%%%%%%%%%%%%%%%%%%%%%
\subsubsection{The limit of $\delta\goto 0^+$: proof of Lemma~\ref{lem:nu-prime}}
\label{sec:second-order-limit}

Before embarking on the main proof, we make note of the following two lemmas concerned with the growth of $\alphastar$ and $\nustar$ and the derivatives of $F_{1}$ and $F_{2}$ when $\delta \goto 0^+$ which shall be used multiple times. 
Their proofs of these lemmas can be found in Section~\ref{sec:zero-order-limit} and \ref{sec:one-order-limit} respectively. 

\begin{lem}\label{lem:zero-order-limit}
	When $\delta \goto 0^+$, the following properties are satisfied  
	\begin{equation}\label{eq:limit-beta}
	\lim_{\delta \goto 0^+} \alphastar = +\infty; 
	\quad \lim_{\delta \goto 0^+} \sqrt{\delta}\alphastar = 0;   
	\quad \lim_{\delta \goto 0^+} \frac{\delta \alphastar}{\phi(\alphastar)} = 2;
	\quad \lim_{\delta \goto 0^+} \nustar = \nu_0.
	\end{equation}
\end{lem}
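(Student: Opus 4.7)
The plan is to exploit the two defining equations $F_1(\nustar,\delta,\alphastar)=0$ and $F_2(\nustar,\delta,\alphastar)=0$ in sequence, using a priori bounds on $\nustar$ together with Mills-ratio asymptotics. I would proceed in four steps, bootstrapping the precision at each stage.

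First, I would record the cheap a priori bound $\nustar\le M$: since the first equation in \eqref{eqn:fix-eqn} gives $\taustar\ge\sigma=1$, we get $\nustar=M/\taustar\le M$, hence $\sqrt{\delta}\nustar\to 0$ as $\delta\to 0^+$. Plugging this into the equation $F_1=0$ in the form $\epsilon[\Phi(-\alphastar+\sqrt{\delta}\nustar)+\Phi(-\alphastar-\sqrt{\delta}\nustar)]+2(1-\epsilon)\Phi(-\alphastar)=\delta$ and dropping the two nonnegative $\epsilon$-terms yields $2(1-\epsilon)\Phi(-\alphastar)\le\delta$, so $\Phi(-\alphastar)\to 0$ and therefore $\alphastar\to\infty$. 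This establishes the first claim of \eqref{eq:limit-beta}.

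Second, to upgrade $\alphastar\to\infty$ into $\sqrt{\delta}\alphastar\to 0$, I would invoke the classical Mills lower bound $\Phi(-\a)\ge c\,\phi(\a)/\a$ for $\a$ large, so that $\phi(\alphastar)/\alphastar\le C\delta$. Taking logarithms gives $\alphastar^{2}/2\ge-\log\delta-\log\alphastar-\log C$, hence $\alphastar=O(\sqrt{-\log\delta})$, which in turn yields $\sqrt{\delta}\alphastar\to 0$.

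Third, with both $\alphastar\to\infty$ and $\sqrt{\delta}\alphastar\to 0$ in hand, the cross term satisfies $\alphastar\sqrt{\delta}\nustar=o(1)$, so the Gaussian Taylor expansions $\phi(\alphastar\pm\sqrt{\delta}\nustar)=\phi(\alphastar)(1+o(1))$ and the refined Mills ratio $\Phi(-\a)=\phi(\a)/\a\cdot(1+O(\a^{-2}))$ reduce the equation $F_1=0$ to
\begin{equation*}
\Big[\epsilon+\epsilon+2(1-\epsilon)\Big]\frac{\phi(\alphastar)}{\alphastar}\bigl(1+o(1)\bigr)=\delta,
\end{equation*}
i.e.\ $2\phi(\alphastar)/\alphastar\sim\delta$, which is exactly the third claim $\delta\alphastar/\phi(\alphastar)\to 2$.

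Fourth, for $\nustar\to\nu_{0}$ I would use $F_{2}=0$ in the decomposition \eqref{eqn:f2-decomposition}. Writing $F_{22}$ as the expectation $\mathbb{E}[(\eta(\sqrt{\delta}\nustar+Z;\alphastar)-\sqrt{\delta}\nustar)^{2}]$ and splitting on the event $\{|\sqrt{\delta}\nustar+Z|\le\alphastar\}$ (whose probability tends to $1$), the bulk contributes $\delta\nustar^{2}(1+o(1))$ while the tail contributes $o(\delta)$ by a Gaussian-tail estimate using $\alphastar\to\infty$. For $F_{23}=2[(\alphastar^{2}+1)\Phi(-\alphastar)-\alphastar\phi(\alphastar)]$, the two-term Mills expansion $\Phi(-\alphastar)=\phi(\alphastar)/\alphastar-\phi(\alphastar)/\alphastar^{3}+O(\phi(\alphastar)/\alphastar^{5})$ forces the leading cancellation and leaves $F_{23}=O(\phi(\alphastar)/\alphastar^{3})=o(\delta)$. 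Substituting into $F_{2}=0$ and passing to any subsequential limit $\nu_{\infty}$ of $\nustar$ (which is bounded in $[0,M]$) yields $\nu_{\infty}^{2}/M^{2}-1+\epsilon\nu_{\infty}^{2}=0$, i.e.\ $\nu_{\infty}^{2}=M^{2}/(1+\epsilon M^{2})=\nu_{0}^{2}$, so $\nustar\to\nu_{0}$.

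The main technical obstacle is the fourth step: the leading terms in $F_{23}/\delta$ cancel, so obtaining $o(\delta)$ demands the second-order Mills expansion, and controlling the tail of $F_{22}/\delta$ requires simultaneously tracking $\alphastar\to\infty$ and $\sqrt{\delta}\nustar\to 0$ at matched rates. Everything else reduces to careful but standard Gaussian asymptotics.
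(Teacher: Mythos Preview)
Your overall strategy matches the paper's proof closely, and steps 1, 3, and 4 are essentially the same as what the paper does. However, step 2 contains a genuine gap: the inequality you extract points the wrong way.

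From $2(1-\epsilon)\Phi(-\alphastar)\le\delta$ together with the Mills lower bound $\Phi(-\alpha)\ge c\,\phi(\alpha)/\alpha$, you correctly obtain $\phi(\alphastar)/\alphastar\le C\delta$, and taking logarithms gives $\alphastar^{2}/2\ge-\log\delta-\log\alphastar-\log C$. But this is a \emph{lower} bound on $\alphastar$; it yields $\alphastar\gtrsim\sqrt{-\log\delta}$, not $\alphastar=O(\sqrt{-\log\delta})$. To conclude $\sqrt{\delta}\,\alphastar\to 0$ you need an \emph{upper} bound on $\alphastar$, which cannot come from dropping nonnegative terms on the left of $F_1=0$.

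The fix, and what the paper does, is to use the opposite comparison. Since every $\Phi$ term in $F_1=0$ is bounded above by $\Phi(-\alphastar+\sqrt{\delta}\nustar)$, one gets $\delta\le 2\Phi(-\alphastar+\sqrt{\delta}\nustar)\le 2\Phi(-\alphastar+M\sqrt{\delta})$. Setting $\beta=\alphastar-M\sqrt{\delta}$ (so $\beta\to\infty$) and applying the Mills \emph{upper} bound $\Phi(-\beta)\le\phi(\beta)/\beta$ gives $\delta\beta\le 2\phi(\beta)$, hence $\beta=O(\sqrt{-\log\delta})$ and therefore $\alphastar=O(\sqrt{-\log\delta})$, which delivers $\sqrt{\delta}\,\alphastar\to 0$. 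With this correction in place, your steps 3 and 4 go through as written; note in step 4 that the tail of $F_{22}$ really is $o(\delta)$ only after a cancellation among $O(\delta)$ pieces, which the paper handles via the auxiliary function $g(x)=\phi(x)-x\Phi(-x)$ and your two-term Mills expansion will achieve the same.
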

\begin{lem}\label{lem:one-order-limit}
	When $\delta\goto 0^+$, the limiting order of the partial derivatives of $F_1$ and $F_2$ are characterized as following:
	\begin{gather}
	\label{eq:limit-f1} \lim_{\delta\goto 0^+} \frac{\nabla_{\alpha}F_1}{\phi(\alphastar)} = -2; \quad \lim_{\delta\goto 0^+} \nabla_{\delta} F_1 = -1; \quad \lim_{\delta\goto 0^+} \frac{\nabla_{\nu}F_1}{\phi^2(\alphastar)} = 4\epsilon \nu_0;\\
	\label{eq:limit-f2} 
	\lim_{\delta\goto 0^+}\alphastar \nabla_{\alpha}F_2= -2; 
	\quad \lim_{\delta\goto 0^+} \alphastar\phi(\alphastar)\nabla_{\delta} F_2= -1; 
	\quad  \lim_{\delta\goto 0^+} \nabla_{\nu} F_2 = \frac{2}{\nu_0},
	\end{gather}
	where all the partial derivatives of $F_1$ and $F_2$ are evaluated at the point $(\nustar, \delta, \alphastar)$.
\end{lem}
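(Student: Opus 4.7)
The plan is to substitute the asymptotics from Lemma~\ref{lem:zero-order-limit} directly into the explicit expressions for the partial derivatives derived in Section~\ref{sec:expression-of-derivative}. Setting $b \defn \sqrt{\delta}\nustar$, the hypothesis $\sqrt{\delta}\alphastar \to 0$ combined with $\nustar \to \nu_0$ yields $b \to 0$ and $\alphastar b \to 0$, so Taylor expansion of $\phi(\alphastar \pm b)$ and $\Phi(-\alphastar \pm b)$ in $b$ is valid. Simultaneously, $\alphastar \to \infty$ makes the Mills-ratio expansion $\Phi(-\alphastar) = \phi(\alphastar)\bigl(\tfrac{1}{\alphastar} - \tfrac{1}{\alphastar^3} + O(\alphastar^{-5})\bigr)$ applicable. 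Throughout, the scaling relation $\delta\alphastar/\phi(\alphastar) \to 2$ converts powers of $\phi(\alphastar)$ into powers of $\delta\alphastar$; in particular, $\alphastar\phi(\alphastar) \sim \delta\alphastar^2/2 \to 0$.

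With these tools in hand, five of the six limits are routine. For $\nabla_\alpha F_1/\phi(\alphastar)$, both $\phi(\alphastar \pm b)$ and $\phi(\alphastar)$ are asymptotically equal, so the sum $-2\epsilon - 2(1-\epsilon) = -2$ emerges. For $\nabla_\delta F_1$, the first-order expansion $\phi(\alphastar-b) - \phi(\alphastar+b) \sim 2\alphastar b\,\phi(\alphastar)$ turns the leading $\epsilon$-term into $\epsilon\nustar^2 \alphastar\phi(\alphastar) \to 0$, leaving $-1$; similarly $\nabla_\nu F_1/\phi^2(\alphastar) \sim 2\epsilon\nustar \cdot \delta\alphastar/\phi(\alphastar) \to 4\epsilon\nu_0$. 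For $\nabla_\alpha F_2$, the Mills-ratio expansion gives $-\alphastar\phi(\alphastar) + (\alphastar^2+1)\Phi(-\alphastar) = 2\phi(\alphastar)/\alphastar^3 + O(\phi(\alphastar)/\alphastar^5)$, so $\alphastar \cdot (1-\epsilon)\delta^{-1}\nabla_\alpha F_{23} \to -2(1-\epsilon)$; the analogous calculation for $\nabla_\alpha F_{22}$, where the $b$-dependence contributes only higher-order corrections, produces $-2\epsilon$, yielding $\alphastar \nabla_\alpha F_2 \to -2$. Finally, $\nabla_\nu F_{22} = 2\nustar\delta[1 - 2\Phi(-\alphastar) + O(\alphastar\phi(\alphastar) b^2)] \to 2\nu_0\delta$, so $\nabla_\nu F_2 = 2M^{-2}\nustar + \epsilon\delta^{-1}\nabla_\nu F_{22} \to 2M^{-2}\nu_0 + 2\epsilon\nu_0 = 2/\nu_0$, the last identity following from $\nu_0^2 = M^2/(1 + \epsilon M^2)$.

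The one delicate computation, and what I expect to be the only real obstacle, is $\nabla_\delta F_2$. Starting from the identity $\nabla_\delta F_2 = -\epsilon\delta^{-2}[F_{22} - \delta\nabla_\delta F_{22}] - (1-\epsilon)\delta^{-2}F_{23}$, I would Taylor-expand the combination $F_{22} - \delta\nabla_\delta F_{22} = -\alphastar[\phi(\alphastar+b)+\phi(\alphastar-b)] + b[\phi(\alphastar+b) - \phi(\alphastar-b)] + (\alphastar^2+1)[\Phi(-\alphastar-b)+\Phi(-\alphastar+b)]$ in powers of $b$. At $b=0$ this equals $F_{23}$ exactly, and the key observation will be that the $b^2$ contributions cancel identically: the two $\phi$-terms together contribute $-\alphastar(\alphastar^2+1)\phi(\alphastar)b^2$ at second order, whereas $(\alphastar^2+1)[\Phi(-\alphastar-b)+\Phi(-\alphastar+b)]$ contributes $+\alphastar(\alphastar^2+1)\phi(\alphastar)b^2$. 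Consequently $F_{22} - \delta\nabla_\delta F_{22} = F_{23} + O(\alphastar^5\phi(\alphastar) b^4)$. Dividing by $\delta^2$ and applying $\delta\alphastar/\phi(\alphastar) \to 2$ twice yields $\nabla_\delta F_2 \sim -\delta^{-2}F_{23} \sim -4\phi(\alphastar)/(\delta^2\alphastar^3) \sim -1/(\alphastar\phi(\alphastar))$, from which $\alphastar\phi(\alphastar)\nabla_\delta F_2 \to -1$. The $O(\alphastar^5\phi(\alphastar)b^4)$ remainder is harmless because, after multiplication by $\alphastar\phi(\alphastar)\delta^{-2}$, it becomes $\alphastar^6\phi^2(\alphastar)\nustar^4 \sim \delta^2\alphastar^8\nustar^4/4$, which vanishes by the exponential decay of $\delta$ relative to any polynomial in $\alphastar$ (forced by $\phi(\alphastar) = e^{-\alphastar^2/2}/\sqrt{2\pi}$ together with $\delta\alphastar/\phi(\alphastar) \to 2$).
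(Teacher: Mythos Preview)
Your proposal is correct, and the five ``routine'' limits follow the paper almost verbatim. The one place where the strategies genuinely diverge is $\nabla_\delta F_2$. The paper groups $F_{22}-\delta\nabla_\delta F_{22}$ as a sum of two pieces of the form $-(\alphastar\mp b)\phi(\alphastar\pm b)+(\alphastar^2+1)\Phi(-\alphastar\mp b)$ and applies the Mills-ratio identity~\eqref{eq:auxillary-moment-3} directly at the shifted arguments $\alphastar\pm b$, the shift being harmless because $b\alphastar\to 0$. You instead Taylor-expand in $b$ and exhibit the exact cancellation of the $b^2$ coefficient, obtaining $F_{22}-\delta\nabla_\delta F_{22}=F_{23}+O(b^4)$. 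Both routes land on the same leading order $-1/(\alphastar\phi(\alphastar))$. Your approach costs a bit more algebra here but buys something: it is precisely the computation the paper itself carries out later in Section~\ref{sec:second-order-limit} when establishing the second-order expansion~\eqref{eq:numerator}, where the $b^4$ coefficient is found to be $\tfrac12\alphastar\phi(\alphastar)$ (so your $O(\alphastar^5\phi(\alphastar))$ bound is conservative but does no harm). One small slip: in sketching $\nabla_\alpha F_2$ you quote the asymptotic for $F_{23}$ rather than for $\nabla_\alpha F_{23}=-4[\phi(\alphastar)-\alphastar\Phi(-\alphastar)]$; the relevant relation is~\eqref{eq:auxillary-moment-2}, though your conclusion $-2(1-\epsilon)$ is still correct.
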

First note that relation~\eqref{eq:nu-zero-limit} in Lemma~\ref{lem:nu-prime} directly comes from Lemma~\ref{lem:zero-order-limit}; Lemma~\ref{lem:one-order-limit} combined with a little algebra leads to relation \eqref{eq:denominator}. 
Therefore, to prove Lemma~\ref{lem:nu-prime}, it is only left for us to establish the limiting order of \eqref{eq:numerator}, which shall be done as follows.

\paragraph*{Proof of relation~\eqref{eq:numerator}.}
By virtue of Lemma~\ref{lem:one-order-limit}, one immediately notices that the leading terms in $ \nabla_{\delta}F_2\nabla_{\alpha}F_1$ and $\nabla_{\alpha}F_2\nabla_{\delta}F_1$ cancel out with each other. 
Therefore, to characterize the limiting order of $ \nabla_{\delta}F_2\nabla_{\alpha}F_1 - \nabla_{\alpha}F_2\nabla_{\delta}F_1$, it requires us to investigate the second-order terms.
Throughout this part, all the partial derivatives of $F_1$ and $F_2$ are calculated at the point $(\nustar, \delta, \alphastar)$ unless otherwise noted. 

\subsubsection*{Second-order terms of $\nabla_{\a}F_1$ and $\nabla_{\delta} F_1$.}
Let us consider $\nabla_{\a}F_1$ and $\nabla_{\delta} F_1$. 
We first make the observation that 
\begin{align}
\label{eqn:F1-alpha-second}
   \notag \frac{\nabla_{\a}F_1+2\phi(\alphastar)}{\phi(\alphastar)} &= -\epsilon\left[e^{-\delta\nustar^2/2}\left( e^{\alphastar\bstar} + e^{-\alphastar\bstar}\right)-2  \right] \\
    &\stackrel{(\mathrm{i})}{\sim} -\epsilon \alphastar^2 \delta\nustar^2 
    \stackrel{(\mathrm{ii})}{\sim} -2\epsilon\nu_0^2\alphastar\phi(\alphastar),
    \end{align}
where $(\mathrm{ii})$ follows as a direct consequence of \eqref{eq:limit-beta}.
Regarding the relation $(\mathrm{i})$, by virtue of the Taylor expansion, we obtain
\begin{align*}
e^{\alphastar\bstar} + e^{-\alphastar\bstar} = 2 + \delta \nustar^2  \alphastar^2 + o(\delta \nustar^2  \alphastar^2), \quad e^{-\delta\nustar^2/2} = 1-\delta\nustar^2/2 + o(\delta\nustar^2).
\end{align*}
As already shown in Lemma~\ref{lem:zero-order-limit}, $\alphastar \goto \infty$, and therefore, the $\delta\nustar^2/2$ term is of smaller order compared with $\delta \nustar^2  \alphastar^2$, which in turn justifies step $(\mathrm{i})$. 
Here, recall that these limits are taken with respect to $\delta\goto 0^+$.

Moreover, direct computation gives 
\begin{align}
\label{eqn:scherzo}
\frac{\phi(\alphastar-\sqrt{\delta}\nustar) - \phi(\alphastar+\sqrt{\delta}\nustar)}{\sqrt{\delta}\alphastar\nustar\phi(\alphastar)} = \exp\left\lbrace-\delta\nustar^2/2 \right\rbrace \frac{\exp\{\sqrt{\delta}\alphastar \nustar\}  - \exp\{-\sqrt{\delta}\alphastar \nustar\}}{\sqrt{\delta}\alphastar\nustar} \sim 2, \quad \delta \goto 0^+,
\end{align}
where the the last relation is given by L'Hôpital's rule, combined with the facts $\sqrt{\delta}\alphastar \goto 0$ and $\nustar \goto \nu_0$ as $\delta \goto 0^+$. 
Taking this collective with Lemma~\ref{lem:zero-order-limit},  
one arrives at
\begin{align}
\label{eqn:F1-delta-second}
    \nabla_{\delta}F_1 + 1 = \epsilon \frac{\nustar}{2\sqrt{\delta}} \left[ \phi(\alphastar-\sqrt{\delta}\nustar) - \phi(\alphastar+\sqrt{\delta}\nustar)\right] \sim \epsilon \nu_0^2\alphastar\phi(\alphastar).
\end{align}

\subsubsection*{Second-order term of $\nabla_{\delta}F_2$.}
Moving on to the quantity $\nabla_{\delta} F_2$, we can rearrange terms 
to derive the following decomposition 
    \begin{equation}\label{eq:partial-delta-f2}
    \frac{\nabla_{\delta}F_2 + (\alphastar\phi(\alphastar))^{-1}}{(\alphastar\phi(\alphastar))^{-1}}  
    = 
    \underbrace{\frac{\nabla_{\delta}F_2 + \delta^{-2}F_{23}}{(\alphastar\phi(\alphastar))^{-1}}}_{=: \Delta_1} - \underbrace{\frac{\delta^{-2}F_{23} - (\alphastar\phi(\alphastar))^{-1}}{(\alphastar\phi(\alphastar))^{-1}}}_{=: \Delta_2}.
    \end{equation}
With this decomposition in mind, we proceed to control the two terms above separately.
\begin{itemize}
\item {\bf Step 1: Bounding the term $\Delta_{1}$.} 
Armed with the expressions of $\nabla_{\delta}F_2$ and $F_{23}$ in closed-form, 
one can re-arrange terms and obtain   
\begin{align}
\label{eq:taylor-expansion}
\nabla_{\delta}F_2 + \delta^{-2}F_{23} = -\epsilon\delta^{-2}\left[ f(s) + f(-s) - 2f(0)\right] ,
\end{align}
where $f(s) \defn -(\alphastar+s)\phi(\alphastar-s) + (\alphastar^2+1)\Phi(-\alphastar+s)$, for $s = \bstar$.
To facilitate analysis of the expression~\eqref{eq:taylor-expansion}, we make note of the following two facts:
\begin{itemize}
	\item For every $k\geq 1$, the rescaled derivative $f^{(k)}(s) /\phi(\alphastar-s)$ is a polynomial of $s$ and $\alphastar$; 
	\item Since $\alphastar \goto \infty$ and $\nustar \goto \nu_0$ as $\delta \goto 0^+$, $s = \bstar$ is therefore negligible compared to any polynomial of $\alphastar$. 
\end{itemize}

Leveraging the aforementioned results, one can see that in the Taylor expansion of $f$ around $s=0$, the non-zero term with the lowest order of $s$ is the dominant term. 
By further calculating $f^{(2)}(0) = 0$ and $f^{(4)}(0) = 6\alphastar\phi(\alphastar)$, we see that
\begin{align*}
\nabla_{\delta}F_2 + \delta^{-2}F_{23} \sim -\epsilon\delta^{-2} \frac{2}{4!}6\alphastar\phi(\alphastar) (\bstar)^4 \sim -\frac{\epsilon\nu_0^4}{2} \alphastar\phi(\alphastar).
\end{align*}

\item {\bf Step 2: Bounding the term $\Delta_{2}$.}
Recalling our definition for function $F_{23}$ (cf.~\eqref{eq:f2}), we can express $\Delta_{2}$ as follows 
\begin{align}\label{eq:decomp-delta-2}
    	\frac{\delta^{-2}F_{23} -  (\alphastar\phi(\alphastar))^{-1}}{(\alphastar\phi(\alphastar))^{-1}} =(1+R_1)(1+R_2) - 1, 
\end{align}
where 
\begin{align*}
R_1 \defn \frac{\alphastar^3\left[-\alphastar\phi(\alphastar)+(\alphastar^2+1)\Phi(-\alphastar) \right] }{2\phi(\alphastar)} -1 ;\quad 
R_2 \defn \left(\frac{2\phi(\alphastar)}{\alphastar\delta} \right)^2 - 1.
\end{align*}

Let us consider each term separately. 
Firstly, directly invoking expression~\eqref{eq:auxillary-moment-3} from Lemma~\ref{lem:formula} suggests $R_1 = o(1)$, as $\delta \goto 0^+$. To further study the limiting order of $R_1$, we obtain 
\begin{align*}
\lim_{\alphastar \goto \infty}\alphastar^2 R_1 & =  \lim_{\alphastar \goto \infty} \frac{\alphastar^5\left[-\alphastar\phi(\alphastar)+(\alphastar^2+1)\Phi(-\alphastar) \right] - 2 \alphastar^2 \phi(\alphastar)}{2\phi(\alphastar)} \\
\small{\text{(L'Hôpital's rule)}} & =  \lim_{\alphastar \goto \infty} \frac{(2 \alphastar^2- 7\alphastar^4)\phi(\alphastar) + (7\alphastar^5+ 5 \alphastar^3)\Phi(-\alphastar)  }{-2 \phi(\alphastar)} + 2\\
& =  \frac{7}{2} \lim_{\alphastar \goto \infty} \frac{\alphastar^3\left[ \alphastar\phi(\alphastar) - (\alphastar^2+  1)\Phi(-\alphastar) \right] }{\phi(\alphastar)} - \lim_{\alphastar \goto \infty} \frac{\alphastar^2 \left[ \phi(\alphastar) - \alphastar\Phi(-\alphastar)\right] }{ \phi(\alphastar)}+ 2 \\
& =  -6,
\end{align*}
where the last step uses relations \eqref{eq:auxillary-moment-2}  and  \eqref{eq:auxillary-moment-3} from Lemma~\ref{lem:formula}. 
This establishes the limiting order $R_1 \sim -6\alphastar^{-2}$ as $\delta \goto 0^+$.

Turning to the term $R_2$, one can easily conclude from Lemma~\ref{lem:zero-order-limit} that $R_2=o(1)$ as $\delta\goto 0^+$. To further pin down the limiting order of $R_2$, we recall that 
\begin{align*}
\epsilon \left[ \Phi(-\alphastar+\sqrt{\delta}\nustar) + \Phi(-\alphastar-\sqrt{\delta}\nustar)\right] + 2(1-\epsilon) \Phi(-\alphastar)-\delta = 0
\end{align*}
as $F_1(\nustar, \delta, \alphastar)  = 0$. 
These allow us to decompose 
\begin{align}
\label{eqn:rondo}
\alphastar^2 \left[ \frac{\delta\alphastar}{2\phi(\alphastar)} - 1 \right] 
= \frac{\epsilon \alphastar^3 [\Phi(-\alphastar + \bstar) + \Phi(-\alphastar- \bstar) - 2 \Phi(-\alphastar) ]}{2\phi(\alphastar)}  
+ \frac{\alphastar^2 \left[\alphastar\Phi(-\alphastar)- \phi(\alphastar)\right]}{\phi(\alphastar)}. 
\end{align}
By virtue of the Taylor expansion, we can use similar reasoning as for the expression~\eqref{eq:taylor-expansion} to arrive at 
\begin{align*}
\Phi(-\alphastar + \bstar) + \Phi(-\alphastar- \bstar) - 2 \Phi(-\alphastar)
\sim  \alphastar \delta \nu_0^2\phi(\alphastar),
\end{align*}
as $\delta \goto 0^+$. 
Taking this together with the fact that $\delta\alphastar\sim2\phi(\alphastar)$ (cf.~\eqref{eq:limit-beta}) reveals that:
the first term in the decomposition~\eqref{eqn:rondo} scales as $\epsilon\nu_0^2\alphastar^2\phi(\alphastar)$.
In addition, from the equation~\eqref{eq:auxillary-moment-2}, 
the second term in the decomposition~\eqref{eqn:rondo} scales as $-1$ --- which is therefore the dominant term as $\alphastar \goto \infty. $ 
We can therefore conclude that 
\begin{align*}
\alphastar^2 \left[ \frac{\delta\alphastar}{2\phi(\alphastar)} - 1 \right] \sim -1 \quad \Longrightarrow \quad \frac{2\phi(\alphastar)}{\alphastar\delta} = 1 + \alphastar^{-2} + o(\alphastar^{-2}).
\end{align*}
As a consequence, we obtain $R_2\sim 2\alphastar^{-2}$. 

Substituting the limit scalings of $R_1$ and $R_2$ into the decomposition~\eqref{eq:decomp-delta-2} yields
\begin{align}
\label{eqn:mozart}
	\frac{\delta^{-2}F_{23} -  (\alphastar\phi(\alphastar))^{-1}}{(\alphastar\phi(\alphastar))^{-1}}
	\sim -4 \alphastar^{-2}.
\end{align}

\end{itemize}
Consequently, the above results on $\Delta_{1}$ and $\Delta_{2}$ taken collectively with the expression \eqref{eq:partial-delta-f2} lead to
\begin{align}
\label{eqn:F2-delta-second}
	\nabla_{\delta}F_2 = -(\alphastar\phi(\alphastar))^{-1}\left[ 1- 4\alphastar^{-2} + o(\alphastar^{-2})\right].
\end{align}

\subsubsection*{Second-order term of $\nabla_{\a}F_2$.}
We are only left to establish the order of $\nabla_{\a}F_2$. 
To this end, re-arranging  terms in the expression of $\nabla_{\a}F_2$ leads to
	\begin{align*}
		\frac{\nabla_{\a}F_2+2\a^{-1}}{\a^{-1}} 
		= \underbrace{\frac{\nabla_{\a}F_2 -\delta^{-1} \nabla_{\a}F_{23} }{\a^{-1}}}_{=:T_1} + 
		\underbrace{\frac{\delta^{-1} \nabla_{\a}F_{23}  + 2\a^{-1}}{\a^{-1}}}_{=: T_2}.
	\end{align*}
Therefore, it suffices to analyze the limiting order of the two terms $T_1$ and $T_2$ separately, which shall be done as follows. 
\begin{itemize}
\item {\bf Step 1: Bounding the term $T_1$.}
    We characterize the leading term of $T_1$ by examining its Taylor expansion with similar argument as in \eqref{eq:taylor-expansion}. Specifically, setting $s\defn \bstar$, we have
    \begin{align*}
     	\nabla_{\a}F_2 -\delta^{-1} \nabla_{\a}F_{23} = \epsilon\delta^{-1}[f_2(s) + f_2(-s)-2f_2(0)],
     \end{align*}
    where $f_2(s) := -2\phi(\alphastar+s)+2\alphastar\Phi(-\alphastar-s).$
    It is straightforward to verify that $f_2^{(2)}(0) = 2\phi(\alphastar)$, 
    and it follows that
    \begin{align*}
    \nabla_{\a}F_2 -\delta^{-1} \nabla_{\a}F_{23} \sim \epsilon\delta^{-1} 2\phi(\alphastar) (\bstar)^2 \sim 2\epsilon \nu_0^2 \phi(\alphastar).
    \end{align*}

\item {\bf Step 2: Bounding the term $T_2$.}
    To calculate the limiting order of $T_2$, we establish the decomposition
    \begin{align*}
    \frac{\delta^{-1} \nabla_{\a}F_{23} + 2\alphastar^{-1}}{\alphastar^{-1}}  = 2 \frac{2\phi(\alphastar)}{\delta\alphastar} \left[ -\frac{\alphastar\left[\alphastar\phi(\alphastar) -(\alphastar^2+1)\Phi(-\alphastar) \right] }{\phi(\alphastar)} + \frac{\phi(\alphastar)-\alphastar\Phi(-\alphastar)}{\phi(\alphastar)}\right] .
    \end{align*}
    As direct consequences of the expressions~\eqref{eq:auxillary-moment-2},  \eqref{eq:auxillary-moment-3} and \eqref{eq:limit-beta}, one can easily see that
    \begin{align*}
    \frac{\alphastar\left[\alphastar\phi(\alphastar) -(\alphastar^2+1)\Phi(-\alphastar) \right] }{\phi(\alphastar)} \sim -2 \alphastar^{-2}; \quad \frac{\phi(\alphastar)-\alphastar\Phi(-\alphastar)}{\phi(\alphastar)}\sim \alphastar^{-2}; \quad 
    \frac{2\phi(\alphastar)}{\delta\alphastar} \sim 1,
    \end{align*}
    as $\delta \goto 0^+$. Plugging the above relations into the decomposition of $T_2$ yields $T_2 \sim 6\alphastar^{-2}$.
 \end{itemize}  
In view of the results above on $T_1$ and $T_2$, we can conclude that
\begin{align}
\label{eqn:F2-alpha-second}
\frac{\nabla_{\a}F_{2} + 2\alphastar^{-1}}{\alphastar^{-1}} \sim 6\alphastar^{-2}.
\end{align}

\paragraph{Putting all this together.}
Thus far, we have established the limiting order of $\nabla_{\alpha}F_1$, $\nabla_{\delta}F_1$, $\nabla_{\delta}F_2$ and $\nabla_{\alpha}F_2$. Combining all the above pieces together, it is easy to justify that
\begin{gather*}
\nabla_{\delta}F_2\nabla_{\alpha}F_1  = 2\alphastar^{-1}\left[ 1- 4\alphastar^{-2} + o(\alphastar^{-2})\right];\\
\nabla_{\alpha}F_2\nabla_{\delta}F_1 = 2\alphastar^{-1}\left[ 1- 3\alphastar^{-2} + o(\alphastar^{-2})\right].
\end{gather*}
It immediately suggests that $\nabla_{\delta}F_2\nabla_{\alpha}F_1 - \nabla_{\alpha}F_2\nabla_{\delta}F_1 = - 2\alphastar^{-3} + o(\alphastar^{-3})
$, which proves the relation~\eqref{eq:numerator},
thus completing the proof of Lemma~\ref{lem:nu-prime}.

%%%%%%%%%%%%%%%%%%%%%%%%%%%%%%%%%%%%%%%%

\subsection{Limiting orders of $\alphastar$ and $\nustar$ when $\delta \goto 0^+$: proof of Lemma~\ref{lem:zero-order-limit}}
\label{sec:zero-order-limit}

In this section, we present the proofs of the four relations in Lemma~\ref{lem:zero-order-limit} in the sequel.

\paragraph{First claim in \eqref{eq:limit-beta}.}
Recall that $F_1$ is defined as 
\begin{align*}
	F_1(\nu, \delta, \alpha) & \defn \epsilon\mathbb{P}\left( \big|\nu \sqrt{\delta}+ Z\big|> \alpha \right) + (1-\epsilon) \mathbb{P}\left( \left| Z\right|> \alpha \right)-\delta.
\end{align*}
Since the first two terms are non-negative, when $\delta\goto 0^+$, setting $F_1(\nustar, \delta, \alphastar) = 0$ leads to  
    \begin{align*}
	\mathbb{P}\left( \big|\bstar+ Z\big|> \alphastar \right) \goto 0; \quad \mathbb{P}\left( \left| Z\right|> \alphastar \right) \goto 0.
	\end{align*}
	From the second expression, it can be immediately concluded that $\lim_{\delta \goto 0^+} \alphastar = \infty$.
	
	\paragraph{Second claim in \eqref{eq:limit-beta}.}
	In order to study the limiting behavior of $\sqrt{\delta}\alphastar$, we first make the observations that $0 < \nustar \leq M$ as $M/\nustar = \taustar \geq 1$ (see \eqref{eq:fix-1}) and $\Phi$ is a monotonically increasing function such that
	\begin{align*}
	 	\Phi(-\alphastar-\bstar) < \Phi(-\alphastar) < \Phi(-\alphastar+\bstar).
	 \end{align*}
	Consequently, $F_1(\nustar,\delta,\alphastar)$ can be upper bounded as 
	\begin{align*}
	0 = F_1(\nustar,\delta,\alphastar) 
	&= \epsilon \left[ \Phi(-\alphastar+\sqrt{\delta}\nustar) + \Phi(-\alphastar-\sqrt{\delta}\nustar)\right] + 2(1-\epsilon) \Phi(-\alphastar)-\delta\\
	&\leq 2 \Phi(-\alphastar+\bstar) - \delta,
	\end{align*}
	which in turn suggests
	\begin{align*}
		\Phi(-\alphastar+\sqrt{\delta}\nustar) \geq \delta/2. 
	\end{align*}
	Apply Lemma~\ref{lem:formula} with a little algebra to yield 
	\begin{align*}
	2 \geq \frac{\delta}{\Phi(-\alphastar+\bstar)} \overset{(\mathrm{i})}{\sim} \frac{(\alphastar-\bstar) \delta}{\phi(-\alphastar+\bstar)}
	\overset{(\mathrm{ii})}{\sim} 
	\frac{1}{\sqrt{2\pi}}\alphastar \delta e^{\alphastar^2/2},
	\end{align*}
	where $(\mathrm{i})$ is a direct consequence of the expression~\eqref{eq:auxillary-moment-1}, and $(\mathrm{ii})$ follows from the observations that $\alphastar\goto\infty$ and $\bstar \leq M\sqrt{\delta} \goto 0$ as $\delta \goto 0^+$. 
	It therefore reveals that $\alphastar^2\delta = o(1)$, due to the fact that $\alphastar\goto\infty$. 
	Thus, we complete the proof of the second claim.
	
	\paragraph{Third claim in \eqref{eq:limit-beta}.}
	With the limiting values of $\alphastar$ and $\alphastar\sqrt{\delta}$ in place, we are now ready to characterize the limiting orders of $\Phi(-\alphastar - \bstar)$ and $\Phi(-\alphastar + \bstar)$. 
	To this end, we first recognize the following relation
	\begin{align}
	\label{eqn:sandwich-phi}
	\Phi(-\alphastar - \bstar) \sim \Phi(-\alphastar + \bstar) \sim \Phi(-\alphastar)  \sim \frac{\phi(\alphastar)}{\alphastar},
	\end{align}
	when $\delta \to 0^{+}$, 
	followed by the property $\alphastar\bstar \leq M \alphastar\sqrt{\delta} = o(1)$. 
	It immediately follows that
	\begin{align*}
	\lim_{\delta\goto 0^+} \frac{\delta\alphastar}{\phi(\alphastar)} 
	&= \lim_{\delta\goto 0^+} \frac{\left[ \epsilon \Phi(-\alphastar + \bstar) + \epsilon \Phi(-\alphastar - \bstar)+ 2(1-\epsilon) \Phi(-\alphastar)\right] \alphastar}{\phi(\alphastar)} \\
	&= \epsilon +\epsilon + 2(1-\epsilon) = 2,
	\end{align*}
	where the first equality comes from property $F_1(\nustar, \delta, \alphastar) = 0$. 
	We thus finish the proof of the third claim.
	
	\paragraph{Fourth claim in \eqref{eq:limit-beta}.}
	In order to understand the limit of $\nustar$ as $\delta\goto 0^+$, 
	we resort to the decomposition~\eqref{eqn:f2-decomposition} of $F_2$,  
	where
	\begin{align}
	\label{eqn:tmp-f2}
	 	F_2 = \frac{\nu^2}{M^2} - 1 + \epsilon \delta^{-1} F_{22} + (1-\epsilon)\delta^{-1} F_{23}.
	 \end{align}
	We claim that it satisfies 
	\begin{align}
	\label{eqn:nustar-limit-comp}
	0 = F_2(\nustar, \delta, \alphastar) = \frac{\nustar^2}{M^2} - 1 + \epsilon \nustar^2 + o(1), \quad \text{as } ~\delta \goto 0^+.
	\end{align}
	Taking the above result as given for the moment, one can conclude that $\lim_{\delta\goto 0^+} \nustar = \nu_0$ as desired, where $\nu_0$ is defined as $\nu_0 \defn M/\tau_0$ and  $\tau_0^2 = 1 + \epsilon M^2$ (cf.~the expression~\eqref{eq:zero-limit}).

	Now it remains to establish the crucial relation~\eqref{eqn:nustar-limit-comp}. 
	To this end, the idea is to characterize the limiting order of each term in the expression~\eqref{eqn:tmp-f2} in terms of $\nustar$, as $\delta \goto 0^+$.
	Let us start with the quantity $\delta^{-1}F_{23}$.  
	Firstly, invoking the equation~\eqref{eq:auxillary-moment-3} from Lemma~\ref{lem:formula}, we know that $F_{23}\sim 4\alphastar^{-3}\phi(\alphastar)$.   
	As a result, one can write 
	\begin{align*}
		\delta^{-1}F_{23}\sim 4\delta^{-1} \alphastar^{-3}\phi(\alphastar) \sim 2\alphastar^{-2},
	\end{align*}
	where the last relation uses the third claim in \eqref{eq:limit-beta} that we just proved, namely, 
	$\lim_{\delta \goto 0^+} \frac{\delta \alphastar}{\phi(\alphastar)} = 2$.
	Since $\alphastar\goto +\infty$ as $\delta \goto 0^+$, we can ensure that 
	$\delta^{-1}F_{23} = o(1)$ as $\delta\goto 0^+$.

	When it comes to the terms in $\delta^{-1}F_{22}$, we find it useful to  define the following function
	\begin{equation}\label{eq:defn-g}
	g(x) \defn \phi(x) - x\Phi(-x). 
	\end{equation}
	One shall then conclude from the expression~\eqref{eq:auxillary-moment-2} that $g(x)\sim x^{-2}\phi(x)$ when $x\goto \infty$. 
	With this piece of notation, we can rewrite $F_{22}$ as 
	\begin{align*}
	F_{22}(\nu, \delta, \alpha) = -(\a-\b)g(\a+\b) - (\a+\b)g(\a-\b) + \left[ \Phi(-\a-\b) + \Phi(-\a+\b) \right] + \delta \nu^2.
	\end{align*}
	Let us examine each term on the right-hand side above respectively. 
	For the terms involving $g$, again applying 
	$\lim_{\delta \goto 0^+} \frac{\delta \alphastar}{\phi(\alphastar)} = 2$ gives 
	\begin{align*}
	\frac{(\alphastar-\bstar)g(\alphastar+\bstar)}{\delta} \sim \frac{\alphastar^{-1}\phi(\alphastar)}{\delta} \sim \frac{1}{2}; \quad \frac{(\alphastar+\bstar)g(\alphastar-\bstar)}{\delta} \sim \frac{\alphastar^{-1}\phi(\alphastar)}{\delta} \sim \frac{1}{2}.
	\end{align*}
	In addition, for the terms involving the function $\Phi$, by the sandwich relation \eqref{eqn:sandwich-phi}, we arrive at 
	\begin{align*}
		\Phi(-\a-\b) + \Phi(-\a+\b) \sim \delta.
	\end{align*}
	Plugging the above relations into $F_{22}$ leads to $\delta^{-1}F_{22}(\nustar, \delta, \alphastar) = o(1) + \nustar^2$, as $\delta \goto 0^+$.
	
	Combining the conclusions about $\delta^{-1} F_{22}$ and $\delta^{-1} F_{23}$, we successfully establish the equation~\eqref{eqn:nustar-limit-comp}, thus finishing the proof of the fourth claim.

%%%%%%%%%%%%%%%%%%%%%%%%%%%%%%%%%%%%%%%%

\subsection{Limiting orders of the partial derivatives: proof of Lemma~\ref{lem:one-order-limit}}\label{sec:one-order-limit}

We are now positioned to study the limiting orders of the partial derivatives of $F_1$ and $F_2$ stated in Section~\ref{sec:expression-of-derivative}. 
This will be accomplished by taking advantage of Lemma~\ref{lem:zero-order-limit}.

	\paragraph{Properties concerning $F_1$ in \eqref{eq:limit-f1}.}
	Recognizing the fact that $\sqrt{\delta}\alphastar \goto 0$ and $\nustar \goto \nu_0$ as $\delta \goto 0^+$ (as in Lemma~\ref{lem:zero-order-limit}), we obtain that $\phi(\alphastar-\sqrt{\delta}\nustar) \sim\phi(\alphastar+\sqrt{\delta}\nustar) \sim \phi(\alphastar)$. 
	Combining this with  the
	explicit expressions of derivatives of $F_{1}$ (cf.~\eqref{eq:f1-derivative}),
	we immediately obtain $\nabla_{\alpha} F_1 \sim -2\phi(\alphastar)$.  

	Further, taking Lemma~\ref{lem:zero-order-limit}
	collectively with the relation~\eqref{eqn:scherzo} where 
	\begin{align*}
	\frac{\phi(\alphastar-\sqrt{\delta}\nustar) - \phi(\alphastar+\sqrt{\delta}\nustar)}{\sqrt{\delta}\alphastar\phi(\alphastar)\nustar} \sim 2, \quad \delta \goto 0^+,
	\end{align*}
	we can directly see that 
	\begin{gather*}
	\nabla_{\nu} F_1  \sim 2\epsilon \nustar \delta \alphastar \phi(\alphastar) \sim 4\epsilon \nu_0\phi^2(\alphastar) \\ 
	\frac{\epsilon \nustar}{2\sqrt{\delta}} \left[ \phi(\alphastar-\sqrt{\delta}\nustar) - \phi(\alphastar+\sqrt{\delta}\nustar)\right] \sim \epsilon \nu_0^2\alphastar\phi(\alphastar) = o(1) 
	\end{gather*}
	as $\delta \goto 0^+$. From the second relation, one can conclude that $\nabla_{\delta} F_1\sim -1$, and thus complete the proof of expression~\eqref{eq:limit-f1}.
	
	\paragraph{Properties concerning $F_2$ in \eqref{eq:limit-f2}.} 
	Let us turn to the analysis of the partial derivatives related to $F_2$. In what follows, we shall check the limiting orders of $\nabla_{\a} F_2$, $\nabla_{\delta} F_2$ and $\nabla_{\nu} F_2$ respectively, with the assistance of Lemma~\ref{lem:formula} and Lemma~\ref{lem:zero-order-limit}.

	\begin{itemize}
    \item \textbf{Limiting order of $\nabla_{\a} F_2$.} 
    It is useful to recall the decomposition as in expression~\eqref{eqn:sonata}, where  
    \begin{align*}
 	\nabla_{\alpha} F_{2} = \epsilon \delta^{-1}\nabla_{\alpha} F_{22} + (1-\epsilon)\delta^{-1} \nabla_{\alpha} F_{23}.
 	\end{align*} 
 	Now we are only left to analyze each term on the right-hand side of the above relation separately. 
	Firstly, the relation~\eqref{eq:auxillary-moment-2} directly yields $\nabla_{\alpha}F_{23} \sim -4\alphastar^{-2}\phi(\alphastar)$; further recognizing that $\delta\sim 2\alphastar^{-1}\phi(\alphastar)$ (cf.~\eqref{eq:auxillary-moment-1}), we have $\delta^{-1}\nabla_{\alpha}F_{23}\sim -2\alphastar^{-1}$. 
	
	To analyze the quantity $\delta^{-1}\nabla_{\alpha}F_{22}$, we again invoke the definition of function $g$ in \eqref{eq:defn-g} to obtain the decomposition
	\begin{align*} 
	\nabla_{\alpha} F_{22}(\nu, \delta, \a) = -2 g(\a+\b) - 2g(\a-\b) + 2\b\left[ \Phi(-\a-\b)- \Phi(-\a+\b)\right] .
	\end{align*}
	Taking this together with the facts that $g(x)\sim x^{-2}\phi(x)$ when $x\goto \infty$, 
	and $\phi(\alphastar+\bstar)\sim \phi(\alphastar-\bstar) \sim \phi(\alphastar)$, leads to 
	\begin{align*}
		g(\alphastar+\bstar)\sim g(\alphastar-\bstar) \sim \alphastar^{-2}\phi(\alphastar)
		\qquad
		\text{ as }~\delta \goto 0^+.
	\end{align*} 
	Finally, we claim that the terms involving $\Phi$ are negligible. 
	This can be shown by combining the results in Lemma~\ref{lem:formula} with the equality \eqref{eqn:scherzo}. 
	Specifically, one has  
	\begin{align*} 	
	2\bstar \left[\Phi(-\alphastar+\bstar) - \Phi(-\alphastar-\bstar)\right]\sim 4 \phi(\alphastar) = o(1),
	\end{align*}
	where the last step uses $\phi(\alphastar) = o(1)$ as $\delta \goto 0^+$.
	Putting these pieces together gives  
	\begin{align*} 
	\delta^{-1} \nabla_{\alpha} F_{22}\sim -4 \delta^{-1}\alphastar^{-2}\phi(\alphastar) \sim -2\alphastar^{-1}.
	\end{align*}
	As a result, one immediately realizes that $\nabla_{\alpha}F_{2} \sim -2\alphastar^{-1}$.

    \item \textbf{Limiting order of $\nabla_{\delta} F_2$.} 
    First recall that $\nabla_{\delta} F_2 = -\epsilon \delta^{-2}\left[ F_{22} - \delta\nabla_{\delta} F_{22}  \right]  -(1-\epsilon)\delta^{-2} F_{23}.$
	As a direct consequence of the relation \eqref{eq:auxillary-moment-3}, 
	one has $F_{23}\sim 4\alphastar^{-3}\phi(\alphastar)$. 
    When it comes to the first term, re-arranging terms in the expression of $F_{22}$ leads to 
	\begin{align*} 
    F_{22} - \delta\nabla_{\delta} F_{22} = & -(\alphastar-\bstar)\phi(\alphastar+\bstar) - (\alphastar+\bstar)\phi(\alphastar-\bstar) \\
    & + (\alphastar^2+1)\left[ \Phi(-\alphastar-\bstar) + \Phi(-\alphastar+\bstar) \right].
    \end{align*}
	Applying \eqref{eq:auxillary-moment-3} again at $\alphastar+\bstar$ and $\alphastar-\bstar$ gives  
    \begin{gather*}
    (\alphastar-\bstar)\phi(\alphastar+\bstar) + (\alphastar^2+1)\Phi(-\alphastar-\bstar) \sim -2 \alphastar^{-3}\phi(\alphastar),\\
    (\alphastar+\bstar)\phi(\alphastar-\bstar) + (\alphastar^2+1)\Phi(-\alphastar+\bstar) \sim -2 \alphastar^{-3}\phi(\alphastar), 
    \end{gather*}
    which together with  $\delta\sim 2\alphastar^{-1}\phi(\alphastar)$ (cf.~\eqref{eq:auxillary-moment-1}) directly validate $\lim_{\delta\goto 0^+} \alphastar\phi(\alphastar)\nabla_{\delta} F_2= -1.$

    \item \textbf{Limiting order of $\nabla_{\nu} F_2$.} 
    It is helpful to recall the expression $\nabla_{\nu} F_2 = 2M^{-2}\nustar+ \epsilon\delta^{-1} \nabla_{\nu} F_{22}$. With Lemma~\ref{lem:zero-order-limit} in place, one has  
	\begin{align*} 
    \delta^{-1} \nabla_{\nu} F_{22} = 2\nustar \left[ \Phi(\alphastar-\bstar)-\Phi(-\alphastar-\bstar)\right] \sim 2\nu_0.
    \end{align*}
    As a result,  we have $\nabla_{\nu} F_2\sim  2(M^{-2}+ \epsilon)\nu_0 = 2/\nu_0$, where the last equality follows from the definition of $\nu_0$ as $\nu_0 \defn M/\tau_0$ with 
     $\tau_0^2 = 1 + \epsilon M^{2}$ according to the expression~\eqref{eq:zero-limit}.
    \end{itemize}

Thus, we complete the proof of Lemma~\ref{lem:one-order-limit}.
%%%%%%%%%%%%%%%%%%%%%%%%%%%%%%%%%%%%%%%%

\subsection{Proof of Lemma~\ref{lem:epsilon}}
\label{sec:pf-lemma-epsilon}
		
We divide this proof into two parts. 
In the first part, we characterize the limiting values of $\alphastar$ and $\nustar/M$ as $\epsilon \goto 0$ to establish \eqref{eq:epsilon-limit-2}; 
in the second part, we proceed by  calculating the limiting orders of each quantity in the expression \eqref{eq:derivative-nu} for $\nustar'(\delta_0)/M$.

\paragraph{Step 1: limit of $\alphastar$ and $\nustar/M$ as $\epsilon \goto 0$.}
\begin{itemize}
\item To establish the first statement of the expression~\eqref{eq:epsilon-limit-2}, we make the observation that: having $F_1(\nustar, \delta_0, \alphastar)=0$ yields 
\begin{align*}
\epsilon\left[ \Phi(-\alphastar+\sqrt{\delta_0}\nustar) + \Phi(-\alphastar-\sqrt{\delta_0}\nustar)-2\Phi(-\alphastar)\right] = \delta_0 -2\Phi(-\alphastar),
\end{align*}
by recalling the expression of $F_1$ in \eqref{eq:f1}. 
It then immediately follows that
\begin{align*}
\left|  \delta_0 -2\Phi(-\alphastar)\right| \leq 4\epsilon.
\end{align*}

When $\epsilon \goto 0$, we know that $2\Phi(-\alphastar) \goto \delta_0$, or equivalently, $\alphastar \goto -\Phi^{-1}(\delta_0/2)=:\a_0$.

\item In the hope of proving the second statement of the expression~\eqref{eq:epsilon-limit-2}, we find it helpful to recall the decomposition $F_2 = F_{21} + \epsilon \delta^{-1} F_{22} + (1-\epsilon)\delta^{-1} F_{23}.$
From $F_2(\nustar, \delta_0,\alphastar) = 0$, the following relation holds true
\begin{align}
\label{eq:nu-limit-epsilon}
\left| \frac{\nustar^2}{M^2}-1 + \delta_0^{-1} F_{23} \right| \leq \epsilon \delta_0^{-1} \left| F_{22} - F_{23}\right| .
\end{align}
Below we shall demonstrate the fact that $|F_{22} - F_{23}| $ is upper bounded by some constant that only depends on $\delta_0$, which means when $\epsilon \goto 0$, the right-hand side of the inequality~\eqref{eq:nu-limit-epsilon} vanishes to zero. In other words, one can conclude 
\begin{align*}
\frac{\nustar}{M} \goto \sqrt{1 -\delta_0^{-1}F_{23}} \goto \sqrt{1 - 2\delta_0^{-1}[-\a_0\phi(\a_0) + (\a_0^2+1)\Phi(-\a_0)]},
\end{align*}
where the last step follows since
$F_{23}(\nustar, \delta,\alphastar) = 2[ -\alphastar\phi(\alphastar) + (\alphastar^2+1)\Phi(-\alphastar)].$

Therefore, it boils down to controlling $|F_{22} - F_{23}|$. 
From now on, let us consider the scenario when $\alphastar < 2\alpha_0$. 
Recognizing that $\alphastar \goto \alpha_0 > 0$ as $\epsilon \goto 0$,
one sees that $\alphastar < 2\alpha_0$ holds as long as $\epsilon$ is sufficiently small. By virtual of the expression~\eqref{eq:f2}, we know that $F_{22}(0, \delta_0, \alphastar) = F_{23}(\nustar, \delta_0, \alphastar)$, and
\begin{align*}
\nabla_{\nu} F_{22}(\nu, \delta_0, \alphastar) = 2\nu\delta_0 \left[ \Phi(\alphastar-\sqrt{\delta_0}\nu)-\Phi(-\alphastar-\sqrt{\delta_0}\nu)\right],
\end{align*}
which combined with direct calculation gives 
\begin{align}
	0 < \nabla_{\nu} F_{22}(\nu, \delta_0, \alphastar)  < 2 \nu\delta_0 \Phi(2\alpha_0 - \sqrt{\delta_0}\nu).
\end{align}
With the help of the above relation, we can further obtain 
\begin{align*}
0 \leq F_{22} - F_{23} = \int_{0}^{\nustar} \nabla_{\nu} F_{22}(\nu, \delta_0, \alphastar) \mathrm{d} \nu
&\leq \int_{0}^{\nustar} 2 \nu\delta_0 \Phi(2\alpha_0 - \sqrt{\delta_0}\nu) \mathrm{d}\nu \\
&= 2 \int_{0}^{\nustar\sqrt{\delta_0}} \nu\Phi(2\alpha_0-\nu) \mathrm{d}\nu \leq 2\int_{0}^{+\infty} \nu\Phi(2 \alpha_0-\nu) \mathrm{d}\nu \leq C_{\delta_0},
\end{align*}
where $C_{\delta_0}$ is a constant that only depends on $\delta_0$. 
Therefore, we complete the proof of the second statement in \eqref{eq:epsilon-limit-2}.

\end{itemize}

\paragraph{Step 2: analysis of quantities in \eqref{eq:derivative-nu}.}
Equipped with the limiting values $\nustar$ and $\alphastar$, we are ready to analyze those terms that appear in the expression~\eqref{eq:derivative-nu}. 
Akin to the previous part, we also assume without loss of generality that $\alphastar < 2 \alpha_0$ throughout this part. 

\begin{itemize}
\item  \textbf{Limiting order of the numerator.}
Let us first consider the numerator $\nabla_{\delta}F_2\nabla_{\alpha}F_1 - \nabla_{\alpha}F_2\nabla_{\delta}F_1$, 
where the partial derivatives are evaluated at $(\nustar, \delta, \alphastar).$
With $|\phi(x)|\leq 1$ and $\nustar \leq M $ in mind, by virtue of the equations~\eqref{eq:f1-derivative}, we can easily verify that 
\begin{align*}
\left| \nabla_{\delta} F_1(\nu, \delta_0, \alpha) + 1\right| \leq  \frac{M}{\sqrt{\delta_0}} \epsilon = \sqrt{\frac{\text{SNR} \cdot \epsilon}{\delta_0}}; \quad \left| \nabla_{\alpha} F_1(\nu, \delta_0, \alpha) + 2 \p\right| \leq 4\epsilon
\end{align*}
with SNR defined in equation~\eqref{eq:defn-SNR}, 
which reveals that $\nabla_{\delta} F_1 \sim - 1$ and $\nabla_{\alpha} F_1 \sim - 2 \phi(\alphastar)$ as $\epsilon \to 0.$

We now turn to the $F_2$-related quantities. 
Invoking their explicit expressions as derived in Section~\ref{sec:expression-of-derivative} yields 
\begin{subequations}
\begin{align}
\label{eqn:beyond}
\left| \nabla_{\a} F_2 - \delta_0^{-1}\nabla_{\a}F_{23}\right| &= \epsilon \delta_0^{-1} \left| \nabla_{\a} F_{22} - \nabla_{\a} F_{23}\right| ;\\
\label{eqn:yyds}
\left| \nabla_{\delta} F_2 + \delta_0^{-2}F_{23}\right| 
&= \epsilon \delta_0^{-2} \left| F_{23}-F_{22} + \delta_0 \nabla_{\delta} F_{22} \right| .
\end{align}
\end{subequations}
We claim that the right-hand sides of both of the above equations vanish as $\epsilon \to 0.$
Taking these as given for the moment, we have $\nabla_{\a} F_2 \sim \delta_0^{-1}\nabla_{\a}F_{23}$ and $\nabla_{\delta} F_2 \sim - \delta_0^{-2}F_{23}$
which further ensure that 
\begin{align*}
\nabla_{\delta}F_2\nabla_{\alpha}F_1 - \nabla_{\alpha}F_2\nabla_{\delta}F_1
& ~\goto~ 2\delta_0^{-2}\phi(\alpha_0) F_{23} + \delta_0^{-1} \nabla_{\a} F_{23} \\
& ~\goto~ 2\delta_0^{-2}{\phi(\alpha_0)}\left[ -2\a_0\phi(\a_0) + (\a_0^2+1)\delta_0\right] - 2 \delta_0^{-1} \left[ 2 \phi(\a_0) - \a_0\delta_0\right],
\end{align*}
as claimed in the expression~\eqref{eqn:nu-ratio-numerator}.
Here, the last step uses $\a_0\defn -\Phi^{-1}(\delta_0/2)$. 
For any $0<\delta_0 < 1$, the limiting value is a {negative} number, due to the basic relation 
\begin{align*}
	\Phi(-\a_0) \in 
	\left[\phi(\a_0)\bigg(\frac{1}{\a_0}-\frac{1}{\a_0^3}\bigg),
	~ \phi(\a_0)\bigg(\frac{1}{\a_0}-\frac{1}{\a_0^3} + \frac{1}{\a_0^5}\bigg)\right].
\end{align*}
% \yuting{mark here, change accordingly}

\paragraph*{Analysis of the expressions~\eqref{eqn:beyond} and \eqref{eqn:yyds}.} Combining $|\phi(x)|\leq 1$, $|\Phi(x)|\leq 1$  with the expressions of $\nabla_{\a} F_{22}$ and $\nabla_{\a} F_{23}$ in Section~\ref{sec:expression-of-derivative}, one can easily see that
\begin{align*}
	\left| \nabla_{\a} F_{22}\right| \leq 4(1+\alphastar) \leq 8(1+\a_0)
	\qquad \text{and }~
	\left| \nabla_{\a} F_{23}\right| \leq 4(1+\alphastar) \leq 8(1+\a_0), 
\end{align*}
given $\alphastar < 2\alpha_0$. It is thus clear that \eqref{eqn:beyond} is negligible when $\epsilon \to 0$. 

As for the relation~\eqref{eqn:yyds}, note that we have shown  in the previous part that $|F_{23}-F_{22}| \leq C_{\delta_0}$, where $C_{\delta_0}$ only depends on $\delta_0$. Additionally, it is clear that 
\begin{align*}
0 \leq \nabla_{\delta}F_{22} 
& =  \nustar^2\left[ \Phi(\alphastar-\nustar)-\Phi(-\alphastar-\nustar)\right]\\
&\leq \nustar^2 \Phi(2\alpha_0 - \sqrt{\delta_0} \nustar)  \leq \max_{\nu\geq 0} \left\lbrace \nu^2 \Phi(2\a_0 - \sqrt{\delta_0} \nu)\right\rbrace  
=: C_{\delta_0}'.
\end{align*}
Thus we conclude that $|\nabla_{\delta}F_{22}|\leq C_{\delta_0}'$.
Putting the above arguments together leads to the fact that $\left| F_{23}-F_{22} + \delta_0 \nabla_{\delta} F_{22} \right|$ is bounded by a universal constant determined by $\delta_0$ and is thus negligible when $\epsilon \to 0$. 

\item \textbf{Limiting order of the denominator.}
It remains to study the denominator $M(\nabla_{\nu}F_2\nabla_{\alpha}F_1 - \nabla_{\alpha}F_2\nabla_{\nu}F_1)$. 
To begin with, from the previous analysis, it is seen that $\nabla_{\alpha}F_2$ scales as $- 2 \delta_0^{-1} \left[ 2 \phi(\a_0) - \a_0\delta_0\right]$ and $\nabla_{\alpha}F_1 $ as $ -2\phi(\a_0)$ when taking $\epsilon \goto 0$. In what follows, we shall analyze $M\nabla_{\nu}F_2$ and $M\nabla_{\nu}F_1$ separately. 
For these two quantities, the following inequalities hold true 
\begin{align*}
	\left| M\nabla_{\nu}F_1\right| &= \sqrt{\text{SNR}\cdot\delta_0\epsilon} \left| \phi(\alphastar-\sqrt{\delta_0}\nustar) - \phi(\alphastar+\sqrt{\delta_0}\nustar)\right| \leq 2\sqrt{\text{SNR}\cdot \delta_0\epsilon}
\end{align*}
and 
\begin{align*}
\left| M\nabla_{\nu}F_2 - 2\frac{\nustar}{M}\right| 
 = \sqrt{\text{SNR}\cdot \delta_0^{-2}\epsilon}\left|\nabla_{\nu} F_{22}\right| 
 &= 2 \sqrt{\text{SNR}\cdot \epsilon} \nustar  \left[ \Phi(\alphastar-\sqrt{\delta_0}\nustar)-\Phi(-\alphastar-\sqrt{\delta_0}\nustar)\right] \\
 &\leq 2 \sqrt{\text{SNR}\cdot \epsilon} \nustar  \Phi(2\a_0-\sqrt{\delta_0}\nustar) \leq 2 \sqrt{\text{SNR} \cdot \epsilon} C_{\delta_0}'',
\end{align*}
with $C_{\delta_0}'' \defn \{\max_{\nu\geq 0}\nu  \Phi(2\a_0-\sqrt{\delta_0}\nu)\}$. 
Taken these collectively, as $\epsilon \goto 0$, we achieve
\begin{align*}
	M(\nabla_{\nu}F_2\nabla_{\alpha}F_1 - \nabla_{\alpha}F_2\nabla_{\nu}F_1) &\goto -4\phi(\a_0) \frac{\nustar}{M} 
	\goto 
	-4\phi(\a_0) \sqrt{1 - 2\delta_0^{-1}[-\a_0\phi(\a_0) + (\a_0^2+1)\Phi(-\a_0)]},
\end{align*}
where the last step relies on the relation~\eqref{eq:epsilon-limit-2}. 
\end{itemize}

\paragraph{Summary.}
Substituting the above parts on the numerator and the denominator into the expression~\eqref{eq:derivative-nu}, we conclude that when $\epsilon \goto 0$, one has 
\begin{align*}
\lim_{\epsilon \goto 0} \frac{\nustar'(\delta_0)}{M} < 0,
\end{align*}
which further indicates that the solution of $\nustar/M$ decreases with $\delta$ near $\forall \delta_0\in (0, 1)$, as long as $\epsilon$ is below a certain threshold.

%%%%%%%%%%%%%%%%%%%%%%%%
%%%%%%%%%%%%%%%%%%%%%%%%
%%%%%%%%%%%%%%%%%%%%%%%%

\section{Auxiliary lemmas and details}
\label{sec:auxiliary}

\subsection{An example satisfying Assumption~\ref{assump:lambda}}
\label{sec:verify-assumption}

\begin{exas} 
Let $\{\lambda_t\}$ be a piece-wise constant sequence with
\begin{align}
	\lambda_t = \mu_k, \qquad \text{for } S_{k-1}+1\leq t\leq  S_k,
\end{align}
where the length of each piece $s_k \defn S_k - S_{k-1}$ and $S_{0}$ is set to be $0.$ 
Further choose $\mu_k = 1/ \max\left\lbrace \log k, 1\right\rbrace $ and $\Lambda_{S_k} = \sum_{i=1}^{S_k}\lambda_i = k^3$, $k\geq 1$.
\end{exas}

\begin{proof}
Let us now verify that this sequence satisfies Assumption~\ref{assump:lambda}. 
It is straightforward to validate the other inequalities, so we only present the proof for the second relation of~\eqref{eqn:assmption-lambda-summable2}.
In this case, when $S_{k} \leq t \leq S_{k+1}-1$, direct calculations yield 
\begin{align*}
l_t = \sum_{s=1}^t |\lambda_s-\lambda_{s+1}| \exp\left\lbrace - c [\Lambda_t - \Lambda_s] \right\rbrace & = \sum_{j=1}^k  |\mu_j-\mu_{j+1}| \exp\left\lbrace - c[\Lambda_t - \Lambda_{S_j}]\right\rbrace \\
& \leq \sum_{j=1}^k |\mu_j-\mu_{j+1}|  
\exp\left\lbrace - c k^3 + c j^3 \right\rbrace 
\exp\left\lbrace - c(t-S_k)\mu_k \right\rbrace .
\end{align*}
Further, it is easy to verify that $|\mu_j - \mu_{j+1}|\sim 1/(j\log^2 j)$, and as a result, 
\begin{align*}
\sqrt{l_t} \lesssim 
\exp\left\lbrace - \frac{c\mu_k(t-S_k)}{2}\right\rbrace \exp\left\lbrace -\frac{c k^3}{2}\right\rbrace \sqrt{\sum_{j=1}^k \frac{1}{j \log^2 j}\exp\left\lbrace c j^3\right\rbrace }  \overset{\text(*)}{\lesssim} \exp\left\lbrace - \frac{c\mu_k(t-S_k)}{2}\right\rbrace  \frac{1}{k^{3/2}\log k},
\end{align*}
where $\text(*)$ follows from the fact that 
\begin{align*}
	\int_1^k \frac{1}{x \log^2 x}\exp( c x^3 ) \mathrm{d}x \lesssim \frac{1}{k^3 \log^2 k}\exp( c k^3 ).
\end{align*}
Finally, we arrive at 
\begin{align*}
\sum_{t=1}^{+\infty} \sqrt{l_t} & \leq \sum_{k=1}^{+\infty} \left[ \sum_{t=S_k}^{S_{k+1}-1} \exp\left\lbrace - \frac{c\mu_k(t-S_k)}{2}\right\rbrace\right]   \frac{1}{k^{3/2}\log k} 
 \lesssim  \sum_{k=1}^{+\infty}  \frac{1}{1 - \exp(-c/2 \mu_k )}\frac{1}{k^{3/2}\log k} \lesssim \sum_{k=1}^{+\infty}  \frac{1}{k^{3/2}} < \infty.
\end{align*}
\end{proof}

% \begin{align*}
% 	l_t = \sum_{s=1}^t |\lambda_s-\lambda_{s+1}| \exp\left\lbrace - c [\Lambda_t - \Lambda_s] \right\rbrace 
% 	& =
% 	\sum_{s=1}^{t/2} |\lambda_s-\lambda_{s+1}| \exp\left\lbrace - c [\Lambda_t - \Lambda_s] \right\rbrace 
% 	+ \sum_{s=t/2}^t |\lambda_s-\lambda_{s+1}| \exp\left\lbrace - c [\Lambda_t - \Lambda_s] \right\rbrace \\
% %
% 	&\leq 
% 	\sum_{s=1}^{t/2} |\lambda_s-\lambda_{s+1}| \exp(- c \Lambda_{t/2}^t)
% 	+ \sum_{s=t/2}^t |\lambda_s-\lambda_{s+1}| \\
% %
% 	&= \frac{\lambda_1}{\exp(c \Lambda_{t/2}^t)}
% 	+ 
% 	\lambda_{t/2} - \lambda_t.
% \end{align*}
% where $\exp(- c [\Lambda_t - \Lambda_s]) \leq 1$ for $s\leq t$
% and let $\Lambda_{t/2}^{t} \defn \sum_{i=t/2}^{t} \lambda_{i}$.

\subsection{Auxiliary lemma for Gaussian distributions}

We collect some useful expressions about the standard Gaussian distribution, which shall be used multiple times in the proof of Section~\ref{sec:proof-risk-curve}. 
\begin{lem}\label{lem:formula}
The density function and cumulative density function $\phi(\cdot)$ and $\Phi(\cdot)$ of the standard Gaussian distribution obey the following relations: 
	\begin{subequations}
		\begin{align}
		\label{eq:auxillary-integral} & \int_{b}^{\infty}(z-a)^2\phi(z)\mathrm{d}z = (b-2a)\phi(b) + (a^2+1)[1-\Phi(b)];\\
		\label{eq:auxillary-moment-1}& \lim_{\a \goto +\infty}\frac{\a \Phi(-\a)}{\phi(\a)} = 1;\\
		\label{eq:auxillary-moment-2}& \lim_{\a \goto +\infty}\frac{\a^2\left[ \phi(\a)-\a\Phi(-\a)\right] }{\phi(\a)} = 1;\\
		\label{eq:auxillary-moment-3} & \lim_{\a \goto +\infty} \frac{\a^3\left[\a\phi(\a) - (\a^2+1)\Phi(-\a)\right] }{\phi(\a)} = -2.
		\end{align}
	\end{subequations}
\end{lem}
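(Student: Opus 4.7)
The plan is to establish the integral identity \eqref{eq:auxillary-integral} by direct computation, and to derive the three asymptotic relations \eqref{eq:auxillary-moment-1}--\eqref{eq:auxillary-moment-3} from the standard Mill's ratio expansion obtained via repeated integration by parts. None of the four statements is deep; they are classical Gaussian tail asymptotics and the only subtlety is getting the right number of terms in the expansion for the third relation, where cancellation occurs.

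For \eqref{eq:auxillary-integral}, I would expand $(z-a)^2 = z^2 - 2az + a^2$ and reduce to three elementary integrals:
\begin{align*}
\int_b^\infty \phi(z)\,\mathrm{d}z &= 1-\Phi(b), \\
\int_b^\infty z\phi(z)\,\mathrm{d}z &= \phi(b), \\
\int_b^\infty z^2\phi(z)\,\mathrm{d}z &= b\phi(b)+1-\Phi(b),
\end{align*}
where the second follows from $\phi'(z)=-z\phi(z)$ and the third by one integration by parts using $z^2\phi(z) = z\cdot z\phi(z) = -z\phi'(z)$. A linear combination then yields the claimed expression $(b-2a)\phi(b)+(a^2+1)[1-\Phi(b)]$.

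For the three asymptotic relations, the master tool is the Mill's ratio expansion obtained by iterating $\phi(z)=-\phi'(z)/z$: integration by parts gives $\int_\alpha^\infty \phi(z)\,\mathrm{d}z = \phi(\alpha)/\alpha - \int_\alpha^\infty \phi(z)/z^2\,\mathrm{d}z$, and a second application yields $\int_\alpha^\infty \phi(z)/z^2\,\mathrm{d}z = \phi(\alpha)/\alpha^3 - 3\int_\alpha^\infty \phi(z)/z^4\,\mathrm{d}z$, so that
\[
\Phi(-\alpha) \;=\; \phi(\alpha)\Bigl[\tfrac{1}{\alpha}-\tfrac{1}{\alpha^3}+O(\alpha^{-5})\Bigr].
\]
Relation \eqref{eq:auxillary-moment-1} is the leading term, and \eqref{eq:auxillary-moment-2} follows from the two-term expansion since $\phi(\alpha)-\alpha\Phi(-\alpha) = \phi(\alpha)[\alpha^{-2}+O(\alpha^{-4})]$.

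The one step that demands a bit of care is \eqref{eq:auxillary-moment-3}, since the leading $O(\phi(\alpha)/\alpha)$ contributions in $\alpha\phi(\alpha)-(\alpha^2+1)\Phi(-\alpha)$ cancel and one must retain enough terms to identify the $\phi(\alpha)/\alpha^3$ coefficient. To handle this I would write
\[
\alpha\phi(\alpha)-(\alpha^2+1)\Phi(-\alpha) \;=\; \alpha\bigl[\phi(\alpha)-\alpha\Phi(-\alpha)\bigr]-\Phi(-\alpha),
\]
substitute the three-term expansion $\Phi(-\alpha)=\phi(\alpha)[\alpha^{-1}-\alpha^{-3}+3\alpha^{-5}+O(\alpha^{-7})]$ (obtained by one more iteration of integration by parts), and observe that the $\phi(\alpha)/\alpha$ contributions cancel while the $\phi(\alpha)/\alpha^3$ coefficients combine to $-3+1=-2$. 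Multiplying by $\alpha^3/\phi(\alpha)$ then gives the stated limit. This coefficient-chasing, while tedious, is the only mildly delicate step in the proof.
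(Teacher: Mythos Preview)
Your proof is correct. For the integral identity \eqref{eq:auxillary-integral} you do exactly what the paper does: expand $(z-a)^2$ and compute the three elementary Gaussian integrals via $\phi'(z)=-z\phi(z)$ and one integration by parts.

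For the three asymptotic relations you take a genuinely different route. The paper proves \eqref{eq:auxillary-moment-1}--\eqref{eq:auxillary-moment-3} by L'H\^opital's rule, differentiating numerator and denominator (and in fact reducing \eqref{eq:auxillary-moment-3} back to the already-established \eqref{eq:auxillary-moment-1} and \eqref{eq:auxillary-moment-2}). You instead derive the Mill's ratio expansion $\Phi(-\alpha)=\phi(\alpha)[\alpha^{-1}-\alpha^{-3}+3\alpha^{-5}+O(\alpha^{-7})]$ by repeated integration by parts and then read off each limit from the appropriate truncation. Both arguments are standard and short; yours has the advantage of producing the full asymptotic series in one stroke (so all three limits fall out of a single computation, and the cancellation in \eqref{eq:auxillary-moment-3} is transparent), while the paper's L'H\^opital approach avoids tracking remainder bounds but requires a separate differentiation for each relation.
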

\begin{proof}[Proof of Lemma~\ref{lem:formula}.]
To verify the first expression, direct calculations yield 
	\begin{align*}
	\int_{b}^{\infty}(z-a)^2\phi(z)\mathrm{d}z & = \int_{b}^{\infty} z^2 \phi(z) \mathrm{d}z - 2a \int_{b}^{\infty}z\phi(z) \mathrm{d}z + a^2  \int_{b}^{\infty}\phi(z)\mathrm{d}z \\
	& = - \int_{b}^{\infty} z \mathrm{d}\phi(z) + 2a \int_{b}^{\infty} \mathrm{d}\phi(z) + a^2  \int_{b}^{\infty}\mathrm{d}\Phi(z) \\
	& = - \left[ - b\phi(b) - \int_b^{\infty} \phi(z) \mathrm{d} z\right] - 2a \phi(b) + a^2\left[1 - \Phi(b)\right]\\
	& = (b-2a)\phi(b) + (a^2+1)[1-\Phi(b)].
	\end{align*}
The last three equations can be verified similarly by use of L'Hôpital's rule; as an illustration, we provide the proof of the last equation here. 
By taking the derivatives of both the numerator and the denominator and using a little algebra, we obtain 
\begin{align*}
	\lim_{\a \goto +\infty} \frac{\a^3\left[\a\phi(\a) - (\a^2+1)\Phi(-\a)\right] }{\phi(\a)}  & = \lim_{\alpha \goto +\infty}\frac{4\a^3\phi(\a) - \a^5\phi(\a) + (\a^5+\a^3)\phi(\a) - (5\a^4+3\a^2)\Phi(-\a)}{-\a \phi(\a)} \\
	& =  \lim_{\alpha \goto +\infty}\frac{- 5\a^2\phi(\a) + (5\a^3+3\a)\Phi(-\a)}{\phi(\a)} \\
	& = 3 \lim_{\alpha \goto +\infty}\frac{\a\Phi(-\a)}{\phi(\a)} - 5 \lim_{\alpha \goto +\infty}\frac{\a^2\left[ \phi(\a) - \a \Phi(-\a)\right] }{\phi(\a)} = -2,
	\end{align*}
	which validates the relation~\eqref{eq:auxillary-moment-3}.
	Thus we complete the proof.
\end{proof}

\end{document}